\pgfplotsset{compat=1.18}
\newcommand\restr[2]{{
  \left.\kern-\nulldelimiterspace 
  #1 
  \vphantom{\big|} 
  \right|_{#2} 
  }}
\DeclareMathOperator*{\essinf}{ess\,inf}
\DeclareMathOperator*{\esssup}{ess\,sup}
\newcommand{\dimrlow}{\underline{\dim}_\textup{r} }
\newcommand{\dimrup}{\overline{\dim}_\textup{r} }
\newcommand{\dimr}{{\dim_\textup{r}} }
\newcommand{\dimlow}{\underline{\dim}_{\textup{loc} }}
\newcommand{\dimup}{\overline{\dim}_{\textup{loc}} }
\newtheorem{theorem}{Theorem}
\newtheorem{lemma}[theorem]{Lemma}
\newtheorem{corollary}[theorem]{Corollary}
\theoremstyle{definition}
\newtheorem{remark}[theorem]{Remark}
\newtheorem{example}[theorem]{Example}
\newtheorem{definition}[theorem]{Definition}
\newtheorem{problem}[theorem]{Problem}
\newtheorem{mylettertheorem}{Theorem}[section]
\renewcommand{\Subset}{\subset\joinrel\subset}
\newcommand{\spt}[1]{{\textup{Spt} \ #1}}
\newcommand{\cref}[1]{Corollary \ref{c.#1}}
\numberwithin{theorem}{section}
\numberwithin{equation}{section}
\newcommand{\trace}[1]{{\textup{Tr}\lb #1 \rb}}
\newcommand{\diam}{\textup{diam} }
\newcommand{\Na}{\mathcal{N}}
\newcommand{\N}{\mathbb{N}}
\newcommand{\Ept}{\mathbb{E}}
\newcommand{\Z}{\mathbb{Z}}
\newcommand{\R}{\mathbb{R}}
\newcommand{\T}{\mathbb{T}}
\newcommand{\pt}{\partial}
\newcommand{\bca}{\begin{cases}}
\newcommand{\eca}{\end{cases}}
\newcommand{\lb}{\left(}
\newcommand{\rb}{\right)}
\newcommand{\lmb}{\left[}
\newcommand{\rmb}{\right]}
\newcommand{\lma}{\left\{}
\newcommand{\rma}{\right\}}
\newcommand{\Ha}{{\mathcal{H}}}
\newcommand{\Da}{{\mathscr{D}}}
\newcommand{\Fa}{{\mathcal{F}}}
\newcommand{\Ra}{{\mathcal{R}}}
\newcommand{\Ea}{{\mathcal{E}}}
\newcommand{\Ta}{{\mathcal{T}}}
\newcommand{\csubset}{\subset\joinrel\subset}
\newcommand{\Lme}{\mathcal{L}}
\newcommand{\gd}{\nabla}
\newcommand{\rta}{\rightarrow}
\newcommand{\lw}{\left|}
\newcommand{\rw}{\right|}
\newcommand{\be}{\begin{equation}}
\newcommand{\ee}{\end{equation}}
\newcommand{\bt}{\begin{thm}}
\newcommand{\et}{\end{thm}}
\newcommand{\bc}{\begin{cor}}
\newcommand{\ec}{\end{cor}}
\newcommand{\bl}{\begin{lem}}
\newcommand{\el}{\end{lem}}
\newcommand{\norm}[1]{\left\lVert#1\right\rVert}
\newcommand{\avg}[1]{\langle#1\rangle}
\newcommand{\normm}[1]{{\left\vert\kern-0.25ex\left\vert\kern-0.25ex\left\vert #1 
    \right\vert\kern-0.25ex\right\vert\kern-0.25ex\right\vert}}
\newcommand{\dist}{\operatorname{dist}}
\newcommand{\weakcv}{\rightharpoonup}
\def\Xint#1{\mathchoice
{\XXint\displaystyle\textstyle{#1}}%
{\XXint\textstyle\scriptstyle{#1}}%
{\XXint\scriptstyle\scriptscriptstyle{#1}}%
{\XXint\scriptscriptstyle\scriptscriptstyle{#1}}%
\!\int}
\def\XXint#1#2#3{{\setbox0=\hbox{$#1{#2#3}{\int}$ }
\vcenter{\hbox{$#2#3$ }}\kern-.6\wd0}}
\def\dashint{\Xint-}
\def\Xint#1{\mathchoice
{\XXint\displaystyle\textstyle{#1}}%
{\XXint\textstyle\scriptstyle{#1}}%
{\XXint\scriptstyle\scriptscriptstyle{#1}}%
{\XXint\scriptscriptstyle\scriptscriptstyle{#1}}%
\!\int}
\def\XXint#1#2#3{{\setbox0=\hbox{$#1{#2#3}{\int}$ }
\vcenter{\hbox{$#2#3$ }}\kern-.6\wd0}}
\def\dashint{\Xint-}
\newcommand{\ep}{\varepsilon}
\begin{document}

\title[Singular Wiener Bound]{On the attainability of the singular Wiener bound}

\author{Zhonggan Huang}
\address{Department of Mathematics, University of Utah, Salt Lake City, UT 84112}
\email{zhonggan@math.utah.edu}

\date{\today}
\keywords{Optimal design, conductance maximization, leaf venation pattern, reticulation, geometric measure theory, stationary varifolds, homogenization, homotopy}
\subjclass{49Q10, 49Q20, 35B27, 92B99}

\begin{abstract}
    We characterize the lower and upper attainability of the Wiener bound (also known as the conductive analogue of the Voigt-Reuss-Hill bound in {elasticity theory}) for singularly distributed conductive material mixtures. For the lower attainability we consider mixtures in which high-conductance materials support on sets having finite one-dimensional Hausdorff measures. We show that, under a mild coercivity condition, the kernel of the effective tensor of the mixture is equal to the orthogonal complement of the homotopy classes of closed paths in the supporting set. This shows that a periodic planar network has positive definite effective tensor, i.e., it is resilient to fluctuations, if and only if the network is reticulate. We prove a geometric characterization of the upper attainability by applying a transformation from varifolds to matrix-valued measures. We show that this transformation leads to an equivalence between two distinct notions from material science and geometric measure theory respectively: conductance maximality and area criticality. Based on this relation we show a pointwise dimension bound for mixtures that attain the upper Wiener bound by applying a fractional version of the monotonicity formula for stationary varifolds. This dimension bound illustrates how the maximality condition constrains the local anisotropy and the local distribution of conductance magnitudes. Both the lower and upper attainability results have potential novel applications in modeling leaf venation patterns.
\end{abstract}

\maketitle
\tableofcontents

\section{Introduction}

 What are the appropriate geometric objects to describe leaf vein patterns? While they are common planar networks, it is difficult to determine the \say{energies} that effectively characterize the geometry. In this paper, we approach this issue through a variational problem on the hydraulic conductivity properties of leaves. More specifically, we study the lower and upper attainability of a singular version of the Wiener bound \cite{Wiener1912} for mixtures of conductive materials. {This bound is also widely known as the conductive analogue Voigt-Reuss-Hill bound \cites{voigt,reuss,Hill_1952} in elasticity literature.} In this new singular setting, high-conductance materials concentrate on lower dimensional sets. Here by ``attainability'' we mean the characterization of the mixtures that attain or do not attain the lower or the upper bound.
 
In particular, our work on the lower attainability provides a potential mathematical support to an existing biological theory on the reticulation phenomenon in higher-order leaf vein patterns, which states that random hydraulic fluctuations imply reticulation \cites{Corson2010,Katifori2010}.

Let us recall the original version of the Wiener bound. By a standard homogenization argument \cites{Jikov1994,Tartar2010,Milton_2002} (also see Appendix \ref{appendix.periodichomog}), a material mixture is represented by a positive definite matrix field $A(x)$ on the torus $\T^n$ (or equivalently $A(x)$ is a periodic matrix field on $\R^n$). Such a matrix field often satisfies the following {ellipticity} condition
\be
\label{eq.ellipticityconditionforA}
\lambda^{-1}\le A(x) \le \lambda, \textup{ for all }x\in\T^n,
\ee
for some positive constant $\lambda$. The \emph{effective conductance tensor} of the mixture $A(x)$ is defined as the positive definite matrix $Q(A)$ that satisfies the following variational problem
\be
\label{eq.classicaleffectivetensor}
p\cdot Q(A) p := \inf_{\varphi\in C^\infty(\T^n)} \int_{\T^n} (\gd \varphi(x)+p)\cdot A(x) (\gd\varphi(x)+p) d\Lme^n,
\ee
where $\Lme^n$ is the Lebesgue measure on $\T^n$ and $p\in \R^n$ is an arbitrary vector that represents the effective pressure gradient from the exterior that applies to the mixture $A(x)$. 

The \emph{classical Wiener bound} provides an explicit range of the effective tensor $Q(A)$. 
 \be
\label{eq.classicalWIenrbound}
\lb\int_{\T^n} A^{-1}(x) d\Lme^n \rb^{-1} \le Q(A) \le \int_{\T^n} A(x) d\Lme^n.
\ee
The upper bound is attained if and only if $\textup{div}\, A =0$, and the lower bound is attained if and only if $\textup{curl}\, A^{-1} =0$. See \cite{Jikov1994}*{Section 1.6} for more details and literature.

Although the upper and lower attainability for the classical Wiener bound \eqref{eq.classicalWIenrbound} can be characterized explicitly by the equations $\textup{div}\, A =0$ and $\textup{curl}\, A^{-1} =0$ respectively, the problem becomes much harder as one looks at $A$'s that concentrate on lower dimensional sets.

Indeed, to model leaf vein patterns, it is natural to consider the matrix fields $A$ on $\T^2$ that have high magnitude near a 1-dimensional network $\Gamma\subset\T^2$. To give a heuristic discussion we consider the $2\times 2$ matrix fields $A_\delta$ of the following form
\[
A_\delta(x):=\bca
\frac{1}{\delta} \, I_{2\times 2} & \textup{ for }\dist(x,\Gamma)<\delta/2\\
\delta\, I_{2\times 2}  & \textup{ elsewhere,}
\eca
\]
where $\delta>0$ is a small parameter. Note that $A_\delta$ do not satisfy the ellipticity condition \eqref{eq.ellipticityconditionforA} uniformly. To sharply characterize the conductivity property of $\Gamma$ we send the small parameter $\delta\rta0^+$. The limit of $A_\delta$ turns out to be a matrix-valued measure $A_0$ of the form
\be\label{eq.exampleinintro1}
dA_0(x)= I_{2\times 2}\, d\restr{\Ha^1}{\Gamma}(x),
\ee
where $\restr{\Ha^1}{\Gamma}$ is the one-dimensional Hausdorff measure restricted to the 1-D set $\Gamma$. Note that at least heuristically the classical Wiener bound \eqref{eq.classicalWIenrbound} degenerates to
\be\label{eq.singularforveinswiener1}
0 \le Q(A_0) \le \int_{\T^2} dA_0(x),
\ee
where the lower bound becomes zero because $A_0^{-1}$ is infinite for Lebesgue almost every point.

Now the difficulty immediately occurs as one looks back on the attainability equations for the upper and lower bound in \eqref{eq.singularforveinswiener1}
\[
\textup{div}\, A_0 =0\; \textup{ and }\;\textup{curl}\, A_0^{-1} =0.
\]
Notice that $A_0$ is a singular measure supported on a lower dimensional set $\Gamma$. The first equation $\textup{div}\, A_0 =0$ can still be interpreted in the sense of distributional derivatives. Unfortunately the second equation $\textup{curl}\, A_0^{-1} =0$ is ill-posed even in the distributional sense. This is because $A_0$ is a singular measure and $A_0^{-1}$ is supposed to be infinity for Lebesgue almost every point. 

In this paper, we develop a theory to deal with both the lower and upper attainability problem in the singular case \eqref{eq.singularforveinswiener1}, by using techniques from geometric measure theory, calculus of variations, homotopy groups and etc. The main results are the two theorems: 
\[
\textup{Theorem \ref{t.forma1} on the lower attainability and Theorem \ref{t.forma2} on the upper attainability.}
\]

To make a more precise presentation, we will model singular matrix fields like $A_0$ in \eqref{eq.exampleinintro1} by positive semi-definite matrix-valued Radon measures, which we call \emph{medium} for convenience (See Definition \ref{def.singulartensorcomposite}). More precisely we call $\theta$ a medium if it is a matrix-valued measure on $\T^n$ that takes the form
\be\label{eq.mediumintro}
d\theta=\sigma\, d\norm{\theta}
\ee
for some Radon measure $\norm{\theta}$ and some positive semi-definite matrix field $\sigma$ such that the trace $\trace{\sigma(x)}=1$ for $\norm{\theta}$-almost all $x\in \T^n$. Here $\sigma$ represents the local anisotropy and $\norm{\theta}$ represents the local magnitude of conductance. Note that the effective tensor $Q(\theta)$ can be defined similarly as in \eqref{eq.classicaleffectivetensor} by replacing $A(x)$ by $\sigma(x)$ and $d\Lme^n$ by $d\norm{\theta}$. In this general setting, the singular Wiener bound \eqref{eq.singularforveinswiener1} becomes
\be\label{eq.singularforveinswiener}
0 \le Q(\theta) \le \theta(\T^n).
\ee

Our first main result Theorem \ref{t.forma1}, focuses on the lower attainability of isotropic 1-D media $\theta$ on $\T^n$. By 1-D we mean that the medium $\theta$ takes the form 
\be\label{eq.networkformintro}
d\theta = I_{n\times n}\, dw = \frac{1}{n} I_{n\times n} \, d (n w)
\ee
with the Radon measure $w$ satisfying
\be\label{eq.1dregularintro}
0<\limsup_{r\rta0^+}\frac{w(B_r(x))}{r}<\infty
\ee
for $w$-almost every $x\in \T^n$. We also require the following \emph{coercivity} condition
\be
\label{eq.coerciveintro}
\limsup_{r\rta0^+}\frac{w(B_r(x))}{r}>c>0
\ee
for some positive constant $c$ and $\Ha^1$-almost every $x$ in the support $ \spt{w}=\spt{\theta}$. An immediate consequence of the coercivity condition is that $\Ha^1(\spt{\theta})<\infty$ (See Section \ref{subsubsection.density}). Also note that whenever $E$ is a closed subset of $\T^n$ such that $\Ha^1(E)<\infty$, the restriction $w=\restr{\Ha^1}{E}$ satisfies \eqref{eq.1dregularintro} and \eqref{eq.coerciveintro}.

For such 1-D media $\theta$, we characterize the relation between the kernel of the effective tensor $Q(\theta)$ and the collection of \emph{homotopy classes} of closed paths in the support $\spt{\theta}\subset\T^n$ of $\theta$. As we shall see later, the kernel of the effective tensor represents the directions in which there is zero conductance in large scales. The interest of this characterization lies in the interaction between the large scale behavior in conductance and the microscale topology of the support of the medium.

The notion of homotopy classes is a standard tool to classify closed paths in a topological space (See Section \ref{subsection.torusandtopology} for the preliminaries and references). {However, to describe the corresponding topological properties of the support $\spt{\theta}$ for different $\ker Q(\theta)$, we need to introduce several new notions that apply the homotopy classes in a slightly unusual way. First of all, we need to define a notion that characterizes the abundance of loops in a given subset $E$ of $\T^n$ in terms of the closed paths in $E$. Specifically for each subset $E\subset\T^n$, we define $H_E\subset \Z^n$ the collection of all the homotopy classes of closed paths in $\T^n$ that have image contained in $E$, see Definition \ref{def.definehomotopygroupofsets} for a precise definition.}   {Next, we call $E$ \emph{loopy} if $H_E$ spans the whole $\R^n$ (not $\Z^n$; see Remark \ref{r.onthedefinitonofloopy}), and call $E$ \emph{reticulate} if there is a loopy connected component in $E$, see Definition \ref{def.topologynetwork} for more details.} 

We remark here that loopiness is equivalent to reticulation in dimension $n=2$, but they are not necessarily the same in higher dimensions. Please see Remark \ref{r.equiloopyandreticulate} and Lemma \ref{l.characterizeloopysetn=2} for more details.

\begin{mylettertheorem}[See Theorem \ref{t.Qtohomotopy}]
    \label{t.forma1}\emph{
Suppose a medium $\theta$ takes the form \eqref{eq.networkformintro} with $w$ satisfying \eqref{eq.1dregularintro} and the coercivity condition \eqref{eq.coerciveintro}. Then the following identity holds
\[
\ker Q(\theta) = H_{\Gamma}^{\perp},
\]
where $H_{\Gamma}^{\perp}$ is the orthogonal space of the homotopy classes of closed paths in the support $\Gamma=\spt{w}=\spt{\theta}$. In particular, $Q(\theta)$ is positive definite if and only if $\spt{\theta}$ is loopy. }
\end{mylettertheorem}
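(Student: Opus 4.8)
The plan is to recast $\ker Q(\theta)$ by convex duality in $L^2(w)$ and then match it with $\operatorname{span}(H_\Gamma)$ using the structure of divergence-free one-dimensional currents. Since $\theta = I_{n\times n}\,dw = \tfrac1n I_{n\times n}\,d(nw)$, the defining variational problem for the effective tensor reads
\[
p\cdot Q(\theta)\,p \;=\; \inf_{\varphi\in C^\infty(\T^n)}\int_{\T^n}\lw \gd\varphi+p\rw^2\,dw \;=\; \dist_{L^2(w;\R^n)}(p,G)^2 ,
\]
where $G\subseteq L^2(w;\R^n)$ is the closed subspace generated by the gradients $\gd\varphi$, $\varphi\in C^\infty(\T^n)$. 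As $Q(\theta)\succeq0$, its kernel consists precisely of the constant fields $p\in\R^n$ lying in $G$, and a Hahn--Banach argument together with the identity $\int_{\T^n}\eta\cdot\gd\varphi\,dw = -\langle\div(\eta\, w),\varphi\rangle$ gives
\[
\ker Q(\theta)\;=\;\Big(\operatorname{span}\Big\{\textstyle\int_{\T^n}\eta\,dw \;:\; \eta\in L^2(w;\R^n),\ \div(\eta\, w)=0 \text{ in }\mathcal{D}'(\T^n)\Big\}\Big)^{\perp}.
\]
Thus it suffices to show that this span equals $\operatorname{span}(H_\Gamma)$ inside $\R^n$; granting that, $\ker Q(\theta)=\operatorname{span}(H_\Gamma)^{\perp}=H_\Gamma^{\perp}$ follows, and so does the last assertion, since $Q(\theta)$ is positive definite iff $H_\Gamma^{\perp}=\{0\}$ iff $H_\Gamma$ spans $\R^n$ iff $\spt{\theta}$ is loopy.

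For the inclusion $\supseteq$, I would fix $v\in H_\Gamma$. The standing hypotheses, via Section \ref{subsubsection.density}, yield $\Ha^1(\Gamma)<\infty$ together with a comparison $w\ge c'\,\restr{\Ha^1}{\Gamma}$ for some $c'>0$; since connected components of a finite-length continuum are rectifiably connected, the class $v$ is represented by a Lipschitz closed curve $\gamma$ with image $|\gamma|\subseteq\Gamma$. Setting $\eta:=\vec t_\gamma\,\dfrac{d(\restr{\Ha^1}{|\gamma|})}{dw}$, the density is bounded by $1/c'$ and $w$ is finite, so $\eta\in L^2(w;\R^n)$; moreover $\div(\eta\, w)=0$ because $\gamma$ is closed, and $\int_{\T^n}\eta\,dw=\int_\gamma\vec t_\gamma\,d\Ha^1$ is the net displacement of a lift of $\gamma$ to $\R^n$, which is exactly $v$. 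Hence $v$ belongs to the left-hand span.

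For the reverse inclusion, take $\eta\in L^2(w;\R^n)$ with $\div(\eta\, w)=0$. By Cauchy--Schwarz and finiteness of $w$, the $1$-current $T:=\vec\eta\,w$ has finite mass, and $\partial T=0$, so $T$ is a normal $1$-cycle with $\spt T\subseteq\spt{w}=\Gamma$, whose class in $H_1(\T^n;\R)\cong\R^n$ is $\lb T(dx_1),\dots,T(dx_n)\rb=\int_{\T^n}\eta\,dw$. Smirnov's decomposition of solenoidal normal $1$-currents writes $T=\int R_\lambda\,d\mu(\lambda)$ as a superposition of elementary cycles, each supported in (a neighborhood of) $\Gamma$; a genuine rectifiable loop in $\Gamma$ has class in $H_\Gamma\subseteq\Z^n$, while the asymptotic cycle of a solenoidal piece is a limit of rational multiples of such classes and therefore stays in the closed subspace $\operatorname{span}(H_\Gamma)$. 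Integrating, the class of $T$, i.e.\ $\int_{\T^n}\eta\,dw$, lies in $\operatorname{span}(H_\Gamma)$, which finishes the proof.

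I expect this reverse inclusion to be the main obstacle: converting the purely distributional constraint $\div(\eta\, w)=0$ into geometry forces one to invoke the decomposition theorem with a genuinely singular ambient measure, and then to verify that no elementary piece carries homology outside $\operatorname{span}(H_\Gamma)$ --- in particular to control possible solenoidal summands and to pin down $H_\Gamma$ robustly under the small perturbations the decomposition may produce; this is also where $\Ha^1(\Gamma)<\infty$, and hence coercivity, is essential, as it is for realizing a homotopy class of $\Gamma$ --- which may have infinitely many components of intricate local structure --- by a rectifiable loop. A more elementary route to the opposite inclusion $\ker Q(\theta)\subseteq H_\Gamma^{\perp}$ avoids currents altogether: if $q(p):=p\cdot Q(\theta)p=0$ and $\varphi_k$ is a recovery sequence, then for any rectifiable loop $\gamma\subseteq\Gamma$ of class $v$ one has $p\cdot v=\oint_\gamma(p+\gd\varphi_k)\cdot d\ell$, and $\lw\oint_\gamma(p+\gd\varphi_k)\cdot d\ell\rw\le \Ha^1(|\gamma|)^{1/2}(1/c')^{1/2}\norm{p+\gd\varphi_k}_{L^2(w)}\to0$, forcing $p\cdot v=0$.
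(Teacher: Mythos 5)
Your $L^2(w)$ duality reformulation is correct and clean: $\ker Q(\theta)$ is exactly the orthogonal complement in $\R^n$ of $\operatorname{span}\{\int_{\T^n}\eta\,dw : \div(\eta w)=0\}$, so the theorem becomes the assertion that this span equals $\R H_\Gamma$. Your argument for $\operatorname{span}\{\int\eta\,dw\}\supseteq \R H_\Gamma$ — produce $\eta$ from a rectifiable loop representing $v\in H_\Gamma$, using $w\ge c'\restr{\Ha^1}{\Gamma}$ from the coercivity hypothesis to get $\eta\in L^2(w)$ — works once one is careful about multiplicity (the current $\gamma_\#\llbracket 0,1\rrbracket$ rather than $\vec t_\gamma\,\restr{\Ha^1}{|\gamma|}$, and a constant $m_\gamma$ in the Cauchy--Schwarz step). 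Your ``more elementary route'' for the same inclusion, via $\oint_\gamma(p+\gd\varphi_k)\cdot d\ell\to 0$, is essentially the paper's Lemma \ref{l.computationwithinequalityofQ} in soft form; the paper makes it quantitative ($p\cdot Q(\theta)p\ge |p|^2/(\Lambda c_{n,\Gamma})$) via the path-selection Lemma \ref{l.selectionofgoodpath} and the multiplicity bound for a path with injective lift, whereas you only need the soft vanishing, which suffices.

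The genuine gap is in the other half, $\operatorname{span}\{\int\eta\,dw\}\subseteq\R H_\Gamma$, i.e.\ $H_\Gamma^\perp\subseteq\ker Q(\theta)$. You invoke Smirnov's decomposition of the normal cycle $T=\eta w$ and then assert that each elementary solenoid's asymptotic cycle ``is a limit of rational multiples of classes in $H_\Gamma$''. That step is the entire difficulty and is left unproved: an elementary solenoid supported in $\Gamma$ need not be a closed loop, and its asymptotic cycle is a priori just a vector in $\R^n$. To place it in $\R H_\Gamma$ one must close up long subarcs of the generating curve \emph{inside} $\Gamma$ (so that the resulting loop's class is actually in $H_\Gamma\subseteq\Z^n$), using rectifiable connectedness of the finite-length component containing the arc and the boundedness of the closing segment; none of this is carried out, and it is exactly the kind of topological control that is delicate when $\Gamma$ has many components. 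The paper sidesteps Smirnov entirely: for $p\in H_\Gamma^\perp$ it shows (Lemma \ref{l.firstinclusion}) that the covering projection $\pi_H:\T^n_{H}\to\T^n$ with $H=\Z H_\Gamma$ restricts to a diffeomorphism on a tubular neighborhood of a lift $\hat\Gamma$ of a Wa\.{z}ewski parametrization of $\Gamma$ — precisely because $H$ exhausts all homotopy of $\Gamma$ — so that $p\cdot x$, which is well-defined on $\T^n_H$ since $p\perp H$, descends to a smooth $\varphi_p$ with $\gd\varphi_p=-p$ on $\Gamma$, giving $p\cdot Q(\theta)p=0$ directly on the primal side. That construction is shorter, gives an explicit near-minimizer, and does not require any decomposition theorem for currents.
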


As loopiness is equivalent to reticulation in dimension $n=2$, the theorem immediately implies the following corollary. 

\begin{corollary}[See Theorem \ref{t.characterizationoflowerboundn=2}]\label{cor.positivedefiniteequalreticulate}
Under the same assumptions in Theorem \ref{t.forma1}, and in dimension $n=2$, the effective tensor $Q(\theta)$ is positive definite if and only if $\spt{\theta}$ is reticulate.
\end{corollary}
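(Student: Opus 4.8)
The statement is a short corollary of Theorem \ref{t.forma1} combined with one topological fact about the two-torus, so the plan is to separate the linear-algebraic content from the topological content.

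First I would unwind the conclusion. Write $\Gamma=\spt{\theta}$. Theorem \ref{t.forma1} gives $\ker Q(\theta)=H_\Gamma^{\perp}$, and since $Q(\theta)$ is symmetric positive semi-definite it is positive definite if and only if $\ker Q(\theta)=\{0\}$, i.e. if and only if $H_\Gamma^{\perp}=\{0\}$; the latter is precisely the statement that $H_\Gamma$ spans $\R^2$, that is, that $\Gamma$ is loopy. So the corollary reduces to the claim that for the closed set $\Gamma\subset\T^2$ loopiness coincides with reticulation --- this is Lemma \ref{l.characterizeloopysetn=2}.

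The implication ``reticulate $\Rightarrow$ loopy'' is immediate: if a connected component $\Gamma_0$ of $\Gamma$ is loopy then $H_{\Gamma_0}\subseteq H_\Gamma$ already spans $\R^2$. For the converse I would argue as follows. A closed path in $\Gamma$ has connected image, hence is contained in a single connected component of $\Gamma$; therefore $H_\Gamma=\bigcup_i H_{\Gamma_i}$ over the connected components $\Gamma_i$. If $\Gamma$ is loopy, pick $v_1,v_2\in H_\Gamma$ linearly independent in $\R^2$, and let $\gamma_1\subset\Gamma_i$ and $\gamma_2\subset\Gamma_j$ be closed paths representing them. The algebraic intersection number of $\gamma_1$ and $\gamma_2$ on $\T^2$ is $\det[v_1\ v_2]\neq 0$, so the two loops cannot be disjoint; hence $\Gamma_i\cap\Gamma_j\neq\emptyset$ and $\Gamma_i=\Gamma_j$. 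That single component then carries both $v_1$ and $v_2$ in its homotopy set, so it is loopy and $\Gamma$ is reticulate.

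The step that really needs work --- the main obstacle --- is the assertion that two continuous loops in $\T^2$ with linearly independent homotopy classes must meet geometrically. I would establish it by lifting to the universal cover $\pi\colon\R^2\to\T^2$: the full preimage $\pi^{-1}(\gamma_2)$ is a closed $\Z^2$-periodic set that, because $v_2\ne 0$, separates $\R^2$ into strips, and a lift of $\gamma_1$, whose endpoints differ by the translation $v_1\notin\R v_2$, is forced to cross one of the separating curves, yielding an intersection point after projecting down. (Equivalently, one invokes the nondegeneracy of the intersection pairing on $H_1(\T^2;\Z)$ via Poincar\'e duality and the cup product on $H^1$.) Once this is in place the proof is complete; everything else, including the finiteness $\Ha^1(\Gamma)<\infty$ furnished by the coercivity hypothesis, is already contained in Theorem \ref{t.forma1}.
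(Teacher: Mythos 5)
Your overall plan is exactly the paper's: Theorem~\ref{t.forma1} reduces positive definiteness of $Q(\theta)$ to $\R H_\Gamma = \R^2$ (loopiness), and then the content of the corollary is the two-dimensional equivalence loopy~$\Leftrightarrow$~reticulate, which is Lemma~\ref{l.characterizeloopysetn=2}; the reduction of that lemma to the assertion that two closed paths in $\T^2$ with $\R$-independent homotopy classes must intersect (Lemma~\ref{l.intersectionofnonparallelpaths}) is also the paper's route. Where you diverge is in how you propose to prove that intersection lemma, and this is where the genuine gap sits.

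The paper proves Lemma~\ref{l.intersectionofnonparallelpaths} by lifting both loops to $\Z^2$-periodic curves $\Gamma_1,\Gamma_2:\R\to\R^2$ that stay a bounded distance from nonparallel lines $L_1,L_2$, then building the explicit continuous self-map
\[
\Phi(t,s)=A^{-1}\bigl(y_2-y_1+(\Gamma_2(s)-L_2(s))-(\Gamma_1(t)-L_1(t))\bigr)
\]
of a large closed ball and applying Brouwer's fixed-point theorem; a fixed point forces $\Gamma_1(t_0)=\Gamma_2(s_0)$, hence an intersection downstairs. This argument requires nothing but continuity and boundedness. Your first proposal, that ``$\pi^{-1}(\gamma_2)$ separates $\R^2$ into strips so a lift of $\gamma_1$ is forced to cross one of the separating curves,'' is not a complete argument: $\gamma_2$ is only assumed to be a continuous closed path, so its lift need not be injective, and the Jordan curve/Jordan--Schoenflies separation theorem applies to \emph{simple} arcs and curves. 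A proper, non-injective, line-like curve in $\R^2$ does not automatically disconnect the plane, and you would need an additional (nontrivial) argument --- e.g.\ Alexander duality, unicoherence, or reducing to an injective sub-arc --- to close this. Your parenthetical fallback via Poincar\'e duality and the cup product is a genuinely different route, and it can be made to work, but it too has a hidden step: ``disjoint continuous loops have zero intersection number'' is immediate for transverse smooth curves, not for arbitrary continuous images, and establishing it requires either a tubular-neighborhood/Thom-class construction or an approximation argument. So the skeleton is right and matches the paper, but the one step you flag as ``the main obstacle'' is exactly where the gap is; the paper's Brouwer argument is the elementary way to fill it without any smoothness or injectivity hypotheses.
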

We refer to Section \ref{subsubsection.outlineoftheoremB} for an outline of the proof of both Theorem \ref{t.forma1} and the above corollary. 

Both the positive definiteness of $Q(\theta)$ and the term ``reticulate'' are closely related to the modeling of leaf vein patterns. To give an intuition on the term ``reticulate'', we refer to Figure \ref{realloopy} for a graphical comparison among a real leaf pattern, a reticulate network and two non-reticulate networks.

\begin{figure}[hbpt]
\centering
\begin{subfigure}[t]{.25\textwidth}
  \centering
  \includegraphics[width=.7\linewidth]{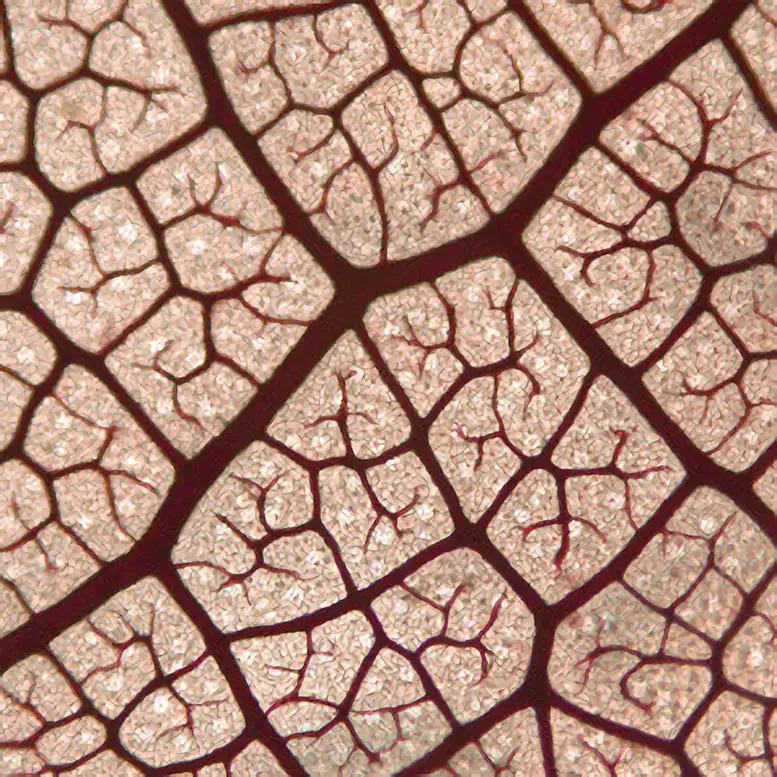}
  \label{truleaf}
\end{subfigure}%
\begin{subfigure}[t]{.25\textwidth}
  \centering
  \includegraphics[width=.7\linewidth]{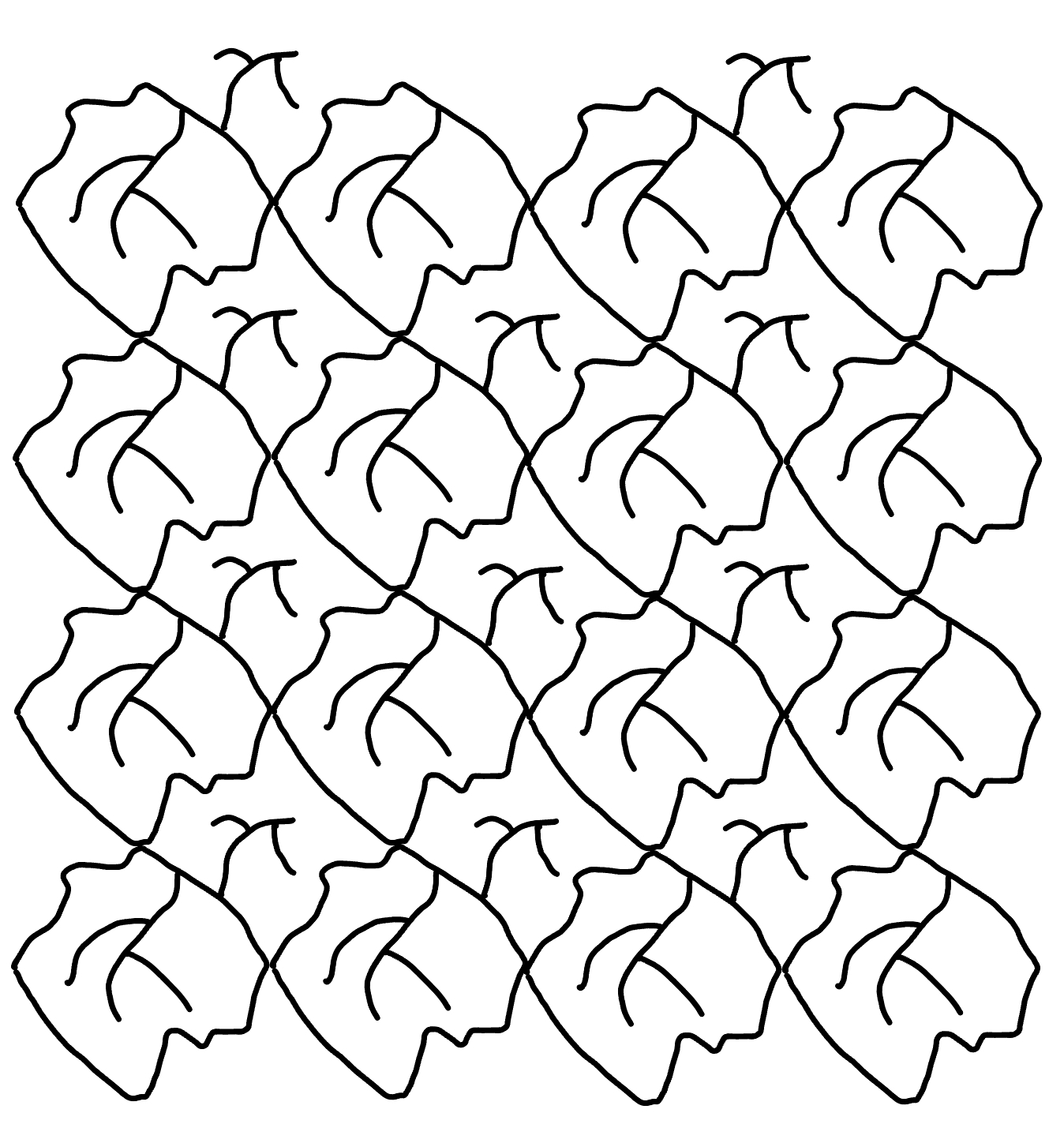}
\end{subfigure}%
\begin{subfigure}[t]{.25\textwidth}
  \centering
  \includegraphics[width=.7\linewidth]{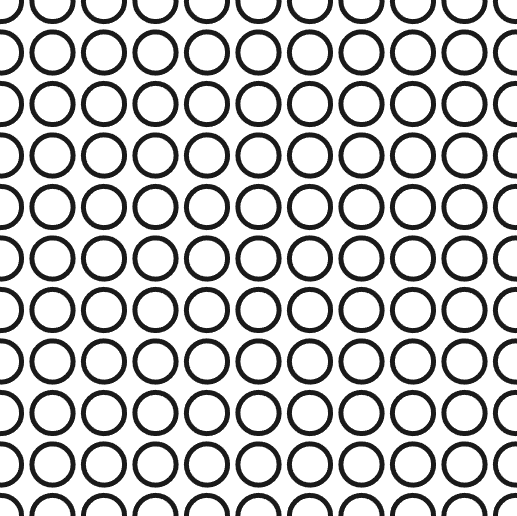}
\end{subfigure}%
\begin{subfigure}[t]{.25\textwidth}
  \centering
  \includegraphics[width=.7\linewidth]{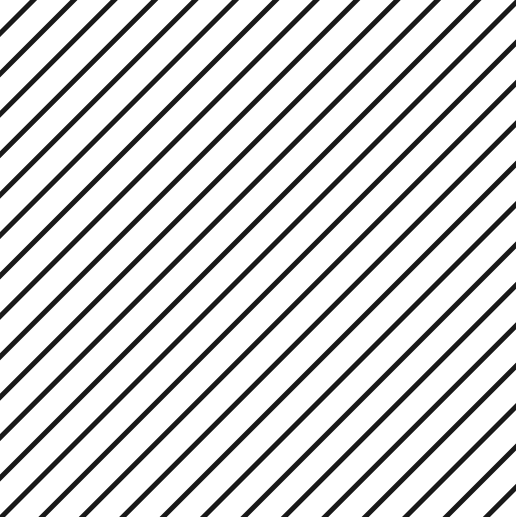}
\end{subfigure}%
\caption{Left: a picture of higher-order veins in a tropical forest tree, \emph{Ampelocera ruizii}. The picture is reproduced from \cite{sack2013leaf}, with permission from Wiley; Middle left: a reticulate $\Z^2$-periodic network that has full rank effective tensor; Middle right: a non-reticulate $\Z^2$-periodic network that has zero effective tensor; Right: a non-reticulate $\Z^2$-periodic network that has rank 1 effective tensor.}
\label{realloopy}
\end{figure}

The positive-definiteness of $Q(\theta)$ is closely related to the resilience of a conductive network to random fluctuations. According to the biological literature \cites{Corson2010,Katifori2010} (also see Section \ref{subsubsection.randomfluctuationsandreticulation} below), a leaf will experience random fluctuations of hydraulic pressure due to various factors like sunlight, humidity, insect bites and etc. In terms of mathematics, the vector $p$ in the definition of effective tensor \eqref{eq.classicaleffectivetensor} is a random vector with no preference in directions. It is then natural to define a ``resilient network'' to be those having positive effective conductance in all directions. Equivalently, this is saying that the effective tensor of this network is positive definite. See Definition \ref{def.differentmedia} for a precise definition of resilience in terms of media. 

We remark here that there is a gap between the original setting in \cites{Corson2010,Katifori2010} and the explanation above about the term \say{resilience}. We will make a clearer connection between these ideas in Appendix \ref{appendix.formalderivation} by presenting a formal derivation of the positivity of $Q(\theta)$ from the minimization of the expected total dissipation of a piece of leaf under random fluctuations.

Our second main result concerns about the upper attainability of the singular Wiener bound \eqref{eq.singularforveinswiener} for general media. We use a transformation, first introduced in \cite{ambrosio1997}*{Remark 3.2} and also see \cite{mease4}, to show that all media can be equivalently represented by varifolds. Varifolds are measure-theoretic extensions of surfaces with nice compactness properties, which has been widely used in the field of minimal surfaces (See Section \ref{subsubsection.varifolds} for more details and references). 

By using the notion of varifolds, we have the following formal identity
\be
\label{eq.formalequivalenceintro}
\textup{\emph{Conductance Maximality} } = \textup{ \emph{Area Criticality}}.
\ee
On the left of the above identity, we mean media that attain the singular upper Wiener bound \eqref{eq.singularforveinswiener}. On the right we mean stationary varifolds, which is a weak notion of the critical points to the area functional of surfaces \cite{Allard1972}. More precisely, we have the second main theorem.

\begin{mylettertheorem}[See Theorem \ref{t.equivalence}]\label{t.forma2}
\emph{There is a continuous surjective map $\Ta$ from the space of all varifolds on $\T^n$ to the space of media. The following statements are equivalent for a fixed medium $\theta$:
\begin{enumerate}[label=(\alph*)]
    \item \label{condition.(a)intro} The medium $\theta$ attains the upper Wiener bound \eqref{eq.singularforveinswiener}.
    \item \label{condition.(a2)intro} The medium $\theta$ satisfies $\gd\cdot\theta=0$ in the distributional sense (See Lemma \ref{l.equationformaximalmedia}).
    \item \label{condition.(b)intro} All varifold realizations $\mu\in \Ta^{-1}(\theta)$ are stationary.
    \item \label{condition.(c)intro} There exists a stationary varifold $\mu\in \Ta^{-1}(\theta)$.
\end{enumerate}}
\end{mylettertheorem}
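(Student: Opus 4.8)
The plan is to construct the map $\Ta$ explicitly, check that it is continuous and surjective, identify the first variation of a varifold $\mu$ with the distributional divergence of the matrix‑valued measure $\Ta(\mu)$, and then close a circle of implications, the equivalence of \ref{condition.(a)intro} and \ref{condition.(a2)intro} being already supplied by Lemma \ref{l.equationformaximalmedia}.

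\emph{The transformation.} View a varifold on $\T^n$ as a Radon measure $\mu$ on $\T^n\times G$, where $G$ is the relevant Grassmannian bundle (the disjoint union of the $G(n,d)$, $1\le d\le n$), and set
\[
\Ta(\mu):=\int_{G}P_S\,d\mu(\cdot,S),\qquad P_S=\text{orthogonal projection onto }S .
\]
Each $P_S$ is positive semi‑definite with $\trace{P_S}=\dim S$, so the trace measure of $\Ta(\mu)$ equals $\sum_{d}d\,\norm{\mu_d}$ and the density $\sigma$ of $\Ta(\mu)$ with respect to its own trace measure satisfies $\trace{\sigma}=1$; hence $\Ta(\mu)$ is a medium as in \eqref{eq.mediumintro}. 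Continuity of $\Ta$ for the weak-$*$ topologies is immediate, since $(x,S)\mapsto\phi(x)P_S$ is bounded and continuous on $\T^n\times G$ for each $\phi\in C(\T^n)$. For surjectivity, given a medium $\theta=\sigma\,d\norm{\theta}$ take a $\norm{\theta}$-measurable spectral decomposition $\sigma(x)=\sum_{i=1}^{n}\lambda_i(x)\,v_i(x)v_i(x)^{\top}$ with $\lambda_i\ge0$, $\sum_i\lambda_i=1$ and $\{v_i(x)\}$ orthonormal, and set $\mu:=\sum_{i=1}^{n}\lambda_i(x)\,\delta_{[v_i(x)]}\,d\norm{\theta}(x)$ (a $1$-varifold); then $\Ta(\mu)=\int_{\T^n}\sum_i\lambda_i(x)v_i(x)v_i(x)^{\top}\,d\norm{\theta}(x)=\theta$.

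\emph{First variation versus divergence.} For $X\in C^1(\T^n;\R^n)$ one has $\div_SX=\sum_{i,j}(\partial_jX_i)(P_S)_{ij}$, so disintegrating $\mu=\mu_x\otimes\norm{\mu}$ gives
\begin{align*}
\delta\mu(X)&=\int_{\T^n\times G}\div_SX\,d\mu(x,S)=\int_{\T^n}\Big\langle DX(x),\int_{G}P_S\,d\mu_x(S)\Big\rangle\,d\norm{\mu}(x)\\
&=\int_{\T^n}\big\langle DX,\,d\Ta(\mu)\big\rangle=-\int_{\T^n}X\cdot d\big(\gd\cdot\Ta(\mu)\big),
\end{align*}
the last equality being the definition of the distributional divergence of the matrix‑valued measure $\Ta(\mu)$. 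Since $\T^n$ is closed there are no boundary terms, so $\mu$ is stationary ($\delta\mu\equiv0$ on $C^1$ vector fields) if and only if $\gd\cdot\Ta(\mu)=0$ in the distributional sense; in particular, whether a realization $\mu\in\Ta^{-1}(\theta)$ is stationary depends on $\theta$ alone.

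\emph{Closing the loop and the main obstacle.} The equivalence \ref{condition.(a)intro}$\Leftrightarrow$\ref{condition.(a2)intro} is Lemma \ref{l.equationformaximalmedia}. If \ref{condition.(a2)intro} holds and $\mu\in\Ta^{-1}(\theta)$, then $\gd\cdot\Ta(\mu)=\gd\cdot\theta=0$, so $\mu$ is stationary, giving \ref{condition.(a2)intro}$\Rightarrow$\ref{condition.(b)intro}. Since $\Ta$ is surjective, $\Ta^{-1}(\theta)\ne\emptyset$, and any element of it witnesses \ref{condition.(b)intro}$\Rightarrow$\ref{condition.(c)intro}. Finally, a stationary $\mu\in\Ta^{-1}(\theta)$ forces $\gd\cdot\theta=\gd\cdot\Ta(\mu)=0$, i.e. \ref{condition.(c)intro}$\Rightarrow$\ref{condition.(a2)intro}. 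The analytic content is light once the framework is fixed; the points needing care are (i) pinning down the precise definition of ``varifold on $\T^n$'' so that $\Ta$ both lands in and exhausts the space of media — chiefly the trace normalization and the exclusion of a $0$-dimensional part — and (ii) the measurable eigen‑selection behind surjectivity together with making the first‑variation identity rigorous for general Radon measures rather than smooth surfaces. I expect (ii) to be the only genuine technicality, and it is standard.
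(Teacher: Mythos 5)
Your proposal is correct and follows essentially the same route as the paper: define the realization map $\Ta$ (the paper's version divides by the dimension $k$ so that $\norm{\Ta_k(\mu)}=\norm{\mu}$, but that rescaling has no bearing on the equivalences), establish weak-$*$ continuity and surjectivity of $\Ta$, identify the first variation $\delta\mu(\Phi)=\int \trace{P_\tau D\Phi}\,d\mu$ with the distributional divergence of $\Ta(\mu)$ via disintegration, and then close the circle of implications with Lemma~\ref{l.equationformaximalmedia} supplying \ref{condition.(a)intro}$\Leftrightarrow$\ref{condition.(a2)intro}. The one place where you proceed differently is surjectivity: you build a $1$-varifold preimage directly from a $\norm{\theta}$-measurable spectral decomposition $\sigma(x)=\sum_i\lambda_i(x)v_i(x)v_i(x)^{\top}$, whereas the paper (Lemma~\ref{l.realizablematrix} together with Lemma~\ref{l.realizationmapcontinuity}) characterizes the full image of each $\Ta_k$ via a Krein--Milman argument on the admissible eigenvalue simplex followed by the Kuratowski--Ryll-Nardzewski measurable selection theorem; the latter is strictly more information than Theorem~\ref{t.forma2} requires, but it is reused later for the realizable-dimension and fractional monotonicity results, while your spectral construction is the more elementary path if one only wants surjectivity. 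The two technical caveats you flag at the end (measurable eigen-selection, and making the first-variation identity rigorous for Radon measures) are genuine but routine, and the paper resolves them exactly by the disintegration theorem and measurable selection.
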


To give a quick explanation of Theorem \ref{t.forma2} we apply the celebrated Allard-Almgren characterization of 1-D stationary varifolds that have positive densities (See \cites{AllardAlmgren1976}; also see Theorem \ref{t.allardalmgren} for a presentation of this theorem), and obtain the following corollary. We refer to Figure \ref{balancedpattern} for a graphical illustration of 1-D stationary varifolds.

\begin{corollary}\label{cor.applyallardalmgren}
Let $\theta$ be a medium that attains the upper Wiener bound \eqref{eq.singularforveinswiener} and satisfies the 1-dimensional lower density bound
\be\label{eq.1dlowerdensityboyundfortensorintro}
\liminf_{r\rta0^+} \frac{\norm{\theta}(B_r(x))}{2r} \ge \delta >0
\ee
for $\norm{\theta}$-almost every $x\in \T^n$. Then $\Ta^{-1}(\theta)=\{\mu\}$ contains a unique 1-rectifiable stationary varifold $\mu$ that satisfies the following properties (we denote $\norm{\mu}=\norm{\theta}$ the area/mass distribution)
\begin{enumerate}
\item the support $\spt{\norm{\mu}}=\spt{\theta}$ is, up to an $\Ha^1$-null closed set $S$, a countable union of straight line segments, which are open relative to $\spt{\norm{\mu}}$;
\item on each line segment there is a constant $c\ge\delta$ such that the density $$\xi(x):=\lim_{r\rta0^+}\frac{\norm{\mu}(B_r(x))}{2r} \equiv c, $$
for $x$ on the line segment;
\item at every $x\in S$, there exists a unique stationary tangent cone consisting of finitely many half lines with constant densities. See Figure \ref{fig:localcone} for the illustration of a stationary tangent cone. In such a cone, the tangent vectors $T_j$ start from the origin and the densities $w_j$ satisfy
\be\label{eq.mcone}
\sum_{j=1}^m w_j T_j =0;
\ee

\item if the density $\xi$ is discretely valued, then the number of line segments is finite.
\end{enumerate}
\end{corollary}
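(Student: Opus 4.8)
The plan is to transfer the lower density bound \eqref{eq.1dlowerdensityboyundfortensorintro} through the correspondence of \tref{forma2} and then quote the Allard--Almgren structure theorem \tref{allardalmgren}. First I would observe that, by surjectivity of $\Ta$ together with the equivalence of \ref{condition.(a)intro} and \ref{condition.(b)intro} in \tref{forma2}, the set $\Ta^{-1}(\theta)$ is non-empty and every $\mu\in\Ta^{-1}(\theta)$ is stationary. Fix one such $\mu$. The key claim --- and the step I expect to be the main obstacle --- is that \eqref{eq.1dlowerdensityboyundfortensorintro} forces $\mu$ to be a $1$-rectifiable varifold with $\norm{\mu}=\norm{\theta}$. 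To see this one tracks the density bound through the construction of $\Ta$: a component of $\mu$ of dimension $k\ge 2$ has, by the monotonicity formula for stationary varifolds, finite $k$-dimensional upper density, so tracing back through $\Ta$ one gets $\norm{\theta}(B_r(x))/(2r)\lesssim r^{\,k-1}\to 0$ at $\norm{\theta}$-a.e.\ point carried by that component, contradicting \eqref{eq.1dlowerdensityboyundfortensorintro}; a $0$-dimensional component contributes the zero matrix and is excluded by the unit-trace normalization \eqref{eq.mediumintro} built into the notion of a medium. Hence $\mu$ is a stationary $1$-varifold with $\norm{\mu}=\norm{\theta}$, so it inherits the positive lower density bound \eqref{eq.1dlowerdensityboyundfortensorintro}, and Allard's rectifiability theorem then upgrades it to a $1$-rectifiable varifold. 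Making this density transfer fully rigorous, and ruling out a priori mixed-dimensional realizations, is the delicate part.

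With a stationary $1$-rectifiable varifold $\mu$ in hand, I would apply \tref{allardalmgren} directly: its conclusion is precisely the decomposition of $\spt{\norm{\mu}}=\spt{\theta}$, up to an $\Ha^1$-null closed set $S$, into relatively open straight segments, the local constancy of the density $\xi$ on each segment with value $\ge\delta$ (by \eqref{eq.1dlowerdensityboyundfortensorintro}), and, at each $x\in S$, a unique stationary tangent cone of finitely many half-lines $T_j$ with densities $w_j$ satisfying $\sum_{j=1}^{m} w_j T_j=0$. This yields items (1), (2) and (3) verbatim.

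For uniqueness, I would use that a rectifiable $1$-varifold has tangent-plane fibre measure equal to a Dirac mass at its approximate tangent line at $\norm{\mu}$-a.e.\ point; hence under $\Ta$ the anisotropy $\sigma(x)$ is the rank-one orthogonal projection onto that line, so the tangent line is recovered as $\operatorname{range}\sigma(x)$, and together with $\norm{\mu}=\norm{\theta}$ this reconstructs $\mu$ from $\theta$, giving $\Ta^{-1}(\theta)=\{\mu\}$. Finally, for item (4): if $\xi$ takes only discretely many values there is a uniform positive lower bound $c_0>0$ on $\xi$, so $\Ha^1(\spt{\theta})\le\norm{\theta}(\T^n)/c_0<\infty$; combined with compactness of $\T^n$ and the finiteness of the half-lines in each tangent cone from item (3), the relatively open segments cannot accumulate (an accumulation point would generate a stationary tangent cone with infinitely many half-lines), so their number is finite.
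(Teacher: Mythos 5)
Your proof is correct and follows the same route as the paper, which reads the result off Theorem~\ref{t.characterizerectifiablemaximalmedium} (Allard rectifiability, $k=1$) together with Theorem~\ref{t.allardalmgren} (Allard--Almgren). Two small remarks. First, the worry about ``mixed-dimensional realizations'' is moot in this framework: each varifold is a Radon measure on $\T^n\times G(k,n)$ for a \emph{single} $k\in\{1,\dots,n\}$ (there are no $0$-varifolds and no mixed-$k$ objects), so what must be excluded is only a stationary $k$-varifold realization with $k\ge 2$. Your density argument handles this correctly once one notes that $\norm{\mu}=\norm{\theta}$ for every $\mu\in\Ta^{-1}(\theta)$, so the monotonicity bound $\norm{\mu}(B_r(x))/r^k\le C$ at small $r$ gives $\norm{\theta}(B_r(x))/(2r)\lesssim r^{k-1}\to 0$, contradicting \eqref{eq.1dlowerdensityboyundfortensorintro} at any point where it holds. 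Second, for item (4) your argument is both a little hand-wavy (segments ``cannot accumulate'' needs more care) and unnecessary: item (4) of Theorem~\ref{t.allardalmgren} applied with compact $K=\T^n$ already gives precisely that conclusion, so you should just cite it. Finally, your explicit uniqueness step (recovering the tangent line as the range of the rank-one projection $\sigma(x)$) re-derives, in slightly different language, the uniqueness conclusion already contained in Theorem~\ref{t.characterizerectifiablemaximalmedium}; either route is fine.
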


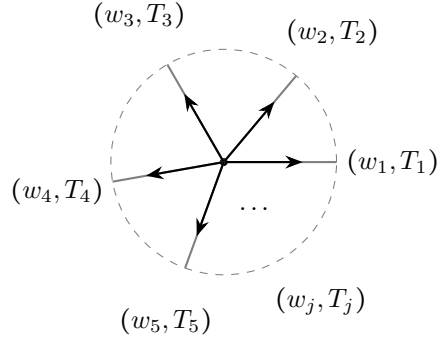
\begin{figure}[htbp]
    \centering
\begin{tikzpicture}[>=Stealth, scale = 1.5]
  
    \draw[gray, dashed] (0,0) circle (1);

    \fill (0,0) circle (1pt);
    
    \foreach \angle [count=\j from 1] in {0,50,120,190,250} {
        \draw[gray, thick] (0,0) -- (\angle:1);
        
        \draw[->, black, thick] (0,0) -- (\angle:0.7);
        
        \node at (\angle:1.5) {$(w_{\j},T_{\j})$};
    }

     \foreach \angle in {305} {
        
        \node at (\angle:0.5) {$\dots$};

        \node at (\angle:1.5) {$(w_j,T_j)$};
    }
    
\end{tikzpicture}
    \caption{For some integer $m\ge 2$, there are $m$ half lines with the unit outward tangent vectors denoted by $T_j$ and densities $w_j>0$ for $1\le j \le m$.  In a stationary tangent cone the vectors $T_j$ and the weights $w_j$ satisfy \eqref{eq.mcone}.
}
    \label{fig:localcone}
\end{figure}

In 1977, Schulgasser \cites{Schulgasser_1977} proved that the upper Wiener bound can be attained by sequentially laminated isotropic materials, in which the notion of attainability is slight weaker than what we mean here. Similar attainability result of maximizing media to Theorem \ref{t.forma2} was first addressed in the case of networks in 2003 by Zhikov and Pastukhova \cites{ZhikovYosifian2014,ZhikovPastukhova2003}.  In 2020 the author \cite{Huang2021} also addressed the network case. It was also discussed in \cites{BOURDIN20081043} that the maximality is attained by the superpositions of thin laminates. We refer to Section \ref{section.maximalmedium} for a more precise discussion on Theorem \ref{t.forma2}.

The fact that the conductance maximality is equivalent to the divergence free equation $\gd\cdot\theta=0$ is well-known \cites{matsci1,meas1,meas2,meas5} in the composite-theory community (also see Section \ref{subsection.elequation} for a rigorous derivation). On the other hand, it was discovered in \cites{ambrosio1997,mease4} that area criticality implies the divergence free equation $\gd \cdot\theta=0$. This implicitly proves that area criticality implies conductance maximality, although the transformation they proposed from varifolds to matrix-valued measures was initially designed to study surface energies.

Based on the identity in Theorem \ref{t.forma2}, we establish a pointwise dimension bound similar to \cite{meas3}*{Corollary 1.4} and \cite{mease6} by introducing a \emph{fractional} version of the monotonicity formula of stationary varifolds. See Section \ref{subsubsection.fractionalmono} for a more rigorous discussion. Also see Theorem \ref{t.realizabledimensionandlocaldimension} for more details.

\begin{figure}[hbpt]
\centering
\begin{subfigure}[t]{.45\textwidth}
  \centering
  \includegraphics[width=.6\linewidth]{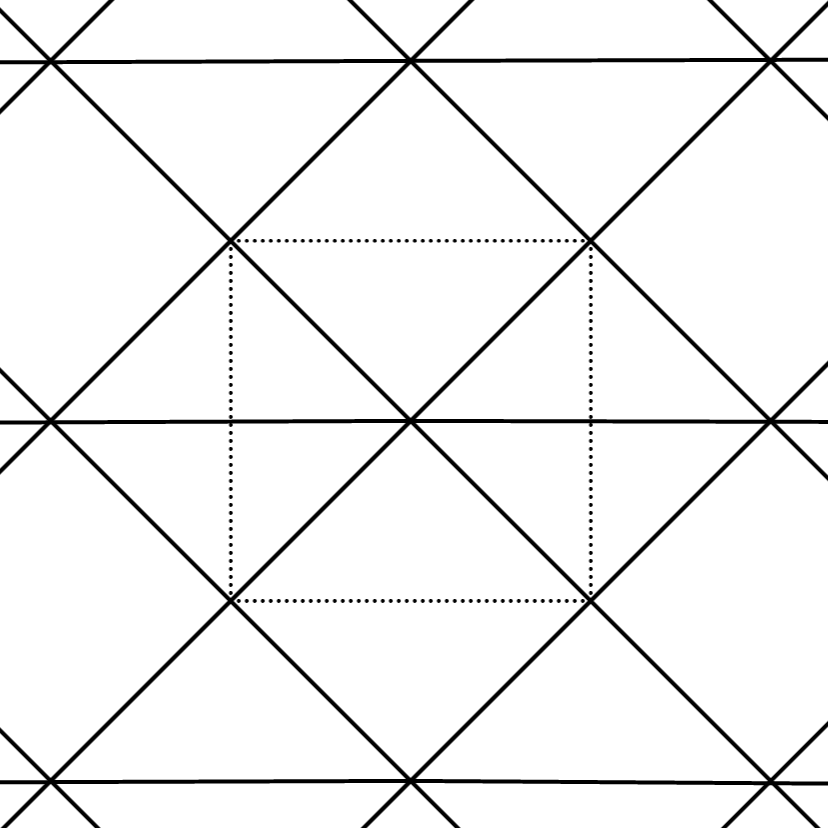}
\end{subfigure}%
\hspace{0.2cm}
\begin{subfigure}[t]{.45\textwidth}
  \centering
  \includegraphics[width=.6\linewidth]{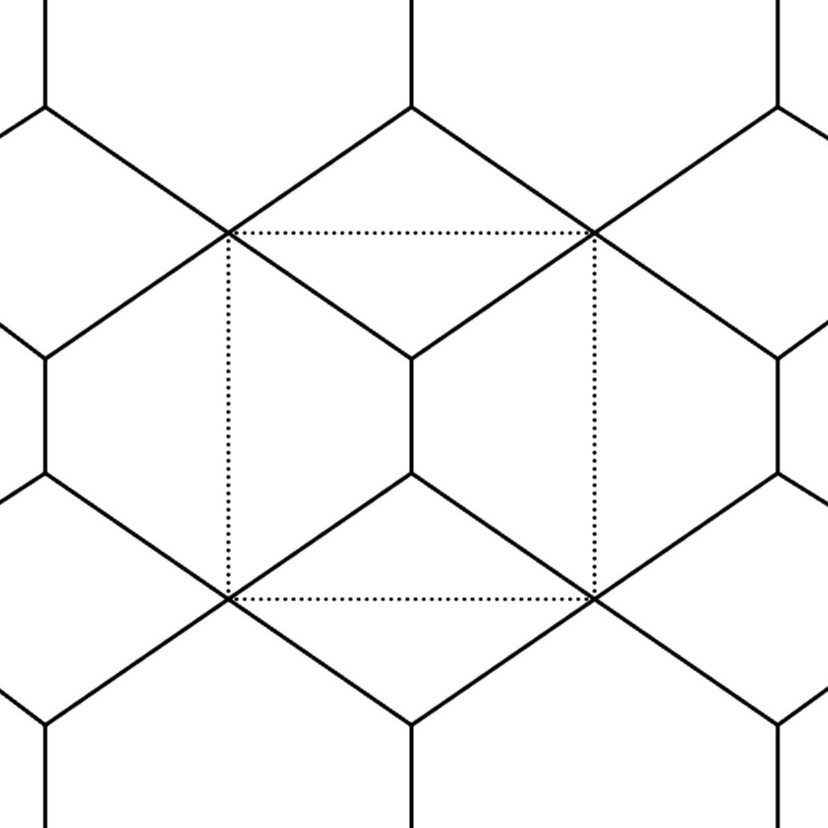}

\end{subfigure}%
\caption{Both networks are $\Z^2$-periodic stationary 1-varifolds, with one period indicated in the box enclosed by dotted lines. Theorem \ref{t.forma2} indicates that these are all appropriate models for leaf vein patterns, as leaves tend to maximize its hydraulic conductance to maintain its functions.}
\label{balancedpattern}
\end{figure}

Interestingly, Theorem \ref{t.forma2} also allows us to access certain questions on leaf vein patterns in rigorous mathematical terms. Specifically we can safely transfer the questions on the geometry of leaf vein patterns to the questions on 1-D stationary varifolds on $\T^2$, which has a more precise definition in mathematics and potentially allows a more careful analysis. For example we can ask the question of maximal valency of leaf vein patterns by asking the same question on 1-D stationary varifolds. We define a stationary network to be a stationary 1-varifold on $\T^2$ that has a.e. density 1. The maximal valency of such a network is defined as the maximal number of edges joining at some node (See Problem \ref{prob.maxval} for more details).

\begin{problem}[See Problem \ref{prob.maxval}]
    \label{prob.intromaxvalency}
    Is the maximal valency of irreducible (See Definition \ref{def.irreducible}) stationary networks on $\T^2$ bounded? Is it bounded by 4, 5 or 6?
\end{problem}

The interest of this question is that the maximal valency bound $4$ can be intuitively discovered from real leaves (See for example Figure \ref{realloopy}), but the math is not so clear. We refer to Section \ref{subsection.openquestion} for the rigorous formulation of this question and some basic discussions. We also explain the notion irreducibility there.

\subsection{Rigorous discussion on the mathematical results}\label{subsection.rigorousdiscussionintro}
In the following we present some new techniques and consequences in Theorem \ref{t.forma1} and Theorem \ref{t.forma2}.

\subsubsection{An outline for the lower attainability characterization}
\label{subsubsection.outlineoftheoremB}

Let us briefly outline the new notions and techniques in the proof of Theorem \ref{t.forma1}. There are three main ingredients in proving Theorem \ref{t.forma1}: countable decompositions, Wa\.{z}ewski parametrizations (See Section \ref{subsubsection.wazuskiprel}) and some covering space arguments (See Section \ref{subsection.torusandtopology}). We refer to Section \ref{section.leafvenation} for a complete presentation.

First, for a medium with its support having finite $\Ha^1$ measure, we have the following countable decomposition. 

\begin{theorem}[See Theorem \ref{t.countabledecompositionof1dmedium}]\label{t.countabledecompositionof1dmediumintro}
    Let $\theta$ be a medium such that $Q(\theta)\ne0$ and $\Ha^1(\spt{\theta})<\infty$.  Then there exist countably many 1-rectifiable mutually disjoint connected components $E_i\subset\spt{\theta}$ such that the submedia $\theta_i:=\restr{\theta}{E_i}$ satisfy
   \be
   \label{eq.countabledecompintro}
  Q(\theta)= \sum_{i=1}^\infty Q(\theta_i) .
   \ee
\end{theorem}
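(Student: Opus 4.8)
The plan is to reduce the identity to two soft properties of the effective‑tensor functional together with one structural input on sets of finite $\Ha^1$‑measure. Throughout write $\Gamma:=\spt{\theta}$, a compact set with $\Ha^1(\Gamma)<\infty$, and recall from the variational definition of $Q$ (the analogue of \eqref{eq.classicaleffectivetensor} with $A\,dx$ replaced by $d\theta=\sigma\,d\norm{\theta}$) that $p\cdot Q(\theta)p=\inf_{\varphi\in C^\infty(\T^n)}\int(\gd\varphi+p)\cdot\sigma(\gd\varphi+p)\,d\norm{\theta}$. From this one reads off at once that $Q\ge 0$, that $Q$ is monotone nondecreasing under $\theta\mapsto\restr{\theta}{G}$, and that $Q$ is superadditive along any mutually singular splitting: if $\theta=\sum_k\eta_k$ with the $\eta_k$ carried by pairwise disjoint Borel sets, then $Q(\theta)\ge\sum_kQ(\eta_k)$, because the infimum of a sum dominates the sum of infima.

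First I would fix the decomposition of $\Gamma$. Since a connected set has $\diam\le\Ha^1$, for each $\ep>0$ there are at most $\Ha^1(\Gamma)/\ep$ connected components of diameter $\ge\ep$; hence $\Gamma$ has at most countably many components $E_1,E_2,\dots$ of positive diameter, the remainder $N:=\Gamma\setminus\bigcup_iE_i$ is totally disconnected, and $E_i$, $\bigcup_iE_i$, $N$ are Borel. Each $E_i$ is a nondegenerate continuum with $\Ha^1(E_i)<\infty$, hence arcwise connected and $1$‑rectifiable by the classical structure theorem for such continua (this is also where the Wa\.{z}ewski parametrizations enter). Put $\theta_i:=\restr{\theta}{E_i}$ and $\theta_N:=\restr{\theta}{N}$, so $\theta=\sum_i\theta_i+\theta_N$ and, by superadditivity, $Q(\theta)\ge\sum_iQ(\theta_i)+Q(\theta_N)\ge\sum_iQ(\theta_i)$. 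The theorem is thus equivalent to the reverse bound $Q(\theta)\le\sum_iQ(\theta_i)$, and in particular forces $Q(\theta_N)=0$.

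The second ingredient is a \emph{separated additivity} lemma: if two media $\eta_1,\eta_2$ satisfy $\dist(\spt{\eta_1},\spt{\eta_2})>0$, then $Q(\eta_1+\eta_2)=Q(\eta_1)+Q(\eta_2)$. The nontrivial inequality ``$\le$'' follows by patching: choosing near‑optimal $\varphi_1,\varphi_2$ and disjoint open neighborhoods $U_j\supset\spt{\eta_j}$ with cutoffs $\chi_j\equiv 1$ near $\spt{\eta_j}$, $\spt{\chi_j}\subset U_j$, the function $\varphi:=\chi_1\varphi_1+\chi_2\varphi_2\in C^\infty(\T^n)$ satisfies $\gd\varphi=\gd\varphi_j$ on $\spt{\eta_j}$ (because $\chi_j$ is locally constant and equal to $1$ there while $\chi_{3-j}$ vanishes there), so its energy against $\eta_1+\eta_2$ is exactly the sum of the two energies. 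Iterating over finitely many components, $Q\!\left(\restr{\theta}{E_1\cup\dots\cup E_K}\right)=\sum_{i=1}^KQ(\theta_i)$, so the remaining task is to compare $Q(\theta)$ with the finite sums $\sum_{i=1}^KQ(\theta_i)$ as $K\to\infty$; since $Q$ is only superadditive, this amounts to showing that the tail $T_K:=N\cup\bigcup_{i>K}E_i$ is asymptotically invisible to $Q$.

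This last point is where I expect the main difficulty to lie. One cannot simply bound the tail contribution by $\norm{\theta}(T_K)$, because the singleton components may carry positive mass (a fat Cantor configuration in one coordinate already gives $\norm{\theta_N}>0$, even though $Q(\theta_N)=0$), and the components $E_i$ with $i>K$ may accumulate onto $N$ and onto $E_1,\dots,E_K$, so bounded cutoffs do not separate them. The construction I would carry out is scale‑localized: split $T_K$ into the part within distance $\rho$ of the compact set $\Gamma_K:=E_1\cup\dots\cup E_K$, whose $\norm{\theta}$‑mass tends to $0$ as $\rho\to0$ since $\Gamma_K\cap T_K=\varnothing$, and the part at distance $\ge\rho$, which is separated from $\Gamma_K$ and, at small scales, behaves like dust: a totally disconnected compact set admits, for every $\delta>0$, a finite partition into relatively clopen pieces of diameter $<\delta$, on neighborhoods of which one can force $\gd\varphi+p=0$ by prescribing $\varphi$ to be the local affine map $-p\cdot x$, at a cutoff cost controlled against the vanishing $\norm{\theta}$‑mass of the pieces. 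Combining separated additivity on $\Gamma_K$ with this dust‑removal estimate on $T_K$ (sending $\delta,\rho\to0$ and then $K\to\infty$, with a diagonal choice) produces, for each $\eta>0$, a test function with energy at most $\sum_iQ(\theta_i)p\cdot p+\eta$, hence $Q(\theta)\le\sum_iQ(\theta_i)$, closing the proof. A cleaner but heavier alternative would route through the duality between $Q(\theta)$ and divergence‑free (solenoidal) vector measures carried by $\Gamma$: by Smirnov's decomposition such a measure splits into closed Lipschitz loops, each of which, being connected, lies in a single component and — if it carries nonzero homology — in a single $E_i$, so a finite subcollection of components already certifies almost all of $Q(\theta)$.
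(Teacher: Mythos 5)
Your Borel decomposition of $\Gamma$ into countably many nondegenerate components $\{E_i\}$ plus a totally disconnected remainder, the $1$-rectifiability of each $E_i$, the free superadditivity $Q(\theta)\ge\sum_iQ(\theta_i)$, and the finite separated additivity $Q(\restr{\theta}{E_1\cup\dots\cup E_K})=\sum_{i\le K}Q(\theta_i)$ are all correct, and you correctly isolate the reverse inequality as the crux. But the scale-localized ``dust removal'' sketched for the tail estimate has a genuine quantitative gap in the cutoff-cost step. To force $\gd\varphi+p=0$ on a relatively clopen piece of the far part of $T_K$ while matching a near-optimal test function on $\Gamma_K$, the transition region must have width $\lesssim\rho$ to stay clear of $\Gamma_K$; but the near part of $T_K$ (within distance $\rho$ of $\Gamma_K$) touches the far part, so the transition cannot be placed in a $\norm{\theta}$-free gap. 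The cutoff cost is therefore of order $\norm{\theta}(\textup{near part})/\rho^2 = o_\rho(1)/\rho^2$, which has no reason to vanish; and the glued test function also picks up a gradient of order $\norm{\varphi}_\infty/\rho$ from matching $\varphi$ to the local affine $-p\cdot x$. The $(\delta,\rho,K)$ diagonal does not by itself repair this. (The Smirnov--duality alternative would require a duality theory for singular media that the paper does not develop, so it is not a quick fix either.)

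The paper's proof is organized around a different device, which your proposal is missing: a greedy selection of components by $\Ha^1$-measure together with a contradiction on the finite $\Ha^1$ budget, powered by the quantitative Lemma \ref{l.quantiposih1measure} (any nontrivial medium has $\Ha^1(\spt{\cdot})\ge1/1000$). Each greedily chosen $E_i$ is enclosed by an almost-clopen $K_i$ via Lemma \ref{l.approximateclopenset}, on which $Q$ of the restriction equals $Q(\theta_i)$ exactly, because the thin annular residuals $J_\delta\setminus J_\ep$ have $\Ha^1<1/1000$ and are hence trivial (with Lemma \ref{l.faithfulconvergencewhenshrinking} controlling the limit). Then, if the remainder after removing $K_1,\dots,K_i$ had mean conductance bounded below in $i$, it would contain (by Theorem \ref{t.dimensiononeofnontrivialmedium}) a component of $\Ha^1$-measure bounded below, forcing $\Ha^1(E_{i+1})$ bounded below for all $i$ and contradicting $\sum_i\Ha^1(E_i)\le\Ha^1(\Gamma)<\infty$. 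In short, the paper uses $\Ha^1(\Gamma)<\infty$ as a global budget that rules out infinitely many quantitatively large components, rather than as a constraint that makes cutoffs cheap; your constructive patching never engages this, and that is precisely where the argument needs a new idea.
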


The condition $\Ha^1(\spt{\theta})<\infty$ is necessary as there is a counterexample in Example \ref{ex.uncountablymanycomponentsmaximalmedia}. 

Theorem \ref{t.countabledecompositionof1dmediumintro} is proved by an induction argument to select ``nice'' connected components, which hinges on the following characterization of nontrivial medium.

\begin{theorem}[See Theorem \ref{t.dimensiononeofnontrivialmedium}]
    \label{t.dimensiononeofnontrivialmediumintro}
    Let $\theta$ be a medium with $Q(\theta)\ne0$, then the support $\spt{\theta}$ is not totally disconnected, that is, there is a nonsingleton component in $\spt{\theta}$. This implies that the 1-D Hausdorff measure
    \[
    \Ha^1(\spt{\theta}) >0.
    \]
    In particular, the Hausdorff dimension $\dim_{\Ha}(\spt{\theta})\ge 1$.
\end{theorem}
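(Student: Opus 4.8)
The plan is to argue by contraposition: I will show that if $\spt{\theta}$ is totally disconnected then $Q(\theta)=0$, and separately that a single nondegenerate connected component forces $\Ha^1(\spt{\theta})>0$. For the first part, note that $\spt{\theta}$ is a closed subset of the compact torus $\T^n$, hence compact, and a compact totally disconnected metric space is zero-dimensional: it admits a basis of relatively clopen sets. Fix $\ep>0$ so small that every subset of $\T^n$ of diameter at most $2\ep$ lies inside a coordinate ball on which $\T^n$ is isometric to a subset of $\R^n$. Each point of $\spt{\theta}$ then has a relatively clopen neighborhood of diameter $<\ep$; extracting a finite subcover and disjointifying it in the usual way, I obtain a partition $\spt{\theta}=\bigsqcup_{k=1}^N U_k$ into finitely many nonempty, pairwise disjoint, relatively clopen sets of diameter $<\ep$. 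Since each $U_k$ is clopen in the compact set $\spt{\theta}$, it is compact, so the $U_k$ are pairwise at positive distance, say at least $3\delta>0$.

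Next I build a competitor for the variational problem \eqref{eq.classicaleffectivetensor} (with $\sigma$ and $d\norm{\theta}$ in place of $A$ and $dx$). Fix $p\in\R^n$ and choose a point $x_k\in U_k$ for each $k$. The $\delta$-neighborhood of $U_k$ has diameter $<\ep+2\delta<2\ep$, so by the choice of $\ep$ it lies in a chart where the affine function $\psi_k(x):=-p\cdot(x-x_k)$ is well defined and smooth. Let $\chi_k\in C^\infty(\T^n)$ be a cutoff equal to $1$ on the $(\delta/2)$-neighborhood of $U_k$ and supported in the $\delta$-neighborhood of $U_k$, so that $\chi_k\psi_k$ extends by zero to a smooth function on $\T^n$, and set $\vp:=\sum_{k=1}^N\chi_k\psi_k\in C^\infty(\T^n)$. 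Because the $U_k$ are at least $3\delta$ apart, their $\delta$-neighborhoods are pairwise disjoint; hence every $x\in\spt{\theta}$ lies in a unique $U_k$ and has an open neighborhood on which $\chi_k\equiv 1$ and $\chi_j\equiv 0$ for $j\ne k$, so $\vp=\psi_k$ there and $\gd\vp\equiv -p$. Thus $\gd\vp+p=0$ on an open set containing $\spt{\theta}$, hence $\norm{\theta}$-a.e., so $p\cdot Q(\theta)p\le\int_{\T^n}(\gd\vp+p)\cdot\sigma(\gd\vp+p)\,d\norm{\theta}=0$. As $Q(\theta)$ is positive semi-definite and $p$ is arbitrary, $Q(\theta)=0$, contradicting the hypothesis; therefore $\spt{\theta}$ is not totally disconnected.

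It remains to extract the measure bound from a nondegenerate connected component $C\subset\spt{\theta}$. Pick distinct $a,b\in C$ and consider the $1$-Lipschitz function $x\mapsto\dist(x,a)$ on $\T^n$; its restriction to $C$ has connected image, hence an interval, containing $0$ and $\dist(a,b)>0$. Since a $1$-Lipschitz map does not increase $\Ha^1$, this gives $\Ha^1(\spt{\theta})\ge\Ha^1(C)\ge\dist(a,b)>0$. Finally, $\Ha^s$ is infinite on every set of positive $\Ha^1$-measure for each $s<1$, so $\dim_{\Ha}(\spt{\theta})\ge 1$.

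I expect the genuine content — and the main obstacle — to be the first step: converting the purely topological hypothesis of total disconnectedness into a usable combinatorial object, namely a finite partition of $\spt{\theta}$ into relatively clopen pieces that are simultaneously small enough to sit inside coordinate charts and mutually bounded away from one another. Once this decomposition is in hand, the locally affine test function $\vp$ is globally well defined and the variational estimate is immediate, and the passage from a nondegenerate continuum to positive $\Ha^1$-measure is standard.
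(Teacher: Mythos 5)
Your argument follows the same route as the paper: both proofs partition a totally disconnected compact support into small, pairwise well-separated clopen pieces and use a locally affine test function (with gradient $\equiv -p$ on a neighborhood of the support) to force the effective tensor to vanish; the paper packages the topological step in its Lemma \ref{l.approximateclopenset} (which it also needs for non-singleton components in Lemma \ref{l.additivityforcomponentdecomposition}), whereas you invoke zero-dimensionality of compact totally disconnected metric spaces directly, and the paper uses a single vector-valued competitor $\Phi_0$ while you work with one direction $p$ at a time -- a cosmetic difference. For the lower bound $\Ha^1(\spt{\theta})>0$ the paper cites \cite{ALBERTI201735}*{Lemma 2.11}, while you give the equivalent elementary argument that a nondegenerate continuum has $\Ha^1$-measure at least its diameter via the $1$-Lipschitz map $x\mapsto\dist(x,a)$.
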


The decomposition in Theorem \ref{t.countabledecompositionof1dmediumintro} reduces the problem in Theorem \ref{t.forma1} to the case where the support $\Gamma=\spt{\theta}$ is further connected and hence 1-rectifiable. 

The second main step is the Wa\.{z}ewski parametrization, which states that there is a surjective constant speed reparametrization $\gamma:[0,1]\rta \Gamma$, with constant multiplicity along with many other fine properties (See Theorem \ref{t.existenceofanicelipschitzparam} for more details and references).  Using this we prove the inclusion $H_{\Gamma}^\perp \subset\ker Q(\theta)$ (See Section \ref{subsection.oneside}). The key is to construct in a neighborhood of $\Gamma$ the linear function $p\cdot x$ for $p\in H_\Gamma^\perp$. This is accomplished by analyzing the path lifting in the covering space $\R^n/\Z H_\Gamma$ of a surjective Wa\.{z}ewski parametrization $\gamma$ as described above.

In our last step (Section \ref{subsection.wazewskiparam} and \ref{subsection.proofoftheoremqhomotopy}), we finish the proof by showing the following inequality
 \[
 p\cdot Q(\theta) p \ge C{|p|^2},
 \]
for all $p\in \Z H_\Gamma$ (which is equal to $H_\Gamma$ because $\Gamma$ is connected in this step; see Lemma \ref{l.homotopygroup}), where $C>0$ depends only on $n$, the geometry of $\Gamma$ and the coercivity constant in \eqref{eq.coerciveintro}. We establish this lower bound by a modification procedure that improves the estimation of lengths and multiplicities of Lipschitz closed paths in $\Gamma$ without changing the homotopy classes via the techniques in \cite{ALBERTI201735}.

The Corollary \ref{cor.positivedefiniteequalreticulate} follows from Theorem \ref{t.forma1} and Lemma \ref{l.characterizeloopysetn=2}, which characterizes the equivalence relation between loopiness and reticulation in dimension $n=2$.

\subsubsection{Fractional monotonicity formula and pointwise dimension bound}\label{subsubsection.fractionalmono}
We will be using notations from the theory of varifolds. We refer to Section \ref{subsubsection.varifolds} for the preliminaries. It is known that a stationary $k$-varifold $\mu(x,\tau)$ satisfies that \(\displaystyle\frac{1}{r^k}\norm{\mu}(B_r(x))\) is monotone nondecreasing in $r>0$, where $\norm{\mu}$ is the area distribution. See \cites{lsimon,Allard1972} for references, and also see Section \ref{subsubsection.varifolds}. In Lemma \ref{l.fracmonotonicityformula} we extend this monotonicity result to a fractional version for media. To explain the term ``fractional'' we introduce the following new version of ``rank''  (See Definition \ref{def.realizabledimension}) for a positive semi-definite matrix $A\in \R^{n\times n}$:
\[
1\le\dimr(A):=\frac{\trace{A}}{\lambda_{\textup{max}}(A)}\le \textup{rank}(A),
\]
where $\lambda_{\textup{max}}(A)$ is the maximal eigenvalue of $A$. This new ``rank'' is useful as it provides a sharp characterization for media that can be realized as $k$-varifolds through the transformation in \cites{mease4,ambrosio1997}. Indeed, in Lemma \ref{l.realizablematrix}, we show that a positive semi-definite matrix $A\in \R^{n\times n}$ can be written as 
\[
A=\int_{G(k,n)} P_\tau d\rho(\tau)
\]
for some probability measure $\rho$ on the Grassmannian manifold $G(k,n)$ (See Section \ref{subsubsection.varifolds}) if and only if $\dimr(A)\ge k$ and $\trace{A}=k$, where $P_\tau$ is the orthogonal projection matrix from $\R^n$ to the $k$-dimensional subspace $\tau$. Based on this observation we obtain that all media can be regarded at least as 1-varifolds. This is one of the key observations in proving Theorem \ref{t.forma2}. We refer to Theorem \ref{t.equivalence} for more details. 

For a medium $d\theta:=\sigma \, d\norm{\theta}$ as introduced in \eqref{eq.mediumintro} we consider the lower semi-continuous envelop of the \say{rank} $\dimr(\sigma(x))$ of the local anisotropy $\sigma$
\[
\dimrlow(\theta)(x):=\sup_{\delta>0}\essinf\{\dimr(\sigma(y))\ ; \ |y-x|\le \delta,\, y\in \spt{\theta}\},
\]
where $\spt{\theta}$ is the support of $\norm{\theta}$ and ``ess\,inf'' is taken with respect to the Radon measure $\norm{\theta}$. Based on these new notions we show a fractional monotonicity formula in Lemma \ref{l.fracmonotonicityformula} for media that attain the upper Wiener bound \eqref{eq.singularforveinswiener} (or equivalently divergence free media $\gd\cdot\theta=0$ by Lemma \ref{l.equationformaximalmedia}). Specifically we show that
\[
\textup{for } \alpha\in  [1,\dimrlow(\theta)(x)) ,\quad\textup{\(\displaystyle \frac{1}{r^\alpha} \norm{\theta}(B_r(x))\) is monotone nondecreasing }
\]
in $0<r<r_{\alpha,x}$ for some $r_{\alpha,x}>0$. Moreover, if $\dimrlow(\theta)(y)\ge m$ for $y\in U\cap \spt{\theta}$ in an open neighborhood $U$ of $x$, then $\alpha$ can be chosen as the constant $m$. This result slightly improves the standard monotonicity formula for $k$-varifolds, as the corresponding medium $\theta_\mu$ of a $k$-varifold $\mu$ always satisfies
\[
\dimrlow(\theta_\mu)(x) \ge k \textup{ for }x\in\spt{\theta}
\]
with equality holds if and only if for $\norm{\mu}$-(or equivalently $\norm{\theta_\mu}$-)almost every $x$ the matrix field $\sigma_\mu(x):=\frac{d\theta_\mu}{d\norm{\theta_\mu}}$ is an orthogonal projection to a $k$-dimensional subspace of $\R^n$.

Based on this fractional monotonicity result, we can show a pointwise dimension bound similar to \cite{meas3}*{Corollary 1.4}. We also refer to \cite{mease6} for more references on the problems of dimension bound.
\begin{theorem}[See Theorem \ref{t.realizabledimensionandlocaldimension}]
    \label{t.dimensionboundintro}
   Suppose a medium $\theta$ attains the upper Wiener bound \eqref{eq.singularforveinswiener}. Then for every point $x$ in its support, one has the inequality
   \be
\label{eq.dimensionboundintro}
\dimrlow(\theta)(x) \le \dimlow(\theta)(x),
   \ee
   where $\dimlow(\theta)(x)$ is the standard lower local dimension of $\norm{\theta}$ (See Definition \ref{def.localdimension}). In particular, the lower local dimension always satisfies $\dimlow(\theta)(x) \ge 1$.
\end{theorem}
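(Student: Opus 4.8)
The plan is to reduce the theorem to the fractional monotonicity formula of Lemma~\ref{l.fracmonotonicityformula}. Since $\theta$ attains the upper Wiener bound \eqref{eq.singularforveinswiener} if and only if $\gd\cdot\theta=0$ (Lemma~\ref{l.equationformaximalmedia}), the hypothesis of that lemma is available, and it supplies, for each $x\in\spt{\theta}$ and each admissible exponent $\alpha$, a radius $r_{\alpha,x}>0$ such that $r\mapsto r^{-\alpha}\norm{\theta}(B_r(x))$ is nondecreasing on $(0,r_{\alpha,x})$. The remainder is the elementary observation that such a monotone ratio forces a power-law upper bound on small balls, which in turn bounds the lower local dimension from below.

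Concretely, I would first record that $\dimrlow(\theta)(x)\ge 1$ for every $x\in\spt{\theta}$: for $\norm{\theta}$-a.e.\ $y$ the matrix $\sigma(y)$ is positive semi-definite with $\trace{\sigma(y)}=1$, hence $\lambda_{\textup{max}}(\sigma(y))\le 1$ and $\dimr(\sigma(y))=1/\lambda_{\textup{max}}(\sigma(y))\ge 1$, and passing to the essential infimum over a neighborhood and then the supremum over its radius preserves this. Now fix $x\in\spt{\theta}$ and any $\alpha$ with $1\le\alpha<\dimrlow(\theta)(x)$; in the boundary case $\dimrlow(\theta)(x)=1$ I would take $\alpha=1$, which is still admissible because $\dimrlow(\theta)$ is lower semicontinuous (by construction), so $\dimrlow(\theta)(y)\ge 1$ on an open neighborhood of $x$ — exactly the situation covered by the ``moreover'' clause of Lemma~\ref{l.fracmonotonicityformula} with $m=1$. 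In all cases Lemma~\ref{l.fracmonotonicityformula} gives $r_{\alpha,x}>0$ with $r\mapsto r^{-\alpha}\norm{\theta}(B_r(x))$ nondecreasing on $(0,r_{\alpha,x})$, so setting $C_\alpha:=r_{\alpha,x}^{-\alpha}\,\norm{\theta}(B_{r_{\alpha,x}}(x))\in(0,\infty)$ (finite since $\norm{\theta}$ is Radon, positive since $x\in\spt{\theta}$) one gets $\norm{\theta}(B_r(x))\le C_\alpha\, r^\alpha$ for all $0<r<r_{\alpha,x}$.

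Taking logarithms and dividing by $\log r<0$, which reverses the inequality, yields for all sufficiently small $r>0$
\[
\frac{\log\norm{\theta}(B_r(x))}{\log r}\ \ge\ \alpha+\frac{\log C_\alpha}{\log r}.
\]
Letting $r\to 0^+$ the last term tends to $0$, so $\dimlow(\theta)(x)=\liminf_{r\to 0^+}\frac{\log\norm{\theta}(B_r(x))}{\log r}\ge\alpha$. Since this holds for every $\alpha\in[1,\dimrlow(\theta)(x))$ and $\alpha=1$ is always admissible, taking the supremum over $\alpha$ gives $\dimlow(\theta)(x)\ge\dimrlow(\theta)(x)$, which is \eqref{eq.dimensionboundintro}; the ``in particular'' assertion then follows from $\dimrlow(\theta)(x)\ge 1$ established above.

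The main obstacle is not in this deduction but upstream, namely in establishing Lemma~\ref{l.fracmonotonicityformula}: upgrading the classical $k$-varifold monotonicity (integer exponent $k$) to the \emph{fractional} exponents $\alpha\in[1,\dimrlow(\theta)(x))$ governed by the local ``rank''. The subtlety is that differentiating $r^{-\alpha}\norm{\theta}(B_r(x))$ and using $\gd\cdot\theta=0$ produces, besides the usual nonnegative ``tilt'' term, an additional term whose sign is controlled by $\dimr(\sigma(y))-\alpha$; this is nonnegative only where $\dimr(\sigma(y))\ge\alpha$, which is why one must localize and only obtains monotonicity on a definite radius $r_{\alpha,x}$, and why $\dimrlow$ rather than the pointwise $\dimr(\sigma(\cdot))$ is the natural quantity to track. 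Once that lemma is in place, the argument above is routine.
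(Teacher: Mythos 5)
Your proof is correct and follows essentially the same route as the paper: both reduce to the fractional monotonicity formula of Lemma~\ref{l.fracmonotonicityformula}, derive the resulting power-law upper bound $\norm{\theta}(B_r(x))\le C_\alpha r^\alpha$ on small balls, and compare it with the definition of the lower local dimension (the paper phrases this step as a contradiction after choosing $\alpha$ strictly between the two dimensions; you argue directly and take a supremum, which is only a cosmetic difference). One minor slip: for the boundary case $\dimrlow(\theta)(x)=1$ you invoke the ``moreover'' clause of Lemma~\ref{l.fracmonotonicityformula} with $m=1$, but that clause is stated only for $m>1$; this costs nothing, since the first sentence of the lemma already gives the $\alpha=1$ monotonicity unconditionally on $(0,1/2)$.
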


The basic spirit of this result is that, when the maximality is attained, the ``rank'' of the matrix field does not exceed the lowest dimension of the local mass distribution. To be more specific, we consider a medium in $\T^3$ of the form
\[
d\theta:= \sigma \, d\restr{\Ha^2}{\T^2\times\{0\}},
\]
where $\sigma$ is a constant positive semi-definite matrix. Notice that $\theta$ attains the upper Wiener bound \eqref{eq.singularforveinswiener} if and only if $\sigma p =0$ for all $p\in \{0\}^2\times\R$. An immediate consequence of this observation is that 
\[
\textup{rank}(\sigma) \le \dim_{\Ha}\lb\T^2\times\{0\} \rb =2.
\]
This shows that, at least heuristically, the conductance-maximizing media tend to avoid the waste of material to put anisotropy off the plane $\T^2\times\{0\}$. The bound \eqref{eq.dimensionboundintro} provides a extension of this observation in general maximizing media. We refer to Example \ref{ex.dimensionboundnotanidentity} and Example \ref{ex.upperrealdimstrictlygreaterthanlocaldim} for more examples around this idea.

\subsection{Literature: on leaves}

\subsubsection{The cohesion-tension theory}
Let us begin with a short presentation of the hydraulic behavior within a tree. The water transportation within trees can be explained by the \emph{Cohesion-Tension} theory (CTT): the water molecules are cohesive and hence the water forms a continuous flow within the trees; because of the air/water surface tension caused by the effects of transpiration in the open pores of the stomata, the pressure of the water within the transport system is negative; the negative pressure then drives the water within the xylems upward against gravity to the leaves. After being driven through the xylem to the petiole of a leaf \cites{McElrone2013}, the water will be transported through different hierarchies of the leaf veins to the opening sites of the stomata and evaporate there \cites{McKown2010,RothNebelsick2001,sack2013leaf}.  CTT was first proposed by Boehm \cites{boehm1893capillaritat} in 1893 and was soon summarized by Dixon and Joly \cites{dixon1895ascent} in 1894. The theory has been thoroughly studied by experiments and remains the most accepted explanation of water transport within plants \cites{zimmermann1994xylem,canny1995new,zimmermann2004water,Kim2014}. 

\subsubsection{Darcy's law}

The Darcy's law is a widely used model for porous media, including the hydraulic behavior in trees \cites{darcylaw,McDowell2015}. It states that the flow velocity field $j$ is related to the pressure gradient $\gd \phi$ in the following form
\[
j=-A \gd \phi,
\]
where $A$ is a positive definite matrix field that represents the local conductance of the tree/leaf. We will give a more precise description of our model based on Darcy's law on leaf vein patterns in Appendix \ref{appendix.formalderivation}.

\subsubsection{Hierarchy, fractal structure, and power law}
The hydraulic properties of leaf venations have received much attention for decades \cites{sack2013leaf,RothNebelsick2001,Cochard2004,Roth1995,Sack2004}. It is commonly accepted that lower-order veins are typically composed of a midrib and two orders of lateral veins and provide for bulk and far-reaching water transport, while higher-order veins often show smaller and reticulate patterns that provide ``diffusive'' local water distribution \cites{RothNebelsick2001,sack2013leaf}. This hierarchical order can also be observed in the models proposed in \cites{Bohn2007,Corson2010,Katifori2010}, where Bohn et al., Corson, and Katifori et al. studied the optimization of hydraulic dissipation of networks, based on some power law constraint \cite{banavar1999size}, which naturally leads to a fractal geometric viewpoints on the venation patterns. A well-known power law, \emph{Murray}'s law \cites{Sherman1981}, originally proposed for modeling blood vessels, is also used in the study of leaf venations. The law was discovered by finding the balance between the efficiency of blood flow and the maintenance cost of vessel construction. {Murray}'s law is shown in \cites{mcculloh2003water} to hold also for leaf venations under some assumptions, but it was also pointed out in \cites{RothNebelsick2001} that this law may not hold for plants because plants are sun-powered and the plant vessels are often leaky. 

In this paper, instead of looking at the local efficiency of veins, which leads to Murray type laws, we look for appropriate geometric shapes of networks from a larger scale by using homogenization method \cites{Jikov1994,Tartar2010}. We model leaf vein patterns, especially the higher-order ones, with these networks. This method no longer leads to a fractal geometry but it still satisfy the Darcy's law for porous medium, which is appropriate for modeling the ``diffusive'' water transportation in the higher order vein structures.

\subsubsection{The random fluctuations and reticulation}\label{subsubsection.randomfluctuationsandreticulation}
The random fluctuations of the pressure gradient \(p\) in \eqref{eq.classicalWIenrbound} can be justified by the experimental results in \cites{beyschlag1992stomatal,Beyschlag1990,Raschke1970}, where it is observed that the closing and opening of the stomata pores (sites of transpiration) form a random patchy distribution (see Figure \ref{fig:infitrationdistribution}). The reticulation of veins is regarded as a redundancy to deal with such random fluctuations and possible exterior damages \cites{Corson2010,Katifori2010}. In these works, the random opening and closing of sinks are shown to be sufficient for producing loops in the optimal structure. Without random fluctuations, the computation indicates that the optimal patterns are tree-like \cites{Bohn2007,banavar2000topology,durand2007structure,bernot2009optimal}. 

\begin{figure}[htbp]
    \centering
    \includegraphics[width=0.4\textwidth]{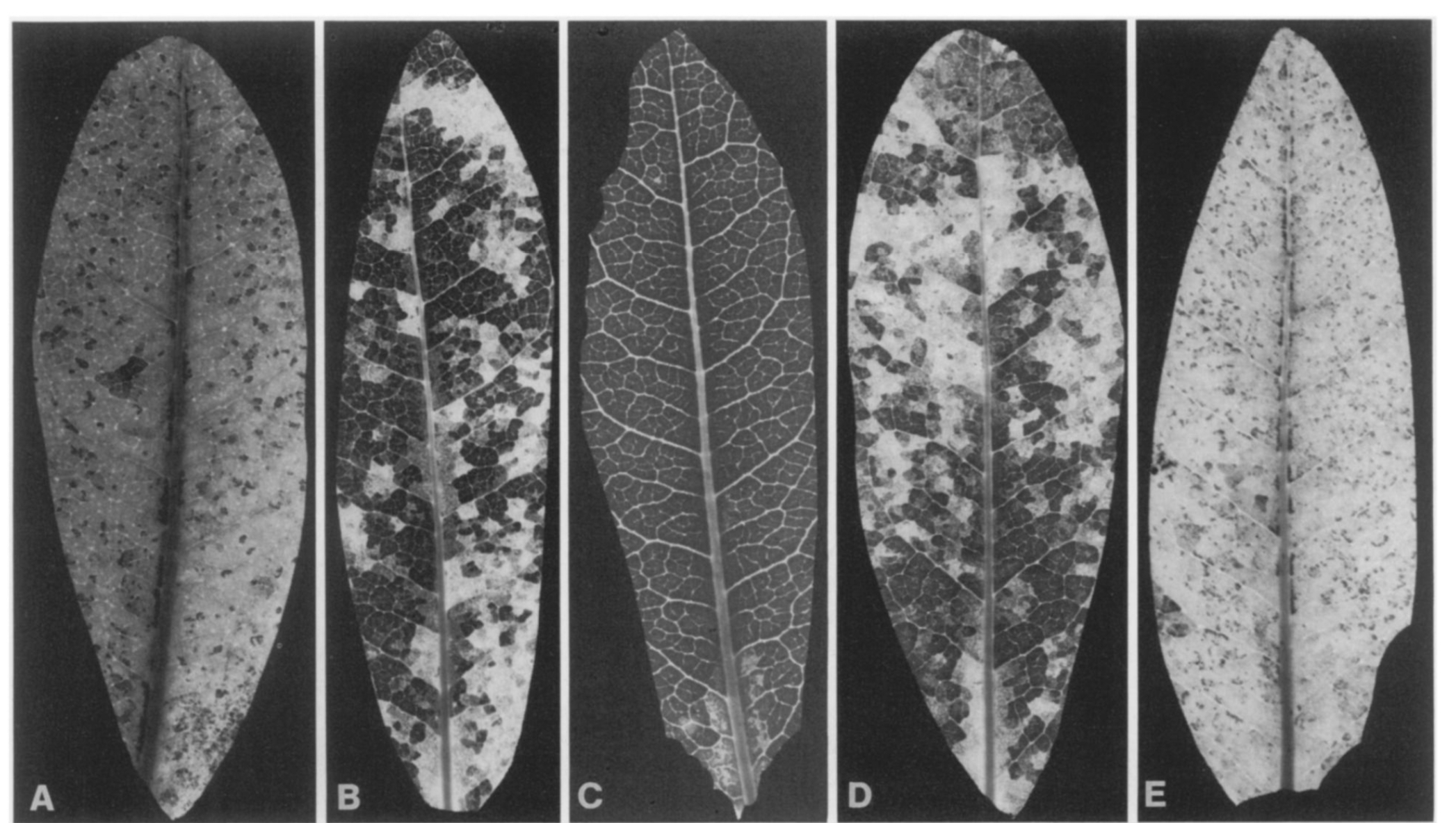}
    \caption{Backlit lower leaf surface of \emph{Arbutus unedo} of different times of a day. (Light area: infiltrated area; dark area: non-infiltrated area. A: 9 a.m.; B: 10 a.m.; C: 12p.m.; D: 4 p.m.; E: 5 p.m.) This picture is reproduced from \cites{Beyschlag1990}, with permission from Springer Nature.}
    \label{fig:infitrationdistribution}
\end{figure}

\subsubsection{Other models on the venation patterns}

Models on many other aspects of veins have also been proposed. In \cites{Dimitrov2006}, Dimitrov and Zucker studied auxin concentration using reaction-diffusion equations and used this to model leaf venation formation in the early stage of a leaf. In \cites{ronellenfitsch2016global,Katifori2018,Ronellenfitsch2019}, Ronellenfitsch and Katifori also proposed a model on auxin concentration and obtained the reticulation result if one assumes fluctuations in auxin production. In \cites{Ronellenfitsch2021}, Ronellenfitsch justified the optimality of reticulation in the elasticity aspect of leaf vein structures.

\subsection{Literature: mathematics}

\subsubsection{Optimal design and Hashin-Shtrikman bound}The upper and lower attainability problem for the singular version of the Wiener bound \eqref{eq.classicalWIenrbound} lies within the more general realm of optimal design \cites{optdesign1,optdesign2,optdesign3,optdesign4}. We also refer to \cites{Milton_2002,cherkaev2000variational} for a detailed introduction in this field. In general, an optimal design problem concerns about minimizing a cost function under some mass constraints. In the theory of conductive composites, the most famous one, other than the Wiener bound, could be the Hashin-Shtrikman bound \cites{HASHIN1963127,hashinliup}. In this problem, one can derive a bound sharper than the classical Wiener bound \eqref{eq.classicalWIenrbound} for two-phase isotropic mixtures \cites{HASHIN1963127,lurie1984exact2,lurie1986exactn,tartar1985estimation,hashinliup}. Specifically in the case that the matrix field $A=\lmb\alpha 1_{U_\alpha}+\beta \lb1-1_{U_\alpha}\rb\rmb I_{n\times n}$ for two positive numbers $\alpha>\beta>0$ and $|U_\alpha|=h\in(0,1)$, the effective tensor $Q(A)$ satisfies 
\be\label{eq.hashinshtrikmanbound}
\beta\frac{\alpha+(n-1)\beta+(n-1)h(\alpha-\beta)}{\alpha+(n-1)\beta-(\alpha-\beta)h}\le\frac{\textup{Tr}(Q(A))}{n} \le \frac{n\alpha\beta+(n-1)\alpha^2 h - (n-1)\alpha\beta h}{n\alpha-\alpha h + h\beta}.
\ee
Interestingly, in the case that high conductive material concentrate on lower dimensional sets, i.e. $\alpha\rta\infty$, $\alpha h\rta m\in (0,\infty)$ and $\beta\rta 0$, the upper H-S bound becomes $\frac{n-1}{n}m$. This is exactly the upper singular Wiener bound  (in the form of average eigenvalues) for $(n-1)$-dimensional medium $\theta$ of the form \(d\theta(x) = P_{\tau_x} \, dw(x),\) where $w$ is an $(n-1)$-rectifiable Radon measure and $P_{\tau_x}$ is the orthogonal projection matrix from $\R^n$ to the $(n-1)$-dimensional tangent space $\tau_x$ of $w$ at $x$. Note that there is a formal dimension reduction from $I_{n\times n}$ to $P_{\tau_x}$. One should notice that the classical sequentially laminated optimizers of the H-S bound no longer capture the attainability property of this singular H-S bound as they simply homogenize in the limit. The author studied the attainability of this singular bound in dimension 2 in \cite{Huang2021}, and this became one of the motivation for studying the singular Wiener bound in a more general setting.

\subsubsection{Singular structures, metric Sobolev spaces and \texorpdfstring{$\mathcal{A}$}{s}-free measures}The modeling of singular structures is one of the most challenging problems in calculus of variations. This topic has attracted attention for decades \cites{Bouchitte1997,ZhikovYosifian2014,bouchitte2001,sing1,sing2}. It also motivated the huge development of Sobolev spaces on metric measure spaces in recent years \cites{metricsobolev1,Lucic2021,Ambrosio2013}.  Interestingly the study of minimal surfaces also provides some good insight in some problems of singular structures. As we have pointed out, the transformation from stationary varifolds to divergence free matrix-valued measures made in \cites{mease4,ambrosio1997} is an important ingredient in proving Theorem \ref{t.forma2}. In fact, such observations have led to an extensive study of linear distributional PDEs, specifically $\mathcal{A}$-free measures \cites{meas3,mease4,mease6,meas7}. 

\subsubsection{Optimal transport as a model of leaf veins} There are also some works on modeling leaf vein patterns by using the theory of optimal transportation \cites{Villani2009}. We refer to \cites{optitrans1,bernot2009optimal} for more details. In some sense the optimal transportation approach follows the same route of Murray type power laws. As we have alluded in Section \ref{subsubsection.randomfluctuationsandreticulation}, these models lead to tree-like patterns. To the best of the author's knowledge, there is no analytic results on the emergence of reticulation under the assumption of random fluctuations.

\subsection{Structure of the paper}

In Section \ref{section.preliminaries} we present some preliminary results from various fields, including measure theory, geometric measure theory, calculus of variations and homotopy groups on the torus. In Section \ref{section.conductivemdium}, we introduce the notion of medium and its effective conductance tensor. We also introduce some novel properties of the effective tensors in the singular case, which includes super-additivity, upper semi-continuity in weak* topology, discontinuity and faithful convergence, and efficient submedium. We also provide a rigorous derivation of the Euler-Lagrange equation $\gd \cdot\theta=0$ for maximal media. In Section \ref{section.maximalmedium} we discuss the transformation of varifolds and establish Theorem \ref{t.forma2} rigorously. We provide a proof for the pointwise dimension bound in Theorem \ref{t.dimensionboundintro}. We also provide a rigorous discussion on Problem \ref{prob.intromaxvalency}.  In Section \ref{section.nontrivialmedia} we provide the proofs of several useful theorems that characterize the supports of media $\theta$ that satisfy $Q(\theta)\ne0$. In particular, we prove the countable decomposition Theorem \ref{t.countabledecompositionof1dmediumintro}. We also provide some examples to show the sharpness of these theorems. In Section \ref{section.leafvenation}, we prove Theorem \ref{t.forma1} and Corollary \ref{cor.positivedefiniteequalreticulate}.

\subsection*{Acknowledgments} I am deeply grateful to my advisor, William M. Feldman, for his unwavering encouragement and invaluable guidance throughout the development of this paper. I also gratefully acknowledge the support of the National Science Foundation under Award No. DMS-2407235. My sincere thanks go to Xuefeng Wang for his support and insightful advice during the early stages of this project. I also extend my appreciation to Frederick R. Adler, Andrej Cherkaev, Graeme Milton, Sean D. Lawley, and Raghavendra Venkatraman for their stimulating discussions and constructive suggestions, which have greatly enriched this work.

\section{Preliminaries}\label{section.preliminaries}

\subsection{Notations}
\begin{itemize}
    \item Let $n\ge 1$ be an integer that represents the dimension. We denote $\R^n$, $\Z^n$ and $\T^n=\R^n/\Z^n$ the Euclidean space, integer lattice and the torus with dimension $n$.
    \item If not particularly mentioned, $\theta$ represent a medium and $\mu$ represent a varifold.
    \item Denote $\Lme^n$ as the Lebesgue measure on $\R^n$ or $\T^n$.
    \item Denote $\trace{A}$ to be the trace of a square matrix $A$, that is, the sum of the diagonal entries of $A$.
    \item For any linear subspace $\tau\subset \R^n$, we denote $ P_\tau$ to be the \emph{orthogonal projection} from $\R^n$ to $\tau$.
    \item Denote $G(k,n)$ the Grassmannian manifold consisting of $k$-dimensional subspaces of $\R^n$.
    \item For a measure $w$ on a metric space $X$, we denote $$\spt{w}=\textup{{support} of }w,$$
    which is the complement of the largest open subset of $X$ that has zero $w$-measure. For a matrix-valued measure the support is defined as that of its total variation.
    \item For a real number $t\in \R$ we denote 
    \[
    \lfloor t\rfloor = \textup{ largest integer below }t,
    \]
    and $\lceil t \rceil=- \lfloor -t\rfloor $.
    \item Given a measure $w$ on a measure space $X$ and a measurable map $\Psi:X\rta Y$, we denote the pushforward as
\be
\label{eq.pushforward}
\Psi_\# w (A):=w(\Psi^{-1}(A)) \textup{ for all measurable }A\subset Y.
\ee
Given a measurable subset $E\subset X$, define the restriction of $w$ as
\be
\label{eq.restriction}
w_E(A):=w(E\cap A) \textup{ for all measurable }A\subset X.
\ee

\item  Let $f: X\rta Y$ be any mapping, then define $\textup{Im} f$ as the image of $f$. If $f:X \rta Y$ is further a linear mapping on linear spaces, then define $\ker f$ as the kernel of $f$.

\item  Let $S$ be any finite set, define $\#S$ as the number of elements in $S$.
\end{itemize}

\subsection{Measure theory}

Let $X$ be a metric space. In this subsection we define and present some basic properties of Radon measures and the matrix-valued versions.

\subsubsection{Radon measure and Riesz representation}

A Radon measure $w$ on a metric space $X$ is a measure on the $\sigma$-algebra of Borel sets on $X$ that satisfies
\begin{itemize}
    \item $w(A)$ is finite for any compact $A\Subset X$
    \item $w$ is outer regular on Borel sets and inner regular on open sets.
\end{itemize}

Denote by $\Ra(X)$ the space of all Radon measures on $X$. There is a functional analytic characterization of $\Ra(X)$.

\begin{theorem}[Riesz representation]
    Let $C_0(X)$ be the space of compactly supported continuous functions, then there is a 1-1 correspondence between the positive linear functionals on $C_0(X)$ and $\Ra(X)$. Specifically a nonnegative linear functional $\psi$ is related to a Radon measure $w$ in the following form
    \[
    \psi(f)=\int_X f dw \textup{  for all }f\in C_0(X).
    \]
    In particular, the functional norm
    \[
    \norm{\psi}_{C_0^*(X)} = w(X)
    \]
    if both sides are finite.
\end{theorem}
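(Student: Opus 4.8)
The plan is to prove the two directions of the correspondence separately and then verify uniqueness and the norm identity. \textbf{From measures to functionals:} given $w\in\Ra(X)$, set $\psi_w(f):=\int_X f\,dw$ for $f\in C_0(X)$; since $f$ is bounded and vanishes outside a compact set of finite $w$-measure, $\psi_w(f)$ is finite, and linearity together with positivity ($f\ge0\Rightarrow\psi_w(f)\ge0$) are immediate, so $\psi_w$ is a positive linear functional on $C_0(X)$.

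\textbf{From functionals to measures:} this is the substantive step, the classical Riesz construction. Given a positive linear functional $\psi$, I would first define, for open $U\subset X$,
\[
w(U):=\sup\{\psi(f)\ :\ f\in C_0(X),\ 0\le f\le 1,\ \spt f\subset U\},
\]
and then extend to an arbitrary $A\subset X$ by outer regularity, $w(A):=\inf\{w(U)\ :\ U\supset A\text{ open}\}$. The sub-steps are: (i) $w$ is monotone and countably subadditive, hence an outer measure; (ii) every open set satisfies the Carath\'eodory measurability criterion, so the Borel $\sigma$-algebra consists of $w$-measurable sets and $w$ restricts there to a Borel measure; (iii) $w$ is finite on compacta and inner regular on open sets, i.e.\ $w$ is Radon; and (iv) $\psi(f)=\int_X f\,dw$ for all $f\in C_0(X)$, which I would prove by reducing to $f\ge0$, decomposing $f$ into truncations adapted to a fine partition of its range, and using Urysohn bump functions --- available because $X$ is metric, so such functions can be built explicitly from the distance to a closed set --- to sandwich these truncations between elementary functions whose $\psi$-values simultaneously approximate $\psi(f)$ and $\int_X f\,dw$.

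\textbf{Uniqueness and the norm:} if two Radon measures $w_1,w_2$ induce the same functional, then for any open $U$ one approximates the indicator of $U$ from below by functions $f\in C_0(X)$ with $\spt f\subset U$ (again built from the distance function), forcing $w_1(U)=w_2(U)$, and outer regularity upgrades this to equality on all Borel sets. For the norm identity, $\|f\|_\infty\le1$ gives $|\psi(f)|=\big|\int_X f\,dw\big|\le\int_X|f|\,dw\le w(X)$, so $\|\psi\|_{C_0^*(X)}\le w(X)$; conversely, when $w(X)<\infty$, choosing $f_k\in C_0(X)$ with $0\le f_k\le1$ whose $w$-integrals increase to $w(X)$ yields $\psi(f_k)=\int_X f_k\,dw\to w(X)$ by monotone convergence, hence $\|\psi\|_{C_0^*(X)}\ge w(X)$.

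The main obstacle lies in items (ii) and (iv): verifying the Carath\'eodory splitting identity for open sets, and especially pinning down the representation $\psi(f)=\int_X f\,dw$, which is precisely where the abstract functional and the constructed outer measure must be forced to agree through a delicate Urysohn-type partition argument; everything else is routine measure-theoretic bookkeeping.
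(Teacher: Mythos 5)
The paper does not prove this theorem---it is presented as a cited standard fact in the preliminaries---so there is no proof of record to compare against. Your outline is the classical Riesz--Markov--Kakutani construction (outer measure on open sets via $\sup$ over dominated test functions, outer-regular extension, Carath\'eodory criterion for opens, sandwich/partition argument for the representation identity, inner regularity for the norm equality), and it is correct under the hypotheses the theorem actually needs.

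The one thing worth flagging is an implicit hypothesis that both you and the paper elide: the statement is false for a general metric space $X$, and your argument quietly uses local compactness. The bump functions $x\mapsto\max\bigl(0,\,1-\dist(x,K)/\varepsilon\bigr)$ you build are continuous and bounded, but they need not lie in $C_0(X)$, because the closed $\varepsilon$-neighborhood of a compact $K$ is bounded but not necessarily compact. In a non-locally-compact metric space such as $\Q$, every compact set is nowhere dense, so $C_0(X)=\{0\}$, and then distinct Radon measures (say $0$ and a Dirac mass) induce the identical zero functional, so the claimed one-to-one correspondence fails. Local compactness is exactly what guarantees a compact $K$ has a compact neighborhood, so that your Urysohn functions are compactly supported, the constructed $w$ is finite on compacta, the uniqueness argument (approximating $\mathbf{1}_U$ from below inside $C_0(X)$) goes through, and the $f_k$ in your norm argument exist. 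This is a defect in the paper's stated hypothesis rather than in your construction, and it is harmless in context since the paper only ever applies the result on $\T^n$ and $\R^n$; but you should add ``locally compact'' (or simply specialize to $\T^n$) to make the outline airtight. One minor nit: in the norm lower bound you cite monotone convergence, but the relevant point is just that inner regularity furnishes compacts $K_k$ with $w(K_k)\to w(X)$ and then $\int f_k\,dw\ge w(K_k)$ for any $f_k\ge\mathbf{1}_{K_k}$; monotonicity of the sequence is not needed.
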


A convenient consequence of this is the following compactness result.

\begin{lemma}\label{l.compactnessofweakstar}
    Suppose $w_j$ is a sequence of Radon measures with bounded total mass
    \[
    \sup_{j} w_j(X) <\infty,
    \]
    then up to a subsequence there is another Radon measure $w_\infty$ such that $w_\infty(X)<\infty$ and
    \be\label{eq.weakconvergenceofradonmeasure}
    \int_X f dw_j \rta \int_X f dw_\infty
    \ee
    for all $f \in C_0(X)$ as $j\rta\infty$.
\end{lemma}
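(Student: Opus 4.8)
The plan is to prove this as a sequential version of the Banach--Alaoglu theorem, using the Riesz representation theorem stated just above to identify the limiting functional with a Radon measure. Throughout I use that every metric space $X$ relevant to this paper (in particular $\T^n$, $\R^n$ and their closed subsets) is separable and locally compact, so that $C_0(X)$ equipped with the sup norm is a separable Banach space; fix a countable dense subset $\{f_k\}_{k\ge 1}\subset C_0(X)$.

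First I would run a diagonal extraction. For each fixed $k$ the real sequence $\big(\int_X f_k\, dw_j\big)_{j\ge1}$ is bounded, since
\[
\Big|\int_X f_k\, dw_j\Big|\le \norm{f_k}_\infty\, w_j(X)\le \norm{f_k}_\infty \sup_j w_j(X)<\infty .
\]
Hence by the usual diagonal argument there is a single subsequence, which I do not relabel, along which $\int_X f_k\, dw_j$ converges as $j\to\infty$ simultaneously for every $k$.

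Next I would upgrade this to convergence against every $f\in C_0(X)$ by an $\ep/3$ argument. Given $f\in C_0(X)$ and $\ep>0$, choose $f_k$ with $\norm{f-f_k}_\infty<\ep$; then for all $i,j$,
\[
\Big|\int_X f\, dw_i-\int_X f\, dw_j\Big|\le 2\ep\sup_j w_j(X)+\Big|\int_X f_k\, dw_i-\int_X f_k\, dw_j\Big|,
\]
and since the last term tends to $0$ as $i,j\to\infty$ and $\ep>0$ is arbitrary, the sequence $\big(\int_X f\, dw_j\big)_j$ is Cauchy and thus converges to a limit, which I call $L(f)$. Linearity of $L$ is immediate from linearity of the integrals and of limits; passing to the limit in the displayed bound for a single index shows $\norm{L}_{C_0^*(X)}\le \sup_j w_j(X)$; and $L$ is a \emph{positive} functional, because $f\ge 0$ forces $\int_X f\, dw_j\ge 0$ for every $j$, hence $L(f)\ge 0$. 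The Riesz representation theorem then furnishes a Radon measure $w_\infty$ with $\int_X f\, dw_\infty=L(f)$ for all $f\in C_0(X)$ and $w_\infty(X)=\norm{L}_{C_0^*(X)}\le \sup_j w_j(X)<\infty$, which is exactly \eqref{eq.weakconvergenceofradonmeasure}.

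The only genuinely delicate point is the separability of $C_0(X)$, which is what legitimizes the diagonal extraction; for a general metric space it can fail, but it holds for all spaces considered in this paper. Everything else is a routine approximation argument combined with the Riesz representation theorem already recorded above.
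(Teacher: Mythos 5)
Your proof is correct. The paper's own proof is a one‐liner, \say{This is an immediate consequence of the Banach--Alaoglu theorem}, whereas you reprove the relevant sequential version of Banach--Alaoglu from scratch (diagonal extraction over a countable dense set of $C_0(X)$, then an $\ep/3$ upgrade, then Riesz). Conceptually it is the same route: weak* compactness of bounded sets in $C_0(X)^*$, identified with Radon measures via Riesz. Your version is in fact slightly more careful than the paper's citation: Banach--Alaoglu on its own gives weak* \emph{topological} compactness of the dual ball, not \emph{sequential} compactness; one needs separability of $C_0(X)$ to metrize the weak* topology on bounded sets and hence extract a convergent subsequence. Your argument makes that separability hypothesis explicit and uses it correctly, which is the one genuinely delicate point that the paper's appeal to Banach--Alaoglu glosses over. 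The only thing you might add is a sentence observing that $L\ge 0$ together with the Riesz representation as stated in the paper (which identifies positive functionals with Radon measures and equates the functional norm with total mass) is exactly what allows you to conclude $w_\infty(X)=\norm{L}_{C_0^*(X)}\le\sup_j w_j(X)<\infty$; you state the inequality, but it is worth noting that it is this finiteness that makes the cited form of Riesz applicable.
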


\begin{proof}
    This is an immediate consequence of the Banach-Alaoglu theorem.
\end{proof}

\begin{definition}
  Call the notion of convergence in \eqref{eq.weakconvergenceofradonmeasure} the weak* convergence and denote $w_j \overset{*}{\weakcv} w_\infty$. 
\end{definition}

\subsubsection{Matrix-valued measures and polar decomposition}
\label{subsubsection.matrixvaluedmeasures}
A matrix-valued measure on a metric space $X$ is an $\R^{n\times n}$-valued set function $\theta$ such that for any countable sequence of pairwise disjoint Borel sets $U_j\subset X$ we have
\[
\theta\lb\bigcup_{i=1}^\infty U_j\rb=\sum_{i=1}^\infty \theta(U_j).
\]
In this paper, we focus on positive semi-definite matrix-valued Radon measures. More precisely we require 
\begin{itemize}
    \item $\theta(E) $ is positive semi-definite for all Borel $E\subset X$.
    \item The trace 
    \be
\label{eq.totalvariationwithrespecttotrace}
\norm{\theta}(E):=\trace{\theta(E)}
    \ee
    is a Radon measure.
\end{itemize}

\begin{lemma}
A matrix-valued measure $\theta$ is a positive semi-definite matrix-valued Radon measure if and only if $\theta$ decomposes as
\[
d\theta=\sigma\, d\norm{\theta}
\]
where $\norm{\theta}$ the trace is a Radon measure and $\sigma\in L^1(\norm{\theta};\R^{n\times n})$ is the Radon-Nikodym derivative 
\be
\label{eq.definematrixfield}
\sigma(x):= \frac{d\theta}{d\norm{\theta}}(x), \ x\in X
\ee
that defines a positive semi-definite matrix field and satisfies $\trace{\sigma(x)}=1$ for $\norm{\theta}$-almost all $x\in X$.
\end{lemma}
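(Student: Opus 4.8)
The plan is to read this as the Radon--Nikodym (polar decomposition) statement for $\R^{n\times n}$-valued measures and to treat the two implications separately. Throughout, ``positive semi-definite matrix'' is understood in the symmetric sense, consistent with all media appearing in the paper; this is what makes the bound on off-diagonal entries available. For the ``if'' direction, suppose $\norm{\theta}$ is a Radon measure and $\sigma\in L^1(\norm{\theta};\R^{n\times n})$ is PSD $\norm{\theta}$-a.e.\ with $\trace{\sigma(x)}=1$ for $\norm{\theta}$-a.e.\ $x$. Define $\theta(E):=\int_E\sigma\,d\norm{\theta}$ entrywise. Countable additivity of $\theta$ follows from that of the scalar integral (apply dominated convergence to each entry, using $|\sigma_{ij}|\in L^1(\norm{\theta})$), and finiteness on compacts is immediate from $\sigma\in L^1$; so $\theta$ is a matrix-valued Radon measure. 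For any Borel $E$ and $v\in\R^n$ one has $v^{T}\theta(E)v=\int_E v^{T}\sigma(x)v\,d\norm{\theta}(x)\ge 0$, so $\theta(E)$ is PSD. Finally $\trace{\theta(E)}=\int_E\trace{\sigma}\,d\norm{\theta}=\int_E 1\,d\norm{\theta}=\norm{\theta}(E)$, so the trace of $\theta$ is exactly the given Radon measure, and $d\theta=\sigma\,d\norm{\theta}$ by construction.

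For the ``only if'' direction, let $\theta=(\theta_{ij})$ be a PSD matrix-valued Radon measure; each $\theta_{ij}$ is a real measure and $\theta_{ij}=\theta_{ji}$ by symmetry. Testing PSD-ness of $\theta(E)$ against $e_i$ gives $0\le\theta_{ii}(E)\le\trace{\theta(E)}=\norm{\theta}(E)$; testing against $e_i\pm e_j$ gives $\theta_{ii}(E)+\theta_{jj}(E)\pm 2\theta_{ij}(E)\ge 0$, hence $2|\theta_{ij}(E)|\le\theta_{ii}(E)+\theta_{jj}(E)\le 2\norm{\theta}(E)$, so $|\theta_{ij}(E)|\le\norm{\theta}(E)$ for every Borel $E$. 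If $\norm{\theta}(E)=0$ then $\norm{\theta}(E')=0$, hence $\theta_{ij}(E')=0$, for every Borel $E'\subset E$, so the total variation $|\theta_{ij}|(E)=0$; therefore $\theta_{ij}\ll\norm{\theta}$ for all $i,j$. The scalar Radon--Nikodym theorem then provides densities $\sigma_{ij}\in L^1(\norm{\theta})$, and with $\sigma=(\sigma_{ij})$ we obtain $d\theta=\sigma\,d\norm{\theta}$.

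It remains to check the pointwise properties of $\sigma$. Symmetry $\sigma_{ij}=\sigma_{ji}$ $\norm{\theta}$-a.e.\ is immediate from $\theta_{ij}=\theta_{ji}$. For the trace, $\sum_i\sigma_{ii}$ is a density of $\sum_i\theta_{ii}=\trace{\theta}=\norm{\theta}$ with respect to $\norm{\theta}$ itself, so $\trace{\sigma(x)}=1$ for $\norm{\theta}$-a.e.\ $x$. For positive semi-definiteness I would avoid invoking a Besicovitch-type differentiation theorem (which would need extra covering structure on the general metric space $X$) and instead test against a countable dense set: for each fixed $v\in\Q^n$ the scalar measure $E\mapsto v^{T}\theta(E)v$ is nonnegative with density $v^{T}\sigma(\cdot)v$, hence $v^{T}\sigma(x)v\ge 0$ off a $\norm{\theta}$-null set $N_v$; on the complement of $\bigcup_{v\in\Q^n}N_v$, which still has full $\norm{\theta}$-measure, the continuous function $v\mapsto v^{T}\sigma(x)v$ is nonnegative on the dense set $\Q^n$ and hence on all of $\R^n$, so $\sigma(x)$ is PSD there.

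The routine parts (countable additivity, the scalar Radon--Nikodym step, regularity of $\norm{\theta}$) are standard; the one genuinely delicate point is this last passage from PSD-ness of the set-values $\theta(E)$ to PSD-ness of the pointwise density $\sigma(x)$, and the countable-dense-vector argument above is the step I expect to require the most care in writing cleanly while keeping the argument valid on an arbitrary metric space.
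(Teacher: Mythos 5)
Your proof is correct, but it takes a genuinely different route from the paper's. The paper invokes the polar decomposition theorem for matrix-valued measures (citing \cite{functionofboundedvariation}*{Corollary 1.29}) to obtain $d\theta = f\,dw$ with $f$ of unit $|\cdot|_*$-norm, then observes $d\norm{\theta} = \trace{f}\,dw$ and renormalizes $\sigma := f/\trace{f}$, using the PSD property of $f$ to justify the division. Your proof instead carries out the component-wise Radon--Nikodym step by hand: you first derive $\theta_{ij}\ll\norm{\theta}$ from the estimate $|\theta_{ij}(E)|\le\norm{\theta}(E)$ (obtained by testing PSD-ness of $\theta(E)$ against $e_i\pm e_j$), apply scalar Radon--Nikodym per entry, recover $\trace{\sigma}=1$ by uniqueness, and establish pointwise PSD-ness of $\sigma(x)$ via the countable-dense-vector argument. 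Your approach is more self-contained and makes explicit a step that the paper's citation leaves implicit, namely that the density of a PSD-valued measure is itself PSD a.e.; your density argument on $\Q^n$ is exactly the clean way to do this without needing a Besicovitch-type covering theorem, and it works on any metric space where Radon--Nikodym applies. The paper's approach is shorter once the polar decomposition theorem is granted, but its terse phrase ``a unique positive semi-definite matrix field $f$'' implicitly relies on the same kind of reasoning you spell out. Both are valid; yours fills in a gap the paper elides.
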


\begin{proof}
    By applying the polar decomposition theorem \cite{functionofboundedvariation}*{Corollary 1.29} 
    \[
    d\theta = f \,dw
    \]
    for some Radon measure $w$ and a unique positive semi-definite matrix field $f$ such that $|f|_*\equiv 1$ for some norm $|\cdot|_*$ on $\R^{n\times n}$. Observe that $d\norm{\theta}(x)= \trace{f(x)} \,dw(x).$ We then define
    \[
    \sigma(x) = \frac{f(x)}{\trace{f(x)}}.
    \]
  Notice that $\sigma$ is well-defined because $f(x)$ is positive semi-definite and then $f(x)\ne 0$ if and only if $\trace{f(x)}\ne0$. The proof is done by checking that for each $1\le i,j\le n$ the component $\sigma_{ij}$ is exactly the Radon-Nikodym derivative $d\theta_{ij}/d\norm{\theta}$. 
\end{proof}

\subsubsection{Disintegration theorem}

The following theorem will be often referred to. For convenience we only present a specific version. See \cite{Ambrosio2008}*{Theorem 5.3.1} for a more general version and its proof.

\begin{theorem}[Disintegration theorem]\label{t.disintegrationtheorem}
   Let $\mu$ be a Radon measure on the product metric space $X\times Y$. Let $\pi$ be the projection $\pi:X\times Y\rta X$ that sends $(x,y)\in X\times Y$ to $x$. Then if we denote $\norm{\mu}:=\pi_\#\mu$, there is a family of probability measures $\rho_x$ on $Y$ such that
   \begin{itemize}
       \item for all Borel set $B\subset Y$ the function $x\rta \rho_x(B)$ is Borel measurable on $X$.
       \item for $\norm{\mu}$-almost all $x\in X$, we have $\rho_x(Y\setminus \pi^{-1}(x))=0$.
       \item for all Borel function $f$ on $X\times Y$ we have
       \[
       \int_{X\times Y} f(x,y) d\mu(x,y) = \int_X \int_{\pi^{-1}(x)} f(x,y) d\rho_x(y) d\norm{\mu}(x).
       \]
   \end{itemize}
\end{theorem}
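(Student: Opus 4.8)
The plan is to build the family $\{\rho_x\}$ by applying the Radon--Nikodym theorem to a collection of ``slice masses'' and then upgrading the resulting finitely additive assignment to genuine probability measures using the Radon (inner) regularity of $\mu$; since the statement here is a specialization of a classical result, in the paper one would simply invoke \cite{Ambrosio2008}*{Theorem 5.3.1}, and the sketch below only indicates the mechanism.

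First I would set $\norm{\mu}:=\pi_\#\mu$ and, for every Borel set $B\subset Y$, define the Radon measure $\lambda_B$ on $X$ by $\lambda_B(A):=\mu(A\times B)$. Since $\lambda_B\le\norm{\mu}$, with $\norm{\mu}$ being $\sigma$-finite, we have $\lambda_B\ll\norm{\mu}$, so there is a $\norm{\mu}$-measurable density $g_B:=d\lambda_B/d\norm{\mu}$ with $0\le g_B\le 1$ and $g_Y\equiv 1$ $\norm{\mu}$-a.e. Each $g_B$ is defined only up to a $\norm{\mu}$-null set, and reconciling uncountably many such choices into a single consistent family of measures is the entire difficulty.

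Next I would pass to a countable sub-family. A Radon measure on a metric space is concentrated on a $\sigma$-compact, hence separable, set, so I may assume that $Y$ carries a countable algebra $\mathcal{A}$ of Borel sets generating the Borel $\sigma$-algebra, together with a countable family of compact sets exhausting it. Off a single $\norm{\mu}$-null set $N\subset X$ one then arranges, simultaneously for all members of $\mathcal{A}$: finite additivity of $B\mapsto g_B(x)$, the normalizations $g_\emptyset(x)=0$ and $g_Y(x)=1$, and tightness, i.e.\ for each $k$ a compact $K_k\subset Y$ with $g_{Y\setminus K_k}(x)<1/k$ --- the last point coming from inner regularity of $\mu$ by compact subsets of $X\times Y$. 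For $x\notin N$ the map $B\mapsto g_B(x)$ is thus a tight finitely additive premeasure on $\mathcal{A}$, hence extends by the Carath\'eodory extension theorem to a Borel probability measure $\rho_x$ on $Y$ (identified with a measure on the fiber $\pi^{-1}(x)=\{x\}\times Y$, which makes the concentration statement $\rho_x(Y\setminus\pi^{-1}(x))=0$ automatic); for $x\in N$ fix $\rho_x$ arbitrarily. Borel measurability of $x\mapsto\rho_x(B)$ for every Borel $B$ then follows from measurability on $\mathcal{A}$ by a monotone class argument, and the integration identity is verified on indicators $f=\mathbf{1}_{A\times B}$, where the right-hand side equals $\int_A g_B\,d\norm{\mu}=\mu(A\times B)$ by construction, after which linearity and monotone convergence extend it to all Borel $f\in L^1(\mu)$.

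The hard part will be the tightness and countable-additivity step: moving from finite additivity of $x\mapsto g_B(x)$, which is essentially free from Radon--Nikodym plus countability, to countable additivity of the Carath\'eodory extension genuinely uses the Radon hypothesis on $\mu$, since one must produce the compact exhaustions of $Y$ uniformly for $\norm{\mu}$-a.e.\ $x$, discarding only countably many null sets along the way. For $Y$ Polish this is routine; in the stated generality it is cleanest to reduce to that case or to cite \cite{Ambrosio2008}*{Theorem 5.3.1} directly, which is the route taken here.
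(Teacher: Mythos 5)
Your sketch correctly outlines the standard route via Radon--Nikodym densities on a countable generating algebra, tightness from inner regularity, and a monotone class argument, and at the end you reach the same conclusion the paper does. The paper's proof of this theorem is in fact nothing more than the citation to \cite{Ambrosio2008}*{Theorem 5.3.1}, which you yourself identify as the cleanest route, so the two approaches coincide.
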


\subsection{Geometric measure theory}

\subsubsection{Lipschitz map and function}

A {Lipschitz map} is a map $f:X\rta Y$ from a metric space $(X,d_X)$ to another metric space $(Y,d_Y)$ satisfying that there is a constant $C>0$ so that for all $x_1,x_2\in X$
\[
d_Y(f(x_1),f(x_2)) \le C d_X(x_1,x_2).
\]
A {Lipschitz function} is a Lipschitz map from $(X,d_X)$ to the real line. 

\subsubsection{Lipschitz path and length}
\label{subsubsection.lipschitzpathandlength}
A path is a continuous map $\gamma:[0,1]\rta X$.  Define the {multiplicity} of $\gamma$ at $x\in X$ as $m(\gamma,x):=\#\gamma^{-1}(x)$ (possibly $\infty$).  

Define the \emph{length} of a Lipschitz path $\gamma$ in $(a_0,b_0)\subset[0,1]$ as 
\[
\ell (\gamma, (a_0,b_0)):= \sup\lma \sum_{i=1}^{N-1} d_X(\gamma(a_i),\gamma(a_{i+1}))\rma<\infty
\]
with the supremum taken over all possible choices of $a_0\le a_1\le a_2 \le \cdots \le a_N \le b_0$. We write $\ell(\gamma):=\ell(\gamma,[0,1])$ as the total length.

Call a Lipschitz path $\gamma$ to have constant speed if $\ell(\gamma,(a_0,b_0))$ is proportional to $(b_0-a_0)$ regardless of $a_0$, $b_0$. It is known that any Lipschitz path having finite length admits a constant speed reparametrization. Notice that if $X$ is a Riemann manifold with Riemann metric $g$, then a constant speed Lipschitz path $\gamma$ satisfies
\[
\textup{Lip}(\gamma)=|\gamma'|_g=\ell(\gamma).
\]
See \cite{ALBERTI201735}*{Section 3} for more details.

\subsubsection{Hausdorff measure and dimension}
\label{subsubsection.hausdorff}
On a metric space $(X,d)$, the diameter of a set $E\subset X$ is defined as 
\[
\diam(E):=\sup_{x,y\in E} d(x,y).
\]
For $\alpha\ge 0$, the $\alpha$-dimensional {Hausdorff measure} of a subset $E\subset X$ is defined as
\[
\Ha^\alpha(E):=\sup_{\delta>0}\inf\left\{\sum_{i=1}^\infty\diam(U_i)^\alpha \ ; \ \diam(U_i)\le \delta\textup{ and } E\subset\bigcup_{i=1}^\infty U_i\right\}.
\]
The {Hausdorff dimension} of a set $E$ is defined as
\[
{\ \dim _{\Ha }(E)=\inf\{\alpha\geq 0:\Ha^{\alpha}(E)=0\}=\sup\{\alpha\geq 0:\Ha^{\alpha}(E)=\infty \},}
\]
where we take ${\displaystyle \inf \emptyset =+\infty }$ and ${\displaystyle \sup \emptyset =0}$. Notice that whenever $E$ has finite $\Ha^\alpha$ measure the restriction $\restr{\Ha^\alpha}{E}$ always define a Radon measure. See \cite{functionofboundedvariation}*{Section 2.8} for more discussions on Hausdorff measures.

\subsubsection{Rectifiable sets and measures}
\label{subsubsection.rectifiable}
On a metric space $X$, a subset $E\subset X$ is called $k$-\emph{rectifiable} for some integer $1\le k \le n$ if there are countably many Lipschitz maps $f_i:\R^k\rta X$ so that
 \[
 \Ha^k\lb E\setminus \bigcup_{i=1}^\infty f_i\lb \R^k\rb\rb=0.
 \]
A measure $\mu$ is $k$-rectifiable if there is a $k$-rectifiable set $E$ and a Borel function $f$ such that
\[
d\mu=f\, d\restr{\Ha^k}{E}.
\]
\subsubsection{Density}
\label{subsubsection.density}
For a measure $\mu$ on $\R^n$, the $k$-dimensional \emph{upper density} at $x\in \R^n$ is defined as
\[
\Theta^*(\mu,x)=\Theta_k^*(\mu,x):=\limsup_{r\rta0^+} \frac{\mu(B_r(x))}{\omega_kr^k},
\]
where $\omega_k$ is the volume of the $k$-dimensional unit ball. Similarly, the \emph{lower density} is defined as
\[
\Theta_*(\mu,x)=\Theta_*^k(\mu,x):=\liminf_{r\rta0^+} \frac{\mu(B_r(x))}{\omega_kr^k}.
\]
For a Borel set $E\subset \R^n$, the $k$-th upper and lower densities are defined by replacing $\mu$ with $\restr{\Ha^k}{E}$.

By \cite{functionofboundedvariation}*{Theorem 2.56}, we know that if $\mu$ is a Radon measure on $\R^n$ and $E$ is a Borel subset, then:
\begin{enumerate}[label=(\alph*)]
    \item If \( \Theta^*(\mu, x) < t \) whenever \( x \in E \), then
    \[
    \mu(E) \leq 2^k t \Ha^k(E).
    \]
    \item If \( E\subset U \) for some bounded open domain $U$ and \( \Theta^*(\mu, x) > t \) whenever \( x \in E \), then
    \[
    \mu(E) \geq t \Ha^k(E).
    \]
\end{enumerate}

A basic consequence of this result is that if $\Theta_k^*(\mu,x)$ is positive and finite for $\Ha^k$-almost every $x\in \spt{\mu}$ then there is a Borel function $f$ such that $d\mu = f \, d\restr{\Ha^k}{\spt{\mu}}$. See \cite{DeLellis2008}*{Proposition 2.16}.

\subsubsection{Tangent measures and rectifiability}\label{subsubsection.tangentmeasuresandrectifiability}
Let $\mu$ be a measure on $\R^n$ and $\alpha\ge 0$. Define an $\alpha$-\emph{tangent measure} $\nu_x$ of $\mu$ at $x\in \R^n$ to be a measure so that there is a sequence of positive numbers $r_i\downarrow0$ such that
\[
\frac{\mu(x+r_i (\cdot))}{\omega_\alpha r_i^\alpha} \overset{\ast}{\weakcv} \nu_x
\]
as $i\rta\infty$, where we have denoted $\omega_\alpha$ the Gamma function extension of volumes of unit balls. Denote by $\textup{Tan}_\alpha(\mu,x)$ the collection of all such $\nu_x$, and $\displaystyle\textup{Tan}(\mu,x):=\bigcup_{\alpha\ge0}\textup{Tan}_\alpha(\mu,x)$. 
\begin{theorem}[\cite{DeLellis2008}*{Theorem 4.8}]\label{t.tangentialcharacterizationofrectifiability}
    A measure $\mu$ on $\R^n$ is $k$-rectifiable for some integer $1\le  k\le n$ if and only if for $\mu$-almost all $x\in \R^n$, $\textup{Tan}(\mu,x)=\textup{Tan}_k(\mu,x)=\{\nu_x\}$ is a singleton, where $\nu_x$ is of the form $c_x\restr{\Ha^k}{\tau}$ for some $c_x>0$ and $\tau\subset\R^n$ is some $k$-dimensional subspace. A set $E\subset\R^n$ is $k$-rectifiable if and only if $\restr{\Ha^k}{E}$ is $k$-rectifiable.
\end{theorem}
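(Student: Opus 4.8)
The plan is to first dispose of the set-versus-measure statement, then prove the main equivalence in two directions. If $E$ is $k$-rectifiable then $\restr{\Ha^k}{E}$ is $k$-rectifiable by definition (density $\equiv 1$); conversely, if $\restr{\Ha^k}{E}=f\,\restr{\Ha^k}{F}$ with $F$ $k$-rectifiable, then $\Ha^k(E\setminus F)=\big(\restr{\Ha^k}{E}\big)(E\setminus F)=\big(f\,\restr{\Ha^k}{F}\big)(E\setminus F)=0$, so $E$ agrees with a Borel subset of $F$ up to an $\Ha^k$-null set, and subsets of $k$-rectifiable sets are $k$-rectifiable straight from the definition. It thus suffices to characterize $k$-rectifiability of a Radon measure $\mu$ by the uniqueness-of-flat-tangent condition.

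For \emph{necessity}, write $d\mu=f\,d\restr{\Ha^k}{E}$ with $E$ $k$-rectifiable. Using the structure theorem (that $E$ equals, up to an $\Ha^k$-null set, a countable disjoint union of pieces $E_i$ of embedded $C^1$ $k$-submanifolds $\Gamma_i$ -- itself a consequence of Lusin-type $C^1$ approximation of Lipschitz maps and the area formula) I would check that at $\Ha^k$-a.e.\ $x\in E_i$: (i) $\Gamma_i$ has a classical tangent plane $\tau_x$ at $x$, and $x$ is an $\Ha^k$-density-one point of $E_i$ in $\Gamma_i$; (ii) for every $j\ne i$, $\restr{\Ha^k}{E_j}$ has zero density at $x$, which follows by applying the Besicovitch differentiation theorem to the mutually singular (and, after localizing, locally finite) Radon measures $\restr{\Ha^k}{E_i}$ and $\restr{\Ha^k}{E_j}$; and (iii) $x$ is a Lebesgue point of $f$ with respect to $\restr{\Ha^k}{E}$. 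Combining (i)--(iii) and testing against a compactly supported continuous function, the rescaled measures $A\mapsto\omega_k^{-1}r^{-k}\mu(x+rA)$ converge, for the \emph{full} limit $r\to 0^+$, to $\omega_k^{-1}f(x)\,\restr{\Ha^k}{\tau_x}$; in particular the $k$-density equals $f(x)\in(0,\infty)$, so at exponents $\alpha\ne k$ the rescaled mass on any ball tends to $0$ or $\infty$ and no nonzero tangent measure survives. Hence $\textup{Tan}(\mu,x)=\textup{Tan}_k(\mu,x)=\{\nu_x\}$ with $\nu_x=\omega_k^{-1}f(x)\restr{\Ha^k}{\tau_x}$.

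For \emph{sufficiency}, assume that for $\mu$-a.e.\ $x$ there is a single tangent measure $\nu_x=c_x\restr{\Ha^k}{\tau_x}$ with $c_x>0$ and $\tau_x\in G(k,n)$. Uniqueness of the blow-up (together with the local mass bound it forces) makes the $k$-density $\lim_{r\to 0^+}\mu(B_r(x))/(\omega_k r^k)$ exist, positive, and finite for $\mu$-a.e.\ $x$; then the density comparison estimates of \cite{functionofboundedvariation}*{Theorem 2.56} quoted above, together with their consequence recalled there, give $d\mu=\theta\,d\restr{\Ha^k}{E}$ for a Borel set $E$ carrying a locally finite $\Ha^k$-measure, with $\theta>0$ holding $\Ha^k$-a.e.\ on $E$. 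It remains to show $E$ is $k$-rectifiable, which is the Marstrand--Mattila rectifiability criterion, and the step I expect to be the main obstacle. I would prove it by first reducing -- via Lusin and a decomposition into level sets of $\theta$ -- to the case $\mu=\restr{\Ha^k}{E}$ with $E$ bounded and $\Ha^k(E)<\infty$. The flat-tangent hypothesis then says that for $\Ha^k$-a.e.\ $x$ and all small $r$, $E\cap B_r(x)$ is \emph{bilaterally} $\varepsilon$-close in $B_r(x)$ to the affine plane $x+\tau_x$ and, crucially, $\restr{\Ha^k}{E}$ restricted to $B_r(x)$ is weakly-$*$ close to $\restr{\Ha^k}{x+\tau_x}$; by Egorov one extracts a compact $K\subset E$ with $\Ha^k(E\setminus K)$ small on which these estimates hold uniformly for all $r<r_0$. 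The decisive step is to upgrade this to a Lipschitz-graph structure: mere set-flatness of Reifenberg type does not force rectifiability, but the extra rigidity that the \emph{measure} is nearly $\Ha^k$ on a plane yields a quantitative ``tilting'' estimate controlling the variation of $\tau_x$ over nearby points and dyadic scales, which drives a stopping-time/corona construction covering $\Ha^k$-almost all of $K$ by countably many Lipschitz graphs of small constant; letting $\varepsilon\to 0$ and exhausting $E$, one concludes $E$ -- hence $\mu$ -- is $k$-rectifiable.
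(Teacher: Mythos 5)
This theorem is stated in the paper as a citation to \cite{DeLellis2008}*{Theorem~4.8}; the paper provides no proof of its own, so there is no in-paper argument against which to compare your attempt. Assessed on its own terms, your proposal is sound in outline but takes a heavier route than the standard one in the sufficiency direction, and leaves the crucial lemma asserted rather than established.

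The set-versus-measure reduction and the necessity direction are fine: localizing to $C^1$ pieces $E_i\subset\Gamma_i$, using Besicovitch differentiation for the mutual singularity of $\restr{\Ha^k}{E_i}$ and $\restr{\Ha^k}{E_j}$, and taking a Lebesgue point of $f$ is the standard argument, and your remark that nonzero tangent measures at exponents $\alpha\ne k$ cannot arise once the $k$-density is positive and finite is correct and worth spelling out. For sufficiency, you correctly recognize that the heart of the matter is the Marstrand--Mattila rectifiability criterion. You reduce to $\mu=\restr{\Ha^k}{E}$ with positive finite density and apply Egorov to get uniform bilateral flatness and weak-$*$ proximity to a plane on a compact $K$ at all scales below some $r_0$; that much is right. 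The gap is in the step you yourself flag as ``the main obstacle'': you invoke a ``quantitative tilting estimate'' to feed a stopping-time/corona construction, but you never derive it. Uniform flatness alone (Reifenberg flatness) does not yield Lipschitz structure or rectifiability, so the tilting control must come entirely from the measure hypothesis, and this is a genuine lemma, not a formality: one must show that if $y\in K\cap B_r(x)$ lies at large angle from $x+\tau_x$, then the mass near $y$ together with the mass near the plane would overshoot $\omega_k r^k(c_x+\varepsilon)$, contradicting the density bound. This ``cone condition'' is precisely the content of the key lemma in the standard proof (Mattila, \emph{Geometry of Sets and Measures}, Thm.~15.19, and the corresponding Proposition~4.9 in De Lellis), where it is used to show that the orthogonal projection onto $\tau_x$ is a bi-Lipschitz homeomorphism on a piece of $K$ near $x$, giving Lipschitz graphs directly without any corona or David--Semmes machinery. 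Your corona route can be made to work, but it needs the same cone/projection lemma as input, and once you have that lemma the direct projection argument is both shorter and closer to what the cited source actually does. I would advise either proving the cone condition and then switching to the projection argument, or at minimum stating the cone condition as a named lemma so that the reader can see exactly where the measure hypothesis is consumed.
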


\begin{definition}
\label{def.geometrictangentspace}
    For a Radon measure $w$, whenever for some integer $1\le k \le n$ the space
    \[
    \textup{Tan}(w,x) = \textup{Tan}_k(w,x)=\{\nu_x\}
    \]
    is a singleton, where $\nu_x = c_x \restr{\Ha^k}{\tau}$ for some $c_x>0$ and $\tau\subset \R^n$ a $k$-dimensional subspace, we call $\tau$ the tangent space of $w$ at $x$.
\end{definition}

\subsubsection{Varifold, area criticality and monotonicity formula}
\label{subsubsection.varifolds}
Let us now introduce the notion of varifolds. We only present the definitions and facts in $\R^n$ as they can be naturally extended to the case of manifolds such as torus. We refer to \cites{Allard1972,de2012allard,Umenne} for more discussions on this topic.

Denote for integers $1\le k\le n$ the Grassmannian manifold $G(k,n)$ consisting of $k$-dimensional subspaces of $\R^n$. Notice that $G(k,n)$ is a compact smooth manifold.

A $k$-varifold $\mu$ is a Radon measure on $\R^n \times G(k,n)$. Let $\pi$ be the projection that sends $(x,\tau)\in \R^n\times G(k,n)$ to $x$. Define the area distribution \(\norm{\mu}:=\pi_\#\mu\) (See \eqref{eq.pushforward} for definition).

Call a varifold $\mu$ to be \emph{stationary} in an open domain $U\subset\R^n$ if for all smooth vector fields $\Phi\in C_0^\infty(U;\R^n)$
\be\label{eq.stationaryvarifold}
\int_{U\times G(k,n)} \trace{P_\tau D\Phi(x)} d\mu(x,\tau) = 0,
\ee
where $P_\tau$ is the orthogonal projection matrix from $\R^n$ to the $k$-subspace $\tau\subset\R^n$. The following theorem states that stationary varifolds are critical points of the area functional.
\begin{theorem}[\cite{Allard1972}]
   A $k$-varifold $\mu$ on $\R^n$ is a critical point of the functional $\norm{\mu}(U)<\infty$ for some bounded open domain $U\subset\R^n$ if and only if $\mu$ is stationary in the sense of \eqref{eq.stationaryvarifold} in $U$.
\end{theorem}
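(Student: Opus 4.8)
The plan is to deduce the equivalence from the first variation formula for the mass of a varifold under a smooth flow. Fix $\Phi\in C_0^\infty(U;\R^n)$ and let $\{\Psi_t\}_{|t|<\varepsilon}$ be the flow it generates, so that $\Psi_0=\mathrm{id}$, $\partial_t\Psi_t\big|_{t=0}=\Phi$, and every $\Psi_t$ equals the identity outside a fixed compact subset of $U$ (the support of $\Phi$). Recall that the pushforward $k$-varifold $(\Psi_t)_\#\mu$ is obtained by transporting the base point through $\Psi_t$ and the plane through the differential $D\Psi_t$, so that its mass over $U$ is
\[
\norm{(\Psi_t)_\#\mu}(U)=\int_{U\times G(k,n)} J_\tau\Psi_t(x)\,d\mu(x,\tau),
\]
where $J_\tau\Psi_t(x)$ is the $k$-dimensional Jacobian of $\Psi_t$ at $x$ along $\tau$, i.e. $\sqrt{\det B(t)}$ with $B(t)=\bigl(\langle D\Psi_t(x)v_i,\,D\Psi_t(x)v_j\rangle\bigr)_{1\le i,j\le k}$ for any orthonormal basis $v_1,\dots,v_k$ of $\tau$. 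Since the $\Psi_t$ range over exactly the ambient deformations admissible in the definition of a critical point of $\norm{\cdot}(U)$, criticality of $\mu$ is equivalent to $\frac{d}{dt}\big|_{t=0}\norm{(\Psi_t)_\#\mu}(U)=0$ for every such $\Phi$.

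First I would carry out the pointwise computation
\[
\frac{d}{dt}\Big|_{t=0}J_\tau\Psi_t(x)=\trace{P_\tau D\Phi(x)}.
\]
Writing $L_t=D\Psi_t(x)$ one has $L_0=I$ and $\dot L_0=D\Phi(x)$, hence $B(0)=I$ and $\dot B(0)=V^{\mathsf T}\bigl(D\Phi(x)+D\Phi(x)^{\mathsf T}\bigr)V$ with $V=[v_1|\cdots|v_k]$; since $\tfrac{d}{dt}\big|_{t=0}\sqrt{\det B(t)}=\tfrac12\trace{\dot B(0)}$ and $VV^{\mathsf T}=P_\tau$, cyclicity of the trace together with $P_\tau^2=P_\tau$ give $\tfrac12\trace{\dot B(0)}=\trace{P_\tau D\Phi(x)}$.

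Next I would differentiate under the integral sign — legitimate because $\Phi$ is smooth with compact support, $(t,x)\mapsto\Psi_t(x)$ is smooth, and both $J_\tau\Psi_t(x)$ and its $t$-derivative are bounded uniformly in $(x,\tau)$ over the fixed compact support and over the compact manifold $G(k,n)$ — to obtain
\[
\frac{d}{dt}\Big|_{t=0}\norm{(\Psi_t)_\#\mu}(U)=\int_{U\times G(k,n)}\trace{P_\tau D\Phi(x)}\,d\mu(x,\tau).
\]
Comparing with \eqref{eq.stationaryvarifold}, the left side vanishes for all $\Phi\in C_0^\infty(U;\R^n)$ precisely when $\mu$ is stationary in $U$; and since this expression is the first variation of the mass along every admissible ambient deformation, its vanishing for all $\Phi$ is exactly the statement that $\mu$ is a critical point of $\norm{\cdot}(U)$. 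This yields both implications simultaneously.

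The step I expect to require the most care is the first-variation computation of the $k$-Jacobian together with the rigorous justification of differentiating under the integral: one must keep in mind that the Grassmannian component of the varifold is simultaneously transported by $D\Psi_t$, so that both the spatial Jacobian factor and the plane along which it is measured depend on $t$. Once the pointwise identity $\frac{d}{dt}\big|_{t=0}J_\tau\Psi_t(x)=\trace{P_\tau D\Phi(x)}$ is established, the remainder is routine bookkeeping, and the general (not necessarily rectifiable) case needs no change, since the pushforward and the first variation formula make sense for arbitrary Radon measures on $\R^n\times G(k,n)$.
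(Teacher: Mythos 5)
The paper does not prove this theorem; it merely cites Allard's 1972 paper as the source, so there is no proof in the paper to compare yours against. Your argument is the standard first-variation computation that does appear in Allard's paper (and in expositions such as Simon's lectures), and it is correct: the pointwise identity $\frac{d}{dt}\big|_{t=0}J_\tau\Psi_t(x)=\trace{P_\tau D\Phi(x)}$ is computed properly (the chain $B(0)=I$, $\dot B(0)=V^{\mathsf T}(D\Phi+D\Phi^{\mathsf T})V$, $\tfrac{d}{dt}\sqrt{\det B}\big|_0=\tfrac12\trace{\dot B(0)}$, followed by cyclicity and $VV^{\mathsf T}=P_\tau$ is sound — though note you do not actually use $P_\tau^2=P_\tau$ in that step, only the symmetry $P_\tau^{\mathsf T}=P_\tau$); the dominated-convergence justification for differentiating under the integral is adequate given compact support and smoothness; and the reduction of ``critical for the mass under ambient deformations'' to the vanishing of this derivative for all $\Phi\in C_0^\infty(U;\R^n)$ is exactly the content of the definition. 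One small point you might make explicit: the formula $\norm{(\Psi_t)_\#\mu}(U)=\int_{U\times G(k,n)}J_\tau\Psi_t\,d\mu$ implicitly uses that $\Psi_t^{-1}(U)=U$, which holds because $\Psi_t=\mathrm{id}$ outside a compact subset of $U$, so the restriction of the pushforward to $U$ is computed over $U$ itself. With that remark the argument is complete and self-contained.
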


Suppose $\mu$ is a stationary $k$-varifold on an open domain $U\subset \R^n$, then the following identity holds
\be\label{eq.monotonicityformulaforstationaryvarifold}
\frac{\norm{\mu}(B_r(x))}{r^k}-\frac{\norm{\mu}(B_s(x))}{s^k}=\int_{U\times G(k,n)} \frac{|(I-P_{\tau})(y-x)|^2}{|y-x|^{k+2}} d\mu(y,\tau)\ge0,
\ee
for any $x\in U$ and $0<s<r <\dist(x,\pt U)$. In particular, the density limit 
\[
\Theta(\norm{\mu},x):=\lim_{r\rta0^+}\frac{\norm{\mu}(B_r(x))}{\omega_kr^k}
\]
exists for every $x\in U$.

\subsubsection{Wa\.{z}ewski parametrization}
\label{subsubsection.wazuskiprel}
We now introduce the Wa\.{z}ewski parametrization of connected compact metric spaces (called {continuum}) that have finite length. This will allow us to do fine analysis on sets having finite length.

\begin{theorem}[\cite{ALBERTI201735}*{Theorem 4.4}]\label{t.existenceofanicelipschitzparam}
Let $X$ be a non-singleton continuum such that $\Ha^1(X)<\infty$, then $\Ha^1(X)>0$ and there is a Lipschitz mapping $\gamma:[0,1]\rta X$ satisfying the following properties:
\begin{enumerate}
    \item[(i)] $\gamma$ is closed, surjective and has degree zero.
    \item[(ii)] The multiplicity $m(\gamma,x)=2$ for $\Ha^1$-almost all $x\in X$.
    \item[(iii)] $\gamma$ has constant speed equaling to $2\Ha^1(X)$, and in particular, for any Borel function $f:X\rta [0,\infty]$ (and in particular any integrable Borel function)
    \be
    \dashint_X f (x) d\Ha^1(x) =  \int_0^1 f(\gamma(t)) dt.
    \ee
    
\end{enumerate}

In particular, a continuum $X$ is 1-rectifiable whenever $\Ha^1(X)<\infty$.    
\end{theorem}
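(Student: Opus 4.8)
The plan is to reduce everything to the case of finite metric graphs, where an explicit Eulerian argument gives all the asserted properties, and then to exhaust a general continuum $X$ by such graphs and pass to a limit. Write $L:=2\Ha^1(X)$. The easy points come first. If $X$ is not a singleton, pick $a\ne b$ in $X$ and let $g(x):=\dist(x,a)$, a $1$-Lipschitz function; since $X$ is connected $g(X)$ is an interval containing $0$ and $\dist(a,b)>0$, so $[0,\dist(a,b)]\subseteq g(X)$, and because $1$-Lipschitz maps do not increase $\Ha^1$ we get $\Ha^1(X)\ge\Ha^1(g(X))\ge\dist(a,b)>0$. The only genuinely external input I would quote rather than reprove is the classical theorem, due to Wa\.{z}ewski, that a continuum of finite $\Ha^1$ measure is arcwise connected, with every arc inside it rectifiable; see the references in \cite{ALBERTI201735}.

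Next, the finite-graph case: suppose $X=G$ is a finite connected metric graph. I would replace each edge of $G$ by a pair of anti-parallel oriented edges; the resulting finite oriented multigraph is connected and has equal in- and out-degree at every vertex, hence carries a closed Eulerian walk. Projecting this walk back to $G$ and reparametrizing it over $[0,1]$ at constant speed produces a closed Lipschitz curve $\gamma$ that traverses each edge exactly once in each direction. Thus $\gamma$ is surjective, covers every point off the finite vertex set exactly twice, has constant speed $2\ell(G)=2\Ha^1(G)$, and --- since each edge is used once in each direction --- induces the zero integral $1$-current; this is what I take ``degree zero'' to mean (for $G\subset\T^n$ it is precisely null-homotopy in $\T^n$), and it is stable under the manipulations below. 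The identity $\dashint_G f\,d\Ha^1=\int_0^1 f(\gamma(t))\,dt$ is then just the length/area formula for $\gamma$, using multiplicity $2$ off a finite set.

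For general $X$ I would build an increasing sequence of finite subgraphs $G_1\subseteq G_2\subseteq\cdots\subseteq X$ with $\overline{\bigcup_k G_k}=X$ and $\Ha^1(G_k)\uparrow\Ha^1(X)$: at each stage adjoin an arc joining $G_k$ to a new point, taken from a fixed countable dense set or, when needed to capture mass, from a region of positive measure not yet covered, arranged to meet $G_k$ only at its endpoint. Crucially I would construct the associated curves $\gamma_k$ \emph{compatibly}, obtaining $\gamma_{k+1}$ from $\gamma_k$ by inserting at suitable times short back-and-forth detours along the finitely many new arcs of $G_{k+1}\setminus G_k$. Then $\|\gamma_{k+1}-\gamma_k\|_\infty$ and $|\ell(\gamma_{k+1})-\ell(\gamma_k)|$ are controlled by $\Ha^1(G_{k+1}\setminus G_k)$, which is summable, so $\gamma_k\to\gamma$ uniformly to a closed Lipschitz curve with $\ell(\gamma)=\lim_k\ell(\gamma_k)$; surjectivity follows from $\overline{\bigcup_k G_k}=X$ together with uniform convergence, and hence $X=\gamma([0,1])$ is $1$-rectifiable. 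For $\Ha^1$-almost every $x$ one has $x\in G_k$ for some $k$, $x$ avoids the countably many branch and insertion points, and the local picture of $\gamma_j$ near $x$ stabilizes for $j\ge k$ with exactly two sheets through $x$; thus $m(\gamma,x)=2$ a.e. The area formula now gives $\ell(\gamma)=\int_X m(\gamma,\cdot)\,d\Ha^1=2\Ha^1(X)=L$, confirming the constant speed, a second application gives the integral identity, and the vanishing of the induced current persists in the limit because it is preserved by the detour insertions and by uniform limits of equi-Lipschitz closed curves.

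The main obstacle is securing the \emph{exact} quantitative conclusions --- multiplicity precisely $2$ at $\Ha^1$-almost every point, and speed precisely $2\Ha^1(X)$ --- rather than the one-sided bounds that a bare Arzel\`a--Ascoli compactness argument would yield. This is exactly why I would use the nested, detour-insertion construction of the $\gamma_k$, which forces the local multiplicities to stabilize and the lengths to converge, and why I would be careful that the exhausting graphs $G_k$ fill up $\Ha^1$-almost all of $X$ and not merely a dense subset of it.
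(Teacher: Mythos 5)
This theorem is quoted from \cite{ALBERTI201735}*{Theorem 4.4}; the paper itself supplies no proof, so there is nothing in the text to compare your argument against, and what follows is an assessment of your sketch on its own terms.

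Your outline --- Eulerian circuit on the doubled graph for finite graphs, exhaustion by nested finite subgraphs, compatible detour insertion, uniform limit --- is the right strategy and I believe it can be carried through, but I want to flag the one point where your plan, as phrased, risks circularity. You propose to \emph{choose} the exhausting graphs $G_k$ so that $\Ha^1(G_k)\uparrow\Ha^1(X)$, adjoining arcs ``from a region of positive measure not yet covered'' when necessary. There is no direct way to do this: an arc in $X$ has no reason to carry positive $\Ha^1$-mass of whatever set remains uncovered in the end, and the assertion that countably many arcs exhaust $\Ha^1$-almost all of $X$ is in substance the $1$-rectifiability of $X$, which is one of the things to be proved. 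The better observation is that the full-measure exhaustion is a \emph{consequence} of the construction, not something to engineer. Take nested finite subgraphs $G_k$ whose union is merely dense (a countable dense set of targets suffices), set $\lambda:=\lim_k\Ha^1(G_k)\le\Ha^1(X)$, and build the $\gamma_k$ by detour insertion as you describe. Then $\gamma_k\to\gamma$ uniformly, $\operatorname{Lip}(\gamma)\le\liminf_k\operatorname{Lip}(\gamma_k)=2\lambda$, $\gamma$ is closed and surjective, and --- exactly as you note at the very end --- the current $\gamma_\#[0,1]$ vanishes because each $(\gamma_k)_\#[0,1]$ does and currents of equi-Lipschitz curves pass to uniform limits. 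Vanishing of this current forces the two oriented multiplicities of $\gamma$ to agree at $\Ha^1$-a.e.\ $x$, so $m(\gamma,x)$ is even a.e.; surjectivity together with the fact that $\gamma(\{\gamma'=0\})$ is $\Ha^1$-null gives $m(\gamma,x)\ge1$ a.e.; hence $m(\gamma,x)\ge2$ a.e., and the area formula yields
\[
2\Ha^1(X)\;\le\;\int_X m(\gamma,\cdot)\,d\Ha^1 \;=\;\ell(\gamma)\;\le\;\operatorname{Lip}(\gamma)\;\le\;2\lambda\;\le\;2\Ha^1(X).
\]
Equality throughout gives $\lambda=\Ha^1(X)$, multiplicity exactly $2$ a.e., constant speed $2\Ha^1(X)$, and $1$-rectifiability, all in one stroke. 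Your ``local picture stabilizes'' argument for the multiplicity can also be completed --- the key point, which you should make explicit, is that the walk-length gap between the two passes of any fixed non-vertex $x\in G_k$ is non-decreasing under detour insertion, so the two preimage times stay separated in the limit even though vertices may accumulate near $x$ --- but routing the multiplicity through the vanishing current and the area formula is cleaner and makes transparent which quantities are inputs and which fall out.
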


\subsection{Torus and its topology}
\label{subsection.torusandtopology}
In the following, we sketch some important known topological properties of torus for audiences who are not familiar with them. Most of the materials in this section are covered in the textbook \cite{algebraictopology}. 

We write the standard flat torus $\T^n:=\R^n/\Z^n$. If not particularly mentioned, we denote
\[
\pi:\R^n\rta\T^n
\]
the standard projection that maps $x\in \R^n$ to the equivalence class of $x$ modulo $\Z^n$. As $\Z^n$ is a discrete subgroup of $\R^n$, standard theory shows that $\pi$ is also a locally isometric diffeomorphism.

\subsubsection{Path and path composition}

A path is a continuous map $\gamma:[0,1]\rta\T^n$. Given two paths $\gamma_1$ and $\gamma_2$ with $\gamma_1(1)=\gamma_2(0)$, define the path composition
\be
\label{eq.pathproduct}
\gamma_1\gamma_2(t):=\bca
\gamma_1(2t) & 0\le t <1/2\\
\gamma_2(2t-1) & 1/2\le t \le 1.
\eca
\ee
 It is not difficult to show that for two Lipschitz paths $\gamma_1$ and $\gamma_2$ with $\gamma_1(1)=\gamma_2(0)$ the path composition $\gamma_1\gamma_2$ is still Lipschitz and the total length satisfies
\be
\label{eq.lengthadditivewrtcomposition}
\ell(\gamma_1\gamma_2)=\ell(\gamma_1) + \ell(\gamma_2).
\ee

\subsubsection{Homotopy classes on torus}
 Call a path $\gamma$ to be closed if $\gamma(0)=\gamma(1)$. Two continuous closed paths $\gamma_1,\gamma_2:[0,1]\rta \T^n$ are homotopic based at $x_0=\gamma_1(0)=\gamma_2(0)$ if there exists a continuous map
\[
H:[0,1]^2 \rta \T^n
\]
such that $H(\cdot,0)=\gamma_1$ and $H(\cdot,1)=\gamma_2$, and $H$ also satisfies $H(0,t)=H(1,t)=x_0$ for all $t\in [0,1]$. By standard theory, the collection of homotopy classes in $\T^n$ based at any point $x_0$ endowed with the path composition forms a group, which we denote by $\pi_1(\T^n,x_0)$. There is a natural isomorphism from $\pi_1(\T^n,x_0)$ to $\Z^n$. We will further discuss this isomorphism in Section \ref{subsubsection.pathliftingproperty}. 

\subsubsection{Covering spaces of torus}
\label{subsubsection.coveringspacesoftorus}
A covering space $(g,X)$ of a topological space $X^*$ consists of a continuous surjection $g:X\rta X^*$ so that for each $x\in X^*$ there is an open neighborhood $U$ such that $g^{-1}(U)$ is a union of disjoint open sets in $X$, restricted on which $g$ is a homeomorphism onto $U$. In this paper we consider for each subgroup $G\le \Z^n$ the quotient space
\[
\T_G^n:=\R^n/G:=\{[x]\ ; \ x\sim y \textup{ if and only if }x-y\in G\}.
\]
By standard theory the spaces $\T_G^n$ are smooth manifolds and the projection $\pi:\R^n\rta \T^n$ can be factored as
\[
\pi=\pi_G\circ \pi^G,
\]
where $\pi$, $\pi^G:\R^n\rta\T_G^n$ and $\pi_G:\T_G^n\rta\T^n$ are locally diffeomorphic and isometric projections. 

\subsubsection{Path lifting property}
\label{subsubsection.pathliftingproperty}
Given a covering map $g:X\rta X^*$ and a path $\gamma:[0,1]\rta X^*$, a lift of the path is another path $\Tilde{\gamma}:[0,1]\rta X$ such that $\gamma=g\circ \Tilde{\gamma}$. The path lifting property states that for each lift $\Tilde{x}_0\in X$ of the starting point $x_0=g(0)$, there is a unique lift $\Tilde{\gamma}$ of $\gamma$ starting at $\Tilde{x}_0$. 

In the case of the covering map $\pi:\R^n\rta\T^n$, given a base point $x_0\in \T^n$ and its lift $y_0\in \pi^{-1}(x_0)$, each closed path $\gamma$ in $\T^n$ based at $\gamma(0)=x_0$ can be lifted to a unique path $\Tilde{\gamma}$ in $\R^n$ starting at $\Tilde{\gamma}(0)=y_0$. The vector $\Tilde{\gamma}(1)-y_0\in\Z^n$ turns out to be independent of $y_0$ and the choice of $\gamma$ in its homotopy class. The map from the homotopy class $[\gamma]$ to $\Tilde{\gamma}(1)-y_0$ is known to be an isomorphism, denoted by $i_{x_0}$, from $\pi_1(\T^n,x_0)$ to $\Z^n$. If the base point $x_0\in\T^n$ is unambiguous, we do not distinguish $\Z^n$ and the homotopy group based at $x_0$.

One also notice that because the covering map $\pi$ is a local isometry, the length $\ell(\gamma)$ is equal to the length of its lift $\ell(\Tilde{\gamma})$ whenever $\gamma$ is Lipschitz.

\subsubsection{Periodic extension of measures}
\label{subsubsection.periodicextensionofmeasures}
For any Radon measure $w$ on $\T^n$, we can extend $w$ uniquely to a periodic Radon measure $\Tilde{w}$ on $\R^n$. Define for any $\phi\in C_0^\infty(\R^n)$ the following linear functional
\[
L_w(\phi):= \int_{\T^n} \sum_{y\in \pi^{-1}(x)}\phi(y) dw(x).
\]
By the Riesz representation theorem, this defines a unique periodic Radon measure $\Tilde{w}$ on $\R^n$ such that
\[
\int_{\R^n} \phi \, d\Tilde{w}=L_w(\phi).
\]
In particular, for any $\alpha\ge0$ and subset $E\subset \T^n$ such that $\Ha^\alpha(E)<\infty$, the periodic extension of $\restr{\Ha^\alpha}{E}$ is exactly $\restr{\Ha^\alpha}{\pi^{-1}(E)}$. 

Similar periodic extension can be done for matrix-valued Radon measures. Specifically for a matrix-valued Radon measure of the form $d\theta = \sigma\, d\norm{\theta}$ we periodically extend the Radon measure $\norm{\theta}$ as above to be a periodic Radon measure $w$ on $\R^n$ and then define the periodic extension $\theta^*$ of $\theta$ as
\be
\label{eq.periodicextensionfomediumn}
d\theta^*(y):= \sigma(\pi(y)) \, dw(y).
\ee

\section{Conductive medium and its average behavior}\label{section.conductivemdium}

In this section we present some elementary results on mixture of conductive materials that are presented by (especially singularly supported) matrix-valued measures. We call such objects {medium} and define them precisely as follows.

\begin{definition}\label{def.singulartensorcomposite}
     Define a \emph{conductive medium} (or simply \emph{medium}) to be a positive semi-definite matrix-valued Radon measure on $\T^n$ (See Section \ref{subsubsection.matrixvaluedmeasures}). A medium $\theta$ is called \emph{isotropic} if $\theta(E)=\lambda(E)I_{n\times n}$ for any Borel set $E\subset \Omega$ and some number $\lambda(E)\ge0$. {Here $\lambda$ is in fact a Radon measure.}
\end{definition}

Define the mass distribution of a medium $\theta$ as its total variation with respect to the trace
\be\label{eq.totalvariationofmedium}
\norm{\theta}(E):=\trace{\theta(E)}.
\ee
According to the discussions in Section \ref{subsubsection.matrixvaluedmeasures}, we know that any medium $\theta$ can be uniquely decomposed as
\be\label{eq.normaldecomposition}
d\theta = \sigma \, d\norm{\theta},
\ee  
where $\norm{\theta}$ is a Radon measure and $\sigma$ is a positive semi-definite matrix field such that 
$$
\trace{\sigma(x)}=1 \textup{ for } \norm{\theta}\textup{-almost every } x\in \T^n.
$$ 
In particular, if $\theta$ is isotropic, we obtain that $\sigma\equiv \frac{1}{n}I_{n\times n}$. If not particularly mentioned, we always identify Radon measures with isotropic media.

Physically the matrix field $\sigma$ represents the local anisotropy of the material and the mass distribution $\norm{\theta}$ represents the local magnitude of conductance as a sum of all directions. Our main goal in this section is to present some basic facts on the average conductive behavior of a given medium.

\begin{definition}
\label{def.effectivetensor}
For a medium $\theta$, define the \emph{effective conductance tensor} (or simply \emph{effective tensor}) as the positive semi-definite matrix $Q(\theta)\in \R^{n\times n}$ that satisfies
\be\label{eq.effectivetensor}
p\cdot Q(\theta)p:=\inf_{\varphi\in C^\infty(\T^n)} \int_{\T^n} (\gd\varphi +p)\cdot \sigma (\gd\varphi +p) d\norm{\theta}
\ee
for every $p\in \R^n$ and we also define the \emph{mean conductance} as the average of eigenvalues
\be\label{eq.meanconductance}
M(\theta):=\frac{\trace{Q(\theta)}}{n}.
\ee
\end{definition}
See Lemma \ref{l.whyquadratic} for the proof that \eqref{eq.effectivetensor} is indeed a quadratic form. The quantity in \eqref{eq.effectivetensor} is known to be quadratic in periodic homogenization in \cite{Jikov1994}*{Chapter 1} and in ergodic versions in \cite{Armstrong2019}*{Chapter 1} for the case when $\norm{\theta}$ is absolutely continuous with respect to Lebesgue measure. The main difference is that $\theta$ in this paper can have singular support.

The main character of this paper is the following singular version that generalize the classical Wiener bound \cite{Wiener1912}.

\begin{lemma}[Singular Wiener bound]\label{l.singularwienerbound}
    The effective tensor $Q(\theta)$ satisfies
    \be\label{eq.singularwienerbound}
    0\le  Q(\theta) \le \theta(\T^n).
    \ee
    In particular, the mean conductance satisfies
       \be\label{eq.singularwienerboundformeanconductance}
   0\le M(\theta)\le \frac{1}{n} \trace{\theta(\T^n)}=\frac{1}{n} \norm{\theta}(\T^n).
    \ee
\end{lemma}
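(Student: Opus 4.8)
The plan is to prove the two-sided bound $0 \le Q(\theta) \le \theta(\T^n)$ by testing the variational definition \eqref{eq.effectivetensor} with two extreme choices of the test function $\varphi$. For the lower bound, fix $p \in \R^n$ and observe that the integrand $(\gd\varphi + p) \cdot \sigma (\gd\varphi + p)$ is pointwise nonnegative, since $\sigma(x)$ is positive semi-definite for $\norm{\theta}$-a.e. $x$. Hence every competitor in the infimum is $\ge 0$, so $p \cdot Q(\theta) p \ge 0$ for all $p$, which is precisely $Q(\theta) \ge 0$.

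For the upper bound, I would simply take $\varphi \equiv 0$ (the constant function, which is admissible since $C^\infty(\T^n)$ contains constants and $\gd(\text{const}) = 0$). This gives
\[
p \cdot Q(\theta) p \le \int_{\T^n} p \cdot \sigma(x)\, p \; d\norm{\theta}(x) = p \cdot \left( \int_{\T^n} \sigma(x)\, d\norm{\theta}(x) \right) p = p \cdot \theta(\T^n)\, p,
\]
where the last equality uses the decomposition $d\theta = \sigma\, d\norm{\theta}$ from \eqref{eq.normaldecomposition}, so that $\int_{\T^n} \sigma\, d\norm{\theta} = \theta(\T^n)$ as a matrix. Since this holds for every $p \in \R^n$, we conclude $Q(\theta) \le \theta(\T^n)$ in the sense of quadratic forms. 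The trace statement \eqref{eq.singularwienerboundformeanconductance} follows by taking traces: $\trace{Q(\theta)} \le \trace{\theta(\T^n)} = \norm{\theta}(\T^n)$ by \eqref{eq.totalvariationofmedium}, and dividing by $n$; positivity of $M(\theta)$ is immediate from $Q(\theta) \ge 0$.

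There is essentially no obstacle here — the lemma is a direct consequence of the variational formulation, and the only point requiring (minor) care is that $Q(\theta)$ is genuinely a quadratic form, so that verifying the matrix inequalities through their associated quadratic forms for all $p$ is legitimate; this is exactly the content of Lemma \ref{l.whyquadratic}, which I would invoke. One might also remark that the upper bound is the "Voigt" side (constant test function, i.e. arithmetic mean of $\sigma$) while the lower bound here degenerates to $0$ precisely because, in the singular setting, the harmonic-mean ("Reuss") side $\left(\int \sigma^{-1} d\norm{\theta}\right)^{-1}$ is vacuous when $\sigma$ fails to be invertible or $\norm{\theta}$ is singular — but this is commentary rather than part of the proof.
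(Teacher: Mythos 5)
Your proposal matches the paper's proof exactly: the lower bound is immediate from nonnegativity of the integrand, the upper bound comes from the test function $\varphi \equiv 0$, and the trace bound follows by averaging eigenvalues. The extra commentary on Lemma \ref{l.whyquadratic} and the Voigt--Reuss degeneration is correct but not part of the paper's (very brief) proof.
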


\begin{proof}
    The lower bound immediately follows from the definition of $Q(\theta)$. The upper bound is obtained by taking the special test function $\varphi=0$ in the formula. The bound for the mean conductance $M(\theta)$ is obtained by taking the average eigenvalue of $Q(\theta)$.
\end{proof}

Recall the classical Wiener bound \eqref{eq.classicalWIenrbound} when $d\theta(x)= A(x) d\Lme^n$ for some matrix field $A$, one has
\[
\lb \int_{\T^n} A^{-1}(x) d\Lme^n \rb^{-1} \le Q(\theta) \le \int_{\T^n} A(x) d\Lme^n.
\]
The upper bound in \eqref{eq.singularwienerbound} is a natural extension of the classical Wiener upper bound, but the lower bound in \eqref{eq.singularwienerbound}, although looks more trivial, is sharp in general. For example, one can consider any finite sum of Dirac delta
\[
\rho:= \sum_{i=1}^N \delta_{x_i},
\]
where $x_i\in \T^n$ for $i=1,\dots,N$, and then find that
\[
Q(\rho)=0
\]
no matter how we choose $N$ and the position $x_i$'s. In fact, we will show in Theorem \ref{t.dimensiononeofnontrivialmedium} that $Q(\theta)=0$ whenever $\spt{\theta}$ is totally disconnected. 

To classify the media that have different average behaviors, we now introduce some new terminologies.

\begin{definition}\label{def.differentmedia}
Call a medium \(\theta\) \emph{trivial} if \(\displaystyle Q(\theta) = 0\); \emph{nontrivial} if \(\displaystyle  Q(\theta) \ne 0\); \emph{positive} (or \emph{resilient}) if \(\displaystyle Q(\theta) > 0\) is positive definite; \emph{maximal} if $\displaystyle Q(\theta)=\theta(\T^n)$, i.e., $\theta$ achieves the upper bound in \eqref{eq.singularwienerbound}. 
\end{definition}

We remark here that a medium $\theta$ is trivial if and only if $M(\theta)=0$. It is maximal if and only if $M(\theta)$ reaches the upper bound in \eqref{eq.singularwienerboundformeanconductance}. Also note that a maximal medium is not necessarily positive.

In the following we show some basic properties of the effective tensor $Q$. In Section \ref{subsection.effectivequadraticform} we show that the quantity in \eqref{eq.effectivetensor} is indeed a quadratic form. In Section \ref{subsection.submedumandsuperadditivity}, we present basic properties of $Q(\theta)$ with respect to the addition of media. In Section \ref{subsection.faithfulconvergence} we show that $Q(\theta)$ is upper semi-continuous with respect to the weak* topology of media. In Section \ref{subsection.efficientsubmedium} we introduce the notion of efficient submedium. In Section \ref{subsection.elequation} we derive the Euler-Lagrange equation for maximal media.

\subsection{The effective quadratic form}\label{subsection.effectivequadraticform}
\begin{lemma}\label{l.whyquadratic}
    The quantity defined in \eqref{eq.effectivetensor} is indeed a quadratic form.

\end{lemma}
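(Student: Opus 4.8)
The plan is to show that the map $p \mapsto p \cdot Q(\theta) p$ defined by the infimum in \eqref{eq.effectivetensor} extends to a genuine quadratic form, i.e. there is a symmetric matrix $Q(\theta)$ realizing it. The natural approach is to work not with the scalar-valued infimum directly but with the vector field $\gd\varphi$ it produces, and to exploit bilinearity of the integrand. Concretely, I would first argue that the infimum is attained (or approximately attained along a minimizing sequence whose gradients converge weakly in $L^2(\norm{\theta})$) — here one uses that the integrand $(\gd\varphi+p)\cdot\sigma(\gd\varphi+p)$ is nonnegative, that $\trace\sigma = 1$ so $\sigma$ is uniformly bounded, and the standard fact that the closure of $\{\gd\varphi : \varphi \in C^\infty(\T^n)\}$ in $L^2(\norm{\theta};\R^n)$ is a closed linear subspace $V$ (the space of "potential" fields). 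So $p\cdot Q(\theta) p = \inf_{v\in V}\int_{\T^n}(v+p)\cdot\sigma(v+p)\,d\norm{\theta}$, a projection-type problem in a Hilbert space, which always has a minimizer $v_p$ characterized by the orthogonality condition $\int (v_p + p)\cdot \sigma\, w\, d\norm{\theta} = 0$ for all $w\in V$.

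Next I would verify that $p\mapsto v_p$ is linear: if $v_p, v_q$ are the minimizers for $p,q$, then $v_p+v_q\in V$ satisfies the orthogonality condition for $p+q$ (add the two identities), so by uniqueness $v_{p+q} = v_p + v_q$, and similarly $v_{\alpha p} = \alpha v_p$. Then define the polarized quantity
\[
B(p,q) := \int_{\T^n} (v_p + p)\cdot \sigma (v_q + q)\, d\norm{\theta}.
\]
Linearity of $p\mapsto v_p + p$ makes $B$ bilinear, and symmetry of $\sigma$ makes $B$ symmetric. Using the orthogonality condition (take $w = v_q \in V$) one gets $B(p,q) = \int (v_p+p)\cdot\sigma(v_q+q)\,d\norm{\theta} = \int(v_p+p)\cdot\sigma\, q\, d\norm{\theta}$, which exhibits $B(p,q)$ as linear in $q$ even more transparently; in any case $B(p,p)$ coincides with the infimum defining $p\cdot Q(\theta)p$. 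A symmetric bilinear form on $\R^n$ is represented by a symmetric matrix, and positive semi-definiteness is immediate since $B(p,p) \ge 0$; this matrix is $Q(\theta)$.

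The main obstacle I anticipate is the purely measure-theoretic point that $V := \overline{\{\gd\varphi : \varphi\in C^\infty(\T^n)\}}^{\,L^2(\norm{\theta})}$ is genuinely a \emph{closed linear subspace} and that a minimizing sequence for the infimum has gradients bounded in $L^2(\norm{\theta})$ so that one can pass to a weak limit \emph{inside} $V$ — this is delicate precisely because $\norm{\theta}$ may be singular (supported on a lower-dimensional set), so $\gd\varphi$ here must be read as the restriction of the classical gradient to $\spt\theta$, and one must be careful that the relevant coercivity (to get the $L^2$ bound on the sequence $\gd\varphi_k$, not just on $(\gd\varphi_k+p)\cdot\sigma(\gd\varphi_k+p)$) holds. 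One clean way around this: note that $\int (\gd\varphi+p)\cdot\sigma(\gd\varphi+p)\,d\norm{\theta}$ depends on $\varphi$ only through $\gd\varphi$ and is convex in $\gd\varphi$; replacing the admissible class by its closure $V$ changes neither the infimum nor its value, and on $V$ the functional $v\mapsto \int(v+p)\cdot\sigma(v+p)\,d\norm{\theta}$ is a nonnegative quadratic functional whose minimizer over the closed subspace $V$ exists by the projection theorem applied after quotienting by the (possibly nontrivial) null directions of $\int\,\cdot\,\sigma\,\cdot\,d\norm{\theta}$. I would isolate the existence-of-minimizer step as a short lemma and spend the bulk of the argument on the linearity/bilinearity bookkeeping, which is routine once existence and uniqueness-up-to-null-directions are in hand.
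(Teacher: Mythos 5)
Your proposal takes a genuinely different route from the paper. The paper avoids Hilbert-space machinery entirely: it first proves (\lref{techtoshowquadratic}) an approximate Euler--Lagrange characterization of minimizing sequences --- $\varphi_j$ is minimizing for $C_\theta(\cdot,p)$ if and only if $\lw\int(\gd\varphi_j+p)\cdot\sigma\gd\eta\,d\norm{\theta}\rw\le\ep\lb\int\gd\eta\cdot\sigma\gd\eta\,d\norm{\theta}\rb^{1/2}$ for all $\eta$ and all large $j$ --- then shows $2$-homogeneity directly and polarizes, observing via that characterization that $\varphi_1^j+\lambda\varphi_2^j+\mu\varphi_3^j$ is minimizing for $p_1+\lambda p_2+\mu p_3$ whenever each $\varphi_i^j$ is minimizing for $p_i$. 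That lemma is exactly the sequential shadow of your orthogonality condition $\int(v_p+p)\cdot\sigma\,w\,d\norm{\theta}=0$ for $w\in V$, but the paper never constructs a limit object $v_p$. Your route does construct it, and the obstacle you flag is real and slightly understated: in $L^2(\norm{\theta};\R^n)$ with the ordinary norm the closure of $\{\gd\varphi\}$ is closed, but the functional $v\mapsto\int(v+p)\cdot\sigma(v+p)\,d\norm{\theta}$ is generally not coercive there (the only a priori bound is $\sigma\le I$ coming from $\trace{\sigma}=1$), so a minimizing sequence can escape in the $L^2$ norm; and after quotienting by the null space of the $\sigma$-seminorm the quotient need not be complete, since the $\sigma$-seminorm is dominated by but not equivalent to the $L^2$ norm, so one must pass to the abstract completion before invoking the projection theorem, and $v_p$ then lives in that completion rather than in $L^2(\norm{\theta})$. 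None of this breaks the argument --- the squared distance from a point to a closed subspace of a Hilbert space is always a quadratic form in the point, which is precisely the content of the lemma --- but that excursion into an abstract completion is what the paper's sequential formulation is designed to sidestep. Each approach buys something: yours is a single, reusable appeal to the projection theorem; the paper's never leaves $C^\infty(\T^n)$ and concrete integrals.
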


Let us write 
$$
    C_\theta(\varphi,p):= \int_{\T^n} (\gd \varphi+p)\cdot \sigma(\gd \varphi+p) d\norm{\theta} \textup{ and }C_\theta(p):=\inf_{\varphi\in C^\infty(\T^n)} C_\theta(\varphi,p).
$$
Before proving Lemma \ref{l.whyquadratic} we first show a technical lemma.

\begin{lemma}
    \label{l.techtoshowquadratic}
A sequence of functions $\varphi_j\in C^\infty(\T^n)$ satisfies
\be
C_\theta(\varphi_j,p) \downarrow C_\theta(p) \textup{ as } j\rta\infty
\ee
if and only if for every $\ep>0$ there is a $j_0>0$ such that for all $j>j_0$ and $\eta\in C^\infty(\T^n)$
\be\label{eq.approximatecorrectorequation}
\lw\int_{\T^n} (\gd \varphi_j+p)\cdot \sigma \gd \eta d\norm{\theta}\rw \le \ep \sqrt{\int_{\T^n} \gd \eta\cdot \sigma \gd \eta d\norm{\theta}}.
\ee
\end{lemma}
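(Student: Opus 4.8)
The plan is to recognize $C_\theta(\varphi, p)$ as a convex quadratic functional on an appropriate Hilbert space and to identify condition \eqref{eq.approximatecorrectorequation} as the statement that the gradient of this functional is becoming small. First I would fix $p$ and work in the Hilbert space $H$ obtained by completing $\{\gd\eta : \eta\in C^\infty(\T^n)\}$ with respect to the seminorm $\eta\mapsto \bigl(\int_{\T^n}\gd\eta\cdot\sigma\gd\eta\, d\norm\theta\bigr)^{1/2}$ (quotienting by its null space), with inner product $\langle v, w\rangle_\sigma := \int_{\T^n} v\cdot\sigma w\, d\norm\theta$. In this language, writing $v_j$ for the class of $\gd\varphi_j$, we have $C_\theta(\varphi_j, p) = \|v_j + p\|_\sigma^2$ (where $p$ is interpreted via its constant-gradient representative, noting $\int p\cdot\sigma\gd\eta\,d\norm\theta$ is the relevant pairing), and $C_\theta(p) = \inf_{v\in \overline{H}} \|v+p\|_\sigma^2 = \|v_* + p\|_\sigma^2$, the squared distance from $-p$ to the closed subspace, attained at the orthogonal projection $v_*$.

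The key computation is the parallelogram-type identity: for any $v_j$ and the minimizer $v_*$,
\[
\|v_j + p\|_\sigma^2 = \|v_* + p\|_\sigma^2 + \|v_j - v_*\|_\sigma^2 + 2\langle v_* + p, v_j - v_*\rangle_\sigma,
\]
and since $v_*$ is the projection, the cross term $\langle v_* + p, v_j - v_*\rangle_\sigma = 0$ for all $v_j$ in the subspace, so $C_\theta(\varphi_j,p) - C_\theta(p) = \|v_j - v_*\|_\sigma^2$. Hence $C_\theta(\varphi_j,p)\downarrow C_\theta(p)$ is equivalent to $\|v_j - v_*\|_\sigma\to 0$. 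On the other hand, the left side of \eqref{eq.approximatecorrectorequation} is $|\langle v_j + p, \gd\eta\rangle_\sigma|$; using $\langle v_* + p, \gd\eta\rangle_\sigma = 0$ this equals $|\langle v_j - v_*, \gd\eta\rangle_\sigma|$, and by Cauchy--Schwarz this is $\le \|v_j - v_*\|_\sigma \|\gd\eta\|_\sigma$, with the bound attained (in the sup over $\eta$) arbitrarily closely. Therefore \eqref{eq.approximatecorrectorequation} holding for all large $j$ with constant $\ep$ is precisely the statement $\|v_j - v_*\|_\sigma \le \ep$ eventually, which is exactly $\|v_j - v_*\|_\sigma\to 0$. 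This closes the equivalence. I would also remark that the monotonicity ``$\downarrow$'' is not really a constraint: any minimizing sequence can be assumed nonincreasing after passing to a subsequence, or one simply notes $C_\theta(\varphi_j,p)\ge C_\theta(p)$ always, so convergence and monotone convergence are interchangeable here.

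The main obstacle I anticipate is purely technical rather than conceptual: one must be careful that $p$ (a vector, i.e.\ a constant gradient) genuinely defines a bounded linear functional $\gd\eta\mapsto\int_{\T^n}p\cdot\sigma\gd\eta\,d\norm\theta$ on the seminormed space, so that its Riesz representative $-v_*$ exists in the completion; boundedness follows from Cauchy--Schwarz since $\int p\cdot\sigma p\,d\norm\theta = p\cdot\theta(\T^n)p<\infty$. A second small point is density: the forward direction of \eqref{eq.approximatecorrectorequation} only gives control against smooth test gradients $\gd\eta$, but these are dense in $\overline H$ by construction, so the estimate passes to the completion and in particular applies with $\gd\eta$ replaced by (an approximation of) $v_j - v_*$ itself, which is what lets us conclude $\|v_j-v_*\|_\sigma\le\ep$ from the integral bound. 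Once these foundational points are set up, both implications are immediate from Cauchy--Schwarz and the Pythagorean identity for orthogonal projections.
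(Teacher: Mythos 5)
Your proof is correct but proceeds by a genuinely different route than the paper. The paper argues directly at the level of smooth test functions: for the forward direction it substitutes $\varphi_j - t\eta$ into $C_\theta(\cdot,p)$, expands the square, and optimizes over $t$ to land on \eqref{eq.approximatecorrectorequation}; for the reverse it argues by contradiction, supposing $C_\theta(\varphi_j,p)\ge C_\theta(p)+\delta$ and producing competitors $\varphi_j+\eta_j$ that violate the bound. Your approach instead passes to the Hilbert space completion of $\{\gd\eta : \eta\in C^\infty(\T^n)\}$ under the $\sigma$-weighted inner product, identifies the minimizer with the orthogonal projection $v_*$ of $-p$, and reduces the whole lemma to the Pythagorean identity $C_\theta(\varphi_j,p)-C_\theta(p)=\|v_j - v_*\|_\sigma^2$ together with the observation that condition \eqref{eq.approximatecorrectorequation} is exactly the statement $\|v_j-v_*\|_\sigma\le\ep$ (Cauchy--Schwarz in one direction, density and testing against $v_j-v_*$ in the other). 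What your argument buys is transparency: the equivalence is revealed as a purely geometric fact about distance to a closed subspace, and the constant $\ep$ appears with the same normalization on both sides of the equivalence rather than being traced through a quadratic-in-$t$ optimization. What the paper's argument buys is self-containment: it never needs to invoke completeness, the projection theorem, or a Riesz-representative for $p$, so it requires no foundational setup beyond Cauchy--Schwarz on the original integral. Both handle the mild imprecision in the lemma's use of ``$\downarrow$'' the same way — since $C_\theta(\varphi_j,p)\ge C_\theta(p)$ always, convergence from above is automatic. Your proposal is sound; the two technical points you flag (boundedness of the functional $\gd\eta\mapsto\int p\cdot\sigma\gd\eta\,d\norm{\theta}$, and density of smooth gradients in the completion) are exactly the right ones to worry about and are resolved as you indicate.
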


\begin{proof}
    To show the only if part, we take $\varphi_j$ to be a minimizing sequence and for some $j_0>0$ and all $j>j_0$ we have
\[
C_\theta(\varphi_j,p)\le C_\theta(p)+\ep.
\]
By minimality of $C_\theta(p)$ we have for all $\eta\in C^\infty({\T^n})$ and $t\ne0$
\[
C_\theta(\varphi_j,p)\le C_\theta(\varphi_j-t\eta,p) + \ep,
\]
which implies that
\[
2t\int_{{\T^n}} (\gd \varphi_j+p)\cdot \sigma \gd \eta d\norm{\theta} -{t^2}\int_{\T^n} \gd \eta\cdot \sigma \gd \eta d\norm{\theta} \le \ep.
\]
If $\int_{\T^n} \gd \eta\cdot \sigma \gd \eta d\norm{\theta}=0$ the inequality \eqref{eq.approximatecorrectorequation} is trivial. If $\int_{\T^n} \gd \eta\cdot \sigma \gd \eta d\norm{\theta}>0$ then by arbitrariness of $t\ne0$
\[
\frac{\lw\int_{{\T^n}} (\gd \varphi_j+p)\cdot \sigma \gd \eta d\norm{\theta}\rw^2}{{\int_{\T^n} \gd \eta\cdot \sigma \gd \eta d\norm{\theta}}} \le  \ep,
\]
regardless of the choice of $\eta$.

To show the if part, we observe that if there is a $\delta>0$ such that
\[
C_\theta(\varphi_j,p)\ge C_\theta(p)+\delta
\]
for all large $j$, then for all $j$ there will be a smooth test function $\eta_j$ such that
\[
C_\theta(\varphi_j,p)\ge C_\theta(\varphi_j+\eta_j,p)+\delta/2
\]
also for all large $j$. In other words, we have
\[
-2\int_{\T^n} (\gd \varphi_j+p)\cdot \sigma\gd \eta_j d\norm{\theta} -\int_{\T^n} \gd \eta_j \cdot \sigma \gd\eta_j d\norm{\theta} \ge \delta/2,
\]
while on the other hand, we have for all real number $t$
\[
2t\int_{\T^n} (\gd \varphi_j+p)\cdot \sigma\gd \eta_j d\norm{\theta} -{t^2}\int_{\T^n} \gd \eta_j \cdot \sigma \gd\eta_j d\norm{\theta}\le \frac{\lw\int_{{\T^n}} (\gd \varphi_j+p)\cdot \sigma \gd \eta_j d\norm{\theta}\rw^2}{{\int_{\T^n} \gd \eta_j\cdot \sigma \gd \eta_j d\norm{\theta}}} \le  \ep.
\]
By taking $t=-1$, we have
\[
-2\int_{\T^n} (\gd \varphi_j+p)\cdot \sigma\gd \eta_j d\norm{\theta} -\int_{\T^n} \gd \eta_j \cdot \sigma \gd\eta_j d\norm{\theta} \le \ep\ll \delta/2,
\]
as $j\rta\infty$ by the assumption \eqref{eq.approximatecorrectorequation}, which is a contradiction.
\end{proof}

\begin{proof}[Proof of Lemma \ref{l.whyquadratic}]
We first observe that for all real constant $\lambda$
\[
\begin{split}
         C_\theta(\lambda p)&=\inf_{\varphi\in C^\infty(\T^n)} C_\theta(\varphi,\lambda p)\\
         &=\lambda^2\inf_{\varphi\in C^\infty(\T^n)} C_\theta(\varphi/\lambda, p)\\
         &=\lambda^2 C_\theta(p).
\end{split}
\]  
It then suffice to show that
\[
(p_1,p_2)_\theta:= \frac{1}{2}\lb C_\theta(p_1+p_2)-C_\theta(p_1)-C_\theta(p_2)\rb
\]
is a bilinear form. By Lemma \ref{l.techtoshowquadratic}  
\[
    C_\theta(p_1+\lambda p_2+\mu p_3)=\lim_{j\rta\infty}C_\theta(\varphi_1^j+\lambda\varphi_2^j+\mu\varphi_3^j,p_1+\lambda p_2+ \mu p_3),
\]
where $\varphi_i^j$'s are minimizing sequences of $C_\theta(\cdot,p_i)$'s for each $i=1,2,3$. Now we know that
\[
\begin{split}
 (p_1,\lambda p_2+\mu p_3)_\theta+ o_j(1)&=\frac{1}{2} \lb C_\theta(\varphi_1^j+\lambda\varphi_2^j+\mu\varphi_3^j,p_1+\lambda p_2+ \mu p_3)-C_\theta(\varphi_1^j,p_1)\right.\\
 &\quad\left.-C_\theta(\lambda\varphi_2^j+\mu\varphi_3^j,\lambda p_2+\mu p_3) \rb \\
&= \int_{\T^n}(\gd\varphi_1^j +p_1)\cdot \sigma (\gd(\lambda\varphi_2^j+\mu\varphi_3^j)+\lambda p_2+ \mu p_3) d\norm{\theta} \\
&=\lambda \int_{\T^n}(\gd\varphi_1^j +p_1)\cdot \sigma (\gd\varphi_2^j+p_2) d\norm{\theta}  \\
&\quad+ \mu \int_{\T^n}(\gd\varphi_1^j +p_1)\cdot \sigma (\gd\varphi_3^j+p_3) d\norm{\theta}\\
&=\lambda (p_1,p_2)_\theta + \mu (p_1,p_3)_\theta + o_j(1).
\end{split}
\]
Sending $j\rta\infty$ on both sides, we obtain the bilinearity of $(p_1,p_2)_\theta$.

\end{proof}

\subsection{Submedium and super-additivity}\label{subsection.submedumandsuperadditivity}

In this subsection we discuss some basic behaviors of $Q(\theta)$ with respect to addition of media $\theta$'s. Let us start with the notion of submedia.

\begin{definition}\label{def.restrictionoftensorfolds}
    A \emph{submedium} of $\theta$ is a medium $\theta'$ such that for all Borel set $A\subset \T^n$
    \[
    \theta'(A)\le \theta(A).
    \]
We also denote as $\theta'\le \theta$. We further write $\theta'<\theta$ if $\theta'\ne \theta$. For a Borel set $E$, the \emph{restriction} of $\theta$ on $E$ is defined as 
    \[
    \theta_E(A) := \theta(E\cap A), \textup{ for every Borel set }A\subset\T^n.
    \]
\end{definition}
Notice that if ${\theta'}$ is a submedium of $\theta$ on $U$, then the subtraction
\[
\Tilde{\theta}:=\theta-{\theta'}
\]
also defines a submedium. We call $\Tilde{\theta}$ the \emph{complement} of $\theta'$ with respect to $\theta$. We are interested in how $Q(\theta)=Q(\theta'+\Tilde{\theta})$ behaves as we view $\theta$ as a sum of $\theta'$ and its complement. It turns out that we don't in general have additivity, but instead an inequality (See Lemma \ref{l.basicpropertiesofQ} \ref{condition.(3)} below). This non-additivity comes from the intersection of supports of each summand. Indeed, we can consider the sum of $\theta_1$ and $\theta_2$ defined on $\T^1$:
\[
d\theta_1:=1_{\{0\le s \le 1/2\}} ds \textup{ and }d\theta_2:=1_{\{1/2\le s \le 1\}} ds,
\]
where $s:[0,1]\rta \T^1$ is a unit speed parametrization. Notice that both 
\[
Q(\theta_1)=Q(\theta_2)=0,
\]
but $Q(\theta_1+\theta_2)=1>0$.

\begin{lemma}\label{l.basicpropertiesofQ}
We show the following basic properties of $Q(\theta)$.

\begin{enumerate}[label=(\arabic*)]
    \item\label{condition.(1)}  Let $\theta'$ be a submedium of $\theta$, then
    \[
    Q(\theta')\le Q(\theta) .
    \]
    \item\label{condition.(2)}  Suppose two media $\theta$ and $\theta'$ satisfy that $\spt{\theta}\cap\spt{\theta'}=\emptyset$, then
   \[
   Q(\theta+\theta')=Q(\theta)+Q(\theta).
   \]
    \item\label{condition.(3)} (Super-additivity) Given a sequence of media $\theta_i$ such that 
    \[
    \sum_{i=1}^\infty \norm{\theta_i}(\T^n)<\infty,
    \]
    then the following medium
    \[
    \theta_\infty:=\sum_{i=1}^\infty \theta_i
    \]
    is well-defined and we have the following super-additivity
    \[
    Q(\theta_\infty) \ge \sum_{i=1}^\infty Q(\theta_i).
    \]
    In particular, if all $\theta_i$ are maximal, then the equality holds and $\theta_\infty$ is also maximal.
\end{enumerate}
   
\end{lemma}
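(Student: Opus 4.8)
\textbf{Proof proposal for Lemma \ref{l.basicpropertiesofQ}.}

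The plan is to prove the three items in order, since \ref{condition.(2)} is essentially a special case of the argument behind \ref{condition.(1)}, and \ref{condition.(3)} builds on both. For \ref{condition.(1)}, given a submedium $\theta'\le\theta$, write $d\theta=\sigma\,d\norm{\theta}$ and $d\theta'=\sigma'\,d\norm{\theta'}$ and let $\Tilde\theta=\theta-\theta'$ be the complement. The key observation is that for any $\varphi\in C^\infty(\T^n)$ and any $p\in\R^n$, the integrand $(\gd\varphi+p)\cdot(\cdot)(\gd\varphi+p)$ is a nonnegative quadratic form acting on the positive semidefinite matrix-valued measures, so
\[
\int_{\T^n}(\gd\varphi+p)\cdot d\theta\,(\gd\varphi+p)=\int_{\T^n}(\gd\varphi+p)\cdot d\theta'\,(\gd\varphi+p)+\int_{\T^n}(\gd\varphi+p)\cdot d\Tilde\theta\,(\gd\varphi+p)\ge\int_{\T^n}(\gd\varphi+p)\cdot d\theta'\,(\gd\varphi+p).
\]
Taking the infimum over $\varphi$ on the right gives $p\cdot Q(\theta')p\le\int_{\T^n}(\gd\varphi+p)\cdot d\theta\,(\gd\varphi+p)$ for every $\varphi$, and then taking the infimum over $\varphi$ on the left yields $p\cdot Q(\theta')p\le p\cdot Q(\theta)p$ for all $p$, i.e. $Q(\theta')\le Q(\theta)$ as quadratic forms (hence as matrices).

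For \ref{condition.(2)}, when $\spt\theta\cap\spt{\theta'}=\emptyset$ we obtain the reverse inequality as well. Choose disjoint open neighborhoods $U\supset\spt\theta$ and $U'\supset\spt{\theta'}$, and given a minimizing sequence $\varphi_j$ for $Q(\theta+\theta')$ at a fixed $p$, use a partition of unity to split $\varphi_j$ into a function that is used near $\spt\theta$ and an independent function near $\spt{\theta'}$; since the two supports do not interact, the energy decouples as a sum and one can optimize each piece separately. Concretely, $p\cdot Q(\theta+\theta')p=C_{\theta}(p)+C_{\theta'}(p)$ because the cross terms vanish (the two measures live on disjoint sets) and each test function can be modified freely off the support of the measure it is paired with without changing that measure's contribution. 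Combined with super-additivity from \ref{condition.(3)} (or directly), this gives $Q(\theta+\theta')=Q(\theta)+Q(\theta')$. (I note in passing that the statement as printed has a typo: the right-hand side should read $Q(\theta)+Q(\theta')$.)

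For \ref{condition.(3)}, first check $\theta_\infty=\sum_i\theta_i$ is a well-defined medium: the partial sums are monotone increasing positive semidefinite matrix-valued measures with $\norm{\theta_\infty}(\T^n)=\sum_i\norm{\theta_i}(\T^n)<\infty$, so they converge (in total variation, entrywise) to a positive semidefinite matrix-valued Radon measure. For the inequality, fix $p$ and fix any finite $N$. For each $i\le N$ pick a near-optimal $\varphi_i$ with $C_{\theta_i}(\varphi_i,p)\le p\cdot Q(\theta_i)p+\ep/2^i$. Since $\theta_\infty\ge\sum_{i=1}^N\theta_i$, item \ref{condition.(1)} gives $Q(\theta_\infty)\ge Q(\sum_{i=1}^N\theta_i)$, so it suffices to bound $p\cdot Q(\sum_{i=1}^N\theta_i)p$ from below by $\sum_{i=1}^N p\cdot Q(\theta_i)p$. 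The main obstacle --- and the only genuinely delicate point --- is that a single test function $\varphi$ cannot simultaneously be near-optimal for finitely many $\theta_i$ whose supports may overlap, so additivity for a finite overlapping sum is not obvious. I would handle this by a direct duality/lower-bound characterization: using Lemma \ref{l.techtoshowquadratic} (the approximate corrector criterion), or more simply the elementary inequality $(\gd\varphi+p)\cdot\sigma(\gd\varphi+p)\ge 2(\gd\varphi+p)\cdot\sigma\,\xi-\xi\cdot\sigma\xi$ for any $\xi$, which shows $p\cdot Q(\theta)p=\sup\{2\int p\cdot d\theta\,\xi-\int\xi\cdot d\theta\,\xi\}$ over an appropriate class of fields $\xi$ (curl-type/gradient fields), and this sup-representation is manifestly additive over a sum of measures on the class of fields that simultaneously work. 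Then letting $N\to\infty$ and using monotone convergence of the eigenvalues of $Q(\sum_{i=1}^N\theta_i)$ gives $Q(\theta_\infty)\ge\sum_{i=1}^\infty Q(\theta_i)$. Finally, if every $\theta_i$ is maximal then $\sum_i Q(\theta_i)=\sum_i\theta_i(\T^n)=\theta_\infty(\T^n)$, while the singular Wiener bound \eqref{eq.singularwienerbound} gives $Q(\theta_\infty)\le\theta_\infty(\T^n)$; combined with super-additivity this forces equality, so $\theta_\infty$ is maximal.
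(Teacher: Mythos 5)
Items \ref{condition.(1)} and \ref{condition.(2)} are handled essentially as in the paper (and you are right that there is a typo in \ref{condition.(2)}: the right-hand side should be $Q(\theta)+Q(\theta')$). The well-definedness of $\theta_\infty$ and the final ``maximal'' paragraph of \ref{condition.(3)} also match the paper. The gap is in the core super-additivity step.

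You write that ``a single test function $\varphi$ cannot simultaneously be near-optimal for finitely many $\theta_i$ whose supports may overlap'' and treat this as the genuine difficulty, then reach for a dual $\sup$-formula. This is a misdiagnosis. You have swapped the logic of super-additivity with that of additivity. To prove
\[
\inf_{\varphi}\sum_{i=1}^{N} C_{\theta_i}(\varphi,p)\;\ge\;\sum_{i=1}^{N}\inf_{\psi}C_{\theta_i}(\psi,p)
\]
you do \emph{not} need one $\varphi$ that is near-optimal for every $\theta_i$ at once; that would be needed for the reverse inequality. For super-additivity you only need that, for an \emph{arbitrary} single $\varphi$,
\[
C_{\theta_\infty}(\varphi,p)=\sum_{i=1}^{\infty}C_{\theta_i}(\varphi,p)\;\ge\;\sum_{i=1}^{N}C_{\theta_i}(\varphi,p)\;\ge\;\sum_{i=1}^{N}\inf_{\psi}C_{\theta_i}(\psi,p)=\sum_{i=1}^{N}p\cdot Q(\theta_i)p,
\]
and then take the infimum over $\varphi$ on the far left. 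This is exactly the paper's argument; it is one line, uses the same test function on both sides, and is indifferent to overlapping supports. The near-optimal $\varphi_i$'s you introduce are never used and are a red herring.

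The duality detour you propose as a workaround is not a viable replacement as written. The representation $p\cdot Q(\theta)p=\sup_\xi\bigl\{2\int p\cdot d\theta\,\xi-\int\xi\cdot d\theta\,\xi\bigr\}$ only becomes a lower bound on the $\inf$ after you restrict $\xi$ to a constraint set $K(\theta)$ that annihilates gradients against $d\theta$ (so that $\int\gd\varphi\cdot d\theta\,\xi=0$ for all $\varphi$); this set depends on $\theta$. If you work only with the $\xi$ that ``simultaneously work'' for all $\theta_i$ and for $\theta_\infty$, then the naive manipulation
$\sum_i \sup_{\xi} L_{\theta_i}(\xi)\ge\sum_i L_{\theta_i}(\xi^*)=L_{\theta_\infty}(\xi^*)$
runs in the \emph{wrong} direction (it bounds $\sum Q(\theta_i)$ from below, not $Q(\theta_\infty)$), and upgrading it to the desired inequality would require strong duality $\inf=\sup$ for $\theta_\infty$ plus compatibility of the constraint sets, neither of which you establish. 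In short: the direct primal argument is immediate and should be used; the dual argument is both unnecessary and, as sketched, does not close.
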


There is in fact no countable additivity even if the supports of the summand are disjoint from each other (See Example \ref{ex.failureofcountableadditivity}). Here we only present finite additivity under the disjointness assumption, but we will discuss in further details on such problems in Section \ref{section.nontrivialmedia}.

\begin{proof}

To prove property \ref{condition.(1)} we decompose $d\theta'=\sigma'\,d\norm{\theta'}$. For every $\varphi\in C^\infty(\T^n)$, we have
    \[
    \int_{\T^n}\lb\gd\varphi+p\rb\cdot \sigma'(x)\lb\gd\varphi+p\rb d\norm{\theta'}(x) \le  \int_{\T^n}\lb\gd\varphi+p\rb\cdot \sigma(x)\lb\gd\varphi+p\rb d\norm{\theta}(x).
    \]
    The inequality $ Q(\theta')\le Q(\theta)$ follows by taking infimum over all $\varphi\in C^\infty(\T^n)$.

 For property \ref{condition.(2)},  notice that the set $\spt{\theta'}$ has positive distance from $\spt{\theta}$, and both are compact by definition. This implies that 
the restriction of any test function $\varphi\in C^\infty(\T^n)$ on $\spt{\theta'}\cup \spt{\theta}$ can be represented as the restriction of $\varphi_1+\varphi_2$ with
$$
\varphi_1, \varphi_2 \in C^\infty(\T^n) \textup{ that satisfy }\spt{\theta}\subset \spt{\varphi_1}, \ \spt{\theta'}\subset \spt{\varphi_2}\textup{ and } \spt{\varphi_1}\cap \spt{\varphi_2}=\emptyset.
$$
This establishes the proof when one take the infimum over the test function $\varphi$.  

To show property \ref{condition.(3)}, we notice that $\sum_{i=1}^\infty \theta_i$ is a well-defined positive semi-definite matrix-valued Radon measure, and 
   \[
   \norm{\sum_{i=1}^\infty \theta_i}(\T^n)= \sum_{i=1}^\infty \norm{\theta_i}(\T^n)<\infty.
   \]
We write 
\[
d\theta_\infty:=\sigma_\infty\, d\norm{\theta_\infty} \textup{ and }d\theta_i:=\sigma_i\, d\norm{\theta_i} \textup{ for each }i\ge 1.
\]
To show the inequality we observe that for all $N>0$ and $\varphi\in C^\infty(\T^n)$
\[
\begin{split}
    \int_{\T^n} \lb \gd \varphi + p\rb\cdot{\sigma_\infty}\lb \gd \varphi + p\rb d\norm{\theta_\infty}&=\int_{\T^n}  \trace{\lb \gd \varphi + p\rb\otimes\lb \gd \varphi + p\rb d\theta_\infty}\\
    &=\sum_{i=1}^\infty \int_{\T^n}  \trace{\lb \gd \varphi + p\rb\otimes\lb \gd \varphi + p\rb d\theta_i}\\
    &\ge \sum_{i=1}^N \int_{\T^n}  \trace{\lb \gd \varphi + p\rb\otimes\lb \gd \varphi + p\rb d\theta_i}\\
    &=\sum_{i=1}^N\int_{\T^n} \lb \gd \varphi + p\rb\cdot \sigma_i\lb \gd \varphi + p\rb d \norm{\theta_i}.
\end{split}
\]
By taking the infimum over all smooth test functions $\varphi$ we obtain
\[
Q(\theta_\infty) \ge \sum_{i=1}^N Q(\theta_i).
\]
Sending $N\rta\infty$ establishes the inequality we desire. 

Suppose all $\theta_i$'s are maximal, then by the singular Wiener upper bound \eqref{eq.singularwienerbound}
\[
\sum_{i=1}^\infty Q(\theta_i)=\sum_{i=1}^\infty\theta_i(\T^n)=\theta_\infty(\T^n) \ge Q(\theta_\infty),
\]
which finishes the proof.

\end{proof}

\subsection{Weak* convergence, upper semi-continuity and faithful convergence}\label{subsection.faithfulconvergence}

In this subsection we study the continuity property of $Q(\theta)$ with respect to the weak* topology of media. Let us introduce the notion of weak* convergence.

\begin{definition}
    \label{def.weakstarconvergence}
    Say that a sequence of media $\theta_i$ {weakly* converges} to another medium $\theta_\infty$ if for any continuous function $\varphi\in C(\T^n)$ there is
    \[
    \lim_{i\rta\infty} \int_{\T^n} \varphi(x) d\theta_i(x) = \int_{\T^n} \varphi(x) d\theta_\infty(x).
    \]
    We use the notation $\theta_i \overset{*}{\weakcv} \theta_\infty$.
\end{definition}

The effective tensor $Q(\theta)$ is generally discontinuous with respect to the weak* topology of $\theta$ as above. Such phenomena occur often because of homogenization and have been studied in the context of $G$, $H$ and $\Gamma$-convergence theories \cites{DalMaso1993,MuratTartar1997,Braides2006,Defranceschi1993}. We are currently aware of at least two phenomena when the discontinuity occurs:
\begin{itemize}
    \item (Change of topology) When there is a significant topological difference between the supports of $\theta_\infty$ and $\theta_i$'s. To see this phenomenon, one can consider the medium 
    \[
     d\theta_\delta(s):= 1_{\{0\le s \le 1-\delta\}} ds, \textup{ for }s\in \T^1,
    \]
    where $s$ is a unit speed parametrization of $\T^1$ and $\delta >0$. Notice that for all $\delta>0$, $Q(\theta_\delta)=0$ but $Q(\theta_0)=1$. On the other hand, $\spt{\theta_\delta}$ does not contain any closed paths belonging to a nonzero homotopy class, but $\spt{\theta_0} = \T^1$ does contain one. 
    \item (Homogenization) When $\theta_i$ weakly* converge to a constant $\theta_\infty$, but $\theta_\infty$ does not faithfully represent the limit of the effective tensor. A very simple and well-known example is the 1-D periodic homogenization on $\T^1$ when $d\theta_\ep(s) = a( s/\ep) ds$. As $\ep\rta0^+$, the media $\theta_\ep\overset{*}{\weakcv} \avg{a}:=\int_0^1 a(s) ds$, but the effective tensor $Q(\theta_\ep) \rta \avg{a^{-1}}^{-1}$.
\end{itemize}

The characterization of discontinuities are left to future works. Although the discontinuity is ubiquitous we can still show that $Q(\theta)$ is upper semi-continuous.

\begin{lemma}\label{l.uppersemicontinuous}
    If $\theta_i$ weakly* converges to another medium $\theta_\infty$, then for any $p\in \R^n$ we have
    \[
    \limsup_{i\rta\infty} \ p\cdot Q(\theta_i)p \le p\cdot Q(\theta_\infty)p,
    \]
    and in particular
    \[
    \limsup_{i\rta\infty} M(\theta_i) \le M(\theta_\infty).
    \]
\end{lemma}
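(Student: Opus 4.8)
The plan is to exploit the fact that $p\cdot Q(\theta)p$ is, by definition, an infimum over smooth test functions of quantities that depend \emph{continuously} on $\theta$ in the weak* topology; an infimum of a family of continuous functions is upper semi-continuous, which is precisely the assertion.

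First I would fix a test function $\varphi\in C^\infty(\T^n)$ and rewrite, exactly as in the proof of Lemma \ref{l.basicpropertiesofQ},
\[
C_\theta(\varphi,p)=\int_{\T^n}(\gd\varphi+p)\cdot\sigma(\gd\varphi+p)\,d\norm{\theta}=\int_{\T^n}\trace{(\gd\varphi+p)\otimes(\gd\varphi+p)\,d\theta}.
\]
The right-hand side is the integral of the \emph{fixed} continuous matrix field $x\mapsto(\gd\varphi(x)+p)\otimes(\gd\varphi(x)+p)$ against the matrix-valued measure $\theta$; written out entrywise it is a finite sum of integrals of scalar continuous functions against the scalar (signed) Radon measures $\theta_{kl}$. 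Since $\theta_i\overset{*}{\weakcv}\theta_\infty$ forces $\theta_{i,kl}\overset{*}{\weakcv}\theta_{\infty,kl}$ for every pair $k,l$, I obtain
\[
C_{\theta_i}(\varphi,p)\xrightarrow{\;i\to\infty\;}C_{\theta_\infty}(\varphi,p).
\]

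Next, for every $i$ the trivial bound $p\cdot Q(\theta_i)p=\inf_{\psi\in C^\infty(\T^n)}C_{\theta_i}(\psi,p)\le C_{\theta_i}(\varphi,p)$ holds, so taking $\limsup_{i\to\infty}$ and using the convergence above gives $\limsup_{i\to\infty}p\cdot Q(\theta_i)p\le C_{\theta_\infty}(\varphi,p)$. Since $\varphi\in C^\infty(\T^n)$ was arbitrary, taking the infimum over $\varphi$ on the right yields $\limsup_{i\to\infty}p\cdot Q(\theta_i)p\le p\cdot Q(\theta_\infty)p$, which is the first claim. For the statement on $M$, I would fix an orthonormal basis $e_1,\dots,e_n$ of $\R^n$, write $M(\theta)=\tfrac1n\sum_{j=1}^n e_j\cdot Q(\theta)e_j$ using the symmetry of $Q$ (Lemma \ref{l.whyquadratic}), apply the first part with $p=e_j$ for each $j$, and invoke that the $\limsup$ of a finite sum is at most the sum of the $\limsup$s.

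There is no deep obstacle here; the only point requiring a moment's care is that weak* convergence of media is phrased with \emph{scalar} test functions, so one must observe that this already entails convergence of $\int_{\T^n}(\text{continuous matrix field})\,d\theta_i$, which follows entrywise. The reason one gets only upper semi-continuity, and not continuity, is that the infimum defining $Q$ ranges over the infinite-dimensional space $C^\infty(\T^n)$ and the convergence $C_{\theta_i}(\varphi,p)\to C_{\theta_\infty}(\varphi,p)$ is far from uniform in $\varphi$; the two examples recorded just before the lemma (change of topology and periodic homogenization) show that the reverse inequality genuinely fails.
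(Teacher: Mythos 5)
Your proof is correct and follows essentially the same route as the paper's: fix a test function, use that $C_{\theta_i}(\varphi,p)\to C_{\theta_\infty}(\varphi,p)$ by applying the weak* convergence entrywise to the matrix-valued measure, bound $p\cdot Q(\theta_i)p$ by $C_{\theta_i}(\varphi,p)$, take $\limsup$ in $i$ and then infimum in $\varphi$, and finally reduce the statement for $M$ to the quadratic form via $M(\theta)=\tfrac1n\sum_j e_j\cdot Q(\theta)e_j$. The only stylistic difference is that you make the "infimum of continuous functionals is upper semi-continuous" heuristic explicit up front, while the paper carries out the same chain of inequalities directly.
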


\begin{proof}
We write 
\[
d\theta_i = \sigma_i\, d\norm{\theta_i} \textup{ and }d\theta_\infty=\sigma_\infty\, d\norm{\theta_\infty}.
\]
We first show that for all $p\in \R^n$ the quantity $$p\cdot Q(\theta)p$$ is upper semi-continuous with respect to the weak* topology. Indeed, let $\psi$ be an arbitrary smooth test function, we have for every $i>0$
\[
p\cdot Q(\theta_i)p \le \int_{\T^n} (\gd \psi+p)\cdot \sigma_i(x)(\gd \psi+p) d\norm{\theta_i}(x)\overset{i\rta\infty}{\longrightarrow} \int_{\T^n} (\gd \psi+p)\cdot \sigma_\infty(x)(\gd \psi+p) d\norm{\theta_\infty},
\]
where the convergence is by applying the weak* convergence in Definition \ref{def.weakstarconvergence} component-wise. This implies that for every $\psi$
\[
\limsup_{i\rta\infty}p\cdot Q(\theta_i)p\le \int_{\T^n} (\gd \psi+p)\cdot \sigma_\infty(x)(\gd \psi+p) d\norm{\theta_\infty}.
\]
This completes the proof of the upper semi-continuity by taking an infimum in $\psi$. The upper semi-continuity of the mean conductance $M(\theta)$ follows immediately from the fact that
\[
M(\theta_i)=\frac{1}{n}\sum_{j=1}^n e_j \cdot Q(\theta_i) e_j,
\]
where $\{e_j\}_{j=1}^n$ forms an orthonormal basis of $\R^n$.
\end{proof}

One of the advantage of using singular matrix-valued measures is that sometimes they can reduce the dimension of the problem and give sharper insights. For example, when modeling the leaf vein networks (also read Example \ref{ex.nontrivialfaithfulconvergence} and Section \ref{section.leafvenation}), one can simply look at singularly supported measures instead of matrix fields that have high conductance in an $\ep$-tubular neighborhood of the network. An important aspect of this approach is that one has to check whether the singularization procedure faithfully indicate the conductive properties of the original nonsingular setup. To that end we introduce the following stronger notion of convergence.

\begin{definition}
      \label{def.faithfulconvergence}
Say that a sequence of media $\theta_i$ \emph{faithfully converge} to $\theta_\infty$ if $\theta_i\overset{*}{\weakcv} \theta_\infty$ and
\[
\lim_{i\rta\infty} Q(\theta_i) = Q(\theta_\infty).
\]
\end{definition}

Despite the general discontinuity of $Q$, we show a principle of faithful continuity when the limit medium is a submedium of all the media in the sequence.

\begin{lemma}\label{l.faithfulconvergencewhenshrinking}
    Suppose $\theta_i$ is a sequence of media weakly* converging to $\theta_\infty$, and for all $i\ge 1$ we have $\theta_\infty\le \theta_i$, then $\theta_i$ converge to $\theta_\infty$ faithfully.
\end{lemma}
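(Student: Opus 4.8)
The plan is to sandwich $Q(\theta_i)$ between the monotonicity bound coming from the submedium relation and the upper semicontinuity bound coming from weak* convergence, and then promote the resulting convergence of quadratic forms to convergence of the matrices themselves by polarization.

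\textbf{Key steps.} First, observe that the hypothesis $\theta_\infty\le\theta_i$ is precisely the statement that $\theta_\infty$ is a submedium of $\theta_i$ in the sense of Definition \ref{def.restrictionoftensorfolds}; hence property \ref{condition.(1)} of Lemma \ref{l.basicpropertiesofQ} applies and gives the Loewner inequality $Q(\theta_\infty)\le Q(\theta_i)$ for every $i\ge 1$. In particular, for each fixed $p\in\R^n$, one has $p\cdot Q(\theta_i)p\ge p\cdot Q(\theta_\infty)p$, so that $\liminf_{i\to\infty} p\cdot Q(\theta_i)p\ge p\cdot Q(\theta_\infty)p$. Second, since $\theta_i\overset{*}{\weakcv}\theta_\infty$ by assumption, Lemma \ref{l.uppersemicontinuous} yields the reverse inequality $\limsup_{i\to\infty} p\cdot Q(\theta_i)p\le p\cdot Q(\theta_\infty)p$. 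Combining the two bounds gives $\lim_{i\to\infty} p\cdot Q(\theta_i)p = p\cdot Q(\theta_\infty)p$ for every $p\in\R^n$.

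\textbf{From forms to matrices.} Finally, since each $Q(\theta_i)$ and $Q(\theta_\infty)$ is symmetric, I would recover the entrywise convergence by polarization: for an orthonormal basis $\{e_j\}_{j=1}^n$ of $\R^n$, the entry $Q(\theta)_{jk}$ is a fixed linear combination of $e_j\cdot Q(\theta)e_j$, $e_k\cdot Q(\theta)e_k$ and $(e_j+e_k)\cdot Q(\theta)(e_j+e_k)$, so applying the scalar limit above with $p=e_j,\,e_k,\,e_j+e_k$ gives $Q(\theta_i)_{jk}\to Q(\theta_\infty)_{jk}$ for all $j,k$, i.e. $Q(\theta_i)\to Q(\theta_\infty)$. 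Together with $\theta_i\overset{*}{\weakcv}\theta_\infty$ this is exactly faithful convergence in the sense of Definition \ref{def.faithfulconvergence}.

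\textbf{Main obstacle.} There is essentially no analytic difficulty here; the whole content is the two-sided estimate, and both halves are already available (monotonicity from Lemma \ref{l.basicpropertiesofQ}, upper semicontinuity from Lemma \ref{l.uppersemicontinuous}). The only point requiring a moment's care is the bookkeeping that $\theta_\infty\le\theta_i$ literally means $\theta_\infty$ is a submedium of $\theta_i$, so that property \ref{condition.(1)} applies verbatim with no extra regularity needed on the $\theta_i$ or on $\theta_\infty$.
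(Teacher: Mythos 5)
Your proof is correct and takes essentially the same route as the paper: combine the monotonicity bound $Q(\theta_\infty)\le Q(\theta_i)$ from Lemma \ref{l.basicpropertiesofQ}~\ref{condition.(1)} with the upper semicontinuity from Lemma \ref{l.uppersemicontinuous}. The paper states the conclusion directly at the level of matrices, leaving the polarization step implicit; your explicit treatment of that final passage from quadratic forms to matrix entries is a harmless elaboration, not a different argument.
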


\begin{example}
   \label{ex.nontrivialfaithfulconvergence}
In \cite{Huang2021}, the author proved an example of faithful convergence that is outside the assumptions in Lemma \ref{l.faithfulconvergencewhenshrinking}. Let $\Gamma\subset\T^2$ be a finite union of $C^2$ curves, having no tangential cusps at joining nodes. Define $\displaystyle\Gamma_\delta:=\bigcup_{x\in \Gamma}B_{\delta/2}(x)$ and
    \[
d\theta_\delta(x):=\lb1+ \frac{1_{\Gamma_\delta}}{\delta} \rb I_{2\times 2}\, d\Lme^2, \textup{ where }d\Lme^2 \textup{ is the Lebesgue measure on $\T^2$,}
    \]
    then $\theta_\delta$ converges faithfully to $I_{2\times 2}\,d\Lme^2 + I_{2\times 2}\,d\restr{\Ha^1}{\Gamma}$ as $\delta\rta0^+$. Notice that $I_{2\times 2}\,d\Lme^2 + I_{2\times 2}\,d\restr{\Ha^1}{\Gamma}$ is not a submedium of $\theta_\delta$ for any $\delta>0$.
\end{example}

\begin{proof}[Proof of Lemma \ref{l.faithfulconvergencewhenshrinking}]
It suffices to show that 
\[
\lim_{i\rta\infty} Q(\theta_i) = Q(\theta_\infty).
\]
By Lemma \ref{l.uppersemicontinuous}, we know that 
\[
\limsup_{i\rta\infty} Q(\theta_i) \le Q(\theta_\infty),
\]
and then it suffices to show the reverse inequality. Indeed, because $\theta_\infty \le \theta_i$ for all $i\ge1$, we have by Lemma \ref{l.basicpropertiesofQ} \ref{condition.(1)}, the following inequalities
\[
Q(\theta_i) \ge Q(\theta_\infty) \textup{ for all }i\ge 1. 
\]
This finishes the proof.

\end{proof}

\subsection{Efficient submedium}
\label{subsection.efficientsubmedium}
There is often redundancy in a medium from the viewpoint of its effective tensor, especially when the medium is singularly supported. To talk about such properties we define the {efficient part} of a medium.

\begin{definition}
    \label{def.efficientpart}
   Call a submedium $\theta_*$ to be \emph{efficient} in $\theta$ if $Q(\theta_*)=Q(\theta)$. Call $\theta_*$ to be \emph{minimally efficient} in $\theta$ if for any $\theta'\le \theta_*$ satisfying $Q(\theta')=Q(\theta_*)=Q(\theta)$, then $\theta'=\theta_*$. Call $\theta$ \emph{saturated} if it is equal to all its efficient submedium.
\end{definition}

We are aware of at least two types of proper efficient submedia.

\begin{itemize}
    \item (Dimension reduction) The medium $I_{2\times 2}\,d\Lme^2 + I_{2\times 2}\,d\restr{\Ha^1}{\Gamma}(x)$ in Example \ref{ex.nontrivialfaithfulconvergence} has proper efficient submedium, where $\Gamma$ is the union of finitely many $C^2$ curves with nondegenerate joining angles at nodes. Indeed, one can check that if at some $x_0\in\Gamma$ there is a $r_0>0$ such that $\Gamma\cap B_{r_0}(x_0)$ is a $C^2$ curve, then 
    $$
    I_{2\times 2}\,d\Lme^2 + P_{\tau_x}\,d\restr{\Ha^1}{\Gamma\cap B_{r_0}(x_0)}(x)+I_{2\times 2}\,d\restr{\Ha^1}{\Gamma\setminus B_{r_0}(x_0)}(x)
    $$ 
    is a proper efficient submedium, with $P_{\tau_x}$ the orthogonal projection from $\R^2$ to the 1-D tangent space $\tau_x$ of $\Gamma$ at $x\in \Gamma \cap B_{r_0}(x_0)$. This can be done by modifying the smooth test functions in \eqref{eq.effectivetensor}, which eliminates the normal contributions without costs in the tangential direction. As a consequence $ I_{2\times 2}\,d\Lme^2 + P_{\tau_x}\,d\restr{\Ha^1}{\Gamma}(x)$ is a proper efficient submedium if $\Gamma$ is, outside a finite set of singular points, the union of finitely many $C^2$ curves.
    \item (Trivial components) Suppose for a medium $\theta$ there is a component $E\subset\spt{\theta}$ such that the restriction $\theta_E$ is trivial, then by Lemma \ref{l.additivityforcomponentdecomposition}, $\theta-\theta_E$ is a proper efficient submedium of $\theta$.
\end{itemize}

Let us show the existence of minimally efficient parts for nontrivial media. We leave the characterization of minimally efficient submedia of a given medium (or equivalently the characterization of saturated media) to future works.

\begin{lemma}
    \label{l.efficientpart}
Nontrivial media always have nonzero minimal efficient submedium.
\end{lemma}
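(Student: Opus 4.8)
The plan is to produce a minimally efficient submedium of $\theta$ as a \emph{mass-minimizing} efficient submedium, realized through a weak* limit, and then to pin down the effective tensor in the limit by combining the upper semi-continuity of $Q$ (Lemma \ref{l.uppersemicontinuous}) with its monotonicity under $\le$ (Lemma \ref{l.basicpropertiesofQ}\ref{condition.(1)}). First I would set up the minimization: since $\theta$ is nontrivial, the family $\mathcal{E}:=\{\theta'\le\theta : Q(\theta')=Q(\theta)\}$ is nonempty (it contains $\theta$), so $m_0:=\inf_{\theta'\in\mathcal{E}}\norm{\theta'}(\T^n)$ is a well-defined number in $[0,\norm{\theta}(\T^n)]$. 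Choose a minimizing sequence $\theta_k\in\mathcal{E}$ with $\norm{\theta_k}(\T^n)\to m_0$. Because $\sup_k\norm{\theta_k}(\T^n)<\infty$, Lemma \ref{l.compactnessofweakstar} (applied to each matrix entry) yields, after passing to a subsequence, a matrix-valued Radon measure $\theta_*$ with $\theta_k\overset{*}{\weakcv}\theta_*$; testing against nonnegative $\varphi\in C(\T^n)$ shows that $v\cdot\theta_* v\ge0$ as a scalar measure for every $v\in\R^n$, so $\theta_*$ is again a medium.

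The next step is to show that $\theta_*$ is an efficient submedium of $\theta$ of minimal mass. Testing the inequality $v\cdot\theta_k v\le v\cdot\theta v$ of scalar measures (valid since $\theta_k\le\theta$) against nonnegative $\varphi\in C(\T^n)$ and passing to the limit gives $\int\varphi\,d(v\cdot\theta_* v)\le\int\varphi\,d(v\cdot\theta v)$ for all such $\varphi$, whence $(v\cdot\theta_* v)(A)\le(v\cdot\theta v)(A)$ for every Borel $A$ by outer regularity, i.e.\ $\theta_*\le\theta$. Consequently $Q(\theta_*)\le Q(\theta)$ by Lemma \ref{l.basicpropertiesofQ}\ref{condition.(1)}. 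Conversely, for every $p\in\R^n$, $p\cdot Q(\theta)p=\limsup_k p\cdot Q(\theta_k)p\le p\cdot Q(\theta_*)p$ by Lemma \ref{l.uppersemicontinuous} (recall $Q(\theta_k)\equiv Q(\theta)$), so $Q(\theta)\le Q(\theta_*)$. Therefore $Q(\theta_*)=Q(\theta)$, so $\theta_*\in\mathcal{E}$; and testing weak* convergence against the constant function $1\in C(\T^n)$ gives $\norm{\theta_*}(\T^n)=\lim_k\norm{\theta_k}(\T^n)=m_0$, so $\theta_*$ attains the minimal mass in $\mathcal{E}$.

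Finally I would verify minimality. Suppose $\theta'\le\theta_*$ with $Q(\theta')=Q(\theta_*)=Q(\theta)$. Then $\theta'\le\theta_*\le\theta$, so $\theta'\in\mathcal{E}$ and hence $\norm{\theta'}(\T^n)\ge m_0=\norm{\theta_*}(\T^n)$; but $\theta'\le\theta_*$ forces $\norm{\theta'}(\T^n)\le\norm{\theta_*}(\T^n)$, so the two total masses coincide. Then $\theta_*-\theta'\ge0$ is a positive semi-definite matrix-valued measure with $\norm{\theta_*-\theta'}(\T^n)=0$, hence $\theta_*-\theta'=0$, i.e.\ $\theta'=\theta_*$. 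Thus $\theta_*$ is minimally efficient in the sense of Definition \ref{def.efficientpart}. It is nonzero, since $\theta_*=0$ would give $Q(\theta)=Q(\theta_*)=Q(0)=0$, contradicting nontriviality.

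The main obstacle is the passage to the limit in the second step: one must ensure that weak* convergence is compatible both with the order (to keep $\theta_*\le\theta$, which is needed for the upper bound on $Q(\theta_*)$) and with the ``correct'' direction of semi-continuity of $Q$ (for the lower bound). The direction in Lemma \ref{l.uppersemicontinuous} is precisely the one required here — note that $Q$ may genuinely jump \emph{upward} along weak*-convergent sequences of efficient submedia (for instance, intervals approximating a loop on $\T^1$), so monotonicity alone cannot recover $Q(\theta_*)\ge Q(\theta)$, and the semi-continuity lemma is indispensable.
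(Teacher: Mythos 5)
Your proof is correct and follows essentially the same route as the paper's: realize a minimally efficient submedium as a weak* limit of a mass-minimizing sequence in the family of efficient submedia, then use upper semi-continuity of $Q$ (Lemma \ref{l.uppersemicontinuous}) together with monotonicity (Lemma \ref{l.basicpropertiesofQ}\ref{condition.(1)}) to pin down the limit's effective tensor. You supply a few details the paper leaves implicit — checking that the weak* limit is still a medium and still $\le\theta$ by testing quadratic forms, and, in the minimality step, making explicit that a proper submedium $\theta'<\theta_*$ has strictly smaller total mass because the difference is a nonzero positive semi-definite matrix-valued measure — but the underlying argument is the same.
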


\begin{proof}
    Consider a nontrivial medium $\theta$ and the following set of submedia
    \[
   L:= \{\theta'\le\theta  \ ; \ Q(\theta')=Q(\theta)\}.
    \]
We aim to solve the minimization problem
\be\label{eq.minmizingmasstofindefficientpart}
\min_{\theta'\in L} \trace{\theta'(\T^n)}.
\ee
Suppose $\theta_i$ is a minimizing sequence, then by weak* compactness, we can find a converging subsequence, not relabeled, such that
\[
\theta_i\overset{*}{\weakcv} \theta_\infty.
\]
By the upper semi-continuity Lemma \ref{l.uppersemicontinuous}, we know that 
\[
Q(\theta_\infty) \ge \limsup_{i\rta\infty} Q(\theta_i)=Q(\theta).
\]
Notice that $\theta_\infty\le \theta$, and then we have $\theta_\infty\in L$. We also know that $\theta_\infty\ne0$ because $\theta$ is nontrivial.

Notice that $\theta_\infty$ is minimally efficient because if there is another $\Tilde{\theta}<\theta_\infty$ and $\Tilde{\theta}\in L$, then the fact that $\theta_\infty$ being a minimizer of \eqref{eq.minmizingmasstofindefficientpart} is violated.

\end{proof}

\subsection{The Euler-Lagrange equation for maximal media}
\label{subsection.elequation}
In this subsection we present the Euler-Lagrange equation for maximal media.

\begin{lemma}\label{l.equationformaximalmedia}
    A $d\theta=\sigma d\norm{\theta}$ is maximal if and only if for every $\Phi\in C^\infty(\T^n ; \R^n)$
    \be\label{eq.euequation}
    \int_{\T^n} \trace{\sigma(x)\gd\Phi(x)} d\norm{\theta}(x)=0.
    \ee
\end{lemma}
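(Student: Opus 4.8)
The plan is to characterize maximality via the first-order optimality condition of the variational problem defining $Q(\theta)$, evaluated at the test vectors $p=e_j$. Recall that $\theta$ is maximal iff $Q(\theta)=\theta(\T^n)$, i.e. iff $p\cdot Q(\theta)p=p\cdot\theta(\T^n)p=\int_{\T^n}p\cdot\sigma(x)p\,d\norm{\theta}(x)$ for every $p\in\R^n$ (since both sides are quadratic forms, equality of the forms is equivalent to equality of the associated symmetric matrices). Now $\int_{\T^n}p\cdot\sigma p\,d\norm{\theta}=C_\theta(0,p)$ is exactly the value of the functional $C_\theta(\cdot,p)$ at the trivial competitor $\varphi=0$. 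Hence maximality for the direction $p$ is equivalent to the statement that $\varphi\equiv 0$ is a minimizer of $\varphi\mapsto C_\theta(\varphi,p)$.

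First I would fix $p\in\R^n$ and write, for $\varphi\in C^\infty(\T^n)$ and $t\in\R$,
\[
C_\theta(t\varphi,p)=C_\theta(0,p)+2t\int_{\T^n}p\cdot\sigma\gd\varphi\,d\norm{\theta}+t^2\int_{\T^n}\gd\varphi\cdot\sigma\gd\varphi\,d\norm{\theta}.
\]
Since $\sigma$ is positive semi-definite, the $t^2$-coefficient is nonnegative, so $\varphi\equiv0$ minimizes $C_\theta(\cdot,p)$ over the line $\{t\varphi\}$ if and only if the linear term vanishes, i.e. $\int_{\T^n}p\cdot\sigma\gd\varphi\,d\norm{\theta}=0$. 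Because this must hold for every $\varphi$ and every $p$, and because a general competitor $\psi\in C^\infty(\T^n)$ in the definition of $Q(\theta)$ can be reached along such a line (take $\varphi=\psi$, $t=1$), $\varphi\equiv0$ is a global minimizer for direction $p$ precisely when $\int_{\T^n}p\cdot\sigma\gd\varphi\,d\norm{\theta}=0$ for all $\varphi\in C^\infty(\T^n)$. Taking $p=e_j$ and summing, or more directly observing that $p\cdot\sigma\gd\varphi=\trace{\sigma(\gd\varphi\otimes p)}$, one sees that "the linear term vanishes for all $p$ and all scalar $\varphi$" is equivalent to the vector form $\int_{\T^n}\trace{\sigma(x)\gd\Phi(x)}\,d\norm{\theta}(x)=0$ for all $\Phi\in C^\infty(\T^n;\R^n)$: indeed write $\Phi=\sum_j \varphi_j e_j$ and note $\gd\Phi=\sum_j e_j\otimes\gd\varphi_j$, so $\trace{\sigma\gd\Phi}=\sum_j \gd\varphi_j\cdot\sigma e_j$, and the integral splits into the scalar conditions.

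Assembling: maximality $\iff$ $p\cdot Q(\theta)p=C_\theta(0,p)$ for all $p$ $\iff$ $0$ minimizes $C_\theta(\cdot,p)$ for all $p$ $\iff$ the linear term $\int p\cdot\sigma\gd\varphi\,d\norm{\theta}$ vanishes for all $p$, $\varphi$ $\iff$ \eqref{eq.euequation} holds for all $\Phi$. I do not expect a serious obstacle here; the one point requiring a little care is the reduction from "$0$ is optimal along every line through $0$" to "$0$ is a global minimizer," but that is immediate because the competitor set $C^\infty(\T^n)$ is itself a linear space, so every competitor $\psi$ lies on the line $\{t\psi\}$ through $0$, and thus checking one-dimensional optimality along all such lines already certifies global optimality. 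A minor bookkeeping subtlety is that $Q(\theta)$ is defined as an infimum that need not be attained, but here we are comparing with the explicit value $C_\theta(0,p)$, so no attainment is needed: $Q(\theta)\le C_\theta(0,p)$ always, and $Q(\theta)=C_\theta(0,p)$ iff no competitor does strictly better, which is exactly the vanishing-linear-term condition above.
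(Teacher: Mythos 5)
Your proof is correct and follows essentially the same route as the paper's: both characterize maximality as ``$0$ is a global minimizer of the corrector problem'' and then read off the vanishing of the linear term as the Euler--Lagrange condition. The only cosmetic difference is that the paper reduces via the trace (observing that equality of traces of ordered positive semi-definite matrices forces equality, then minimizing over vector-valued $\Phi$ at once), whereas you handle each direction $p$ with a scalar $\varphi$ and then reassemble to the vector form at the end; the underlying first-order argument is identical.
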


\begin{proof}
{We first recall that $\theta$ is maximal if and only if $Q(\theta)=\theta(\T^n)$. As both $Q(\theta)$ and $\theta(\T^n)$ are positive semi-definite, it is not difficult to find that $\theta$ is maximal if and only if its mean conductance (average eigenvalue of $Q(\theta)$) reaches the upper Wiener bound \eqref{eq.singularwienerboundformeanconductance}}
\be\label{eq.Mequalupperbound}
M(\theta)= \frac{1}{n} \trace{Q(\theta)}= \frac{1}{n} \norm{\theta}(\T^n).
\ee
Notice that if we write $\Phi=\sum_{i=1}^n\phi^ie_i$ with $e_i$'s being an orthonormal basis for $\R^n$
\be\label{eq.otherformofMthetha}
\begin{split}
    M(\theta)&=\frac{1}{n}\trace{Q(\theta)}\\
    &=\frac{1}{n}\sum_{i=1}^n e_i\cdot Q(\theta)e_i\\
    &=\frac{1}{n}\sum_{i=1}^n\inf_{\phi^i\in C^\infty(\T^n)} \int_{\T^n} (\gd \phi^i(x)+e_i)\cdot \sigma(x)(\gd \phi^i(x)+e_i) d\norm{\theta}(x)\\
    &=\frac{1}{n} \inf_{\Phi\in C^\infty(\T^n; \R^n)} \int_{\T^n} \trace{\lb\gd \Phi+I\rb^T\sigma(x)\lb\gd \Phi+I\rb} d\norm{\theta}(x).
\end{split}
\ee
Therefore {by applying \eqref{eq.Mequalupperbound}} the medium $\theta$ is maximal if and only if for all $\Phi\in C^\infty(\T^n; \R^n)$
\[
\int_{\T^n} \trace{\lb\gd \Phi+I\rb^T\sigma(x)\lb\gd \Phi+I\rb} d\norm{\theta}(x) \ge \norm{\theta}(\T^n).
\]
Simplifying the inequality we get
\[
\int_{\T^n} \trace{\gd \Phi^T\sigma(x)\gd \Phi} d\norm{\theta}(x) + 2 \int_{\T^n} \trace{\sigma(x)\gd \Phi} d\norm{\theta}(x) \ge 0.
\]
Replacing $\Phi$ by $h\Phi$ with $|h|>0$ we see that
\[
|h|\int_{\T^n} \trace{\gd \Phi^T\sigma(x)\gd \Phi} d\norm{\theta}(x) + 2 \frac{h}{|h|}\int_{\T^n} \trace{\sigma(x)\gd \Phi} d\norm{\theta}(x) \ge 0,
\]
which shows \eqref{eq.euequation} after sending $h\rta0$ from both sides. The reverse follows immediately when we plug \eqref{eq.euequation} into the last formula in \eqref{eq.otherformofMthetha}.

\end{proof}

\section{Maximal medium, stationary varifold and dimension bound}
\label{section.maximalmedium}

In this section we characterize maximal media by applying some results from geometric measure theory, especially the topic on varifolds and PDE for measures. Specifically we discuss Theorem \ref{t.forma2} and \eqref{eq.formalequivalenceintro}, which states the following formal identity
\be\label{eq.formalequality}
{\textup{Conductance Maximality}} \quad = \quad \textup{Area Criticality}.
\ee
On the left of the identity we mean the media that achieve the upper Wiener bound \eqref{eq.singularwienerbound}, and on the right we mean stationary varifolds, which are known to be critical points of the generalized area functional (See Section \ref{subsubsection.varifolds} for more details). Based on this identity we prove a pointwise dimension bound for maximal medium (See Theorem \ref{t.realizabledimensionandlocaldimension}).

At first glance it is far from being clear why the equality \eqref{eq.formalequality} makes sense. We explain this by introducing a mapping, first introduced in \cite{ambrosio1997}*{Remark 3.2}, from the space of varifolds to the space of media. From this transformation we view medium, a matrix-valued measure, as a varifold having \emph{variable} and \emph{fractional} dimension (See Lemma \ref{l.realizationmapcontinuity} and Definition \ref{def.realizabledimensionformedia}). Specifically we define the mapping $\Ta$ as follows: First disintegrate a $k$-varifold $\mu$, by Theorem \ref{t.disintegrationtheorem}
\[
d\mu(x,\tau) = d\rho_x(\tau) d\norm{\mu}(x), 
\]
where $\rho_x$ is a family of probability measures on the Grassmannian manifold $G(k,n)$ and $\norm{\mu}=\pi_\# \mu$ is the pushforward of $\mu$ with respect to the projection $\pi(x,\tau)=x$. Second, we define the matrix field
\be\label{eq.transformationofmatrixfield}
\sigma_\mu(x):=\frac{1}{k}\int_{G(k,n)} P_\tau d\rho_x(\tau),
\ee
where $P_\tau$ is the orthogonal projection matrix to the $k$-dimensional subspace $\tau\subset\R^n$. We then define $\Ta(\mu)$ as the following medium
\be\label{eq.realizationmap}
d\Ta(\mu):= \sigma_\mu d\norm{\mu}.
\ee
Notice that \(\trace{\sigma_\mu}=1 \textup{ and }\norm{\Ta(\mu)}=\norm{\mu}.\)

As we have seen in Lemma \ref{l.equationformaximalmedia} (See also the discussions in \cite{matsci1}*{Section 6}), the maximization of conductance is essentially solving the distributional PDE
\be\label{eq.formaldivergencefreematrixfield}
\gd \cdot  \theta  =0
\ee
under various conditions (two-phase, anisotropic, etc.) on the medium $\theta$. If $\theta = \Ta(\mu)$ for some varifold $\mu$, then plug this into \eqref{eq.formaldivergencefreematrixfield} we obtain
\be
0=\gd\cdot \lb \sigma_\mu(x) d\norm{\mu}(x) \rb = \frac{1}{k}\gd\cdot \lb \int_{G(k,n)} P_\tau d\rho_x(\tau) d\norm{\mu}(x) \rb ,
\ee
which is equivalently saying that $\mu$ is a stationary varifold (See Section \ref{subsubsection.varifolds}). This proves the second part of the following rigorous version of \eqref{eq.formalequality}.

\begin{theorem}\label{t.equivalence}
The mapping $\Ta$ as defined in \eqref{eq.realizationmap} is surjective and continuous with respect to the weak* topology of media and varifolds. Moreover, the following statements are equivalent for a fixed medium $\theta$:
\begin{enumerate}[label=(\alph*)]
    \item \label{condition.(a)} The medium $\theta$ is maximal.
     \item \label{condition.(a2)} The medium $\theta$ satisfies $\gd\cdot\theta=0$ in the distributional sense (See Lemma \ref{l.equationformaximalmedia}).
    \item \label{condition.(b)} All varifold realizations $\mu\in \Ta^{-1}(\theta)$ are stationary.
    \item \label{condition.(c)} There exists a stationary varifold $\mu\in \Ta^{-1}(\theta)$.
\end{enumerate}
\end{theorem}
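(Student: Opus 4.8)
The plan is to first settle the two structural claims about $\Ta$ --- surjectivity and weak$^*$ continuity --- and then deduce the four equivalences by running the cycle
\[
(a)\ \Longleftrightarrow\ (a2)\ \Longrightarrow\ (b)\ \Longrightarrow\ (c)\ \Longrightarrow\ (a2).
\]
The step $(b)\Rightarrow(c)$ is precisely where surjectivity enters: it guarantees $\Ta^{-1}(\theta)\ne\emptyset$, so that ``all varifold realizations are stationary'' in $(b)$ is not a vacuous hypothesis. The step $(a)\Leftrightarrow(a2)$ will be a direct reading of \lref{equationformaximalmedia}, and $(a2)\Rightarrow(b)$ together with $(c)\Rightarrow(a2)$ will both come out of a single disintegration computation.

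For \emph{surjectivity}, fix a medium $d\theta=\sigma\,d\norm{\theta}$ with $\trace{\sigma(x)}=1$ for $\norm{\theta}$-a.e.\ $x$. Any nonzero positive semi-definite matrix $A$ has $\lambda_{\textup{max}}(A)\le\trace{A}$, so $\dimr(\sigma(x))\ge 1$; hence, taking $k=1$, both hypotheses of \lref{realizablematrix} are met and for $\norm{\theta}$-a.e.\ $x$ there is a probability measure $\rho_x$ on $G(1,n)$ with $\int_{G(1,n)}P_\tau\,d\rho_x(\tau)=\sigma(x)$. The one point to handle with care is the measurability of $x\mapsto\rho_x$; this is supplied either by a measurable selection theorem applied to the (closed-valued, nonempty-valued) multifunction $x\mapsto\{\rho\in\mathcal{P}(G(1,n)):\int P_\tau\,d\rho=\sigma(x)\}$, or concretely by a measurable eigendecomposition $\sigma(x)=\sum_{j=1}^n\lambda_j(x)\,v_j(x)\otimes v_j(x)$ with $\lambda_j(x)\ge 0$, $\sum_j\lambda_j(x)=1$, and the choice $\rho_x:=\sum_{j=1}^n\lambda_j(x)\,\delta_{\R v_j(x)}$. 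Then $d\mu(x,\tau):=d\rho_x(\tau)\,d\norm{\theta}(x)$ is a $1$-varifold with $\Ta(\mu)=\theta$; note in passing that this realizes \emph{every} medium as a $1$-varifold, the fact used later for the fractional monotonicity formula. For \emph{continuity}, one argues within $k$-varifolds for each fixed $k$: if $\mu_i\overset{*}{\weakcv}\mu$, then for any $\varphi\in C(\T^n)$ and entries $a,b$, the definition \eqref{eq.realizationmap} of $\Ta$ gives that the $(a,b)$ component of $\int_{\T^n}\varphi\,d\Ta(\mu_i)$ equals $\tfrac1k\int_{\T^n\times G(k,n)}\varphi(x)(P_\tau)_{ab}\,d\mu_i(x,\tau)$, and since $(x,\tau)\mapsto\varphi(x)(P_\tau)_{ab}$ is continuous and bounded on the compact space $\T^n\times G(k,n)$, this converges to the corresponding component for $\mu$; hence $\Ta(\mu_i)\overset{*}{\weakcv}\Ta(\mu)$, which is \lref{realizationmapcontinuity}.

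For the \emph{equivalences}: $(a)\Leftrightarrow(a2)$ holds because \lref{equationformaximalmedia} characterizes maximality of $\theta$ by $\int_{\T^n}\trace{\sigma(x)\gd\Phi(x)}\,d\norm{\theta}(x)=0$ for all $\Phi\in C^\infty(\T^n;\R^n)$, which, using the symmetry of $\sigma$, is exactly the distributional identity $\gd\cdot\theta=0$. For the remaining steps, let $\mu\in\Ta^{-1}(\theta)$ be any $k$-varifold realization, disintegrated as $d\mu(x,\tau)=d\rho_x(\tau)\,d\norm{\theta}(x)$ with $\sigma(x)=\tfrac1k\int_{G(k,n)}P_\tau\,d\rho_x(\tau)$. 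By \tref{disintegrationtheorem} and linearity of the trace,
\[
\int_{\T^n\times G(k,n)}\trace{P_\tau D\Phi(x)}\,d\mu(x,\tau)=k\int_{\T^n}\trace{\sigma(x)D\Phi(x)}\,d\norm{\theta}(x)
\]
for every $\Phi\in C^\infty(\T^n;\R^n)$, so $\mu$ is stationary if and only if $\gd\cdot\theta=0$. Consequently $(a2)$ forces \emph{every} realization to be stationary, which is $(b)$; $(b)$ implies $(c)$ since $\Ta^{-1}(\theta)\ne\emptyset$ by surjectivity; and if $(c)$ holds, reading the displayed identity backwards for the stationary realization yields $\gd\cdot\theta=0$, i.e.\ $(a2)$. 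This closes the cycle and proves the theorem.

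The \emph{main obstacle} is the measurable selection inside the surjectivity argument --- upgrading the pointwise statement of \lref{realizablematrix} to a Borel-measurable family $x\mapsto\rho_x$ --- together with fixing a convention for ``the space of all varifolds'' so that the continuity claim is unambiguous (the argument above is per fixed dimension $k$, and $\Ta$ is continuous on each stratum). Everything else reduces to bookkeeping with the disintegration theorem.
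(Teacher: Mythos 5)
Your proposal is correct and follows essentially the same route as the paper: surjectivity via the pointwise matrix realizability criterion (\lref{realizablematrix}) plus a measurable selection (the paper invokes Kuratowski--Ryll--Nardzewski; your explicit eigendecomposition alternative $\rho_x = \sum_j \lambda_j(x)\delta_{\R v_j(x)}$ is a harmless and arguably cleaner variant), continuity per dimension stratum $\Ha_k$ exactly as in \lref{realizationmapcontinuity}, and the disintegration identity $\int \trace{P_\tau D\Phi}\,d\mu = k\int \trace{\sigma D\Phi}\,d\norm{\theta}$ to close the implication cycle. The paper compresses the equivalence chain into a sentence before the theorem statement; you spell out the cycle $(a)\Leftrightarrow(a2)\Rightarrow(b)\Rightarrow(c)\Rightarrow(a2)$ explicitly, which is the same logical content.
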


\begin{proof}
    The proof of Theorem \ref{t.equivalence} is complete by Lemma \ref{l.realizationmapcontinuity} below, where we provide a more precise discussion on the surjectivity and continuity of the mapping $\Ta$.

\end{proof}

Our main contribution in this section is a pointwise dimension bound for maximal media. As we have pointed out in the introduction, for a medium in $\T^3$ of the form
\[
d\theta:= \sigma \, d\restr{\Ha^2}{\T^2\times\{0\}},
\]
where $\sigma$ is a constant positive semi-definite matrix, the rank of $\sigma$ satisfies the following bound when $\theta$ is maximal
\[
\textup{rank}(\sigma) \le \dim_{\Ha}\lb\T^2\times\{0\} \rb =2.
\]
More generally a \emph{global version} of such dimension bound was established in \cites{meas3} for $\mathcal{A}$-free measures, including maximal medium. At a heuristic level, the theory, in terms of maximal medium, states that the rank of the matrix field $\sigma=\frac{d\theta}{d\norm{\theta}}$ does not exceed the dimension of $\norm{\theta}$ (See \cite{meas3}*{Corollary 1.4 and Proposition 3.1} for more precise discussions). In this case, we call the rank of the matrix field $\sigma$ is bounded by the dimension of $\norm{\theta}$. 

To establish a \emph{pointwise} dimension bound, the compromise is that one should use a new slightly weaker notion of ``rank''. We define the new ``rank'' as follows.

\begin{definition}[Realizable dimension]
    \label{def.realizabledimension}
Given a positive semi-definite matrix $A\in \R^{n\times n}$, we define its \emph{realizable dimension} as
\[
\dimr(A):=\frac{\trace{A}}{\lambda_{\max} (A)},
\]
where $\lambda_{\max}(A)$ is the maximal eigenvalue of $A$.

\end{definition}

Note that $\dimr(A)\le \textup{rank}(A)$ with equality if and only if $A$ is an orthogonal projection matrix. The interest of this new notion is that it sharply characterizes whether a medium $\theta$ is in the range of $k$-varifolds under the transformation $\Ta$ (See Section \ref{subsection.varifoldrealization}). 

To state the dimension bound result more precisely, we define two notions of dimensions. The first is the semi-continuous version of realizable dimensions of the anisotropy matrix fields $\sigma$ in media.

\begin{definition}
    \label{def.realizabledimensionformedia}
    For a medium $d\theta=\sigma\, d\norm{\theta}$, we define its \emph{lower realizable dimension} at $x\in \spt{\theta}$ as
\be
\label{eq.lowerrealizabledimension}
\dimrlow(\theta)(x):=\sup_{\delta>0}\essinf\{\dimr(\sigma(y))\ ; \ |y-x|\le \delta,\, y\in \spt{\theta}\},
\ee
and symmetrically the \emph{upper realizable dimension}
\be
\label{eq.uprealizabledimension}
\dimrup(\theta)(x):=\inf_{\delta>0}\esssup\{\dimr(\sigma(y))\ ; \ |y-x|\le \delta,\, y\in \spt{\theta}\}.
\ee
Here we take \say{ess\,sup} and \say{ess\,inf} with respect to the Radon measure $\norm{\theta}$. Define the \emph{realizable dimension} of $\theta$ as
\be
\dimr(\theta)(x)=\dimrlow(\theta)(x)=\dimrup(\theta)(x)
\ee
if the equality holds. Note that $\dimrlow(\theta)(x)\ne \dimr(\sigma(x))$ and $\dimrup(\theta)(x)\ne \dimr(\sigma(x))$ in general.
\end{definition}

Besides the realizable dimension, we also consider the {local dimensions} from the viewpoints of fractal geometry. The following definition of local dimension can be found in \cites{Tricot_1982,Young_1982} and the book \cite{Bishop_Peres_2016}.

\begin{definition}\label{def.localdimension}
    We define the \emph{lower local dimension} of a medium $\theta$ as
    \[
    \dimlow (\theta)(x):=\liminf_{r\rta0^+} \frac{\log \norm{\theta}(B_r(x))}{\log r},
    \]
    and similarly the \emph{upper local dimension}
    \[
    \dimup (\theta)(x):=\limsup_{r\rta0^+} \frac{\log \norm{\theta}(B_r(x))}{\log r}.
    \]
    The \emph{local dimension} of $\theta$ at $x$ is defined as
    \[
    \dim_{\textup{loc}}(\theta)(x):=\lim_{r\rta0^+} \frac{\log \norm{\theta}(B_r(x))}{\log r},
    \]
    if the limit exists.
\end{definition}

The main theorem of this section describes the relation between the above two dimensions in general.

\begin{theorem}\label{t.realizabledimensionandlocaldimension}
    Suppose a medium $\theta$ is maximal, then for all $x\in \spt{\theta}$ we have
    \be\label{eq.dimensionboundtheorem}
     \dimrlow(\theta)(x) \le \dimlow (\theta)(x).
    \ee
   In particular, the lower local dimension $\dimlow (\theta)(x) \ge 1$ for all $x\in \spt{\theta}$.
\end{theorem}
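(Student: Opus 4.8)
The plan is to reduce maximality to the divergence--free equation $\gd\cdot\theta=0$, to establish a \emph{fractional monotonicity formula} for the mass $h(r):=\norm{\theta}(B_r(x))$, and finally to pass to logarithms. By \lref{equationformaximalmedia}, writing $d\theta=\sigma\,d\norm{\theta}$, maximality of $\theta$ is equivalent to $\int_{\T^n}\trace{\sigma(y)\,\gd\Phi(y)}\,d\norm{\theta}(y)=0$ for every $\Phi\in C^\infty(\T^n;\R^n)$. Fix $x\in\spt{\theta}$; since $x\in\spt{\theta}=\spt{\norm{\theta}}$ we have $h(r)>0$ for all $r>0$. Fix any exponent $\alpha$ with $1\le\alpha<\dimrlow(\theta)(x)$ (or $\alpha=1$). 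Because $\dimrlow(\theta)(x)$ is the supremum over $\delta>0$ of $\essinf\{\dimr(\sigma(y)):|y-x|\le\delta,\ y\in\spt{\theta}\}$, there is some $\delta_0\in(0,\tfrac12)$ with $\dimr(\sigma(y))\ge\alpha$ for $\norm{\theta}$-almost every $y\in B_{\delta_0}(x)$; for $\alpha=1$ this is automatic, since $\lambda_{\max}(\sigma(y))\le\trace{\sigma(y)}=1$ forces $\dimr(\sigma(y))\ge1$ for every $y$.

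The heart of the matter is the fractional monotonicity formula (\lref{fracmonotonicityformula}): $r\mapsto h(r)/r^{\alpha}$ is nondecreasing on $(0,\delta_0)$. I would prove it by lifting $B_{\delta_0}(x)$ isometrically to $\R^n$ and testing the divergence--free equation against radial fields $\Phi(y)=\phi(|y-x|)(y-x)$, where $\phi\in C^\infty_0([0,\delta_0))$ is nonincreasing, $\phi\ge0$, and $\phi\equiv1$ near $0$. Since $\gd\Phi(y)=\phi(|y-x|)\,I+\phi'(|y-x|)\,|y-x|^{-1}\,(y-x)\otimes(y-x)$ and $\trace{\sigma(y)}=1$, the equation reads
\[
\int_{B_{\delta_0}(x)}\Big(\phi(|y-x|)+\phi'(|y-x|)\,\tfrac{(y-x)\cdot\sigma(y)(y-x)}{|y-x|}\Big)\,d\norm{\theta}(y)=0 .
\]
Positive semidefiniteness of $\sigma(y)$ together with $\lambda_{\max}(\sigma(y))=\trace{\sigma(y)}/\dimr(\sigma(y))=1/\dimr(\sigma(y))$ gives $(y-x)\cdot\sigma(y)(y-x)\le|y-x|^{2}/\dimr(\sigma(y))\le|y-x|^{2}/\alpha$ on $B_{\delta_0}(x)$, and since $\phi'\le0$ this yields
\[
\int_{B_{\delta_0}(x)}\phi(|y-x|)\,d\norm{\theta}(y)\ \le\ \frac1\alpha\int_{B_{\delta_0}(x)}\big(-\phi'(|y-x|)\big)\,|y-x|\,d\norm{\theta}(y).
\]
Letting $\phi$ increase pointwise to $\mathbf{1}_{[0,r)}$ (so that $-\phi'$ concentrates at radius $r$) converts this, by the standard argument used for the monotonicity formula of stationary varifolds, into $h(r)\le\alpha^{-1}r\,h'(r)$ for a.e.\ $r\in(0,\delta_0)$, i.e.\ $\tfrac{d}{dr}\big(h(r)/r^{\alpha}\big)\ge0$, whence the asserted monotonicity. (Running the same computation with $\phi$ localized in a fixed small ball gives the refinement mentioned in the introduction: if $\dimr(\sigma(\cdot))\ge m$ holds $\norm{\theta}$-a.e.\ on a whole neighborhood of $x$, one may take $\alpha=m$ directly.)

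With the monotonicity in hand the theorem follows quickly. Fix $\rho_0\in(0,\delta_0)$ and put $C_\alpha:=h(\rho_0)/\rho_0^{\alpha}\in(0,\infty)$; then $h(r)\le C_\alpha r^{\alpha}$ for all $0<r\le\rho_0$, so $\log h(r)\le\log C_\alpha+\alpha\log r$. For $r$ small $\log r<0$, and dividing by $\log r$ reverses the inequality:
\[
\frac{\log h(r)}{\log r}\ \ge\ \alpha+\frac{\log C_\alpha}{\log r}\ \longrightarrow\ \alpha\qquad(r\to0^{+}).
\]
Hence $\dimlow(\theta)(x)=\liminf_{r\to0^{+}}\tfrac{\log h(r)}{\log r}\ge\alpha$. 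Taking the supremum over all admissible $\alpha$ gives $\dimlow(\theta)(x)\ge\dimrlow(\theta)(x)$, which is \eqref{eq.dimensionboundtheorem}; and since $\alpha=1$ is always admissible, $\dimlow(\theta)(x)\ge1$.

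The main obstacle is the second step: upgrading the family of integral inequalities (one for each admissible cutoff $\phi$) to genuine pointwise monotonicity of $h(r)/r^{\alpha}$. This is the usual delicate part of monotonicity--formula arguments — one must mollify carefully, use the almost--everywhere differentiability of the monotone function $h$, and handle the at most countably many radii $r$ at which $\partial B_r(x)$ carries positive $\norm{\theta}$-mass; one must also verify that the radial field $\Phi$, defined only near $x$, extends to a bona fide $C^\infty$ vector field on $\T^n$, which is guaranteed once $\delta_0<\tfrac12$. The reduction via \lref{equationformaximalmedia} and the logarithmic bookkeeping in the last step are routine.
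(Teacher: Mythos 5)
Your argument is correct and takes essentially the same route as the paper: both establish the fractional monotonicity formula (\lref{fracmonotonicityformula}) by plugging the radial field $\Phi(y)=\phi(|y-x|)(y-x)$ into the Euler--Lagrange equation $\gd\cdot\theta=0$, using $\trace{\sigma}=1$ and the bound $(y-x)\cdot\sigma(y)(y-x)\le|y-x|^2/\dimr(\sigma(y))$ to get $h(r)\le\alpha^{-1}rh'(r)$, and then deduce $h(r)\le C_\alpha r^\alpha$ near $x$. The only difference is cosmetic: you obtain $\dimlow(\theta)(x)\ge\alpha$ directly by dividing the logarithmic inequality, whereas the paper argues by contradiction from the same bound.
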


The proof of this theorem does not depend on the theory as used in \cite{meas3}, but simply a \emph{fractional} version of the classical monotonicity formula for varifolds (See Lemma \ref{l.fracmonotonicityformula}). We do not know if \eqref{eq.dimensionboundtheorem} still hold when the realizable dimension is replaced by the standard rank.

Let us outline the structure of this section. In Section \ref{subsection.varifoldrealization} we characterize the media that can be realized by $k$-varifold for some $1\le k\le n$. In Section \ref{subsection.fractionalmonotonicityfomular}, we prove Theorem \ref{t.realizabledimensionandlocaldimension} by introducing a fractional version of monotonicity formula. We also present some examples to show the sharpness of the bound \eqref{eq.dimensionboundtheorem}. In Section \ref{subsection.applicationofstationaryvarifolds} we show some applications of existing theorems on stationary varifolds to maximal media.  In Section \ref{subsection.openquestion} we discuss Problem \ref{prob.intromaxvalency} in rigorous mathematical terms.

\subsection{Varifold realizations of media}\label{subsection.varifoldrealization}

In this subsection we show some basic properties of the transformation $\Ta$ as defined in \eqref{eq.realizationmap}. Let us first denote $\Ha$ the space of all varifolds on $\T^n$, and $\Fa$ the space of all media. We denote for each integer $1\le k \le n$ the space $\Ha_k$ of all $k$-varifolds and therefore we have $\displaystyle\Ha=\bigcup_{k=1}^n \Ha_k$.

\begin{definition}[Realization map]
    Call the mapping $\Ta:\Ha\rta \Fa$ as defined in \eqref{eq.realizationmap} the \emph{realization map}. For each $1\le k \le n$, call the restriction $\Ta_k:=\restr{\Ta}{\Ha_k}$ the $k$-th realization map.
\end{definition}

\begin{lemma}\label{l.realizationmapcontinuity}
For each $1\le k \le n$, the $k$-th realization map $\Ta_k$ is continuous with respect to the weak* topology on $\Ha_k$ and $\Fa$. A medium $\theta\in \Fa$ is in the range of $\Ta_k$ if and only if the matrix field $\sigma=\frac{d\theta}{d\norm{\theta}}$ satisfies
\[
\dimr(\sigma(x)) \ge k \textup{ for } \norm{\theta}\textup{-almost all }x\in \T^n.
\]
In particular, the first realization map $\Ta_1:\Ha_1\rta\Fa$ is surjective. 
\end{lemma}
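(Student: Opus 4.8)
The plan is to treat the three assertions in turn; the only point requiring real care is a measurable selection inside the range characterization. \emph{Continuity.} Fix $1\le k\le n$ and suppose $\mu_i\overset{*}{\weakcv}\mu_\infty$ in the weak* topology of Radon measures on the compact space $\T^n\times G(k,n)$. To conclude $\Ta_k(\mu_i)\overset{*}{\weakcv}\Ta_k(\mu_\infty)$ I would test each matrix entry against an arbitrary $\varphi\in C(\T^n)$: disintegrating $d\mu(x,\tau)=d\rho_x(\tau)\,d\norm{\mu}(x)$ via Theorem~\ref{t.disintegrationtheorem} and unwinding the definition \eqref{eq.realizationmap} gives
\[
\int_{\T^n}\varphi(x)\,d\big(\Ta(\mu)\big)_{ab}(x)=\frac{1}{k}\int_{\T^n\times G(k,n)}\varphi(x)\,(P_\tau)_{ab}\,d\mu(x,\tau),
\]
where $1\le a,b\le n$ index the matrix entries. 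Since $\tau\mapsto P_\tau$ is continuous on $G(k,n)$, the integrand belongs to $C(\T^n\times G(k,n))$, so the right-hand side converges as $i\to\infty$, which is precisely weak* convergence of the media; hence $\Ta_k$ is continuous.

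\emph{Necessity in the range characterization.} If $\theta=\Ta_k(\mu)$ then $\norm{\theta}=\norm{\mu}$ and $\sigma(x)=\tfrac{1}{k}\int_{G(k,n)}P_\tau\,d\rho_x(\tau)$ for $\norm{\theta}$-a.e.\ $x$. For any unit vector $v\in\R^n$ one has $v\cdot\sigma(x)v=\tfrac{1}{k}\int_{G(k,n)}|P_\tau v|^2\,d\rho_x(\tau)\le\tfrac1k$, so $\lambda_{\max}(\sigma(x))\le 1/k$, and since $\trace{\sigma(x)}=1$ this yields $\dimr(\sigma(x))=1/\lambda_{\max}(\sigma(x))\ge k$.

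\emph{Sufficiency in the range characterization.} Conversely, assume $\dimr(\sigma(x))\ge k$ for $\norm{\theta}$-a.e.\ $x$. Then $A_x:=k\,\sigma(x)$ is positive semi-definite with $\trace{A_x}=k$ and (by scale invariance of $\dimr$) $\dimr(A_x)\ge k$, so Lemma~\ref{l.realizablematrix} provides, for each such $x$, a probability measure on $G(k,n)$ realizing $A_x$ as $\int P_\tau\,d\rho$. The remaining task — which I expect to be the main obstacle — is to choose these measures measurably in $x$. I would argue as follows: the set of pairs $(A,\rho)$ with $\rho$ a probability measure on $G(k,n)$ and $\int_{G(k,n)}P_\tau\,d\rho=A$ is closed in the product of the symmetric matrices with the compact metrizable space of probability measures on $G(k,n)$, and its image under projection to the first factor is exactly the admissible set described by Lemma~\ref{l.realizablematrix}; a Borel-measurable selection $A\mapsto\rho(A)$ is then furnished by a measurable selection theorem (e.g.\ Kuratowski--Ryll-Nardzewski, equivalently Jankov--von Neumann). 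Composing with the $\norm{\theta}$-measurable map $x\mapsto A_x$ produces a measurable family $x\mapsto\rho_x$ of probability measures on $G(k,n)$ with $\tfrac1k\int P_\tau\,d\rho_x=\sigma(x)$; setting $d\mu(x,\tau):=d\rho_x(\tau)\,d\norm{\theta}(x)$ defines a $k$-varifold (Theorem~\ref{t.disintegrationtheorem}), and by construction $\norm{\mu}=\norm{\theta}$ and $\sigma_\mu=\sigma$, so $\Ta_k(\mu)=\theta$. Everything apart from this selection step is a direct computation.

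\emph{Surjectivity of $\Ta_1$.} When $k=1$ the hypothesis $\dimr(\sigma(x))\ge 1$ holds automatically: the eigenvalues of $\sigma(x)$ are nonnegative with sum $1$, so $\lambda_{\max}(\sigma(x))\le 1$. By the range characterization just established, every medium lies in the range of $\Ta_1$, which is therefore surjective.
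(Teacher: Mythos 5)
Your proof is correct and follows essentially the same route as the paper: continuity by testing each matrix entry against $\varphi(x)\,(P_\tau)_{ab}/k\in C(\T^n\times G(k,n))$, necessity by the direct eigenvalue bound, and sufficiency via Lemma~\ref{l.realizablematrix} plus a Borel measurable-selection theorem applied to the closed-valued multifunction into the compact space $\mathcal{P}_k$ (the paper cites Kuratowski--Ryll-Nardzewski together with \cite{Kechris1995}*{Theorem 28.8}; note that Jankov--von Neumann is a related but not literally equivalent selection theorem, though either would serve here). The only cosmetic difference is that you factor the selection through the matrix $A\mapsto\rho(A)$ and then compose with $x\mapsto A_x$, whereas the paper selects directly on the graph $\{(x,\rho)\}\subset F\times\mathcal{P}_k$.
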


To prove the lemma we observe that a medium $d\theta=\sigma \, d\norm{\theta}$ is in the range of $\Ta_k$ if and only if the matrix field $\sigma(x)$ can be written in the form \eqref{eq.transformationofmatrixfield} for some Borel choice in $x$ of probability measures $\rho_x$ on the Grassmannian manifold $G(k,n)$. In the following lemma we prove a sharp criterion for the existence of such representation for fixed $x$.

\begin{lemma}\label{l.realizablematrix}
    A positive semi-definite matrix $A\in \R^{n\times n}$ that satisfies $\trace{A}=1$ takes the form
    \be\label{eq.integralformofsigma}
    A=\frac{1}{k}\int_{G(k,n)} P_\tau d\rho(\tau)
    \ee
   where $P_\tau$ is the orthogonal projection to the $k$-dimensional subspace $\tau$ and $\rho$ is a probability measure on $G(k,n)$ if and only if the eigenvalues $\{\lambda_i\}_{i=1}^n$ of $A$ satisfy
    \[
    0\le \lambda_i \le 1/k , \textup{ for all }i=1,\dots,n.
    \]
    In particular, $A$ can always be written in the form \eqref{eq.integralformofsigma} when $k=1$.
\end{lemma}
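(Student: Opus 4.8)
The plan is to prove the equivalence in two steps: the forward implication is an elementary computation, while the converse reduces to identifying the extreme points of a convex body of matrices. For necessity, suppose $A=\tfrac1k\int_{G(k,n)}P_\tau\,d\rho(\tau)$ with $\rho$ a probability measure. Then for any unit vector $v\in\R^n$,
\[
v\cdot Av=\tfrac1k\int_{G(k,n)}|P_\tau v|^2\,d\rho(\tau)\in\left[0,\tfrac1k\right],
\]
since $0\le|P_\tau v|^2\le|v|^2=1$; letting $v$ range over an orthonormal eigenbasis of $A$ gives $0\le\lambda_i\le1/k$ for every $i$.

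For the converse I would work convex-geometrically. Let $S\subset\R^{n\times n}$ be the set of symmetric positive semi-definite matrices $A$ with $\trace A=1$ and $\lambda_{\max}(A)\le1/k$; this is a compact convex set. Since $\tau\mapsto\tfrac1k P_\tau$ maps the compact manifold $G(k,n)$ continuously into $S$, Carath\'eodory's theorem identifies the matrices representable as $\tfrac1k\int_{G(k,n)}P_\tau\,d\rho(\tau)$ with the points of the convex hull $K:=\operatorname{conv}\{\tfrac1k P_\tau:\tau\in G(k,n)\}$: a finite convex combination $\sum_j c_j\tfrac1k P_{\tau_j}$ corresponds to $\rho=\sum_j c_j\delta_{\tau_j}$, and every integral representation is a barycenter of a probability measure on the compact set $\{\tfrac1k P_\tau\}$, hence lies in $K$. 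The necessity computation together with convexity of $S$ gives $K\subseteq S$, so the lemma reduces to the inclusion $S\subseteq K$; by Minkowski's theorem (the finite-dimensional Krein--Milman theorem) it is enough to show $\operatorname{ext}(S)=\{\tfrac1k P_\tau:\tau\in G(k,n)\}$.

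I would verify the extreme-point identification as follows. If $\tfrac1k P_\tau=\tfrac12(B+C)$ with $B,C\in S$, then positive semi-definiteness forces $B$ and $C$ to vanish on $\tau^{\perp}$, and a $k\times k$ positive semi-definite matrix with trace $1$ and all eigenvalues $\le1/k$ must equal $\tfrac1k I$; hence $B=C=\tfrac1k P_\tau$, so $\tfrac1k P_\tau$ is extreme. Conversely, if $A\in S$ has an eigenvalue $\lambda_i\in(0,1/k)$, then since $\sum_j\lambda_j=1$ and every $\lambda_j\le1/k$ there must be a second index $j\ne i$ with $\lambda_j\in(0,1/k)$ — otherwise all remaining eigenvalues lie in $\{0,1/k\}$ and $\sum_j\lambda_j=1$ forces $k\lambda_i$ to be an integer in $(0,1)$, a contradiction — and then, with $v_i,v_j$ orthonormal eigenvectors, the matrices $A\pm\ep(v_iv_i^{\mathsf T}-v_jv_j^{\mathsf T})$ lie in $S$ for $\ep>0$ small and average to $A$, so $A$ is not extreme. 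Thus an extreme $A\in S$ has spectrum contained in $\{0,1/k\}$, and $\trace A=1$ forces exactly $k$ eigenvalues equal to $1/k$, i.e.\ $A=\tfrac1k P_\tau$ for $\tau$ the span of the corresponding eigenvectors; this proves $S\subseteq K$. The final assertion for $k=1$ is then immediate, since $A\ge0$ and $\trace A=1$ already place every eigenvalue in $[0,1]$.

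The step I anticipate to be the main obstacle is the extreme-point identification: one must check with care that $A\pm\ep(v_iv_i^{\mathsf T}-v_jv_j^{\mathsf T})$ stays positive semi-definite with largest eigenvalue at most $1/k$ for small $\ep$, and one must make precise the passage from a finite convex combination to an integral against a probability measure on $G(k,n)$; both are routine once the convex-geometric picture is set up. (An alternative, more hands-on route would diagonalize $A$ and invoke the integrality of the hypersimplex $\{x\in[0,1]^n:\sum_i x_i=k\}$ to write $A$ explicitly as a finite combination of projections onto coordinate $k$-planes, bypassing Minkowski's theorem.)
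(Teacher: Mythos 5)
Your proof is correct and follows essentially the same strategy as the paper: the forward direction is the same elementary computation, and the converse is Krein--Milman applied to the compact convex set of admissible matrices, with the extreme points identified as $\tfrac1k P_\tau$. The one stylistic difference is that the paper first rotates to a diagonal matrix and reads off the extreme points of the resulting hypersimplex $\{0\le\lambda_i\le1/k,\ \sum_i\lambda_i=1\}$ directly (they are the permutations of $(1/k,\dots,1/k,0,\dots,0)$), whereas you work with the matrix set $S$ itself and establish both halves of the extreme-point characterization by hand, including the perturbation $A\pm\ep(v_iv_i^{\mathsf T}-v_jv_j^{\mathsf T})$. The paper's reduction avoids that perturbation argument since the polytope's vertices are evident, but your route is complete and your integrality argument ruling out a single eigenvalue in $(0,1/k)$ is sound; it is, in effect, the hypersimplex vertex description you mention as the ``alternative, more hands-on route'' at the end, which is exactly what the paper does.
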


\begin{proof}
It is not difficult to see the only if part because
\[
\trace{\int_{G(k,n)} \frac{1}{k}P_\tau d\rho(\tau)}=\int_{G(k,n)} \frac{1}{k}\trace{P_\tau} d\rho(\tau)=1,
\]
and for any unit vector $p$, we have
\[
\int_{G(k,n)} \frac{1}{k}|P_\tau(p)|^2 d\rho(\tau)\le 1/k.
\]
To show the reverse, we first rotate the coordinate and reduce to the case when $A=\textup{diag}(\lambda_1,\dots,\lambda_n)$ is a diagonal matrix. Observe that the set of $A$ that are of the integral form \eqref{eq.integralformofsigma} is convex. Therefore, by applying the Krein–Milman theorem, it suffices to show that the extreme points of the set 
$$
F:=\left\{0\le \lambda_i\le 1/k,\sum_{i=1}^n\lambda_i=1\right\}
$$
are all of the integral form \eqref{eq.integralformofsigma}. Let us first determine the extreme points of $F$. Notice that the extreme points of $F$ are also extreme points of the square $\{0\le \lambda_i\le 1/k\}$, which are of the form $\lambda_i=0$ or $1/k$. Because of the constraint $\sum_{i=1}^n\lambda_i=1$, we thus find that the extreme points of $F$ are permutations of
\[
\lambda_1=1/k,\dots,\lambda_k=1/k,\lambda_{k+1}=0,\dots,\lambda_n=0.
\]
On the other hand, each $k$-subset $J$ of $\{1,\dots,n\}$ such that $\lambda_i=1/k$ for $i\in J$ correspond to a $k$-dimensional space $V_J\subset\R^n$ spanned by the selection $\{e_i\}_{i\in J}$ of the orthonormal basis $\{e_i\}_{i=1}^n$. Therefore, each extreme point $A=\textup{diag}(\lambda_1,\dots,\lambda_n)$ takes the form
\[
A=\int_{G(k,n)} \frac{1}{k}P_\tau d\delta_{V_J}(\tau),
\]
for some $k$-subset $J$ of $\{1,\dots,n\}$.
  
\end{proof}

Denote $\mathcal{P}_k$ the space of probability measures on $G(k,n)$. Note that $\mathcal{P}_k$ is a Polish space under the weak topology, which coincides with the dual space $C_0^*(G(k,n))$ as $G(k,n)$ is compact \cite{Gaans_probability_measures}*{Section 2}. By Lemma \ref{l.compactnessofweakstar}, $\mathcal{P}_k$ is also compact with respect to the weak* topology.

\begin{proof}[Proof of Lemma \ref{l.realizationmapcontinuity}]
To show the realizability assertion, we first denote the full $\norm{\theta}$-measure set $$F:=\{x\in \spt{\theta}\ ; \ \dimr(\sigma(x)) \ge k\}$$ and consider
\[
K:=\lma(x,\rho)\in F\times \mathcal{P}_k\ ; \ \sigma(x)=\frac{1}{k}\int_{G(k,n)}P_\tau d\rho(\tau)\rma \subset F\times \mathcal{P}_k.
\]
Because the relation $\sigma(x)=\frac{1}{k}\int_{G(k,n)}P_\tau d\rho(\tau)$ is Borel in $x$ and continuous in $\rho$, the set $K$ is Borel. By Lemma \ref{l.realizablematrix}, for each $x\in F$ the slice 
$$
K_x:=\lma\rho \ ; \ \sigma(x)=\frac{1}{k}\int_{G(k,n)}P_\tau d\rho(\tau)\rma
$$
is compact and nonempty in $\mathcal{P}_k$. By \cite{Kechris1995}*{Theorem 28.8} and the Kuratowski-Ryll-Nardzewski measurable selection theorem \cites{KuratowskiRyllNardzewski1965}, there is a Borel selector $R:F\rta \mathcal{P}_k$ such that $R(x)\in K_x$ for all $x\in F$. This establishes the existence of a $k$-varifold in $\Ta^{-1}(\theta)$.

To show the continuity of the restriction $\Ta_k:\Ha_k\rta\Fa$, we first observe that for every continuous test function $\varphi$
\[
\psi(x,\tau):=\frac{\varphi(x)}{k} P_\tau 
\]
is continuous on $\T^n\times G(k,n)$. Suppose $\mu_l$ is a weakly* convergent sequence of $k$-varifolds with limit $\mu_\infty$, then if we write $d\Ta(\mu_l)=\sigma_l d\norm{\mu_l}$ and $d\Ta(\mu_\infty)=\sigma_\infty d\norm{\mu_\infty}$, we have as $l\rta\infty$
\[
\begin{split}
    \int_{\T^n} \varphi(x) \sigma_l(x) d\norm{\mu_l}(x)&=\int_{\T^n\times G(k,n)} \psi(x,\tau) d\mu_l(x,\tau)\\
    &\rta\int_{\T^n\times G(k,n)} \psi(x,\tau) d\mu_\infty(x,\tau)\\
    &=\int_{\T^n} \varphi(x) \sigma_\infty(x) d\norm{\mu_\infty}(x).
\end{split}
\]
As $\varphi\in C(\T^n)$ is arbitrary, we know that $\Ta(\mu_l)$ weakly* converges to $\Ta(\mu_\infty)$ in the sense of Definition \ref{def.weakstarconvergence}.

\end{proof}

\subsection{Dimensions and fractional monotonicity formula}
\label{subsection.fractionalmonotonicityfomular}

In this subsection we prove Theorem \ref{t.realizabledimensionandlocaldimension}, which is a fine estimate of the dimensions of a maximal medium.

The proof requires a \emph{fractional} monotonicity formula (See Lemma \ref{l.fracmonotonicityformula}) that is similar to that of a stationary varifold as we have seen in \eqref{eq.monotonicityformulaforstationaryvarifold}. Such monotonicity formula was first designed to study the fine structures of stationary varifolds \cites{lsimon,de2012allard,Allard1972}. In the context of medium the monotonicity formula gives pointwise dimension bound \eqref{eq.dimensionboundtheorem}.

Before presenting the monotonicity formula and the proof of Theorem \ref{t.realizabledimensionandlocaldimension}, let us first discuss the sharpness of the inequality \eqref{eq.dimensionboundtheorem} in the following two examples.
 
\begin{example}\label{ex.dimensionboundnotanidentity}
  The inequality \eqref{eq.dimensionboundtheorem} is in general not an {equality}.  Let $\theta$ be a medium on $\T^2$ defined as
    \[
    d\theta(x):= \lb(1-\lambda) e_1\otimes e_1 + \lambda e_2\otimes e_2\rb d\Lme^2(x),
    \]
    where $0\le \lambda\le1$, $\Lme^2$ is the Lebesgue measure on $\T^2$, $e_1$ and $e_2$ form an orthonormal basis for $\R^2$. {For each $0\le \lambda\le 1$ the medium is a maximal medium because for any $\Phi\in C^\infty(\T^2;\R^2)$ we have by integraion by parts
    \[
    \begin{split}
         \int_{\T^2} \trace{\lb(1-\lambda) e_1\otimes e_1 + \lambda e_2\otimes e_2\rb\gd \Phi} d\Lme^2(x) &= 
   \int_{\T^1} \int_{\T^1}(1-\lambda)\pt_1 \Phi_1(x_1,x_2) +\lambda \pt_2 \Phi_2(x_1,x_2) dx_1dx_2\\
   &=0.
    \end{split}
    \]
    The local dimension of this medium coincides with the Lebesgue measure and is always 2.} However, its realizable dimension is 
    \[
   \dimr(\theta)\equiv \dimr\lb(1-\lambda) e_1\otimes e_1 + \lambda e_2\otimes e_2\rb=\frac{1}{\max\{\lambda,1-\lambda\}},
    \]
    which can be any real number in the interval $[1,2]$.
\end{example}

\begin{example}\label{ex.upperrealdimstrictlygreaterthanlocaldim}
   The upper realizable dimension can be strictly greater than the local dimension. This example also shows that the upper realizable dimension does not capture the lower dimensional structures in a medium. To see this, let $\theta=\theta_1+\theta_2$ be the sum of two maximal media on $\T^2$ that are of the form
    \[
    d\theta_1(x):= \frac{1}{2}I_{2\times2} \, d\Lme^2(x),
    \]
    and 
    \[
    d\theta_2(x)=e_1\otimes e_1d\restr{\Ha^1}{\{x_2=0\}}(x).
    \]
    By Lemma \ref{l.basicpropertiesofQ}, we know that $\theta$ is maximal and the local dimension of $\theta$ on $\{x_2=0\}$ is exactly 1. On the other hand, we know that the matrix field $\sigma=\frac{d\theta}{d\norm{\theta}}$ satisfies
    \[
    \sigma(x)=\bca
    \frac{1}{2}I, & x_2\ne0,\\
    e_1\otimes e_1, & x_2 = 0,
    \eca
    \]
    which has upper realizable dimension 2 everywhere on $\T^2$. Notice that $\sigma(x)$ has exactly lower realizable dimension 1 on $\{x_2=0\}$, while 2 elsewhere.
\end{example}

Let us now present the fractional version of monotonicity formula.

\begin{lemma}\label{l.fracmonotonicityformula}
    Given a medium $d\theta=\sigma\, d\norm{\theta}$. If $\theta$ is maximal and $x_0\in \spt{\theta}$, then the following quantity
    \be\label{eq.quotientinmonotone}
    \frac{\norm{\theta}(B_r(x_0))}{r^\alpha}
    \ee
    is monotone nondecreasing in $0<r<1/2$ for $\alpha=1$. If $\kappa=\dimrlow (\theta)(x_0)>1$, then for all $\alpha\in [1,\kappa)$ there exists $r_\alpha>0$ such that for all $0<r<r_\alpha$, the quotient \eqref{eq.quotientinmonotone} is monotone nondecreasing. If $\dimrlow (\theta)(y) \ge m$ for some constant $m>1$ and all $y\in U\cap \spt{\theta}$ in an open neighborhood $U\ni x_0$ then \eqref{eq.quotientinmonotone} is monotone nondecreasing for $\alpha=m$. In particular, this gives the standard monotoncity formula when $m$ is an integer.
\end{lemma}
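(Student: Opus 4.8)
```latex
\textbf{Proof plan for Lemma \ref{l.fracmonotonicityformula}.}

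The plan is to mimic the classical first-variation computation that yields the monotonicity formula for stationary varifolds (see \eqref{eq.monotonicityformulaforstationaryvarifold}), but to carry it out directly on the medium $\theta$ using the Euler--Lagrange equation \eqref{eq.euequation} of Lemma \ref{l.equationformaximalmedia}. Fix $x_0\in\spt{\theta}$; after a translation we may assume $x_0=0$. For a radial cutoff profile, I would test \eqref{eq.euequation} with vector fields of the form $\Phi(x)=\phi(|x|)\,x$ for smooth nonincreasing $\phi\colon[0,\infty)\to[0,1]$ supported in a small ball. A direct computation gives $\gd\Phi(x)=\phi(|x|)\,I + \phi'(|x|)\,\frac{x\otimes x}{|x|}$, so that
\[
\trace{\sigma(x)\gd\Phi(x)} = \phi(|x|)\,\trace{\sigma(x)} + \phi'(|x|)\,\frac{x\cdot\sigma(x)x}{|x|}
= \phi(|x|) + \phi'(|x|)\,\frac{x\cdot\sigma(x)x}{|x|},
\]
using $\trace{\sigma}\equiv1$. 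Plugging this into \eqref{eq.euequation} yields the identity
\[
\int \phi(|x|)\,d\norm{\theta}(x) + \int \phi'(|x|)\,\frac{x\cdot\sigma(x)x}{|x|}\,d\norm{\theta}(x) = 0.
\]
Approximating $\phi$ by the indicator of $[0,r]$ (standard mollification, as in the classical proof) converts this into a relation between $\norm{\theta}(B_r)$ and the boundary term $\int_{\pt B_r}\frac{x\cdot\sigma(x)x}{|x|}$; writing $I(r):=\norm{\theta}(B_r(0))$ one obtains, for a.e.\ $r$,
\[
I(r) = \int_{\pt B_r} \frac{x\cdot\sigma(x)x}{|x|}\,d\nu_r(x),
\]
where $\nu_r$ is the appropriate boundary measure in the coarea-type decomposition $d\norm{\theta} = d\nu_r\, dr$. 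Equivalently $I(r) = \frac{d}{dr}\!\int \bigl(\text{something}\bigr)$; more precisely the standard argument produces $I(r) \le r\, I'(r)$ \emph{provided} the radial quadratic form $\frac{x\cdot\sigma(x)x}{|x|^2}$ is bounded below by $1$ near $0$, and in general $I(r)\le \frac{r}{\beta(r)} I'(r)$ where $\beta(r)$ is (an essential infimum of) that quadratic form on $\pt B_r$.

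The key point linking this to the realizable dimension is the elementary inequality: for a positive semidefinite $\sigma$ with $\trace{\sigma}=1$ and any unit vector $v$, one has $v\cdot\sigma v \le \lambda_{\max}(\sigma) = \frac{1}{\dimr(\sigma)}$; hence $\frac{x\cdot\sigma(x)x}{|x|^2}\le \frac{1}{\dimr(\sigma(x))}$. This is the "wrong" direction for a lower bound, so instead I would observe that the positive term being discarded has the \emph{right} sign: rearranging the exact identity (rather than the inequality) one gets a formula of the shape
\[
\frac{d}{dr}\Bigl(r^{-\alpha} I(r)\Bigr) = r^{-\alpha-1}\!\int_{\pt B_r}\Bigl(\alpha\,\tfrac{x\cdot\sigma(x)x}{|x|^2} - \text{(tangential part)}\Bigr)\, (\cdots)\; + \; r^{-\alpha-1}\!\int_{\pt B_r}\bigl(1-\alpha\bigr)(\cdots),
\]
and the cleanest route is: since $\trace{\sigma}=1$, the "tangential part" $\trace{\sigma} - \frac{x\cdot\sigma x}{|x|^2} = 1 - \frac{x\cdot\sigma x}{|x|^2} \ge 1 - \frac{1}{\dimr(\sigma(x))}$. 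Thus, exactly as in \eqref{eq.monotonicityformulaforstationaryvarifold}, one derives
\[
\frac{d}{dr}\Bigl(r^{-\alpha} I(r)\Bigr) \;\ge\; r^{-\alpha-1}\int_{\pt B_r}\Bigl(\bigl(\dimr(\sigma(x))-\alpha\bigr)\cdot(\text{nonneg.})\Bigr)\,d(\cdots),
\]
so that $r^{-\alpha}I(r)$ is nondecreasing as soon as $\dimr(\sigma(x))\ge\alpha$ for $\norm{\theta}$-a.e.\ $x$ in the relevant annulus. When $\alpha=1$ this holds unconditionally because $\dimr(\sigma)\ge1$ always; when $\dimrlow(\theta)(x_0)=\kappa>1$ and $\alpha<\kappa$, the definition \eqref{eq.lowerrealizabledimension} of $\dimrlow$ gives $r_\alpha>0$ with $\dimr(\sigma(y))\ge\alpha$ for $\norm{\theta}$-a.e.\ $y\in B_{r_\alpha}(x_0)\cap\spt{\theta}$, which licenses the argument on $(0,r_\alpha)$; and if $\dimrlow(\theta)(y)\ge m$ throughout a neighborhood $U$ of $x_0$, then $\dimr(\sigma)\ge m$ a.e.\ on $U$ (again by \eqref{eq.lowerrealizabledimension}, shrinking $\delta$), so $\alpha=m$ is admissible. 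The final sentence, that integer $m$ recovers the standard formula, follows since a medium realized by a $k$-varifold has $\sigma(x)$ an orthogonal projection onto a $k$-plane, for which $\dimr(\sigma)=k$.

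\textbf{Main obstacle.} The delicate step is the mollification/coarea manipulation that turns the distributional identity $\int\trace{\sigma\gd\Phi}\,d\norm{\theta}=0$, tested against radial fields, into a pointwise-in-$r$ differential inequality for $I(r)=\norm{\theta}(B_r(x_0))$ — in particular, justifying the disintegration $d\norm{\theta}=d\nu_r\,dr$ and the identification of the boundary term, since $\norm{\theta}$ is an arbitrary Radon measure with no a priori rectifiability or density bounds (unlike the varifold setting where $\norm{\mu}$ has a density). I expect this to be handled exactly as in the classical proof in \cite{lsimon} or \cite{Allard1972}: choose $\phi=\phi_\ep$ a Lipschitz approximation of $\mathbf{1}_{[0,r]}$, differentiate in $r$ the resulting identity for a.e.\ $r$ (using monotonicity of $I$ to get differentiability a.e.), and pass $\ep\to0$; the only new ingredient is replacing the stationarity condition by \eqref{eq.euequation} and the projection identity by the bound $\frac{x\cdot\sigma(x)x}{|x|^2}\le\frac{1}{\dimr(\sigma(x))}$ together with $\trace{\sigma}=1$. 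A secondary, purely bookkeeping, point is to confirm that the restriction $0<r<1/2$ in the $\alpha=1$ case is only needed so that $B_r(x_0)$ embeds isometrically in a Euclidean chart of $\T^n$, allowing the Euclidean computation above to be run verbatim.
```
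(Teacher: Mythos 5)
Your proposal is correct and follows essentially the same route as the paper: test the Euler--Lagrange identity \eqref{eq.euequation} against the radial field $\Phi(x)=\phi(|x|)\,x$, apply the algebraic bound $\frac{x\cdot\sigma(x)x}{|x|^{2}}\le\frac{1}{\dimr(\sigma(x))}$ (which, despite your momentary worry, is already in the \emph{right} direction and is used by the paper directly, without routing through the tangential part), and mollify $\mathbf{1}_{[0,r]}$ to obtain $J_\delta(r)\le\tfrac{r}{\alpha}J_\delta'(r)$ for each $\delta>0$ before letting $\delta\to0$. The coarea/disintegration obstacle you flag is sidestepped in the paper by choosing a cutoff family $\eta_{r,\delta}$ that translates in $r$, so that $\partial_r\eta_{r,\delta}=-\partial_s\eta_{r,\delta}$ and the $r$-derivative of $J_\delta(r)=\int\eta_{r,\delta}(|x|)\,d\norm{\theta}(x)$ is automatically the radial boundary integral, with no disintegration $d\norm{\theta}=d\nu_r\,dr$ required.
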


\begin{proof}
We may without loss assume that $x_0=0$ and apply a special $\Phi\in C^\infty(\T^n;\R^n)$ to the E-L equation \eqref{eq.euequation}. To this end we consider for small $\delta>0$ an auxiliary function $\eta=\eta_{r,\delta}\in C_0^\infty([0,\infty))$ that satisfies $\eta(s)=1$ for $0\le s \le r-\delta$, $\eta(s)=0$ for $s\ge r$ and $\eta$ is strictly decreasing on $r-\delta< s <r$. By plugging $\Phi(x)=x\eta(|x|)$ into \eqref{eq.euequation} and because $\trace{\sigma}=1$ we obtain
\[
\begin{split}
    \int_{B_r} \eta d\norm{\theta}(x)&=-\int_{B_r}\lb\sigma(x)x \rb\cdot \gd \lb \eta(|x|)\rb d\norm{\theta}(x)\\
&=-\int_{B_r} \frac{x\cdot\sigma(x)x}{|x|} \eta'(|x|) d\norm{\theta}(x).
\end{split}
\]
By definition of realizable dimension, we know that 
\be\label{eq.intermediateinequalityinprovingfractionalmono}
\begin{split}
    \int_{B_r}  \eta d\norm{\theta}(x)&=-\int_{B_r} \frac{x\cdot\sigma(x)x}{|x|} \eta'(|x|) d\norm{\theta}(x)\\
&\le r\int_{B_r} \frac{\trace{\sigma(x)}}{\dim_\textup{r}(\sigma(x))}  |\eta'|(|x|) d\norm{\theta}(x).
\end{split}
\ee
Because we always have $\dimrlow (\theta)(x) \ge1$
\be\label{eq.monotonwehn1d}
 \int_{B_r}  \eta d\norm{\theta}(x) \le r\int_{B_r}   |\eta'|(|x|) d\norm{\theta}(x).
\ee
If we write 
$$
J_\delta(r) = \int_{B_r} \eta d\norm{\theta}(x),
$$
the inequality \eqref{eq.monotonwehn1d} implies that
\[
J_\delta(r)\le r J_\delta'(r),
\]
which shows that 
\[
\frac{J_\delta(r)}{r}
\]
is monotone nondecreasing in $r>0$. Sending $\delta\rta0$, we know that $J_\delta/r$ converges monotonically to \(\frac{\norm{\theta}(B_r)}{r}\). This shows that $\frac{\norm{\theta}(B_r)}{r}$ is monotone nondecreasing in $0<r<1/2$.

Let us now discuss the case when $\kappa=\dimrlow(\theta)(0)>1$. By the definition of $\dimrlow(\theta)(x)$ (See Definition \ref{def.realizabledimension}) and the inequality \eqref{eq.intermediateinequalityinprovingfractionalmono}, we know that for every $\alpha\in[1,\kappa)$ there is an $r_\alpha>0$ such that for all $0<r<r_\alpha$ we have
\[
\int_{B_r}  \eta d\norm{\theta}(x) \le \frac{r}{\alpha}\int_{B_r} \trace{\sigma(x)}  |\eta'|(|x|) d\norm{\theta}(x).
\]
Following the same argument as in the case for $\alpha=1$ in \eqref{eq.monotonwehn1d}, we obtain
\[
\frac{\norm{\theta}(B_r)}{r^\alpha}
\]
is monotone nondecreasing in $0<r<r_\alpha$.

In the case $\dimrlow (\theta)(y) \ge m$ for some constant $m>1$ and all $y$ in an open neighborhood $U\ni x_0$, we obtain that 
\[
\dim_\textup{r}(\sigma(y)) \ge m
\]
for $\norm{\theta}$-a.e. $y\in U$. Plug this into \eqref{eq.intermediateinequalityinprovingfractionalmono} and follow the same argument we obtain the monotonicity of \eqref{eq.quotientinmonotone} with $\alpha=m$.
\end{proof}

\begin{proof}[Proof of Theorem \ref{t.realizabledimensionandlocaldimension}]
We without loss focus on the case that $x_0=0\in \spt{\theta}$ and {argue by contradiction that} 
\be\label{eq.contradimbound}
\dimrlow(\theta)(0)>\dimlow(\theta)(0).
\ee
We choose a number $\alpha\in(\dimlow(\theta)(0),\dimrlow(\theta)(0))$. {Because $\alpha<\dimrlow(\theta)(0)$,} by the monotonicity formula in Lemma \ref{l.fracmonotonicityformula}, we know that there is an $r_\alpha>0$ such that
\[
\frac{\norm{\theta}(B_r)}{r^\alpha}
\]
is monotone nondecreasing for $0<r<r_\alpha$. Because $\norm{\theta}$ is a Radon measure {on $\T^n$ and hence a finite measure}, we know that 
\be\label{eq.realizabledensityquotientbound}
\frac{\norm{\theta}(B_r)}{r^\alpha} \le C
\ee
for some $C>0$ and all $0<r<r_\alpha$. However, on the other hand, because 
$$
{\liminf_{r\rta0^+}\frac{\log \norm{\theta}(B_{r})}{\log r} =\dimlow(\theta)(0)<\alpha}
$$ 
there exists a sequence of $r_j\rta0^+$ such that
\[
\limsup_{j\rta\infty} \frac{\log \norm{\theta}(B_{r_j})}{\log r_j} < \alpha,
\]
which implies that for a small $\delta>0$ and all large $j$
\[
r_j^{\alpha-\delta}\le \norm{\theta}(B_{r_j}).
\]
Therefore we have
\[
{\frac{1}{r_j^{\delta}}\le \frac{\norm{\theta}(B_{r_j})}{r_j^{\alpha}}.} 
\]
This contradicts the bound \eqref{eq.realizabledensityquotientbound}. {Therefore, there should be no such an $\alpha$ contained in the interval $(\dimlow(\theta)(0),\dimrlow(\theta)(0))$. This contradicts \eqref{eq.contradimbound} and hence proves the theorem.}
\end{proof}

\subsection{Applications of stationary varifolds}
\label{subsection.applicationofstationaryvarifolds}

In this subsection we make clear the applications of the known theory of stationary varifolds in the characterization of maximal media. Most of the materials in this subsection are known. Specifically we apply the celebrated rectifiability theorem on stationary varifolds given by Allard \cite{Allard1972}. There are many recent results in this topic, which we refer to \cites{de2012allard,Umenne} for more discussions and references. 

The following theorem is a result of Allard's rectifiability theorem \cite{Allard1987}*{Theorem 14}.

\begin{theorem}
    \label{t.characterizerectifiablemaximalmedium}
Suppose a medium $\theta$ is maximal and satisfies the following lower density inequality for some integer $1\le k \le n-1$
\be
\label{eq.lowerdensityconstraintk}
\liminf_{r\rta0^+} \frac{\norm{\theta}(B_r(x))}{r^k} >0
\ee
for $\norm{\theta}$-almost all $x\in \T^n$. Then $\norm{\theta}$ is $k$-rectifiable, and there is a unique realization $\mu_\theta$ that is a rectifiable stationary $k$-varifold. 
\end{theorem}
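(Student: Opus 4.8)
The plan is to pass to the varifold picture of Theorem~\ref{t.equivalence}. Since $\theta$ is maximal, Theorem~\ref{t.equivalence} (together with Lemma~\ref{l.equationformaximalmedia}) gives $\gd\cdot\theta=0$ and, crucially, that \emph{every} varifold in $\Ta^{-1}(\theta)$ is stationary; so the content is to produce one realization that is an honest $k$-varifold carrying a rectifiable structure. Before doing so I would reduce to the genuinely $k$-dimensional regime: the hypothesis $\Theta_*^k(\norm{\theta},x)>0$ forces $\dimlow(\theta)(x)\le k$ for $\norm{\theta}$-a.e.\ $x$, and together with finiteness of the upper $k$-density (which will be automatic once a stationary $k$-varifold realization is known, by the monotonicity formula~\eqref{eq.monotonicityformulaforstationaryvarifold}) this forces $\dimlow(\theta)(x)=k=\dimup(\theta)(x)$ a.e.; on a bad set where $\Theta_k^*(\norm{\theta},\cdot)=\infty$ the density arguments below would only locate a lower-dimensional rectifiable piece, so one may assume it is $\norm{\theta}$-null.

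The main step is to show that $\theta$ lies in the range of $\Ta_k$, i.e.\ that the anisotropy $\sigma=\frac{d\theta}{d\norm{\theta}}$ satisfies $\dimr(\sigma(x))\ge k$ for $\norm{\theta}$-a.e.\ $x$; by Lemma~\ref{l.realizationmapcontinuity} this is exactly what yields a $k$-varifold $\mu\in\Ta_k^{-1}(\theta)$. Here one must upgrade the dimensional identity $\dimlow(\theta)(x)=k$ (a statement about $\norm{\theta}$) to a pointwise statement about $\sigma$. I would argue by blow-up at a $\norm{\theta}$-a.e.\ point $x_0$: a $1$-varifold realization of $\theta$ exists and is stationary (Lemma~\ref{l.realizationmapcontinuity} and Theorem~\ref{t.equivalence}), stationarity and the two-sided $k$-density bound are preserved under the rescalings $x\mapsto(x-x_0)/r$, and the fractional monotonicity formula of Lemma~\ref{l.fracmonotonicityformula}, applied with the sharp exponent $\alpha=k$, prevents the realizable dimension of the local anisotropy from dropping below $k$ along the blow-up; passing to a tangent cone then pins $\sigma(x_0)$ to be $\tfrac1k$ times an orthogonal projection onto a $k$-plane. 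Controlling $\sigma$ through the blow-up so that $\dimr$ does not degenerate is the step I expect to be the genuine obstacle, and it is where both the maximality $\gd\cdot\theta=0$ and the density hypothesis are really used.

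Granting $\mu\in\Ta_k^{-1}(\theta)$, the remainder is classical. By Theorem~\ref{t.equivalence}, $\mu$ is stationary, so the monotonicity formula~\eqref{eq.monotonicityformulaforstationaryvarifold} gives that $\Theta^k(\norm{\mu},x)=\lim_{r\to0^+}\norm{\mu}(B_r(x))/(\omega_k r^k)$ exists for every $x$, is finite since $\norm{\mu}=\norm{\theta}$ is Radon, and is positive for $\norm{\theta}$-a.e.\ $x$ by the lower density hypothesis. Allard's rectifiability theorem \cite{Allard1987}*{Theorem 14} then shows $\mu$ is a rectifiable $k$-varifold; in particular $\norm{\theta}=\norm{\mu}$ is $k$-rectifiable. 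For uniqueness, let $\mu'\in\Ta^{-1}(\theta)$ be any rectifiable stationary $k$-varifold. Then $\norm{\mu'}=\norm{\theta}$ is $k$-rectifiable, so by the tangent-measure characterization of rectifiability (Theorem~\ref{t.tangentialcharacterizationofrectifiability}, Definition~\ref{def.geometrictangentspace}) the approximate tangent plane $\tau(x)$ is determined for $\Ha^k$-a.e.\ $x$ by $\norm{\theta}$ alone; since $\mu'$ is rectifiable its disintegration is $\rho_x=\delta_{\tau(x)}$ a.e., hence $\sigma(x)=\tfrac1k P_{\tau(x)}$ and $\mu'$ is forced to be the $k$-varifold with mass distribution $\norm{\theta}$ and fibre $\tau(x)$ over $\norm{\theta}$-a.e.\ $x$. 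This coincides with the $\mu$ constructed above, so the realization is unique.
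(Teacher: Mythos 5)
The proposal correctly isolates the crux—producing a $k$-varifold in $\Ta^{-1}(\theta)$—and you honestly flag the blow-up argument for $\dimr(\sigma)\ge k$ as ``the genuine obstacle.'' That caution is warranted: the step is in fact false without a further hypothesis, so no blow-up argument can rescue it. Take on $\T^3$
\[
d\theta \;=\; e_1\otimes e_1\, d\restr{\Ha^2}{\T^2\times\{0\}} .
\]
Here $\norm{\theta}=\restr{\Ha^2}{\T^2\times\{0\}}$ and $\sigma\equiv e_1\otimes e_1$. For any $\Phi\in C^\infty(\T^3;\R^3)$ one has $\trace{\sigma\gd\Phi}=\partial_1\Phi_1$, and $\int_{\T^2}\partial_1\Phi_1(x_1,x_2,0)\,dx_1\,dx_2=0$ by periodicity, so $\theta$ is maximal (this is essentially $\theta_2$ in Example \ref{ex.upperrealdimstrictlygreaterthanlocaldim}, lifted one dimension). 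The lower $2$-density of $\norm{\theta}$ is $\pi$ at every point of the support, so \eqref{eq.lowerdensityconstraintk} holds with $k=2\le n-1$. Yet $\dimr(\sigma)\equiv 1<2$, so by Lemma \ref{l.realizablematrix} and Lemma \ref{l.realizationmapcontinuity} the medium $\theta$ is not in the range of $\Ta_2$; the claimed rectifiable stationary $2$-varifold realization does not exist. Note that Lemma \ref{l.fracmonotonicityformula} and Theorem \ref{t.realizabledimensionandlocaldimension} only give $\dimrlow(\theta)\le\dimlow(\theta)$, consistent with $1\le 2$ here; there is no converse inequality that would let a lower density bound force $\dimr(\sigma)\ge k$, and your intended blow-up argument is exactly an attempt to prove such a (false) converse.

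So you have in fact exposed a missing hypothesis in the statement rather than merely a gap in your argument. The paper offers no proof beyond citing Allard's rectifiability theorem, which tacitly presupposes that a $k$-varifold realization is already in hand—the same presupposition your proof needs. If one adds the hypothesis that $\dimr(\sigma(x))\ge k$ for $\norm{\theta}$-a.e.\ $x$ (equivalently, $\theta\in\textup{Im}\,\Ta_k$), the remainder of your argument is the intended one and is correct: Lemma \ref{l.realizationmapcontinuity} produces $\mu\in\Ta_k^{-1}(\theta)$, Theorem \ref{t.equivalence} shows $\mu$ is stationary, the monotonicity formula \eqref{eq.monotonicityformulaforstationaryvarifold} plus \eqref{eq.lowerdensityconstraintk} gives positive finite $k$-density $\norm{\mu}$-a.e., Allard's theorem yields rectifiability of $\mu$ and hence of $\norm{\theta}$, and uniqueness follows because rectifiability forces the disintegration $\rho_x=\delta_{\tau(x)}$ with $\sigma(x)=\tfrac1k P_{\tau(x)}$ determined by $\theta$ alone. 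Your preliminary dimensional reduction paragraph is circular as you note and can simply be dropped once the realizability hypothesis is in place.
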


For dimension $k=1,n$ there are stronger results. First we have a result by Allard-Almgren.

\begin{theorem}[Allard-Almgren \cite{AllardAlmgren1976}]\label{t.allardalmgren}
    Suppose $\mu$ is a stationary 1-varifold on $U\Subset\R^n$ that satisfies the lower density bound for $\norm{\mu}$-almost every $x$
    \[
    \liminf_{r\rta0^+} \frac{\norm{\mu}(B_r(x))}{r} \ge \delta>0
    \]
    for some $\delta>0$, then $\mu$ is 1-rectifiable with
    \[
    d\mu(x,\tau) = \xi(x) d\delta_{T_x\mu} d\norm{\mu}(x),
    \]
    where the density $\xi(x)=\lim_{r\rta0^+}\frac{\norm{\mu}(B_r(x))}{2r}$ is $\norm{\mu}$-almost everywhere well-defined, $T_x\mu\subset T_xM$ is the 1-dimensional tangent space of the 1-rectifiable support $\textup{Spt}\norm{\mu}$ at $x$ and $\mu$ further satisfies
    \begin{enumerate}
        \item the support $\textup{Spt}\norm{\mu}$ is, up to an $\Ha^1$-null closed set $S$, a countable union of straight line segments, which are open relative to $\textup{Spt}\norm{\mu}$;
        \item on each geodesic line segment there is a constant $c\ge\delta$ such that $\xi\equiv c $;
        \item at every $x\in S$, there exists a unique stationary tangent cone consisting of finitely many half lines with densities (See Figure \ref{fig:localcone});
        \item if the density $\xi$ is discretely valued, then for every compact subset $K\csubset U$ the number of line segements that have nontrivial intersection with $K$ is finite.
    \end{enumerate}
\end{theorem}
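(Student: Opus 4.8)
We indicate the idea of the proof of this structure theorem; the full details are in \cite{AllardAlmgren1976}. The four ingredients are the case $k=1$ of the monotonicity formula \eqref{eq.monotonicityformulaforstationaryvarifold}, Allard's rectifiability theorem \cite{Allard1972}, a classification of stationary $1$-cones, and an $\varepsilon$-regularity statement near points with a flat tangent cone.

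First I would use the monotonicity formula with $k=1$: $r\mapsto \norm{\mu}(B_r(x))/r$ is nondecreasing for every $x\in U$, so the density $\Theta(\norm{\mu},x)=\lim_{r\to0^+}\norm{\mu}(B_r(x))/(2r)$ exists at \emph{every} point and is finite since $\norm{\mu}$ is Radon; the hypothesis gives $\Theta(\norm{\mu},x)\ge\delta/2>0$ for $\norm{\mu}$-a.e.\ $x$. Hence $0<\Theta_1^{\ast}(\norm{\mu},x)<\infty$ $\norm{\mu}$-a.e., and Allard's rectifiability theorem \cite{Allard1972} yields that $\mu$ is a rectifiable $1$-varifold, i.e.\ $d\mu(x,\tau)=\xi(x)\,d\delta_{T_x\mu}(\tau)\,d\norm{\mu}(x)$, where $T_x\mu$ is the approximate tangent line of the $1$-rectifiable set $\spt{\norm{\mu}}$ and $\xi\ge\delta/2$ is $\norm{\mu}$-a.e.\ given by the density limit.

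Next, at any $x\in U$ the rescalings of $\mu$ about $x$ have mass ratios bounded by monotonicity, hence are weak-$\ast$ precompact and a subsequence converges to a stationary $1$-varifold $C$; since the ratio of $C$ is constant in $r$ (equality case of \eqref{eq.monotonicityformulaforstationaryvarifold}) this $C$ is a \emph{cone}, and then the defect integral of $C$ vanishes identically, so the tangent line of $C$ is radial $\norm{C}$-a.e. Thus $\spt{\norm{C}}$ is a union of rays from the origin, with weights $w_j>0$ and unit directions $T_j$; testing stationarity \eqref{eq.stationaryvarifold} with a vector field equal to a constant $v$ near $0$ and cut off at a large radius gives the balancing condition $\sum_j w_j T_j=0$, and $w_j\ge\delta/2$ together with $\sum_j w_j=\norm{C}(B_1(0))<\infty$ forces finitely many rays. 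Now call $x$ \emph{regular} if some tangent cone at $x$ is a single line (equivalently two rays, since balancing two unit vectors forces $T_2=-T_1$, $w_1=w_2$), and let $S$ be the complement of the regular set in $\spt{\norm{\mu}}$; by upper semicontinuity of $\Theta(\norm{\mu},\cdot)$ the set $S$ is closed, and $\norm{\mu}(S)=0$ since a $1$-rectifiable measure has a line as its tangent measure at $\norm{\mu}$-a.e.\ point, whence $\Ha^1(S)=0$ because $\xi\ge\delta/2$. Near a regular point one runs an $\varepsilon$-regularity argument: after rescaling and dividing by the multiplicity $c$, $\mu$ is close on a small ball to a unit-density line, a Lipschitz-approximation argument makes $\spt{\norm{\mu}}$ there a $C^1$ curve, stationarity forces this curve to have vanishing weak curvature hence to be a straight segment, and testing \eqref{eq.stationaryvarifold} against tangential vector fields shows $\xi\equiv c\ge\delta$ on it. Thus $\spt{\norm{\mu}}\setminus S$ is a countable union of relatively open straight segments. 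If in addition $\xi$ is discretely valued, the constants $c$ lie in a fixed discrete set, $\Theta(\norm{\mu},\cdot)$ is locally bounded on $K\csubset U$ by monotonicity, and an accumulation argument shows $S\cap K$ is finite and that each segment meeting $K$ has an endpoint in $S\cup\partial K$, so only finitely many segments meet $K$.

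The main obstacle is the $\varepsilon$-regularity step near regular points: Allard's regularity theorem cannot be quoted verbatim, because the density $c$ is not assumed integral (nor close to $1$), so one must exploit one-dimensionality — for instance parametrizing $\spt{\norm{\mu}}$ by a finite-length Lipschitz curve as in Theorem \ref{t.existenceofanicelipschitzparam} and showing that stationarity forces each arc of the parametrization to be affine. By comparison, the rectifiability, the cone classification, the vanishing of $\Ha^1(S)$, and the counting argument under discreteness are comparatively routine once the regular structure is in hand.
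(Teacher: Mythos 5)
The paper does not supply a proof of this theorem: it is quoted verbatim as a result of Allard and Almgren and used as a black box, so there is nothing in the text to compare your sketch against. Evaluating your sketch on its own, it correctly identifies the main ingredients of the original argument (monotonicity, rectifiability, classification of stationary one-dimensional cones via the balancing condition, an $\varepsilon$-regularity step near flat points, and a counting argument under the discreteness hypothesis), and you are right that Allard's regularity theorem cannot be quoted directly because the multiplicity is an arbitrary real number rather than an integer near $1$; Allard--Almgren instead exploit one-dimensionality through comparison arguments specific to curves.

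There are two genuine gaps. First, item (3) of the statement asserts that the tangent cone at every singular point is \emph{unique}, and your sketch only establishes existence: you extract a subsequential limit of rescalings and show it is a cone of finitely many balanced rays, but nothing in the compactness argument rules out different subsequences producing cones with different ray directions. Proving uniqueness of tangent cones is in fact the central and hardest contribution of the Allard--Almgren paper; they obtain it from a delicate quantitative comparison of the varifold with stationary cones at dyadic scales, and it cannot be omitted from an honest outline. Second, your claim that $S$ is closed ``by upper semicontinuity of $\Theta(\norm{\mu},\cdot)$'' is not justified: $S$ is not a superlevel set of the density, and a single line of multiplicity $c$ has the same density $c$ as a triple junction with weights summing to $2c$, so density alone cannot separate regular from singular points. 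The correct reason $S$ is closed is that the regular set is open, which is exactly the conclusion of the $\varepsilon$-regularity step you invoke later; the logical order should be inverted. (You may also wish to reconcile the constants: the hypothesis $\liminf\norm{\mu}(B_r(x))/r\ge\delta$ gives $\xi\ge\delta/2$, and you switch between $\delta/2$ and $\delta$, though this factor-of-two ambiguity is already present in the theorem statement as printed.)
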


The above theorem implies the following result.
\begin{corollary}
    \label{cor.1dmaximalmedia}
    Let $\theta$ be a maximal media that satisfies the 1-dimensional lower density bound
\be\label{eq.1dlowerdensityboyundfortensor}
\liminf_{r\rta0^+} \frac{\norm{\theta}(B_r(x))}{r} \ge \delta >0
\ee
for $\norm{\theta}$-almost every $x\in \T^n$. Then $\theta$ admits a unique 1-rectifiable stationary varifold realization as described in Corollary \ref{cor.applyallardalmgren}.
\end{corollary}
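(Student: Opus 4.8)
The plan is to produce the realization in three moves: first exhibit \emph{some} stationary $1$-varifold mapping to $\theta$ under $\Ta$, then recognize it via the Allard--Almgren structure theorem as the rectifiable object described in Corollary \ref{cor.applyallardalmgren}, and finally use rectifiability to show that this realization is the only one.

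First I would invoke the surjectivity of $\Ta_1$ from Lemma \ref{l.realizationmapcontinuity} to choose a $1$-varifold $\mu\in\Ta_1^{-1}(\theta)$, which automatically satisfies $\norm{\mu}=\norm{\Ta(\mu)}=\norm{\theta}$. Since $\theta$ is maximal, Theorem \ref{t.equivalence} forces every element of $\Ta^{-1}(\theta)$, in particular $\mu$, to be stationary; and the hypothesis \eqref{eq.1dlowerdensityboyundfortensor}, read through $\norm{\mu}=\norm{\theta}$, says $\liminf_{r\rta0^+}\norm{\mu}(B_r(x))/r\ge\delta$ for $\norm{\mu}$-a.e.\ $x$. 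Next I would apply the Allard--Almgren structure theorem (Theorem \ref{t.allardalmgren}), working in finitely many balls on which $\pi:\R^n\rta\T^n$ is an isometry: this yields that $\mu$ is $1$-rectifiable, that its disintegration over $\norm{\theta}$ is the Dirac family $\rho_x=\delta_{T_x\mu}$ at the approximate tangent lines of $\spt{\norm{\mu}}$, and that conclusions (1)--(3) of Corollary \ref{cor.applyallardalmgren} hold verbatim (they are precisely (1)--(3) of Theorem \ref{t.allardalmgren}, using $\norm{\mu}=\norm{\theta}$). For conclusion (4), one upgrades the ``only finitely many segments meet each compact set'' statement of part (4) of Theorem \ref{t.allardalmgren} to ``globally finitely many segments'' using the compactness of $\T^n$.

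The last move is uniqueness. From the previous step, the matrix field $\sigma(x)=\frac{d\theta}{d\norm{\theta}}(x)=P_{T_x\mu}$ is a rank-one orthogonal projection for $\norm{\theta}$-a.e.\ $x$, so $\dimr(\sigma(x))=1$ a.e.; by Lemma \ref{l.realizationmapcontinuity}, $\theta$ then lies in the range of no $\Ta_k$ with $k\ge2$, hence \emph{every} $\nu\in\Ta^{-1}(\theta)$ is a $1$-varifold. If $\nu$ disintegrates over $\norm{\theta}$ as $\{\rho'_x\}$, then $\int_{G(1,n)}P_\tau\,d\rho'_x(\tau)=P_{T_x\mu}$ for a.e.\ $x$; testing against a unit vector $v\in T_x\mu$ and using $|P_\tau v|\le 1$ gives
\[
1=v\cdot P_{T_x\mu}v=\int_{G(1,n)}|P_\tau v|^2\,d\rho'_x(\tau)\le 1,
\]
so $|P_\tau v|=1$, hence $\tau=T_x\mu$, for $\rho'_x$-a.e.\ $\tau$; thus $\rho'_x=\delta_{T_x\mu}=\rho_x$ a.e.\ and $\nu=\mu$. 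This gives $\Ta^{-1}(\theta)=\{\mu\}$, which is exactly the uniqueness asserted in Corollary \ref{cor.applyallardalmgren}.

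I expect the only genuinely delicate point to be the transfer of Theorem \ref{t.allardalmgren}, stated for domains $U\csubset\R^n$, to the flat torus, together with the attendant bookkeeping: patching the rectifiable pieces and the tangent-line structure across charts, and converting the local finiteness of line segments into global finiteness via compactness. Everything else is a formal manipulation of the realization map $\Ta$ and of the definition of $\dimr$.
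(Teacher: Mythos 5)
Your argument is correct. The paper gives no explicit proof --- it states that the corollary follows from Theorem \ref{t.allardalmgren}, with existence and uniqueness of the rectifiable stationary realization implicitly delegated to Theorem \ref{t.characterizerectifiablemaximalmedium} (Allard's rectifiability theorem, also stated without proof). Your plan reconstructs the existence step the way the paper intends: pick $\mu\in\Ta_1^{-1}(\theta)$ via Lemma \ref{l.realizationmapcontinuity}, get stationarity from Theorem \ref{t.equivalence}, transfer the density hypothesis through $\norm{\mu}=\norm{\theta}$, and run Theorem \ref{t.allardalmgren} locally on isometric charts. Where you diverge is the uniqueness step, and there your argument is actually a genuine improvement in explicitness over what the paper offers: instead of invoking the unproved uniqueness clause of Theorem \ref{t.characterizerectifiablemaximalmedium}, you observe that once $\mu$ is rectifiable the anisotropy field satisfies $\sigma(x)=P_{T_x\mu}$ a.e., so $\dimr(\sigma)=1$ a.e.; the realizability criterion in Lemma \ref{l.realizationmapcontinuity} then excludes any $k$-varifold preimage with $k\ge2$, and for any $1$-varifold preimage the equality case of $\int_{G(1,n)}|P_\tau v|^2\,d\rho'_x(\tau)\le1$ forces the Dirac disintegration $\rho'_x=\delta_{T_x\mu}$. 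This is a clean, self-contained proof of $\Ta^{-1}(\theta)=\{\mu\}$ that only uses the machinery already developed in the paper. The two small points you flag --- patching Theorem \ref{t.allardalmgren} across charts and upgrading local finiteness of segments to global finiteness by compactness of $\T^n$ --- are indeed routine but worth a sentence in a full write-up.
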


For isotropic media, or equivalently media that have realizable dimension $k=n$, we have the following characterization.

\begin{theorem}
    Let $\theta$ be an isotropic maximal medium, then {$d\norm{\theta} = c \, d\Lme^n$} for some constant $c>0$.
\end{theorem}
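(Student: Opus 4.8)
The plan is to combine the Euler--Lagrange characterization of maximal media in Lemma \ref{l.equationformaximalmedia} with the defining property of an isotropic medium, and then to upgrade the resulting distributional identity using solvability of Poisson's equation on the torus.

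First I would record that, since $\theta$ is isotropic, the discussion following Definition \ref{def.singulartensorcomposite} gives $\sigma(x)=\frac{1}{n} I_{n\times n}$ for $\norm{\theta}$-almost every $x$, so that $\trace{\sigma(x)\gd\Phi(x)}=\frac{1}{n}\div \Phi(x)$ for every $\Phi\in C^\infty(\T^n;\R^n)$. Feeding this into the identity \eqref{eq.euequation} and using that $\theta$ is maximal yields
\[
\int_{\T^n} \div \Phi(x)\, d\norm{\theta}(x)=0 \qquad \text{for all }\Phi\in C^\infty(\T^n;\R^n),
\]
i.e.\ the finite Radon measure $\norm{\theta}$ has vanishing distributional gradient on $\T^n$.

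Next I would turn ``vanishing gradient'' into ``constant''. Given $f\in C^\infty(\T^n)$, set $\bar f:=\int_{\T^n} f\,dx$ (recall that $\T^n$ has unit volume); since $f-\bar f$ has zero mean, there is $u\in C^\infty(\T^n)$ with $\Delta u=f-\bar f$. Choosing $\Phi=\gd u$ in the displayed identity gives $\int_{\T^n}(f-\bar f)\,d\norm{\theta}=0$, that is,
\[
\int_{\T^n} f\, d\norm{\theta} = \norm{\theta}(\T^n)\int_{\T^n} f\,dx .
\]
Since $C^\infty(\T^n)$ is dense in $C(\T^n)$, the Riesz representation theorem then forces $d\norm{\theta}=c\,dx$ with $c=\norm{\theta}(\T^n)\ge 0$, and $c>0$ unless $\theta$ is the zero medium. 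Because $d\theta=\frac{1}{n} I_{n\times n}\,d\norm{\theta}$, this simultaneously pins down $\theta$ itself.

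The argument has no serious obstacle; the only points to check are that $\norm{\theta}$ is a finite Radon measure (so every pairing with a smooth function is finite and legitimate) and the classical solvability of the mean-zero Poisson equation on $\T^n$. A route avoiding Poisson's equation is to mollify $\norm{\theta}$ on $\T^n$: the mollifications are smooth, have vanishing gradient, hence are constant, and converge weakly* to $\norm{\theta}$, which again forces $\norm{\theta}$ to be a constant multiple of Lebesgue measure.
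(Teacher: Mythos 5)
Your proof is correct, but it takes a genuinely different and in fact more elementary route than the paper. The paper first realizes the isotropic medium as an $n$-varifold ($d\mu=\frac{1}{n}d\delta_{\R^n}(\tau)\,d\norm{\theta}(x)$), invokes the monotonicity formula to show $r\mapsto\norm{\theta}(B_r(x))/r^n$ is nondecreasing, and then applies Lebesgue--Besicovitch differentiation to get $d\norm{\theta}=\xi\,dx$ with $\xi\in L^1$ before finally using $\gd\cdot\theta=0$ to upgrade $\xi$ to a constant. You instead observe that isotropy turns the Euler--Lagrange identity \eqref{eq.euequation} directly into $\int_{\T^n}\div\Phi\,d\norm{\theta}=0$, i.e.\ $\gd\norm{\theta}=0$ distributionally, and dispose of this in one step via the mean-zero Poisson equation on $\T^n$ (or by mollification). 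This sidesteps the varifold machinery and the monotonicity formula entirely, and it also absorbs the absolute-continuity step and the final ``density is constant'' step into a single argument — a step the paper itself leaves slightly implicit, since showing that an $L^1$ density with vanishing distributional gradient is a.e.\ constant requires essentially the same mollification/Poisson reasoning you use. What the paper's route buys is a natural continuation of the varifold-theoretic viewpoint that dominates that section; what yours buys is brevity and self-containedness. One small caveat worth making explicit: $c>0$ holds only under the implicit standing assumption $\theta\neq 0$, which you already noted.
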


This result for the case when $\norm{\theta}$ is absolutely continuous with respect to the Lebesgue measure {has been established earlier, see \cite{Jikov1994}*{Section 1.6}.}

\begin{proof}
   It is not difficult to see that 
   \[
   d\mu(x,\tau)=\frac{1}{n}d\delta_{\R^n}(\tau) d\norm{\theta}(x) 
   \]
   is the unique $n$-realization of $\theta$. By the monotonicity formula (See Section \ref{subsubsection.varifolds}), we know that 
    \[
    \frac{\norm{\theta}(B_r(x))}{r^n}
    \]
    is monotone nondecreasing in $r>0$ for every $x\in \T^n$. By the Lebesgue-Besicovitch differentiation theorem (See \cite{DeLellis2008}*{Theorem 2.10}), $\norm{\theta}$ is absolutely continuous with respect to the Lebesgue measure, that is, for some $\xi\ge0$ and $\xi\in L^1(\T^n)$
    \[
    d\norm{\theta}(x)=\xi(x) d\Lme^n.
    \]
    Now, by Theorem \ref{t.equivalence} we have for every $\Phi\in C_0^\infty(U)$
    \[
   0= \int_U \gd\cdot\Phi(x) d\norm{\theta}(x)=\int_U \gd\cdot\Phi(x)\xi(x) d\Lme^n,
    \]
    which shows that $\xi(x)\equiv c$ for some constant $c>0$ and all $x\in U$.
\end{proof}

\subsection{Stationary networks and the question of the maximal valency of leaf vein patterns}\label{subsection.openquestion}

As we have established the connection between the stationary varifolds and leaf vein patterns in Theorem \ref{t.equivalence}, it is natural to ask what geometric properties of the leaf vein patterns can we derive from this relation. In this subsection, we provide an example question on stationary 1-varifold on $\T^2$ with density exactly 1 almost everywhere. In this example we formulate mathematically the question on the maximal valencies, that is, the maximal number of edges joining at one node, of leaf vein patterns.

By the Allard-Almgren characterization in Theorem \ref{t.allardalmgren} and Corollary \ref{cor.1dmaximalmedia}, such objects are networks composed of finitely many nodes and edges $(\Na,\Ea)$, where $\Na\subset\T^2$ and $\Ea$ is a collection of straight line segments in $\T^2$ with endpoints in $\Na$ satisfying that for every node $x\in \Na$ there is an integer $k\ge 2$ such that the edges joining at $x$ can be written as $e_1,\dots,e_k\in\Ea$, and the edges always satisfy, according to \eqref{eq.mcone}, the balance condition
\be\label{eq.balancecondition}
\sum_{i=1}^k T_{i} =0,
\ee
where for each $1\le i\le k$, the vector $T_i$ is the unit tangent vector of the line segment $e_i$ starting from the endpoint $x$.
\begin{definition}
    \label{def.stationarynetworks}
    We define a \emph{periodic planar stationary network} (simply call \emph{stationary network} in the following context) as $\Gamma=(\Na,\Ea)$ as described above that satisfies \eqref{eq.balancecondition} for all $x\in \Na$. We do not distinguish $\Gamma$ and the closed set consisting of all points in $\Na$ and edges in $\Ea$.
    For a stationary network $\Gamma$, define the \emph{valency} $\textup{V}_\Gamma(x)$ at $x\in \T^2$ as
    \[
\textup{V}_\Gamma(x):=\lim_{r\rta0^+} \# \partial B_r(x) \cap \Gamma.
    \]
\end{definition}

To talk about the maximal valency problem, we need another concept, which is the irreducibility of a stationary network. 

\begin{definition}
    \label{def.irreducible}
    A stationary network is called \emph{irreducible} if it can not be written as the union of two distinct stationary networks.
\end{definition}

Our question on the maximal valency states as follows.

\begin{problem}[Maximal valency]\label{prob.maxval}
    Is there a universal constant $C>0$ such that for any irreducible stationary network $\Gamma$, the maximal valency
    \[
    V(\Gamma):=\max_{x\in\Gamma} V_\Gamma(x) \le C?
    \]
    If the bound $C$ exists, is $C =4 ,5$ or 6 ?
\end{problem}

The reason of using the notion \say{irreducible} is Lemma \ref{l.genericunionproperty} below, which states that a reducible stationary network can be perturbed infinitesimally without losing stationarity. This indicates that the maximal valency of reducible stationary networks is unstable in the sense that it changes after small and  simple perturbations of the network while the stationarity is preserved (see Figure \ref{fig:perturornot}).
    \begin{figure}[hbpt]
\centering
\begin{subfigure}{.3\textwidth}
  \centering
  \includegraphics[width=.5\linewidth]{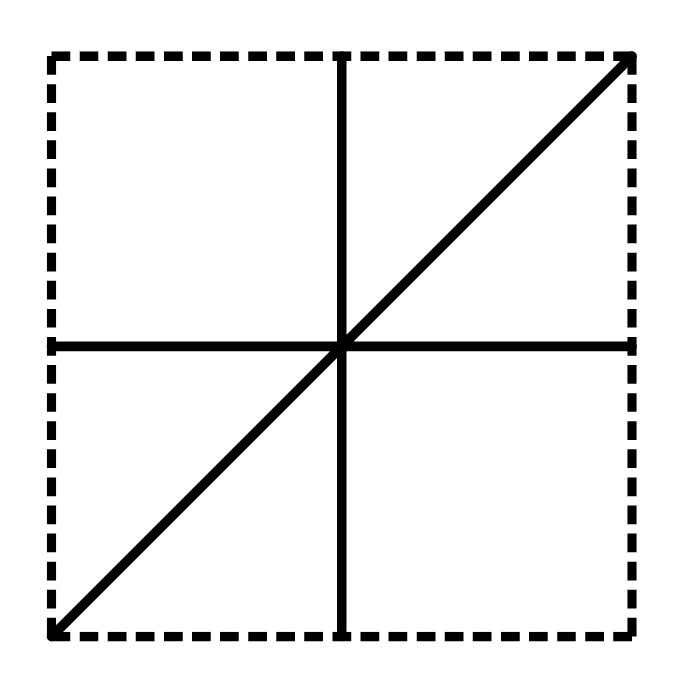}
  \caption{A reducible stationary network with maximal valency 6.}
  \label{pertbefore}
\end{subfigure}%
\hspace{0.2cm}
\begin{subfigure}{.3\textwidth}
  \centering
  \includegraphics[width=.5\linewidth]{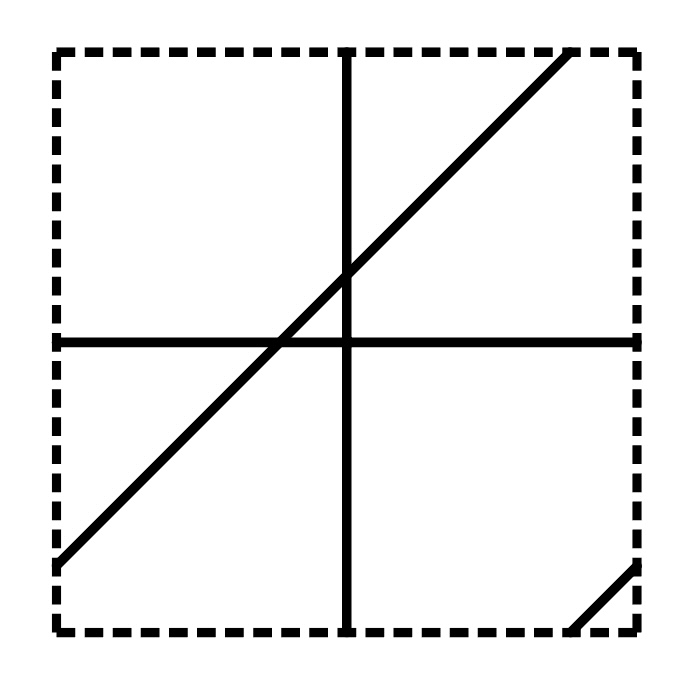}
  \caption{After small perturbations the maximal valency becomes 4.}
  \label{afterpert}
\end{subfigure}%
\caption{Instability of maximal valencies of reducible stationary networks. }
\label{fig:perturornot}
\end{figure}

\begin{lemma}
    \label{l.genericunionproperty}
    Given two stationary networks $\Gamma_1$ and $\Gamma_2$, for any $\ep>0$ there is a vector $|p|=1$ such that 
    \[
    \Gamma_1 \cup \Gamma_2 + \ep p
    \]
    is a stationary network.
\end{lemma}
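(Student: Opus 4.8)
The plan is to show that for a \emph{generic} translation vector $p$, no edge of $\Gamma_1$ lies on the same geodesic of $\T^2$ as any edge of $\Gamma_2+\ep p$, and that whenever this ``no common geodesic'' condition holds, the union $\Gamma_1\cup(\Gamma_2+\ep p)$ carries a natural stationary-network structure. First I would describe that structure explicitly. Let $\Na := \Na_1\cup(\Na_2+\ep p)\cup\{x\in\T^2 : x\in e\cap(e'+\ep p)\text{ for some }e\in\Ea_1,\ e'\in\Ea_2\}$, and let $\Ea$ consist of the closures of the connected components of $\big(\Gamma_1\cup(\Gamma_2+\ep p)\big)\setminus\Na$. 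Under the no-common-geodesic hypothesis a $\Gamma_1$-edge and a $\Gamma_2+\ep p$-edge meet in at most finitely many points, so $\Na$ is finite and each element of $\Ea$ is a straight segment with endpoints in $\Na$; since $\Gamma_1$ and $\Gamma_2$ are themselves stationary networks, distinct edges within each meet only at nodes, so this is a genuine graph structure.

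Next I would verify the balance condition \eqref{eq.balancecondition} at every $x\in\Na$. The edges of $\Ea$ incident to $x$ split into those coming from $\Gamma_1$ and those from $\Gamma_2+\ep p$. The $\Gamma_1$-tangents at $x$ sum to zero: either $x\in\Na_1$ and $\Gamma_1$ is balanced, or $x$ lies in the interior of a single edge of $\Gamma_1$ which contributes a $\pm u$ pair, or $x\notin\Gamma_1$ and the group is empty. Symmetrically for $\Gamma_2+\ep p$. Adding the two groups gives $\sum_i T_i=0$. The key point that makes this honest is precisely the no-common-geodesic hypothesis: a half-edge of $\Gamma_1$ emanating from $x$ and a half-edge of $\Gamma_2+\ep p$ emanating from $x$ in the same direction would lie on a common geodesic through $x$, hence would be \emph{identified as one edge} in $\Ea$, destroying the additive splitting. (This is exactly what goes wrong when a $\Gamma_1$-edge overlaps a collinear $\Gamma_2+\ep p$-edge: at an endpoint of the overlap two half-edges merge, and the balance at that node acquires a nonzero residual $-u$ — the instability illustrated in Figure \ref{fig:perturornot}.) Conversely, transversal crossings and even coincident nodes are harmless, since the balance still decomposes additively over the two networks. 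Also note $\Gamma_2+\ep p$ is itself a stationary network for every $p$, translation being irrelevant to \eqref{eq.balancecondition}.

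Then I would prove that the no-common-geodesic condition is generic. Pass to the $\Z^2$-periodic lifts $\tilde\Gamma_1,\tilde\Gamma_2\subset\R^2$, which is harmless since the network conditions are local and $\Z^2$-invariant; the edges of each $\Gamma_i$ on $\T^2$ lift to a finite collection of $\Z^2$-orbits of parallel segments. Fix an edge of $\Gamma_1$ whose lifts have direction $u$ and lie on lines $\{\ell_1+v : v\in\Z^2\}$, and an edge of $\Gamma_2$ with lifts of direction $w$ on lines $\{\ell_2+v\}$. If $u\neq\pm w$ no translate can identify the geodesics. If $u=\pm w$, then $\ell_2+\ep p=\ell_1+v$ for some $v\in\Z^2$ forces $\langle \ep p,u^\perp\rangle$ to take one of countably many prescribed values; since $u^\perp\neq 0$, each such equation confines $p$ to an affine line of $\R^2$. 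Ranging over the finitely many pairs of edges of $\Gamma_1,\Gamma_2$ and the countably many pairs of lifted lines, the bad set $B_\ep\subset\R^2$ is a countable union of affine lines, hence $B_\ep\neq\R^2$ (Baire category, or Lebesgue measure). Any $p\notin B_\ep$ then yields a stationary network $\Gamma_1\cup(\Gamma_2+\ep p)$, which is what we want.

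I expect the main obstacle to be the first two steps: correctly pinning down exactly which configurations of the union violate the stationary-network definition — the genuine failure being overlapping collinear edges, while the subtle-but-benign cases (transversal crossings, nodes landing on nodes or on edge interiors) all survive because the balance sums additively over the two networks — and then checking that excluding all common geodesics is simultaneously sufficient for that and generic. The genericity count in the last step is routine linear algebra on the torus once the right condition has been isolated.
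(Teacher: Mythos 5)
Your proposal is correct, and it follows the same route as the paper's proof, which is extremely terse (two sentences: the union is a stationary network whenever edge intersections are finite, and this can be achieved by a small perturbation). What you have done is supply the two pieces of content the paper leaves implicit: (i) the verification that finiteness of the pairwise edge intersections — or your slightly stronger ``no common geodesic'' condition — guarantees a bona fide $(\Na,\Ea)$-structure on the union in which the balance condition \eqref{eq.balancecondition} holds additively at every node, with the correct identification of where additivity could fail (collinear overlapping half-edges merging into one element of $\Ea$), and (ii) the genericity count via lifts to $\R^2$, showing the bad set of translations is a countable union of affine lines. Both steps are sound. Your condition is slightly stronger than what the paper states (you also exclude disjoint collinear edges, which are actually harmless), but since it is still generic this costs nothing and arguably yields a cleaner proof, as it removes the need to track edge merging along shared geodesics away from $x$. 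In short, you have written out the argument the paper only sketches, and done so correctly.
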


\begin{proof}
    Note that $\Gamma_1 \cup \Gamma_2$ is a stationary network whenever the intersection of the edges is at most finite. This can always be achieved by a small perturbation.
\end{proof}

One might also ask whether there are only finitely many such irreducible objects. In the following we prove that there are infinitely many irreducible stationary networks. Note that the following construction does not enumerate all the irreducible stationary networks.

\begin{lemma}\label{l.infinitelymanyirreducible}
    There are infinitely many irreducible \(\Z^2\)-periodic stationary networks. All of them have maximal valency 4.
\end{lemma}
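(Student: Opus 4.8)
The plan is to reduce the statement to producing one good example and then rescaling it. First I would record two elementary facts about balanced nodes. If four unit vectors sum to zero, they form two antipodal pairs: the Gram matrix of the four vectors has rank at most $2$, the all-ones vector lies in its kernel (each vector is orthogonal to the total), and the $\Z_2\times\Z_2$-symmetric structure of the remaining Gram relations then forces one pairwise inner product to equal $-1$, hence an antipodal pair, and by balance the other two are antipodal as well. Thus a valency-$4$ node of a stationary network is exactly a transverse crossing of two straight segments. Similarly, three unit vectors summing to zero are pairwise at $120^\circ$ and no proper nonempty subset of them sums to zero, so a valency-$3$ node is a $120^\circ$ triple junction through which no straight segment can pass. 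Consequently a stationary network all of whose nodes are crossings must contain a straight closed geodesic, which is itself a stationary subnetwork, so such a network is reducible; hence any irreducible network of maximal valency $4$ necessarily carries both crossings and triple junctions.

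Second, I would fix one explicit irreducible $\Z^2$-periodic stationary network $\Gamma$ of maximal valency $4$, namely a periodic pattern whose nodes are either $120^\circ$ triple junctions or transverse crossings of two bounded segments, arranged so that every maximal straight segment through a crossing has both endpoints at triple junctions and so that $\Gamma$ is connected. The point of this geometry is a purely combinatorial description of its stationary subnetworks: by the facts above, a stationary subnetwork of $\Gamma$ is the same thing as a sub-collection $S$ of the edges of $\Gamma$ that at each triple junction contains all three or none of the incident edges, and at each crossing contains each of the two collinear pairs entirely or not at all. Let $\mathcal G$ be the graph on the edges of $\Gamma$ whose adjacencies are \say{sharing a triple junction} and \say{being collinear at a crossing}; then every admissible $S$ is a union of connected components of $\mathcal G$. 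Because each straight segment is pinned at both ends by triple junctions that fan out into the rest of the connected network, $\mathcal G$ is connected, so $S$ is empty or all of the edges, and $\Gamma$ is irreducible.

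Third, I would generate infinitely many examples by isotropic rescaling. Let $\widetilde\Gamma\subset\R^2$ be the $\Z^2$-periodic lift of $\Gamma$ and put $\Gamma_m:=\pi\big(\tfrac1m\widetilde\Gamma\big)$ for $m\ge1$. Since $\tfrac1m\widetilde\Gamma$ is $\tfrac1m\Z^2$-periodic, hence $\Z^2$-periodic, $\Gamma_m$ is a well-defined finite network on $\T^2$; an isotropic similarity preserves straightness of segments, the balance relations $\sum_i T_i=0$, and all valencies, so each $\Gamma_m$ is a $\Z^2$-periodic stationary network of maximal valency $4$. Its edge-adjacency graph is a rescaled copy of $\mathcal G$, hence connected, so each $\Gamma_m$ is irreducible by the argument above. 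Finally the $\Gamma_m$ are pairwise distinct, since the fundamental cell $[0,1)^2$ of $\Gamma_m$ contains $m^2$ translated copies of a region of length $\tfrac1m\Ha^1(\Gamma)$, so that $\Ha^1(\Gamma_m)=m\,\Ha^1(\Gamma)\to\infty$. Thus $\{\Gamma_m\}_{m\ge1}$ is an infinite family of irreducible $\Z^2$-periodic stationary networks, all of maximal valency $4$.

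The main obstacle is the existence of the base example $\Gamma$. Every obvious $\Z^2$-periodic network carrying valency-$4$ crossings — the square lattice, the kagome lattice, a triple-junction honeycomb augmented by straight closed geodesics — is in fact reducible, because a closed geodesic (or, more generally, any maximal straight line in the network) can always be split off as its own stationary subnetwork. One is therefore forced to design a pattern that genuinely interlocks $120^\circ$ triple junctions with crossings; note that triple junctions require edges of irrational slope, which is impossible for closed geodesics but perfectly fine for bounded segments. Making the interlock tight enough that the edge-adjacency graph $\mathcal G$ becomes connected, and then verifying that connectedness for the chosen $\Gamma$, is the real content of the proof.
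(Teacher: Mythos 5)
Your proof takes a genuinely different route from the paper's: the paper exhibits, for each odd integer $k$, an explicit $\Z^2$-periodic pattern built from $k$ parallelograms with angles $\pi/3,\,2\pi/3$ placed along the horizontal midline of the fundamental square, so that distinct $k$ give distinct irreducible networks; it does not invoke an edge-adjacency graph, a connectivity criterion, or a rescaling. Your preliminary classification of balanced nodes (four unit vectors in $\R^2$ summing to zero form two antipodal pairs; three form a $120^\circ$ triple junction), and the reformulation of irreducibility as connectedness of the edge-adjacency graph $\mathcal G$, are correct and in fact supply a verification criterion that the paper itself never spells out.

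The rescaling step, however, has a genuine gap. You assert that the edge-adjacency graph of $\Gamma_m := \pi\bigl(\tfrac1m\widetilde\Gamma\bigr)$ on $\T^2$ \say{is a rescaled copy of $\mathcal G$, hence connected.} It is not. Writing $\widetilde{\mathcal G}$ for the edge-adjacency graph of the $\Z^2$-periodic lift $\widetilde\Gamma$ in $\R^2$, one has $\mathcal G\cong\widetilde{\mathcal G}/\Z^2$, whereas the edge-adjacency graph of $\Gamma_m$ is $\mathcal G_m\cong\widetilde{\mathcal G}/m\Z^2$, a degree-$m^2$ covering of $\mathcal G$ --- and a covering of a connected graph may be disconnected. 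Concretely, if the components of $\widetilde{\mathcal G}$ are indexed by $j\in\Z$ with translation by $(0,1)$ sending component $j$ to $j+1$ and translation by $(1,0)$ preserving each, then $\mathcal G$ is connected but $\mathcal G_m$ has exactly $m$ components, so $\Gamma_m$ is \emph{reducible} for every $m\ge2$. This is precisely the situation for a strip-type base example such as the paper's (a band of diamonds centered on $y=1/2$): its lift consists of disjoint horizontal bands at heights $y\in\tfrac12+\Z$, and rescaling by $1/m$ crowds $m$ disjoint bands into a single fundamental period. What the rescaling argument actually requires is connectedness of $\widetilde{\mathcal G}$ in $\R^2$, a strictly stronger condition than connectedness of $\mathcal G$, and nothing in your stated hypotheses on the base example secures it. Combined with the fact that the base example is described but not constructed (which you flag as \say{the real content of the proof}), the argument as written does not close; one either needs a base example with connected $\R^2$-edge-adjacency graph (and a proof of that), or should abandon rescaling in favor of a directly parameterized family as the paper does.
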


\begin{proof}
    We prove this by constructing infinitely many such patterns. Let \(k>0\) be an arbitrary odd number and we put \(k\) equilateral {parallelograms} with inner angles \(\pi/3,2\pi/3\) (diamonds) inside a unit square \((0,1]^2\). In Figure \ref{fig:infinitemanyirredu}, we put \(k=7\) such parallelograms along the line \(y=0.5\). There is no obstacle to set $k$ to be arbitrarily large as long as $k$ is odd.

    \begin{figure}[htbp]
        \centering
        \includegraphics[width=0.3\linewidth]{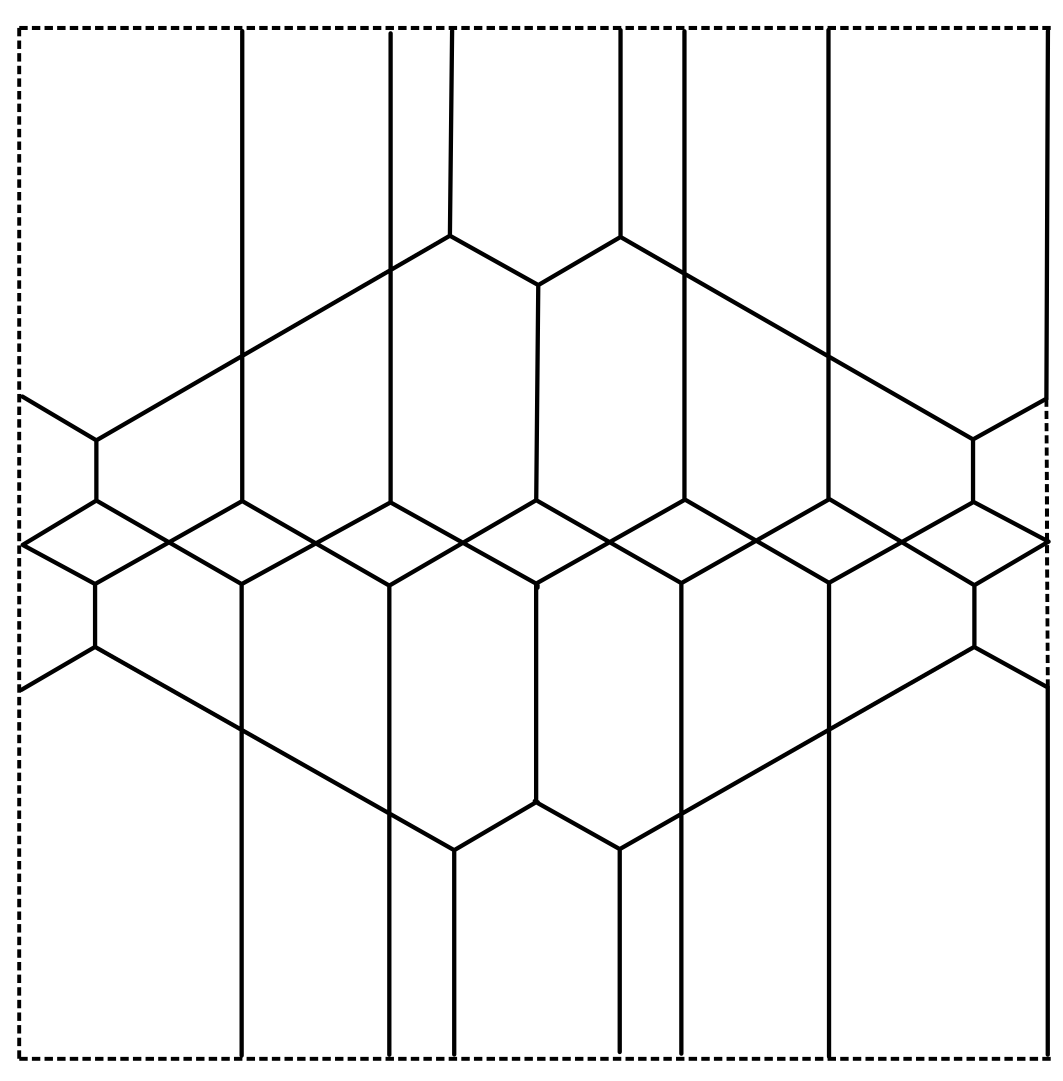}
        \caption{An irreducible stationary network that has $k=7$ parallelograms in the middle. This example was first made in \cite{Huang2021}.}
        \label{fig:infinitemanyirredu}
    \end{figure}

\end{proof}

\section{The support of nontrivial media}
\label{section.nontrivialmedia}

In this section we present three theorems about the support of a nontrivial medium. These results present the interplay between the geometry of the support $\spt{\theta}$ and the nonzero effective tensor $Q(\theta)\ne0$ under different assumptions. They are also important preliminary results for the discussion of the 1-D attainability of the lower Wiener bound in Section \ref{section.leafvenation}. In Section \ref{subsection.examplesontrivialmeida} we also present several examples that can show the sharpness of these theorems.

Our first result concerns about the dimension of a nontrivial medium and shows that the support of a nontrivial medium is not totally disconnected, and hence has at least Hausdorff dimension 1. This result is sharp in the sense that the dimension of the support of a nontrivial medium can be any real number in $[1,n]$, see Example \ref{ex.characterizenontrivialmediabutnopointwisecontrol} for more details.

\begin{theorem}
    \label{t.dimensiononeofnontrivialmedium}
    Let $\theta$ be a nontrivial medium, then the support $\spt{\theta}$ is not totally disconnected, that is, there is a nonsingleton component in $\spt{\theta}$. This implies that the 1D Hausdorff measure
    \be\label{eq.positiveH1measure}
    \Ha^1(\spt{\theta}) >0.
    \ee
    In particular, the Hausdorff dimension $\dim_{\Ha}(\spt{\theta})\ge 1$.
\end{theorem}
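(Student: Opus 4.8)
The plan is to argue by contradiction: assume $\spt{\theta}$ is totally disconnected and show that $Q(\theta)=0$, contradicting nontriviality. The key point is that on a compact totally disconnected metric space, points can be separated by clopen sets, and one can use this to construct, for each vector $p\in\R^n$, a smooth test function $\varphi$ on $\T^n$ with $\gd\varphi + p$ arbitrarily small on $\spt{\theta}$ in the $L^2(\norm{\theta})$ sense, forcing $p\cdot Q(\theta)p=0$.

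First I would recall that a compact metric space is totally disconnected if and only if it is zero-dimensional, so $\spt{\theta}$ admits arbitrarily fine finite covers $\{K_1,\dots,K_N\}$ by pairwise disjoint compact (hence clopen in $\spt{\theta}$) sets. Fix $p\in\R^n$ and $\ep>0$. Choose the cover fine enough that each $K_i$ has diameter less than $\ep$. Since the $K_i$ are compact and pairwise disjoint in $\T^n$, they have pairwise positive distance, so there exist disjoint open sets $U_i \supset K_i$ and a smooth partition-type construction giving $\psi_i \in C^\infty(\T^n)$ with $\spt{\psi_i}\subset U_i$, the $\spt{\psi_i}$ pairwise disjoint, and $\psi_i \equiv 1$ on a neighborhood of $K_i$. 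Pick a point $x_i \in K_i$ and set
\[
\varphi(x) := -\sum_{i=1}^N \psi_i(x)\, p\cdot(x - x_i),
\]
interpreting $x - x_i$ via a smooth local branch of the difference on $U_i$ (legitimate since $U_i$ can be taken to lie in a coordinate chart of $\T^n$). On a neighborhood of $K_i$ we have $\varphi(x) = -p\cdot(x-x_i)$, hence $\gd\varphi(x) = -p$ there, so $\gd\varphi + p = 0$ on $\spt{\theta}$ — wait, this is too strong globally, so more carefully: $\gd\varphi + p = 0$ on $\bigcup_i V_i$ where $V_i$ is the neighborhood of $K_i$ on which $\psi_i\equiv 1$, and $\spt{\theta}\subset\bigcup_i K_i \subset \bigcup_i V_i$. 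Then
\[
p\cdot Q(\theta) p \le \int_{\T^n}(\gd\varphi + p)\cdot\sigma(\gd\varphi+p)\, d\norm{\theta} = 0,
\]
since the integrand vanishes $\norm{\theta}$-almost everywhere. As $p$ was arbitrary, $Q(\theta)=0$, contradicting $Q(\theta)\ne 0$. This proves $\spt{\theta}$ has a nonsingleton connected component $E$. Since a connected metric space with more than one point has Hausdorff dimension at least $1$ (a standard fact: the distance function from a point is a non-constant Lipschitz map onto an interval, so $\Ha^1(E)>0$), we get $\Ha^1(\spt{\theta})\ge\Ha^1(E)>0$ and $\dim_{\Ha}(\spt{\theta})\ge 1$.

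The main obstacle is making the test function construction fully rigorous on the torus rather than on $\R^n$: the expression $p\cdot(x-x_i)$ is only locally well-defined on $\T^n$, so one must ensure each clopen piece $K_i$ sits inside a coordinate ball, which is why the cover must be taken fine (diameter below the injectivity radius). One also needs the elementary but essential input that compact totally disconnected spaces admit fine finite clopen partitions; I would cite this from standard topology. Everything else — the disjoint bump functions, the variational upper bound from Definition~\ref{def.effectivetensor}, and the dimension lower bound for connected sets — is routine.
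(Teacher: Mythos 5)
Your proof is correct and takes essentially the same approach as the paper: assume total disconnectedness, produce a fine disjoint clopen cover of the compact support, build a smooth test function that equals an affine function with gradient $-p$ near each piece, and conclude $Q(\theta)=0$, contradicting nontriviality. The only cosmetic differences are that the paper packages the construction into a single vector-valued test function $\Phi_0$ (to bound $M(\theta)$) and cites \cite{ALBERTI201735}*{Lemma 2.11} for the step that a nonsingleton continuum has $\Ha^1>0$, whereas you work with scalar test functions for each $p$ and give the direct Lipschitz distance-function argument.
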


Our second result concerns about the decomposition of $\theta$ into the countable sum of its restrictions to the components of $\spt{\theta}$. In general there is no countable additivity due to Example \ref{ex.failureofcountableadditivity}. We also show the existence of a maximal medium having uncountably many connected components in Example \ref{ex.uncountablymanycomponentsmaximalmedia}.

\begin{theorem}
    \label{t.countabledecompositionofcomponents}
    Let $\theta$ be a nontrivial medium and suppose $\spt{\theta}$ have countably many components $E_i$ with
    \[
   E_i\subset \spt{\theta}, \ \spt{\theta} = \bigcup_{i=1}^\infty E_i \textup{ and } E_i\cap E_j=\emptyset \textup{ for }i\ne j.
    \]
   Define the restrictions $\theta_i:=\restr{\theta}{E_i}$ then
    \[
    Q(\theta)=\sum_{i=1}^\infty Q(\theta_i).
    \]
    In particular, if $\theta$ is saturated then all $\theta_i$'s are nontrivial.
\end{theorem}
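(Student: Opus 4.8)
The plan is to establish the identity $Q(\theta)=\sum_i Q(\theta_i)$ by proving two opposite inequalities, the first of which is immediate. Since $\T^n$ is compact, $\Gamma:=\spt{\theta}$ is a compact metric space, so its components $E_i$ are closed, hence Borel, and $\sum_i\norm{\theta_i}(\T^n)=\sum_i\norm{\theta}(E_i)=\norm{\theta}(\T^n)<\infty$. Thus super-additivity (Lemma \ref{l.basicpropertiesofQ}\ref{condition.(3)}) gives $Q(\theta)=Q(\sum_i\theta_i)\ge\sum_i Q(\theta_i)$ at once. It remains to show that for each fixed $p\in\R^n$ and $\ep>0$ one has $p\cdot Q(\theta)p\le\sum_i p\cdot Q(\theta_i)p+C\ep$ with $C$ independent of $\ep$.

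The reverse inequality is the substance of the proof, and it is obtained by a separation-plus-localization argument. First choose $N$ with $\norm{\theta}(\bigcup_{k>N}E_k)<\ep$. Since $E_1,\dots,E_N$ are pairwise disjoint compact subsets of the metric space $\T^n$, pick pairwise disjoint open sets $\mathcal{O}_i\supset E_i$ and the decreasing open sets $\mathcal{O}_i^{(m)}:=\mathcal{O}_i\cap\{\dist(\cdot,E_i)<1/m\}$, so that $\bigcap_m\mathcal{O}_i^{(m)}=E_i$. In the compact Hausdorff space $\Gamma$ the connected components coincide with the quasi-components, i.e.\ $E_i=\bigcap\{W\ \textup{clopen in }\Gamma:\ W\supset E_i\}$; a compactness argument then yields for each $m$ a clopen $W_i^{(m)}\subset\Gamma$ with $E_i\subset W_i^{(m)}\subset\mathcal{O}_i^{(m)}\cap\Gamma$. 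Setting $V_i^{(m)}:=W_i^{(1)}\cap\cdots\cap W_i^{(m)}$ produces a decreasing sequence of clopen subsets of $\Gamma$, pairwise disjoint in $i$ (as $V_i^{(m)}\subset\mathcal{O}_i$), with $\bigcap_m V_i^{(m)}=E_i$. Then $\theta_{V_i^{(m)}}$ decreases, weak$^*$-converges to $\theta_i$ by dominated convergence, and dominates $\theta_i$, so Lemma \ref{l.faithfulconvergencewhenshrinking} gives $Q(\theta_{V_i^{(m)}})\to Q(\theta_i)$. Fixing $m$ large enough that $p\cdot Q(\theta_{V_i^{(m)}})p\le p\cdot Q(\theta_i)p+\ep/N$ for $i=1,\dots,N$, we write $V_i:=V_i^{(m)}$ and $R:=\Gamma\setminus\bigcup_{i\le N}V_i$.

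Now $V_1,\dots,V_N,R$ is a clopen partition of $\Gamma$, so the media $\theta_{V_1},\dots,\theta_{V_N},\theta_R$ have pairwise disjoint compact supports and $\theta=\sum_{i\le N}\theta_{V_i}+\theta_R$; iterating Lemma \ref{l.basicpropertiesofQ}\ref{condition.(2)} gives $Q(\theta)=\sum_{i\le N}Q(\theta_{V_i})+Q(\theta_R)$. Since $R$ is clopen and disjoint from $E_i$ for $i\le N$, any component meeting $R$ is contained in $R$ and must have index $>N$; hence $R\subset\bigcup_{k>N}E_k$ and $\norm{\theta}(R)<\ep$, so by the singular Wiener bound $p\cdot Q(\theta_R)p\le p\cdot\theta_R(\T^n)p\le|p|^2\norm{\theta}(R)<|p|^2\ep$. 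Combining, $p\cdot Q(\theta)p\le\sum_{i\le N}\bigl(p\cdot Q(\theta_i)p+\ep/N\bigr)+|p|^2\ep\le\sum_i p\cdot Q(\theta_i)p+(1+|p|^2)\ep$. Letting $\ep\to0$ and invoking super-additivity yields the identity. For the final assertion, suppose $\theta$ is saturated but some $\theta_j$ is trivial, so $Q(\theta_j)=0$ while $\theta_j\ne0$ (components of zero $\norm{\theta}$-mass carry no information and may be merged into a neighbor beforehand). Then $\theta-\theta_j=\sum_{i\ne j}\theta_i$ is a proper submedium, and super-additivity together with the identity just proved gives $Q(\theta-\theta_j)\ge\sum_{i\ne j}Q(\theta_i)=Q(\theta)-Q(\theta_j)=Q(\theta)$; since $Q(\theta-\theta_j)\le Q(\theta)$ by monotonicity (Lemma \ref{l.basicpropertiesofQ}\ref{condition.(1)}), it is a proper efficient submedium, contradicting saturation.

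The main obstacle is exactly that the components $E_i$ need not be mutually separated — they may accumulate on one another — so one cannot directly glue near-optimal correctors for the individual $\theta_i$ into a corrector for $\theta$, and finite additivity (Lemma \ref{l.basicpropertiesofQ}\ref{condition.(2)}) does not literally apply to the raw decomposition. The resolution is to trade "positive distance between components" for the clopen structure furnished by the compact Hausdorff topology (components equal quasi-components), which turns the countable component decomposition into a genuinely separated \emph{finite} clopen partition at the price of slightly fattening each $E_i$; the error of this fattening is then absorbed by the faithful-convergence principle of Lemma \ref{l.faithfulconvergencewhenshrinking} and by the tail estimate on $\norm{\theta}(R)$.
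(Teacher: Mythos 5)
Your proof is correct and rests on the same core ingredients as the paper's, but organizes them differently. The paper first establishes a standalone \emph{single-component} additivity (Lemma~\ref{l.additivityforcomponentdecomposition}): $Q(\theta)=Q(\theta_E)+Q(\theta-\theta_E)$ for a component $E$, obtained by shrinking a clopen (relative to $\spt{\theta}$) neighborhood $J_\delta\supset E$ (Lemma~\ref{l.approximateclopenset}), invoking faithful convergence $Q(\restr{\theta}{J_\delta})\to Q(\theta_E)$ and finite additivity (Lemma~\ref{l.basicpropertiesofQ}\ref{condition.(2)}), and controlling the complement by upper semi-continuity. It then iterates this $N$ times and disposes of the residual $\theta-\sum_{i\le N}\theta_i$ again via weak$^*$ upper semi-continuity (Lemma~\ref{l.uppersemicontinuous}). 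You instead build the clopen partition for all of $E_1,\dots,E_N$ simultaneously --- re-deriving the quasi-component fact that underlies Lemma~\ref{l.approximateclopenset} --- apply finite additivity once to $V_1,\dots,V_N,R$, and kill the tail $Q(\theta_R)$ directly by the singular Wiener bound together with the tail-mass estimate $\norm{\theta}(R)<\ep$. Both versions hinge on clopen separation, faithful convergence (Lemma~\ref{l.faithfulconvergencewhenshrinking}), and a vanishing-tail argument; yours is somewhat more explicit about where the smallness of the tail enters, while the paper's factorization isolates Lemma~\ref{l.additivityforcomponentdecomposition} as a reusable tool that is invoked again in the proof of Theorem~\ref{t.countabledecompositionof1dmedium}. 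One small remark on your closing sentence: the parenthetical about "merging" zero-mass components into a neighbor is not well-posed, since the $E_i$ are the actual topological components of $\spt{\theta}$ and cannot be reassigned; the point you actually need, and do supply, is just that if $\theta_j\ne 0$ with $Q(\theta_j)=0$ then $\theta-\theta_j$ is a proper efficient submedium, contradicting saturation.
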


Our third result concerns about the countable decomposition of $\theta$ solely when $\spt{\theta}$ has finite $\Ha^1$ measure. The key improvement of this theorem is that it does not require the medium to have only countably many components.

\begin{theorem}\label{t.countabledecompositionof1dmedium}
    Let $\theta$ be a nontrivial medium such that $\Ha^1(\spt{\theta})<\infty$. Then there exist countably many 1-rectifiable mutually disjoint connected components $E_i\subset\spt{\theta}$ such that the submedia $\theta_i:=\restr{\theta}{E_i}$ satisfy
   \be
   \label{eq.countabledecomp}
   Q(\theta)= \sum_{i=1}^\infty Q(\theta_i) .
   \ee
\end{theorem}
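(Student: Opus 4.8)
The plan is to combine the Wa\.{z}ewski parametrization (Theorem \ref{t.existenceofanicelipschitzparam}), which is available precisely because $\Ha^1(\spt{\theta})<\infty$, with an exhaustion argument that at each stage peels off a connected component carrying a definite amount of effective conductance. First I would set up the machinery of restricting to components: since $\Ha^1(\spt{\theta})<\infty$, the support is a continuum-like set whose connected components are themselves compact with finite $\Ha^1$ measure, hence (by the last line of Theorem \ref{t.existenceofanicelipschitzparam}) $1$-rectifiable. A key preliminary observation, which I would isolate as the additivity lemma referenced in the text (\lref{additivityforcomponentdecomposition}), is that if $E\subset\spt{\theta}$ is simultaneously open and closed relative to $\spt{\theta}$ (a ``clopen'' piece, e.g. a finite union of components separated from the rest by positive distance), then $Q(\theta)=Q(\restr{\theta}{E})+Q(\restr{\theta}{\spt{\theta}\setminus E})$; this follows from Lemma \ref{l.basicpropertiesofQ}\ref{condition.(2)} since the two pieces have disjoint compact supports. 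Super-additivity (Lemma \ref{l.basicpropertiesofQ}\ref{condition.(3)}) then gives the $\ge$ direction for any countable disjoint decomposition, so the whole difficulty is producing a countable family of components whose restrictions capture \emph{all} of $Q(\theta)$.

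The core of the argument is an inductive selection. Having removed finitely many components $E_1,\dots,E_m$, consider the residual medium $\theta^{(m)}:=\restr{\theta}{\spt{\theta}\setminus(E_1\cup\cdots\cup E_m)}$. If $Q(\theta^{(m)})=0$ we stop (and the decomposition is finite, with the $\ge$ from super-additivity forced to equality). Otherwise $\theta^{(m)}$ is nontrivial, so by Theorem \ref{t.dimensiononeofnontrivialmedium} its support contains a nonsingleton component $C$; I then want to choose the next $E_{m+1}$ to be a component $C$ with $Q(\restr{\theta^{(m)}}{C})$ at least, say, half the supremum of $Q(\restr{\theta^{(m)}}{C'})$ over all components $C'$ (comparing, as usual for the partial order on symmetric matrices, via the trace $M(\cdot)$ or a fixed test vector). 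The point of Theorem \ref{t.dimensiononeofnontrivialmedium} here is that nontriviality of the residual always guarantees a nonsingleton component exists to be picked, so the process either terminates with trivial residual or runs forever. Running forever produces countably many $E_i$; since $\sum_i \norm{\theta_i}(\T^n)\le\norm{\theta}(\T^n)<\infty$, Lemma \ref{l.basicpropertiesofQ}\ref{condition.(3)} gives $\sum_i Q(\theta_i)\le Q(\theta)$, and I must show the limiting residual $\theta_\infty:=\restr{\theta}{\spt{\theta}\setminus\bigcup_i E_i}$ is trivial, $Q(\theta_\infty)=0$.

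Suppose not. Because each finite-stage residual $\theta^{(m)}$ decomposes (via the clopen additivity lemma applied to the closed set $E_1\cup\cdots\cup E_m$ which is at positive distance from its complement in $\spt{\theta}$) as $Q(\theta)=\sum_{i\le m}Q(\theta_i)+Q(\theta^{(m)})$, and $\theta^{(m)}\downarrow$ in the submedium order with $\theta^{(m)}\overset{*}{\weakcv}\theta_\infty$, upper semi-continuity (Lemma \ref{l.uppersemicontinuous}) plus monotonicity $Q(\theta^{(m)})\ge Q(\theta_\infty)$ forces $Q(\theta^{(m)})\to Q(\theta_\infty)$ in fact $Q(\theta^{(m)})\ge Q(\theta_\infty)$ for every $m$; more importantly, taking $m\to\infty$ in $Q(\theta)=\sum_{i\le m}Q(\theta_i)+Q(\theta^{(m)})$ gives $Q(\theta)=\sum_i Q(\theta_i)+\lim_m Q(\theta^{(m)})$, so it remains to show $\lim_m Q(\theta^{(m)})=0$. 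This is where the greedy choice pays off: if the limit were a nonzero matrix, then for large $m$ the nontrivial residual $\theta^{(m)}$ would have a nonsingleton component $C_m$, and by the Wa\.{z}ewski parametrization of $C_m$ together with the length bound $\sum_m\Ha^1(E_m)\le\Ha^1(\spt{\theta})<\infty$, one shows $Q(\restr{\theta^{(m)}}{C_m})$ cannot stay bounded below — intuitively, a component contributing a fixed amount of conductance must contain a closed loop of definite length in a nontrivial homotopy class, and there is only finite total length available. Hence the supremum of component-contributions in $\theta^{(m)}$ tends to $0$, and since $E_{m+1}$ was chosen to realize at least half of it, $Q(\theta_{m+1})\to 0$; but then $Q(\theta^{(m)})$, being dominated by the residual's component-contributions (this needs Theorem \ref{t.dimensiononeofnontrivialmedium} once more to rule out ``diffuse'' contributions not concentrated on a single component), also tends to $0$, a contradiction.

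The main obstacle, which I would flag as the technical heart of the proof, is precisely the last step: quantifying that a connected $1$-rectifiable piece of total length $\ell$ contributes at most $C(n,\ell)$ to the effective tensor, with $C(n,\ell)\to 0$ as $\ell\to 0$. This is a small-scale version of the length/homotopy lower bound developed later in Section \ref{section.leafvenation} (the modification procedure via \cite{ALBERTI201735}), run in reverse as an upper bound: a component of small diameter supports only null-homotopic loops in $\T^n$, and a careful construction of test functions $\varphi$ in \eqref{eq.effectivetensor} — essentially $p\cdot x$ localized near the component — shows $p\cdot Q(\restr{\theta}{C})p$ is controlled by the length of $C$. I would prove this as a standalone lemma before running the induction, so that the exhaustion argument above becomes routine bookkeeping with Lemma \ref{l.basicpropertiesofQ} and Lemma \ref{l.uppersemicontinuous}.
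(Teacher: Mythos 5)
Your outline is structurally similar to the paper's (inductive peeling of connected components, clopen approximation via Lemma \ref{l.approximateclopenset} and Lemma \ref{l.additivityforcomponentdecomposition}, finite $\Ha^1$ measure used to force termination), but there is a genuine gap in the termination step, and the fix is to change the greedy criterion.

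You propose to choose $E_{m+1}$ greedily by its $Q$-contribution, and then argue that the residual $Q(\theta^{(m)})$ cannot stay bounded below because ``the residual's component-contributions tend to $0$.'' You flag the needed step yourself --- that $Q(\theta^{(m)})$ is ``dominated by the residual's component-contributions'' --- and claim Theorem \ref{t.dimensiononeofnontrivialmedium} can rule out diffuse contributions. It cannot: that theorem only produces a nonsingleton component in the support of a nontrivial medium, not a component with nontrivial $Q$-contribution. In fact the paper's own Example \ref{ex.failureofcountableadditivity} shows exactly the failure mode you are worried about: Lebesgue measure on $\T^1$ is the sum of media supported on the deleted Cantor intervals, each component of which has $Q=0$, yet the sum is maximal. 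So ``all components contribute near-zero conductance'' does not imply ``$Q(\theta^{(m)})$ is near zero,'' and your contradiction does not close. You also do not need, and would struggle to prove at this stage without circularity, the quantitative decay $C(n,\ell)\to 0$: the homotopy/length characterization of $\ker Q$ is Theorem \ref{t.Qtohomotopy}, which is proved \emph{using} Theorem \ref{t.countabledecompositionof1dmedium}.

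The paper avoids all of this by selecting greedily on $\Ha^1$-\emph{measure}, not on $Q$-contribution: pick $E_{i+1}$ with $\Ha^1(E_{i+1}) \ge \tfrac12 \sup\{\Ha^1(F): F \text{ a component of the current residual support}\}$. The contradiction is then purely measure-theoretic. If the limiting residual $\tilde\theta$ were nontrivial, Theorem \ref{t.dimensiononeofnontrivialmedium} would give a nonsingleton component $K\subset\spt{\tilde\theta}$ with $\Ha^1(K)>0$; since $K$ is disjoint from all the chosen $E_j$, it sits inside a component $\tilde K_i$ of every finite-stage residual support, and the greedy rule forces $\Ha^1(E_{i+1})\ge \tfrac12 \Ha^1(\tilde K_i) \ge \tfrac12 \Ha^1(K) > 0$ for every $i$, contradicting $\sum_i \Ha^1(E_i) \le \Ha^1(\spt{\theta}) < \infty$. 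No estimate relating length to $Q$-contribution is required. (The only quantitative $\Ha^1$ input the paper uses is Lemma \ref{l.quantiposih1measure}, a hard threshold ``$Q\ne 0 \Rightarrow \Ha^1(\spt{\theta}) \ge 1/1000$,'' and it appears not in the termination step but to verify that the clopen approximations $J_\ep$ satisfy $Q(\restr{\theta}{J_\ep}) = Q(\theta_1)$ once $\ep$ is small.) If you switch your selection to $\Ha^1$-greedy and use the resulting measure-theoretic contradiction, the rest of your scaffolding is sound.
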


We postpone the proofs of Theorem \ref{t.dimensiononeofnontrivialmedium}, Theorem \ref{t.countabledecompositionofcomponents} and Theorem \ref{t.countabledecompositionof1dmedium} to Section \ref{subsection.proofof5.15.2} and \ref{subsection.proofof5.3}. In Section \ref{subsection.examplesontrivialmeida} we present some examples.

\subsection{Examples on trivial and nontrivial media}
\label{subsection.examplesontrivialmeida}

In this subsection we present some examples to show the sharpness of the above theorems. The nontrivial media are weaker notions than the maximal ones, which means that we should expect less regularity properties of such media than, for example, Theorem \ref{t.characterizerectifiablemaximalmedium} and Theorem \ref{t.allardalmgren}.  We list several examples (or non-examples) to show that why Theorem \ref{t.dimensiononeofnontrivialmedium}, Theorem \ref{t.countabledecompositionofcomponents} and Theorem \ref{t.countabledecompositionof1dmedium} are sharp. There are also some examples that are related to the results obtained in the previous sections.

\begin{example}\label{ex.reverseofnontrivialmediatheorem}
    In this example we show that the reverse of Theorem \ref{t.dimensiononeofnontrivialmedium} is generally false, that is, a medium can still be trivial even if the support has positive $\Ha^1$ measures. We first parametrize $\T^1$ with unit speed coordinate $x\in [-1/2,1/2)$. Define the medium $\theta$ as
    \[
    d\theta(x)= |x|^\alpha d\Lme^1(x)=|x|^\alpha dx,
    \]
    where $\alpha>1$. Notice that $\spt{\theta}=\T^1$ but it is not difficult to check that $\theta$ is a trivial medium.

\end{example}

\begin{example}
    \label{ex.failureofcountableadditivity}
    In Lemma \ref{l.basicpropertiesofQ} \ref{condition.(2)} we proved additivity for two media $\theta_1$ and $\theta_2$ satisfying $\spt{\theta_1}\cap \spt{\theta_2}=\emptyset$. In this example we show that there is generally no countable additivity for a sequence of media $\theta_i$ even if we assume $\spt{\theta_i}\cap \spt{\theta_j}=\emptyset$ for $i\ne j $. Indeed, on $\T^1$ we define the media 
    \[
    \theta_j:= \restr{\Ha^1}{\overline{D}_j},
    \]
    where $D_j$ is the union of open subintervals of $(0,1)$, with each interval having length $1/3^j$, obtained by taking the deleted intervals at the $j$-th step in the construction of the standard Cantor set. {More precisely, for $j\ge 1$ the sets $D_j$ satisfy
    \[
  D_1=(1/3,2/3)\textup{ and }D_{j+1}=\bigcup _{k=0}^{3^{j}-1}\left({\frac {3k+1}{3^{j+1}}},{\frac {3k+2}{3^{j+1}}}\right)\setminus D_j.
    \]
   See Figure \ref{fig:cantor} for the corresponding $D_j$'s when $j=1,2,3$. Note that $\spt{\theta_j}=\overline{D}_j$ and it is not difficult to show that 
   $$
  \dist(\spt{\theta_i},\spt{\theta_j}) =  \dist(\overline{D}_i,\overline{D}_{j})=1/3^{\max\{i,j\}}>0
   $$ 
   for all $i\ne j$.} Now we can find that
    \[
    Q(\theta_j)=0 \textup{ for all }j\ge 1,
    \]
    while $\sum_{j=1}^\infty \theta_j$ is exactly the Lebesgue measure of $\T^1$, which is a maximal medium.
\end{example}

        \begin{figure}[htbp]
        \centering
        \includegraphics[width=0.8\linewidth]{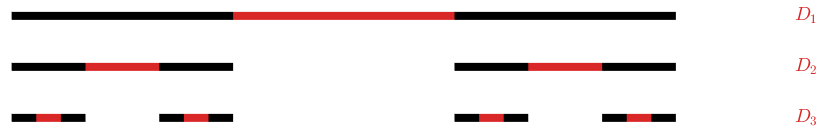}
        \caption{The first three steps in constructing the standard Cantor set. The red intervals correspond to the deleted intervals $D_j$ in step $j=1,2$ and $3$.}
        \label{fig:cantor}
    \end{figure}

\begin{example}\label{ex.characterizenontrivialmediabutnopointwisecontrol}
In this example we show that it is difficult to go beyond Theorem \ref{t.dimensiononeofnontrivialmedium} and prove a similar pointwise theory like Theorem \ref{t.realizabledimensionandlocaldimension} and the fractional montonicity formula like Lemma \ref{l.fracmonotonicityformula} for maximal media. In fact we show that a medium can have \say{arbitrarily} rough support even if they are close to the upper Wiener bound \eqref{eq.singularwienerbound}. We start with the medium $\theta$ on $\T^n$ as defined below
    $$
    d\theta:=e_1\otimes e_1\, d\restr{\Ha^1}{\{x'=0\}},
    $$
  where we parametrize $\T^n$ by $x=(x_1,x')\in[0,1)\times[0,1)^{n-1}$ and $e_1=\gd x_1$ is the unit vector on $x_1$-axis. Notice that $\theta$ is maximal. Now let $\xi$ be an arbitrary medium, then by Lemma \ref{l.basicpropertiesofQ} \ref{condition.(1)}, for every $\ep>0$, the medium $\theta_\ep:=\theta+\ep\xi$ is nontrivial. As $\ep>0$ is arbitrary, the medium $\theta_\ep$ can be made arbitrarily close to the upper Wiener bound \eqref{eq.singularwienerbound}. However, the dimension of $\spt{\theta_\ep}$ can be any number between $1$ and $n$ as $\xi$ is chosen arbitrarily. 

\end{example}

\begin{example}\label{ex.uncountablymanycomponentsmaximalmedia}

In this example we present a maximal medium $\theta$ such that $\spt{\theta}$ has uncountably many components. Let $\beta=\frac{\log2}{\log3}$, and consider on $(x_1,x_2)\in\T^2$ the following Radon measure
\[
dw(x_1,x_2):=d\restr{\Ha^\beta}{C}(x_1) dx_2,
\]
where $C\subset\T^1$ is the standard Cantor set. By standard result (See \cite{Falconer_1985}*{Theorem 1.14}), we know that $\restr{\Ha^\beta}{C}$ is a probability measure with support equal to $C$. This shows that $w$ is a probability measure supported on $C\times \T^1$. 

We define the medium $\theta$ to be
\[
\norm{\theta}=w \textup{ and } \sigma:= \frac{d\theta}{d\norm{\theta}}= e_2\otimes e_2,
\]
where $e_1=\gd x_1$ and $e_2=\gd x_2$ form an orthonormal basis for $\R^2$. By using Fubini's theorem, it is not difficult to check that $\theta$ is maximal on $\T^2$, but $\spt{\theta}$ is composed of uncountably many components.

\end{example}

\subsection{Proof of Theorem \ref{t.dimensiononeofnontrivialmedium} and Theorem \ref{t.countabledecompositionofcomponents}}

\label{subsection.proofof5.15.2}

The proofs of both Theorem \ref{t.dimensiononeofnontrivialmedium} and Theorem \ref{t.countabledecompositionofcomponents} hinge on the following fact from topology. This lemma adapts from the arguments in \cite{arhangelskij2008topological}*{Proposition 3.1.7}.

\begin{lemma}\label{l.approximateclopenset}
    Let $X$ be a closed subset of $\T^n$, then for every component $E\subset X$ and its open neighborhood $ V\subset \T^n$ there is another open subset $K$ of $\T^n$ such that $E\subset K \subset V$, and $K\cap X$ is closed in $\T^n$.
\end{lemma}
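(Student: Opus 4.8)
The plan is to reduce the statement to a purely intrinsic fact about the compact Hausdorff space $X$ and then invoke the classical coincidence of connected components with quasicomponents in such spaces.

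\emph{Step 1 (Reduction to $X$).} Since $K$ is required to be open in $\T^n$ and $X$ is closed in $\T^n$, the set $K\cap X$ is closed in $\T^n$ if and only if it is closed in $X$; being also relatively open in $X$, the condition on $K$ is equivalent to saying that $U:=K\cap X$ is a relatively clopen subset of $X$ with $E\subset U\subset V\cap X$. Conversely, given such a $U$, I would choose an open $W\subset\T^n$ with $W\cap X=U$ and put $K:=W\cap V$; then $K$ is open in $\T^n$, $E\subset K\subset V$, and $K\cap X=(W\cap X)\cap V=U\cap V=U$ since $U\subset V$. So it suffices to produce a relatively clopen $U\subset X$ with $E\subset U\subset V\cap X$.

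\emph{Step 2 (Component equals quasicomponent).} Fix $x_0\in E$ and let $Q$ be the \emph{quasicomponent} of $x_0$, i.e. the intersection of all relatively clopen subsets of $X$ containing $x_0$. Since $E$ is connected and is contained in every clopen subset of $X$ that it meets, $E\subset Q$; the content is the reverse inclusion, for which I would show $Q$ is connected. Note $X$ is compact (closed in the compact space $\T^n$) and Hausdorff, hence normal. Suppose $Q=A\sqcup B$ with $A,B$ closed in $X$, disjoint, $x_0\in A$; choose by normality disjoint open sets $\mathcal U\supset A$ and $\mathcal V\supset B$ in $X$. The clopen neighborhoods of $x_0$ form a family of closed sets with the finite intersection property whose total intersection $Q$ avoids the closed set $X\setminus(\mathcal U\cup\mathcal V)$, so by compactness finitely many of them intersect in a clopen set $C\ni x_0$ with $C\subset\mathcal U\cup\mathcal V$. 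Then $C\cap\mathcal U=C\setminus\mathcal V=C\cap(X\setminus\mathcal V)$ is simultaneously open and closed in $X$, contains $x_0$, and is disjoint from $\mathcal V\supset B$; hence $Q\subset C\cap\mathcal U$ forces $B=Q\cap B=\emptyset$. Thus $Q$ is connected, so $Q\subset E$ and $Q=E$. (One may also cite \cite{arhangelskij2008topological}*{Proposition 3.1.7} for this step.)

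\emph{Step 3 (Compactness extraction) and the main obstacle.} With $E=Q=\bigcap\{C:C\text{ clopen in }X,\ x_0\in C\}$ contained in the relatively open set $V\cap X$, the same compactness argument applied to the closed family $\{C\}$ together with $X\setminus(V\cap X)$ yields finitely many $C_1,\dots,C_m$ containing $x_0$ with $U:=C_1\cap\dots\cap C_m\subset V\cap X$. A finite intersection of relatively clopen sets is relatively clopen, so $U$ is the set sought in Step 1, which finishes the proof. The one genuinely nontrivial ingredient is the identification $E=Q$ in Step 2 — in particular the combination of normality of $X$ with the "finite intersection of clopen sets is clopen" trick to manufacture a separating clopen set; the bookkeeping in Steps 1 and 3 is routine once that is available.
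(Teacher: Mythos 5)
Your proof is correct and rests on the same key ingredient as the paper's (that connected components coincide with quasicomponents in a compact Hausdorff space, combined with a finite-intersection/compactness extraction to produce a clopen set inside $V\cap X$). The only difference is cosmetic: you work directly in the compact space $X$, while the paper works inside $\overline{V\cap X}$ and first argues $E$ is also a component there; your route is slightly cleaner, and your self-contained Step~2 replaces the paper's citation of \cite{engelking1978dimension}*{Lemma 1.4.4}.
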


The argument for proving this lemma also works for compact components in a closed set of $\R^n$.

\begin{proof}
We consider the set $W=V\cap X$, and if $W$ is also closed then we are done. Otherwise $\lb\pt V\rb\cap X$ is non-empty. Notice that $E\subset W$ is a component of $X$ and therefore it is also a component for $\overline{W}$. By \cite{engelking1978dimension}*{Lemma 1.4.4}, $E$ is also a quasi-component of $\overline{W}$. {Recall that a quasi-component in $\overline{W}$ is the intersection of all subsets of $\overline{W}$ that contain a fixed $x\in X$ and are both open and closed in the subset topology.} Therefore for every $y\in \lb\pt V\rb\cap X$, {because $y\notin E$} there is an open subset $H_y\subset \T^n$ such that $E\subset H_y$, $y\not\in H_y$ and $H_y\cap \overline{W}$ is also closed. This means that $\overline{W}\setminus H_y$ form an open cover for $\lb\pt V\rb\cap X$ in the subset topology of $\overline{W}$. As $\lb\pt V\rb\cap X$ is compact, we may choose finitely many $y_1,\dots,y_m\in \lb\pt V\rb\cap X$ such that $\overline{W}\setminus H_{y_i}$, $i=1,\dots,m$ cover $\lb\pt V\rb\cap X$. Let $H=\cap_{i=1}^m H_{y_i}$, then $H$ by the previous construction is still open and contains $E$, and $H\cap \overline{W}$ is closed. Moreover, we know that $ H\cap \overline{W} \cap \lb\pt V\rb\cap X =\emptyset$, which means that $H\cap \overline{W}\csubset V$, and hence we may finish the proof by just choosing \(K = V\cap H.\)

\end{proof}

\begin{proof}[Proof of Theorem \ref{t.dimensiononeofnontrivialmedium}]
We argue by contradiction and assume that the compact set 
$$
X:=\spt{\theta}\subset\T^n
$$ 
is totally disconnected. We claim that in this case the mean conductance 
\[
M(\theta) = 0.
\]

To prove the claim, we apply Lemma \ref{l.approximateclopenset} to the totally disconnected set $X$. Notice that all components of $X$ are singletons, and therefore for every small $\delta>0$ and every $x\in X$ there is an open subset $K_\delta(x)\subset \T^n$ such that $x\in K_\delta(x)$, $\diam(K_\delta)< \delta$ and $K_\delta\cap X$ is closed. Now by compactness of $X$ we can choose finitely many $x_i$ for $i=1,\dots,m$ such that the open sets $K_\delta(x_i)$'s cover $X$ and each $I_i:=K_\delta(x_i)\cap X$ is closed in $\T^n$. In other words, $I_i$'s are both closed and open in $X$ in the subset topology. After finitely many times of intersections and complementations of $I_i$'s, we obtain for $X$ a new finite cover $J_k$'s that are disjoint from each other and also both open and closed in $X$ in the subset topology. Moreover the diameters $\diam(J_k)<\delta$.

For each $k$, let $\psi_k\in C^\infty(\T^n)$ be a smooth function such that $0\le \psi_k\le 1$, $\psi_k \equiv 1$ in a neighborhood of $J_k$ and $\psi_k=0$ outside a slightly larger neighborhood. By the previous constructions on $J_k$'s we can further allow
\[
\diam(\spt{\psi_k})<\delta \textup{ and }\psi_k\psi_l=0 \textup{ for all }k\ne l.
\]
Because $\T^n$ is a flat torus for each $k$ we may choose a smooth isometric chart $\xi_k$ from a $\delta$ geodesic ball containing $J_k$ to the ball $B_{\delta}(0)$ in $\R^n$. Now we define
\[
\Phi_0(x) = -\sum_{k} \xi_k \psi_k\in C^\infty(\T^n;\R^n),
\]
and see that, if we write $d\theta=\sigma d\norm{\theta}$ and apply \eqref{eq.otherformofMthetha}
\[
M(\theta) \le \frac{1}{n} \int_{\T^n} \trace{(\gd \Phi_0 + I)^T\sigma(\gd \Phi_0 + I)} d\norm{\theta}(x) = 0.
\]
{This contradicts the assumption that $\theta$ is nontrivial and implies that the support $\spt{\theta}$ must contain at least one non-singleton connected component $E$. We finish the proof of this theorem by applying the following lemma.
\begin{lemma}\label{l.albert211}
    Let $E$ be a connected subset of a continuum $X$. Then $\Ha^1(E)\ge \diam(E)$.
\end{lemma}
The proof of this lemma can be found in \cite{ALBERTI201735}*{Lemma 2.11}.
}

\end{proof}

To prove Theorem \ref{t.countabledecompositionofcomponents} we require the following lemma that improves Lemma \ref{l.basicpropertiesofQ} \ref{condition.(2)}.

\begin{lemma}
\label{l.additivityforcomponentdecomposition}
    Let $\theta$ be a medium and $E\subset \spt{\theta}$ be a component. If we write $\theta_E$ to be the restriction of $\theta$ on $E$ then
    \[
    Q(\theta)=Q(\theta_E)+ Q(\theta-\theta_E).
    \]
\end{lemma}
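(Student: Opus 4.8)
## Proof Proposal for Lemma \ref{l.additivityforcomponentdecomposition}

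The plan is to reduce the additivity statement to the disjoint-support case already handled in Lemma \ref{l.basicpropertiesofQ}\,\ref{condition.(2)}, by exploiting that a connected component $E$ of the \emph{compact} set $\spt{\theta}$ can be separated from its complement $\spt{\theta}\setminus E$ by open sets whose traces on $\spt{\theta}$ are clopen. The super-additivity inequality $Q(\theta)\ge Q(\theta_E)+Q(\theta-\theta_E)$ is already free from Lemma \ref{l.basicpropertiesofQ}\,\ref{condition.(3)} (applied to the two-term sum $\theta=\theta_E+(\theta-\theta_E)$, both of finite mass), so the entire content is the reverse inequality $Q(\theta)\le Q(\theta_E)+Q(\theta-\theta_E)$.

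First I would apply Lemma \ref{l.approximateclopenset}: since $E$ is a component of the closed set $X:=\spt{\theta}$, for any open neighborhood $V\supset E$ there is an open set $K$ with $E\subset K\subset V$ and $K\cap X$ closed in $\T^n$. Taking a decreasing sequence of neighborhoods $V_j\downarrow E$ one gets open sets $K_j$ with $E\subset K_j$, $\overline{K_j\cap X}=K_j\cap X$, and $\bigcap_j K_j\cap X = E$ (using that $E$ equals the intersection of all clopen-in-$X$ sets containing it, i.e.\ $E$ is a quasi-component, as in the proof of Lemma \ref{l.approximateclopenset}). Write $A_j:=K_j\cap X$ and $B_j:=X\setminus A_j$; both are compact (hence at positive mutual distance) and $E=\bigcap_j A_j$. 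Now split the medium as $\theta=\theta_{A_j}+\theta_{B_j}$ where $\theta_{A_j}:=\restr{\theta}{A_j}$ and $\theta_{B_j}:=\restr{\theta}{B_j}$. Since $\spt{\theta_{A_j}}\subset A_j$ and $\spt{\theta_{B_j}}\subset B_j$ are disjoint compacts, Lemma \ref{l.basicpropertiesofQ}\,\ref{condition.(2)} gives the \emph{exact} additivity
\[
Q(\theta)=Q(\theta_{A_j})+Q(\theta_{B_j})\qquad\text{for every }j.
\]

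Next I would pass to the limit $j\to\infty$. On the one hand $\theta_{A_j}\overset{*}{\weakcv}\theta_E$: indeed $\theta_{A_j}$ is decreasing in the set inclusion sense and $\bigcap_j A_j=E$, so for any $\varphi\in C(\T^n)$, $\int\varphi\,d\theta_{A_j}\to\int\varphi\,d\theta_E$ by dominated convergence against $\norm{\theta}$ (note $\norm{\theta}(A_j\setminus E)\to 0$ since $A_j\downarrow E$ along a countable chain and $\norm{\theta}$ is a finite Radon measure once we observe $\norm{\theta}(\partial E)$-type sets are controlled — more carefully, replace $K_j$ by $K_1\cap\cdots\cap K_j$ to make $A_j$ genuinely decreasing, which is harmless). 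Correspondingly $\theta_{B_j}=\theta-\theta_{A_j}\overset{*}{\weakcv}\theta-\theta_E$. By upper semi-continuity of $Q$ under weak* convergence (Lemma \ref{l.uppersemicontinuous}), for each fixed $p\in\R^n$,
\[
p\cdot\big(Q(\theta_E)+Q(\theta-\theta_E)\big)p\ge \limsup_{j\to\infty}\Big(p\cdot Q(\theta_{A_j})p+p\cdot Q(\theta_{B_j})p\Big)=p\cdot Q(\theta)p,
\]
where the last equality uses the exact additivity above (valid for every $j$). Combined with the super-additivity from Lemma \ref{l.basicpropertiesofQ}\,\ref{condition.(3)}, this yields $Q(\theta)=Q(\theta_E)+Q(\theta-\theta_E)$.

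The main obstacle I anticipate is the measure-theoretic bookkeeping in the limit step: ensuring $\theta_{A_j}\overset{*}{\weakcv}\theta_E$ requires that $A_j$ can be chosen decreasing with $\bigcap_j A_j = E$ exactly, which follows from intersecting the $K_j$ produced by Lemma \ref{l.approximateclopenset} and using that $E$ (a component of the compact set $X$) coincides with its quasi-component, hence with the intersection of all relatively-clopen supersets. Everything else is a routine application of the already-established properties of $Q$ (disjoint additivity, super-additivity, upper semi-continuity). I would also remark that the same argument, carried out with countably many components simultaneously, is exactly what powers Theorem \ref{t.countabledecompositionofcomponents}.
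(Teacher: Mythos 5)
Your proof is correct and follows essentially the same route as the paper's: separate $E$ from its complement in $\spt{\theta}$ using the clopen neighborhoods from Lemma \ref{l.approximateclopenset}, apply the disjoint-support additivity of Lemma \ref{l.basicpropertiesofQ}\,\ref{condition.(2)} at each level, pass to the limit with upper semi-continuity (Lemma \ref{l.uppersemicontinuous}), and close with super-additivity (Lemma \ref{l.basicpropertiesofQ}\,\ref{condition.(3)}). The only cosmetic difference is that the paper invokes Lemma \ref{l.faithfulconvergencewhenshrinking} to get the exact limit $Q(\theta_\delta)\to Q(\theta_E)$ on the $E$-side, whereas you apply upper semi-continuity to both pieces and use subadditivity of $\limsup$; these are interchangeable since faithful convergence there is itself just upper semi-continuity plus monotonicity.
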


\begin{proof}
    The difficulty here is that the support of $\theta_E$ may intersect the support of its complement $\theta-\theta_E$. To overcome this, we apply Lemma \ref{l.approximateclopenset} to the component $E$. We then obtain for each $\delta>0$ an open subset $K_\delta\subset \T^n$ such that $E\subset K_\delta$, $\dist(E,\T^n\setminus K_\delta)<\delta$ and $K_\delta\cap \spt{\theta}$ is closed in $\T^n$. We denote $J_\delta:=K_\delta\cap \spt{\theta}$, and define the restrictions
    \[
    \theta_\delta:=\restr{\theta}{J_\delta}.
    \]
    Notice that $\theta_\delta\overset{*}{\weakcv} \theta_E$ and $\theta_E\le \theta_\delta$. By applying Lemma \ref{l.faithfulconvergencewhenshrinking}, we know that $\theta_\delta$ faithfully converges to $\theta_E$ as $\delta\rta0^+$, that is, we have
    \[
\lim_{\delta\rta0^+}Q(\theta_\delta)=Q(\theta_E).
    \]
    On the other hand, by the construction of $\theta_\delta$, the support $\spt{\theta_\delta}$ is disjoint from the support of its complement $\spt{(\theta-\theta_\delta)}$. By Lemma \ref{l.basicpropertiesofQ} \ref{condition.(2)}, we have for every $\delta>0$
    \be\label{eq.intermediatedecompositionatdelat}
    Q(\theta)=Q(\theta_\delta) + Q(\theta-\theta_\delta).
    \ee
    Because $\theta-\theta_\delta$ weakly* converges to $\theta-\theta_E$, by sending $\delta\rta0$ in \eqref{eq.intermediatedecompositionatdelat} and applying Lemma \ref{l.uppersemicontinuous}
    \[
    Q(\theta)=Q(\theta_E)+\limsup_{\delta\rta0^+} Q(\theta-\theta_\delta) \le Q(\theta_E) + Q(\theta-\theta_E).
    \]
We then finish the proof by applying Lemma \ref{l.basicpropertiesofQ} \ref{condition.(3)}.
    
\end{proof}

\begin{proof}[Proof of Theorem \ref{t.countabledecompositionofcomponents}]
Notice that if $E$ and $F$ are disjoint components of $\spt{\theta}$, then $F$ is also a component of $\spt{(\theta-\theta_E)}$, where $\theta_E$ is the restriction of $\theta$ on $E$. 

Let $E_i$ be the countable collection of components of $\spt{\theta}$ and $\theta_i:=\restr{\theta}{E_i}$ be the restrictions of $\theta$ on $E_i$. By iterating Lemma \ref{l.additivityforcomponentdecomposition}, we obtain the identity
\[
Q(\theta)= \sum_{i=1}^N Q(\theta_i) + Q\lb\theta-\sum_{i=1}^N \theta_i\rb,
\]
for all $N\ge1$. In particular, we have for all $N\ge1$ the following identity for the mean conductances
\[
M(\theta)= \sum_{i=1}^N M(\theta_i) + M\lb\theta-\sum_{i=1}^N \theta_i\rb.
\]
Notice that by Lemma \ref{l.basicpropertiesofQ} \ref{condition.(3)} we always have $Q(\theta)\ge \sum_{i=1}^\infty Q(\theta_i)$, and if the inequality is strict, then there is a constant $c>0$ such that for all $N\ge1$
\[
 M\lb\theta-\sum_{i=1}^N \theta_i\rb =M(\theta)- \sum_{i=1}^N M(\theta_i) \ge c>0.
\]
This is impossible because $\lb\theta-\sum_{i=1}^N \theta_i\rb\overset{*}{\weakcv}0$ as $N\rta\infty$, and by Lemma \ref{l.uppersemicontinuous} we have
\[
0\ge \limsup_{N\rta\infty}M\lb\theta-\sum_{i=1}^N \theta_i\rb \ge c>0,
\]
which is a contradiction.    
\end{proof}

\subsection{Proof of Theorem \ref{t.countabledecompositionof1dmedium}}
\label{subsection.proofof5.3}
In this proof we will have to select carefully a countable family of connected components based on the only assumptions that $Q(\theta)\ne0$ and $\Ha^1(\spt{\theta})<\infty$. 

We begin with the following quantitative version of the inequality \eqref{eq.positiveH1measure}.

\begin{lemma}
    \label{l.quantiposih1measure}
    Suppose a medium $\theta$ is nontrivial, then
    \[
   {\Ha^1(\spt{\theta}) \ge 1/2.} 
    \]
\end{lemma}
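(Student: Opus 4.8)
The plan is to make the constant in the qualitative bound $\Ha^1(\spt{\theta})>0$ from Theorem \ref{t.dimensiononeofnontrivialmedium} quantitative by tracking the test-function construction in that proof. Recall the key identity \eqref{eq.otherformofMthetha}: for any $\Phi\in C^\infty(\T^n;\R^n)$,
\[
M(\theta)\le \frac{1}{n}\int_{\T^n}\trace{(\gd\Phi+I)^T\sigma(\gd\Phi+I)}\,d\norm{\theta}.
\]
The strategy is contrapositive: assuming $\Ha^1(\spt{\theta})<1/1000$, I will build a single competitor $\Phi$ making the right-hand side zero, forcing $M(\theta)=0$ and hence $Q(\theta)=0$, i.e. $\theta$ is trivial.

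First I would recall the following covering fact: if $\Ha^1(X)<\delta_0$ for a compact set $X\subset\T^n$, then $X$ can be covered by finitely many open balls $B_{r_i}(x_i)$ with $\sum_i 2r_i$ as small as we like (say $<1/100$), and moreover — after passing to a suitable disjoint refinement using that small total length prevents the cover from ``wrapping around'' the torus — we may assume the closures of a grouped family $J_1,\dots,J_m$ of clopen-in-$X$ pieces are pairwise disjoint and each sits inside a Euclidean coordinate chart $\xi_k$ of $\T^n$; this is exactly the mechanism in the proof of Theorem \ref{t.dimensiononeofnontrivialmedium}, where total disconnectedness was used only to produce such a clopen decomposition, and here small $\Ha^1$-measure serves the same purpose because a set of length $<1/1000$ cannot contain a nonconstant closed loop of any nonzero homotopy class (its projection to each coordinate circle has length $<1/1000<1$, so it misses a point and lifts to $\R^n$; hence each component, and after shrinking each small-diameter piece, embeds isometrically in a chart). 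Then pick smooth cutoffs $\psi_k$ with $\psi_k\equiv 1$ near $J_k$, $\psi_k\psi_l\equiv 0$ for $k\ne l$, supported in the chart domains, and set $\Phi_0:=-\sum_k \xi_k\psi_k$. Since $\Phi_0(x)=-\xi_k(x)$ on a neighborhood of $J_k$ and $\xi_k$ is an isometric chart, $\gd\Phi_0+I=0$ on a neighborhood of $\spt{\theta}=\bigcup_k J_k$, so the integrand vanishes $\norm{\theta}$-a.e., giving $M(\theta)=0$, hence $Q(\theta)=0$ by \eqref{eq.meanconductance}. This contradicts nontriviality.

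The main obstacle — and the only place the specific numerical constant $1/1000$ enters — is justifying the clopen decomposition: I need that a compact set of $\Ha^1$-measure below an absolute threshold admits a finite cover by pairwise-disjoint clopen-in-$X$ pieces each of diameter small enough to lie in a single chart of $\T^n$. The length bound controls connected pieces of $X$ (a continuum of length $<\delta$ has diameter $<\delta$), but $X$ need not be connected; one handles the general case by covering $X$ with small balls whose total radius is tiny, invoking Lemma \ref{l.approximateclopenset} to replace each $B_{r_i}(x_i)\cap X$ by a genuinely clopen-in-$X$ neighborhood contained in a slightly larger ball, and then taking finite Boolean combinations exactly as in the proof of Theorem \ref{t.dimensiononeofnontrivialmedium} to get disjointness. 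The margin between $1/1000$ and the crude requirements (each piece of diameter $<$ injectivity radius of $\T^n$, which is $1/2$, and total cover length $<1$ to rule out homotopically nontrivial loops) is deliberately generous, so no delicate optimization is needed; I would simply verify that the construction goes through with room to spare and cite \cite{ALBERTI201735}*{Lemma 2.11} for the final step $\gd\Phi_0+I\equiv 0$ near $\spt\theta\Rightarrow Q(\theta)=0$, just as before.
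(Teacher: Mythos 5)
Your overall strategy matches the paper's: argue by contraposition, show that $\Ha^1(\spt{\theta})<1/1000$ forces a finite decomposition of $\spt{\theta}$ into small-diameter relatively-clopen pieces, build the test field $\Phi_0=-\sum_k\xi_k\psi_k$ exactly as in the proof of Theorem \ref{t.dimensiononeofnontrivialmedium}, and conclude $M(\theta)=0$ (hence $Q(\theta)=0$) via \cite{ALBERTI201735}*{Lemma 2.11}. That skeleton is sound.

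The gap is in the mechanism you propose for producing the clopen decomposition. You say you would ``invoke Lemma \ref{l.approximateclopenset} to replace each $B_{r_i}(x_i)\cap X$ by a genuinely clopen-in-$X$ neighborhood contained in a slightly larger ball.'' That is not what Lemma \ref{l.approximateclopenset} does: its input is a \emph{component} $E$ of $X$ together with an open neighborhood $V\supset E$, and it produces an open $K$ with $E\subset K\subset V$ and $K\cap X$ closed. It does not upgrade an arbitrary relatively-open set like $B_{r_i}(x_i)\cap X$ to a clopen one; if $X$ is an arc and $B\cap X$ a proper subarc, no proper clopen subset of $X$ contains it, so the ``replacement'' you describe is impossible in general. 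The paper sidesteps Lemma \ref{l.approximateclopenset} entirely here and does something simpler: take finitely many open balls $B_1,\dots,B_N$ covering $\spt{\theta}$ with $\sum_j\diam(B_j)\le 1/500$ and set $K:=\bigcup_j B_j$. Since $K$ is a finite union of open balls it is open with at most $N$ connected components $C_1,\dots,C_m$, each open, and by chaining each has diameter at most $\sum_j\diam(B_j)\le 1/500$. The sets $J_k:=\spt{\theta}\cap C_k$ are then pairwise disjoint, relatively open, and cover $\spt{\theta}$, hence they are \emph{automatically} clopen in $\spt{\theta}$ --- no Boolean combinations and no appeal to Lemma \ref{l.approximateclopenset} are needed. (Your alternative route of applying Lemma \ref{l.approximateclopenset} to each genuine component of $\spt{\theta}$ --- each of which has diameter $<1/1000$ since $\Ha^1(E)\ge\diam(E)$ for a continuum $E$ --- together with a compactness and Boolean-combination argument would also work, but it is more elaborate than required and is not what the passage you wrote actually carries out.)
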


\begin{proof}
{We argue by contradiction and assume that $\Ha^1(\spt{\theta})<1/2$. We claim that in this case, the set $\spt{\theta}$ can be decomposed as a finite collection of subsets that have positive distance from each other and have diameters smaller than 1/2.}

First of all, by definition (See Section \ref{subsubsection.hausdorff}), we know that for every $\delta>0$
\[
\inf\lma \sum_{j=1}^\infty \diam(U_j) \ ; \ \diam(U_j)\le \delta,\, \spt{\theta}\subset \bigcup_{j=1}^\infty U_j\rma < 1/2.
\]
This shows that, by using the compactness of $\spt{\theta}$, we can find $N\in \N_+$ open balls $B_j$ such that
\[
\sum_{j=1}^N \diam(B_j) < 1/2 \textup{ and } \spt{\theta}\subset \bigcup_{j=1}^N B_j=: K.
\]
{We finish the proof of the claim by showing that the diameter of each component of $K$ is less than $1/2$. To prove this we denote $K^*\subset K$ a connected component of $K$. As $K$ is a finite union of balls, the component $K^*$ is also a finite union of balls $\{B_{j_l}\}\subset \{B_j\}_{j=1}^N$. By the triangle inequality
\[
\diam(K^*) \le \sum_{l} \diam(B_{j_l}) \le \sum_{j=1}^N \diam(B_j) < 1/2.
\]
This proves the claim.
}
 
{Now, because each connected component $K^*$ of $K$ has diameter at most $1/2$, $K^*$ as a subset of $\T^n$ must be isometrically diffeomorphic to a connected open domain in $\R^n$. Indeed, the preimage $\pi^{-1}(B_r)$ of any open ball $B_r\subset \T^n$ of radius $0<r\le1/2$ under the standard projection $\pi:\R^n\rta\T^n$ is a countable union of disjoint balls in $\R^n$ of the same radius, and $\pi$ restricted on any of these balls defines an isometric diffeomorphism onto $B_r$. Applying this observation to a fixed ball of radius $1/2$ that covers $K^*$ (there is always such a ball because $\diam(K^*)<1/2$), we can find a smooth isometric diffeomorphism $\phi:K^*\rta \R^n$. The proof is then done by a similar construction of the smooth test vector field $\Phi_0$ as in the proof of Theorem \ref{t.dimensiononeofnontrivialmedium}.}

\end{proof}

\begin{proof}[Proof of Theorem \ref{t.countabledecompositionof1dmedium}]
We construct $E_i$'s inductively and denote $\theta_i=\restr{\theta}{E_i}$. Because $\theta$ is nontrivial, by Theorem \ref{t.dimensiononeofnontrivialmedium} we know that there is always a connected component of $\spt{\theta}$ having positive $\Ha^1$ measure. Therefore, we can find $E_1$ to satisfy
\[
\Ha^1(E_1) \ge \frac{1}{2}\sup\{\Ha^1(F)\ ; \ F \textup{ is a connected component of }\spt{\theta}\}>0.
\]
By Lemma \ref{l.approximateclopenset}, for every $\ep>0$ we can find an open set $J_\ep$ such that $J_\ep\subset J_{\delta}$ whenever $\delta>\ep>0$ and
\be\label{eq.propertyofJep}
E_1\subset J_\ep, \ \dist(E_1,\pt J_\ep) \le \ep, \ \Ha^1(J_\ep\cap \spt{\theta}\setminus E_1)\le \ep
\ee
and $J_\ep\cap\spt{\theta}$ is closed. We claim that when $\ep>0$ is sufficiently small, we have
\[
Q(\restr{\theta}{J_\ep})=Q(\theta_1).
\]
Indeed, on one hand we have by Lemma \ref{l.faithfulconvergencewhenshrinking} the identity
$\displaystyle\lim_{\ep\rta0^+}Q(\restr{\theta}{J_\ep})=Q(\theta_1).$ On the other hand, we have by Lemma \ref{l.additivityforcomponentdecomposition}
\[
Q(\restr{\theta}{J_\delta}) = Q(\restr{\theta}{J_\ep}) + Q(\restr{\theta}{J_\delta\setminus J_\ep})
\]
for $0<\ep<\delta\ll 1$. When both $\delta,\ep\rta0^+$ with $\delta>\ep$, by \eqref{eq.propertyofJep}, we know that $\Ha^1(J_\delta\cap \spt{\theta}\setminus J_\ep) \rta0 $. This implies that according to Lemma \ref{l.quantiposih1measure}
\[
Q(\restr{\theta}{J_\delta}) = Q(\restr{\theta}{J_\ep}) 
\]
for all $\delta>\ep>0$ whenever both are small. Therefore $Q(\restr{\theta}{J_\ep})=Q(\theta_1)$ for sufficiently small $\ep>0$. 

We define $K_1$ to be such an open set $J_\ep$ with sufficiently small $\ep>0$ that satisfies
\[
E_1\subset K_1, \ Q(\restr{\theta}{K_1})=Q(\theta_1)
\]
and $\spt{\theta}\cap K_1$ is closed. If $\theta-\restr{\theta}{K_1}$ is trivial, then by Lemma \ref{l.basicpropertiesofQ} we are done. Otherwise by Theorem \ref{t.dimensiononeofnontrivialmedium} there is a connected component $E_2$ of $\spt{\theta}$ that is disjoint from $K_1$ and
\[
\Ha^1(E_2) \ge \frac{1}{2}\sup\{\Ha^1(F)\ ; \ F \textup{ is a connected component of }\spt{\theta} \setminus K_1\}>0.
\]
Note that by the construction of $K_1$, all the connected components of $\spt{\theta} \setminus K_1$ is also a component of $\spt{\theta}$. Suppose we have obtained $E_1,\dots,E_i$ for some $i\ge 2$. For $E_i$ we can find by a similar argument as before, an open set $K_i$ such that
\[
E_i\subset K_i, \ Q(\restr{\theta}{K_i})= Q(\theta_i) , \ K_i\cap \spt{\theta} \textup{ is closed}
\]
and $K_j\cap K_{j'}=\emptyset$ for $1\le j < j'\le i$.  If $\theta-\sum_{j=1}^i \restr{\theta}{K_j}$ is trivial, then we are done. Otherwise by applying Theorem \ref{t.dimensiononeofnontrivialmedium} to $\theta-\sum_{j=1}^i \restr{\theta}{K_j}$, we can find a connected component $E_{i+1}$ of $\spt{\theta}$ disjoint from all $K_j$ for $j\le i$ and
\be\label{eq.countableinequaltiy}
\Ha^1(E_{i+1}) \ge \frac{1}{2}\sup\{\Ha^1(F)\ ; \ F \textup{ is a connected component of }\spt{\theta} \setminus \bigcup_{j=1}^i K_j\}>0.
\ee
We finish the proof by showing that the mean conductance satisfies
\[
\lim_{i\rta\infty} M\lb\theta-\sum_{j=1}^i \restr{\theta}{K_j}\rb = 0.
\]
Suppose for some $c>0$ and all large $i$ we have
\[
M\lb\theta-\sum_{j=1}^i\restr{\theta}{K_j}\rb \ge c >0.
\]
Sending $i\rta\infty$ and denote 
\[
\Tilde{\theta}=\theta-\sum_{j=1}^\infty \restr{\theta}{K_j}.
\]
By applying the singular Wiener bound \eqref{eq.singularwienerbound} and the upper semi-continuity in Lemma \ref{l.uppersemicontinuous}, we know that 
\[
\frac{1}{n}\norm{\Tilde{\theta}}(\T^n) \ge M(\Tilde{\theta})(\T^n) \ge c >0.
\]
As $\Tilde{\theta}$ is a nontrivial medium, by Theorem \ref{t.dimensiononeofnontrivialmedium} again, there is always a component $K\subset \spt{\Tilde{\theta}}$ such that $\Ha^1(K)>0$. On one hand, we know that 
\[
\spt{\Tilde{\theta}} \subset \spt{\theta} \setminus \bigcup_{j=1}^\infty K_j,
\]
and therefore $K\cap E_j=\emptyset$ for all $j\ge 1$. On the other hand, for all $i\ge 1$ there is a connected component $\Tilde{K}_i$ of $\spt{\theta}\setminus\bigcup_{j=1}^iK_j$ that contains $K$ and satisfies by \eqref{eq.countableinequaltiy}
\[
\Ha^1(E_{i+1}) \ge \frac{1}{2}\Ha^1(\Tilde{K}_i) \ge \frac{1}{2}\Ha^1(K)>0.
\]
This contradicts the fact that $\sum_{i=1}^\infty\Ha^1(E_i) \le \Ha^1(\spt{\theta})<\infty$.

\end{proof}

\section{Loopiness, reticulation and the lower Wiener bound on networks}
\label{section.leafvenation}
In this section we characterize the lower attainability property of the singular lower Wiener bound \eqref{eq.singularwienerbound} for \emph{network-like} media. To be more specific, we focus on media of the form $d\theta = I_{n\times n}\, dw$, where the Radon measure $w$ satisfies
\be\label{eq.1dregular}
0<\limsup_{r\rta0^+}\frac{w(B_r(x))}{r}<\infty
\ee
for $w$-almost every $x\in \T^n$ and the following coercivity condition 
\be
\label{eq.coercive}
\limsup_{r\rta0^+}\frac{w(B_r(x))}{r}>c>0
\ee
for $\Ha^1$-almost every $x\in \spt{w}=\spt{\theta}$. Note that the subtle point here is that the coercivity condition \eqref{eq.coercive} is satisfied for $\Ha^1$-almost every $x\in \spt{w}$ instead of for $w$-almost every $x$. By the discussions in Section \ref{subsubsection.density}, there is an equivalent definition for such medium
\be\label{eq.networklikemedium}
d\theta(x)= a(x) I_{n\times n} \, d\restr{\Ha^1}{\Gamma}(x),
\ee
where $\Gamma=\spt{w}=\spt{\theta}$ is a closed subset of $\T^n$ and $\Ha^1(\Gamma)<\infty$, and the coercivity condition on $a$ takes the form
\be
\label{eq.ellipticfora}
a\in L^1\lb\restr{\Ha^1}{\Gamma}\rb \textup{ and } a(x) \ge \Lambda^{-1} \textup{ for }\restr{\Ha^1}{\Gamma}\textup{-almost every }x\in \T^n,
\ee
for some constant $\Lambda>0$. We refer to Example \ref{ex.reverseofnontrivialmediatheorem} for the necessity of the coercivity condition.

Such medium arise as one considers in classical theory the matrix field $\Bar{\sigma}$ of the form with small $\delta>0$ 
\[
\Bar{\sigma}(x):=\bca
\frac{a(x)}{\delta}I_{n\times n} & \dist(x,\Gamma)<(\delta/\omega_{n-1})^{1/(n-1)}\\
\delta I_{n\times n} & \textup{elsewhere}.
\eca
\]
That is, when the high conductive material concentrate on the 1D set $\Gamma$, one should study medium of the form \eqref{eq.networklikemedium} as an effective model. We refer to \cite{Huang2021} for a justification of the faithful convergence of $\Bar{\sigma}(x) \, d\Lme^2$ to $d\theta$ as in \eqref{eq.networklikemedium} as $\delta\rta0$ in the special case that the dimension $n=2$ and $\Gamma$ is a finite union of $C^2$ curves.

Most of the results in this section are motivated by the modeling of leaf venation patterns. Specifically we view $\T^n$ as a small piece of leaves, $\Gamma=\spt{\theta}$ the geometric structure of the leaf veins on $\T^n$ and $a(x)$ the local conductance of veins at $x\in \Gamma$. The goal is to show, as pointed out in the introduction, that a periodic planar network is resilient to fluctuations if and only if it is reticulate. This formal theorem will be stated in a rigorous way in Theorem \ref{t.Qtohomotopy} and Theorem \ref{t.characterizationoflowerboundn=2}.

It turns out naturally that the answer is hidden in the study of the effective tensor $Q(\theta)$, and in particular, the positive definiteness of $Q(\theta)$. By standard homogenization theory, the effective conductance of the medium $\theta$ in direction $p\in \R^n$ should be 
\[
p\cdot Q(\theta)p.
\]
Here the vector $p$ can be viewed as the effective exterior pressure gradient applied on the medium $\theta$. Suppose the medium experience a random fluctuation in $p$, then one would expect that $p\cdot Q(\theta)p$ to be as large as possible in all direction $p$. However, due to the vast variety of leaf veins, we do not expect a quantitative way to analyze, but at least we can allow $p\cdot Q(\theta)p$ to be positive in all $p$. This is equivalently saying that $Q(\theta)$ has to be positive definite, or in other words, the medium $\theta$ has to be positive. 

Although the above simple explanation is clear, there is still a gap between this argument and the original setting in \cites{Corson2010,Katifori2010}, where the minimization of the total dissipation under random fluctuations was considered. In Appendix \ref{appendix.formalderivation} we try to fix this gap by deriving the positivity of the effective tensor $Q(\theta)$ from a continuum version of the dissipation minimization problem in \cites{Corson2010,Katifori2010}. 

To characterize positive medium $\theta$ of the form \eqref{eq.networklikemedium}, we need to introduce some topological terminologies. We begin with the set $H_E$ that collects all the homotopy classes in the subset $E\subset\T^n$.

\begin{definition}
    \label{def.definehomotopygroupofsets}
    For any subset $E\subset \T^n$, we denote $\pi_1(E,x_0)\le \pi_1(\T^n,x_0)$ the subgroup of the homotopy classes containing closed paths in $E$ based at $x_0\in E$. We denote the collection of homotopy classes in $E$ as the set
    \be\label{eq.definehe}
    H_E:=\bigcup_{x_0\in E} i_{x_0}\lb \pi_1(E,x_0)\rb \subset\Z^n,
    \ee
    where $i_{x_0}:\pi_1(\T^n,x_0)\rta\Z^n$ is the isomorphism as introduced in Section \ref{subsubsection.pathliftingproperty}. Call $\Z H_E:=\textup{Span}_\Z(H_E)$, $\R H_E:=\textup{Span}_\R (H_E)$ and $H_E^\perp := \lb\R H_E\rb^\perp$ the real orthogonal complement of $\R H_E$ in $\R^n$. 
\end{definition}

\begin{definition}
    \label{def.topologynetwork}
    Let $E\subset\T^n$. Denote $H_E\subset \Z^n$ the collection of homotopy classes in $E$ as defined in Definition \ref{def.definehomotopygroupofsets}. Call $E$ to be trivial if $H_E=0$. Call $E$ to be \emph{loopy} if the real span $\R H_E=\R^n$. Call $E$ to be \emph{reticulate} if it has a loopy connected component. Call $E$ to be \emph{quasi-laminate} if $H_E=\textup{Span}_\Z (p)$ for some $p\in \Z^n$.
\end{definition}

\begin{remark}\label{r.equiloopyandreticulate}
    The terms ``loopy'' and ``reticulate'' generally share the same meaning for leaf venation patterns in biological literature \cites{RothNebelsick2001,sack2013leaf}. In dimension $n=2$ they are indeed the same mathematically (See Lemma \ref{l.characterizeloopysetn=2}). In higher dimensions they are different notions because the loops can be detached in the extra dimensions. A quick example in $\T^3$ is the following set
    \be
\label{eq.exampledetachn=3}
{\T^1\times\{0\}\times\{1/2\}\cup\{0\}\times\{1/2\}\times\T^1\cup\{1/2\}\times \T^1 \times \{0\} .}
    \ee
    {This detaching argument does not work in dimension $n=2$, because in $\T^2$ two closed paths that belong to nonparallel homotopy classes will eventually intersect with each other and form a reticulate set, see Lemma \ref{l.characterizeloopysetn=2} and Lemma \ref{l.intersectionofnonparallelpaths}. Intuitively this is related to the simple fact that on $\R^2$ any two nonparallel straight lines will always intersect with each other, while in higher dimensions $n\ge 3$, two nonparallel lines generically do not intersect with each other, and if they do they must be on the same plane. In this higher dimensional scenario, one can construct a loopy set like \eqref{eq.exampledetachn=3}, in which there are three linearly independent closed paths that do not intersect with each other, i.e. the union is not connected and therefore not reticulate.}
\end{remark}

{\begin{remark}
    \label{r.onthedefinitonofloopy}
It is natural to ask whether $E$ being loopy can be equivalently defined as $H_E$ spanning $\Z^n$ instead of $\R^n$. It turns out that in dimension $n=2$, the two ways of defining loopy are equivalent. This can be seen by Lemma \ref{l.characterizeloopysetn=2} \ref{condition.(d)loopy}. However, in higher dimensions $n\ge 3$, $H_E$ spanning $\Z^n$ is a stronger property than loopiness. 
 Here let us present an explicit example of a loopy set $F\subset \T^3$, of which the homotopy class collection $H_F$ does not span $\Z^3$: we parametrize $\T^3$ by $(x,y,z)\in [0,1)^3$, and define
\be
\label{eq.spanRnotZ}
\begin{split}
    F&:=\{2y=x \textup{ mod }\Z\textup{ and }z=0\} \cup \{x=0, \ z=1/2\} \cup \{x=y=1/2\}\\
    &=:F_1\cup F_2 \cup F_3.
\end{split}
\ee
Note that $H_{F_1}=\textup{Span}_{\Z}\{(2,1,0)\}$, $H_{F_2}=\textup{Span}_{\Z}\{(0,1,0)\}$ and $H_{F_3}=\textup{Span}_{\Z}\{(0,0,1)\}$ (these sets can be easily derived by using the path lifting property and the simple structure of $F_i$'s). Because $F_1$, $F_2$ and $F_3$ are mutually disjoint, we have $H_F=H_{F_1}\cup H_{F_2}\cup H_{F_3}$ and
\[
\Z H_F = (2\Z)\times\Z^2 <\Z^3 \textup{, while }\R H_F = \R^3.
\]
The intuition here is very similar to what we have just discussed in Remark \ref{r.equiloopyandreticulate}. In dimension $n=2$, even if we do not initially observe the existence of $(1,0)$ and $(0,1)$ in $H_E$, the closed paths having nonparallel homotopy classes in $E$ will have to intersect with each other, resulting in the occurrence of $(1,0)$ and $(0,1)$ in $H_E$ that span the whole $\Z^2$. However, in higher dimensions, the closed paths can be detached and contained in different components (such as the components $F_1$, $F_2$ and $F_3$ in $F$ as in \eqref{eq.spanRnotZ}).
\end{remark}
}

Our main theorem of this section is to show the direct relation between the kernel of $Q(\theta)$ and the collection of homotopy classes $H_\Gamma\subset \Z^n$ in the support $\Gamma=\spt{\theta}$.

\begin{theorem}
    \label{t.Qtohomotopy}
    Let $\theta$ be a network-like medium as defined in \eqref{eq.networklikemedium} with the weight $a$ satisfying the coercivity condition \eqref{eq.ellipticfora}, then we have the following identity
    \be
\label{eq.Qtohomotopy}
H_{\Gamma}^\perp  = \ker Q(\theta).
    \ee
    In particular, $Q(\theta)$ is positive definite if and only if $\Gamma$ is loopy. 
\end{theorem}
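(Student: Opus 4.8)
\textbf{Proof proposal for Theorem \ref{t.Qtohomotopy}.}

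The plan is to prove the two inclusions $H_\Gamma^\perp \subset \ker Q(\theta)$ and $\ker Q(\theta) \subset H_\Gamma^\perp$ separately, and then deduce the positive-definiteness dichotomy as an immediate corollary (since $H_\Gamma^\perp = 0$ is, by definition, equivalent to $\R H_\Gamma = \R^n$, i.e.\ to $\Gamma$ being loopy). The first step is to reduce to the case where $\Gamma$ is connected. Since the coercivity condition \eqref{eq.ellipticfora} forces $\Ha^1(\Gamma) = \Ha^1(\spt{\theta}) < \infty$, we may invoke the countable decomposition Theorem \ref{t.countabledecompositionof1dmedium} to write $Q(\theta) = \sum_i Q(\theta_i)$ with each $\theta_i = \restr{\theta}{E_i}$ supported on a connected $1$-rectifiable component $E_i$. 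Then $\ker Q(\theta) = \bigcap_i \ker Q(\theta_i)$, and on the topological side one checks that $H_\Gamma$ is generated by $\bigcup_i H_{E_i}$ (a connected path in $\Gamma$ decomposes through the components it visits; here one needs that two distinct components are disjoint closed sets, so a loop lies entirely in one component after a reparametrization argument), so that $H_\Gamma^\perp = \bigcap_i H_{E_i}^\perp$. Hence it suffices to prove the identity when $\Gamma$ is connected, where additionally $\Z H_\Gamma = H_\Gamma$ is a genuine subgroup (Lemma \ref{l.homotopygroup}).

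For the inclusion $H_\Gamma^\perp \subset \ker Q(\theta)$, fix $p \in H_\Gamma^\perp$; I want to produce a sequence of smooth correctors $\varphi_j$ on $\T^n$ with $\gd\varphi_j + p \to 0$ in $L^2(\norm{\theta})$, which gives $p \cdot Q(\theta) p = 0$ and hence, by nonnegativity of the quadratic form, $Q(\theta) p = 0$. The construction uses the Wa\.zewski parametrization $\gamma:[0,1]\to\Gamma$ of Theorem \ref{t.existenceofanicelipschitzparam}. The key point is that the linear function $x \mapsto p\cdot x$, which lives on $\R^n$ rather than $\T^n$, descends to a well-defined Lipschitz function in a neighborhood of $\Gamma$ precisely because $p \perp H_\Gamma$: lifting closed loops in $\Gamma$ through the covering $\R^n/\Z H_\Gamma \to \T^n$, the holonomy of any loop lies in $\Z H_\Gamma$, on which $p\cdot(\cdot)$ vanishes, so $p\cdot x$ is single-valued along $\Gamma$. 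One then mollifies this function to a smooth $\psi$ on a tubular neighborhood with $\gd\psi \approx P_{\tau_x}p$ (the tangential part) on $\Gamma$, extends by a cutoff, and sets $\varphi_j = -\psi - p\cdot x$ (interpreted via the covering); since $d\theta = I\,dw$ is isotropic the cross terms integrate correctly and one obtains $\int_{\T^n}|\gd\varphi_j + p|^2\,dw \to 0$. This mirrors the outline in Section \ref{subsubsection.outlineoftheoremB}.

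For the reverse inclusion, equivalently I must show $p \cdot Q(\theta) p \ge C|p|^2$ for all $p \in \Z H_\Gamma = H_\Gamma$, with $C>0$ depending on $n$, the geometry of $\Gamma$, and the coercivity constant $\Lambda$ in \eqref{eq.ellipticfora}. The strategy: given a candidate corrector $\varphi$, consider the Lipschitz map $x \mapsto x + \varphi(x)$ composed with $\gamma$ and lifted to the covering; a nonzero $p\in H_\Gamma$ is realized as the holonomy class of some closed Lipschitz loop $\gamma_p$ in $\Gamma$, and pairing $\gd\varphi + p$ against $\gamma_p'$ and integrating around the loop yields $\oint (\gd\varphi+p)\cdot d\gamma_p = p\cdot(\text{net displacement}) = |p|^2 / (\text{something})$ — forcing $\gd\varphi + p$ to have controlled $L^2(w)$-mass by Cauchy--Schwarz, where the denominator is the length of $\gamma_p$. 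The technical heart, which I expect to be the main obstacle, is uniformly bounding below the ``efficiency'' $|p|^2 / \ell(\gamma_p)^2$ over all $p$ and simultaneously controlling the multiplicity with which $\gamma_p$ covers $\Gamma$ (so the lower bound survives the $L^1$ weight $a \ge \Lambda^{-1}$ rather than $a \equiv 1$): a naive loop representing $p$ could be very long or traverse edges many times. This requires the modification/surgery procedure on Lipschitz loops from \cite{ALBERTI201735} — shortening loops and reducing multiplicities while preserving the homotopy class — to select, for each $p$ in a generating set of $H_\Gamma$, a representative of length and multiplicity bounded purely in terms of $\Ha^1(\Gamma)$ and $n$; then one upgrades from generators to all of $H_\Gamma$ and hence to all of $\R H_\Gamma$ by linearity of the quadratic form and a compactness argument on the unit sphere of $\R H_\Gamma$. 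Combining the two inclusions gives \eqref{eq.Qtohomotopy}; specializing, $Q(\theta) > 0 \iff \ker Q(\theta) = 0 \iff H_\Gamma^\perp = 0 \iff \R H_\Gamma = \R^n$, which is the definition of $\Gamma$ being loopy.
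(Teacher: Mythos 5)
Your overall route — reduce to connected $\Gamma$ via Theorem \ref{t.countabledecompositionof1dmedium}, prove $H_\Gamma^\perp\subset\ker Q(\theta)$ by a covering-space argument, and prove the reverse by loop selection plus Cauchy--Schwarz with the surgery techniques of \cite{ALBERTI201735} — is the paper's route. However, there are two genuine gaps as written.

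The first concerns $H_\Gamma^\perp\subset\ker Q(\theta)$. You construct $\psi$ near $\Gamma$ with ``$\gd\psi\approx P_{\tau_x}p$ (the tangential part) on $\Gamma$'' and set ``$\varphi_j=-\psi-p\cdot x$.'' Neither piece is right. First, $p\cdot x$ is not a function on $\T^n$, so $\varphi_j$ does not parse as a periodic test function. More fundamentally, controlling only the tangential part of the gradient is insufficient: the medium is isotropic, $d\theta=a\,I_{n\times n}\,d\restr{\Ha^1}{\Gamma}$, so the energy is $\int_\Gamma|\gd\varphi+p|^2\,a\,d\Ha^1$ with the \emph{full} Euclidean norm. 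If on $\Gamma$ you only kill the tangential part, the normal component $P_{\tau_x^\perp}p$ survives and the integral does not tend to zero. The paper's Lemma \ref{l.firstinclusion} achieves strictly more: since $\pi_H:\T_H^n\to\T^n$ is a local isometric diffeomorphism, the function $h_p$ on $\T_H^n$ defined by $h_p\circ\pi^H(x)=p\cdot x$ has $\gd h_p\equiv p$, and pulling it back through $(\pi_H|_{\hat U})^{-1}$ on a small neighborhood $\hat U$ of the lifted image $\hat\Gamma$ produces a smooth function on an \emph{open neighborhood} $U$ of $\Gamma$ whose full gradient equals $p$ on all of $U$. With a cutoff $\eta\equiv 1$ near $\Gamma$, the corrector $\varphi_p:=-\eta\cdot\bigl(h_p\circ(\pi_H|_{\hat U})^{-1}\bigr)$ then satisfies $\gd\varphi_p=-p$ identically on $\Gamma$, so $p\cdot Q(\theta)p=0$ exactly, with no limiting sequence. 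The nontrivial step you would still need to supply is that $\pi_H$ restricted to some such $\hat U$ is injective; the paper shows that a failure of injectivity would produce a closed path in $\Gamma$ with homotopy class in $\Z^n\setminus\Z H_\Gamma$, a contradiction.

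The second gap is the claim that one can ``upgrade from generators to all of $H_\Gamma$\ldots by linearity of the quadratic form.'' Quadratic forms are not linear, and positivity of $q\mapsto q\cdot Qq$ on a generating set does not imply positivity on the lattice it generates: for example $Q=\begin{pmatrix}1&1\\1&1\end{pmatrix}$ is positive on $e_1$ and on $e_2$ but vanishes on $e_1-e_2$. What one actually needs, and what Lemma \ref{l.selectionofgoodpath} supplies, is a \emph{uniform quantitative} bound over the entire lattice: for every $q\in H_\Gamma\setminus\{0\}$ a loop $\gamma_q$ in $\Gamma$ with $\ell(\gamma_q)\le c_{n,\Gamma}|q|$ and with injective lift in $\R^n$, so that the multiplicity obeys $m_{\gamma_q}\le 2\ell(\gamma_q)$. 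These loops are built by composing generator loops. Feeding this into the change-of-variables estimate (Lemma \ref{l.computationwithinequalityofQ}) together with $a\ge\Lambda^{-1}$ yields $q\cdot Q(\theta)q\ge |q|^2/(\Lambda c_{n,\Gamma})$ for all $q\in H_\Gamma$; only with this uniform bound over the lattice, plus density of $\{q/|q|\}$ in the unit sphere of $\R H_\Gamma$ (Lemma \ref{l.homotopygroup}), does one conclude that $Q(\theta)$ is positive definite on $\R H_\Gamma$ and hence $\ker Q(\theta)\subset H_\Gamma^\perp$.
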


In dimension $n=2$, we can make some further improvements.

\begin{theorem}
    \label{t.characterizationoflowerboundn=2}
    Suppose $n=2$ and assume the same as Theorem \ref{t.Qtohomotopy}. We characterize the topological properties of $\Gamma$ for different forms of $Q(\theta)$:
    \begin{itemize}
        \item $Q(\theta)=0$ if and only if $\Gamma$ is trivial.
        \item $Q(\theta)=q\otimes q$ for some $q\in \R^2\setminus\{0\}$ if and only if $q\in \R\Z^2$ and $\Gamma$ is quasi-laminate in direction $q$.
        \item $Q(\theta)$ is positive definite if and only if $\Gamma$ is reticulate. 
    \end{itemize}
\end{theorem}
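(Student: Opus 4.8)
The plan is to read off all three characterizations from Theorem \ref{t.Qtohomotopy} (the identity $\ker Q(\theta)=H_\Gamma^\perp$) together with elementary intersection theory on $\mathbb{T}^2$. Since $Q(\theta)$ is symmetric and positive semi-definite, Theorem \ref{t.Qtohomotopy} also gives $\operatorname{Im}Q(\theta)=(\ker Q(\theta))^\perp=(H_\Gamma^\perp)^\perp=\mathbb{R}H_\Gamma$, so that $\operatorname{rank}Q(\theta)=\dim_{\mathbb{R}}\mathbb{R}H_\Gamma$. Two of the three bullets are then immediate. First, $Q(\theta)=0$ iff $\operatorname{rank}Q(\theta)=0$ iff $\mathbb{R}H_\Gamma=\{0\}$ iff $H_\Gamma=\{0\}$, i.e.\ $\Gamma$ is trivial. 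Second, $Q(\theta)$ is positive definite iff $\operatorname{rank}Q(\theta)=2$ iff $\mathbb{R}H_\Gamma=\mathbb{R}^2$, i.e.\ $\Gamma$ is loopy; by Lemma \ref{l.characterizeloopysetn=2}, in dimension $2$ loopiness is equivalent to reticulation, which finishes that case.

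It is worth recording the (known) reason loopiness forces reticulation in dimension $2$, since the same mechanism underlies the remaining bullet. Writing $H_\Gamma=\bigcup_i H_{E_i}$ over the connected components $E_i$ of $\Gamma$ — a genuine equality because components and path-components coincide for the finite-length closed set $\Gamma$ — the condition $\mathbb{R}H_\Gamma=\mathbb{R}^2$ forces the existence of components $E_i,E_j$ and classes $p\in H_{E_i}$, $q\in H_{E_j}$ with nonzero algebraic intersection number $p\wedge q\ne 0$. Two closed curves in $\mathbb{T}^2$ whose homology classes pair nontrivially must intersect, so $E_i\cap E_j\ne\emptyset$; since components are disjoint this forces $i=j$, and hence a single component carries two independent classes and is loopy, i.e.\ $\Gamma$ is reticulate.

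The third bullet is the one requiring real work. If $Q(\theta)=q\otimes q$ with $q\ne 0$, then $\operatorname{rank}Q(\theta)=1$, so $\mathbb{R}H_\Gamma=\operatorname{Im}Q(\theta)=\mathbb{R}q$ is a line; since $H_\Gamma\subset\mathbb{Z}^2$ is nonzero this line meets the lattice, whence $q\in\mathbb{R}\mathbb{Z}^2$, and we let $p_0$ be a primitive generator of $\mathbb{R}q\cap\mathbb{Z}^2$. It remains to show $\Gamma$ is quasi-laminate in direction $q$, i.e.\ $H_\Gamma=\mathbb{Z}p_0$. Each $H_{E_i}$ is a subgroup of $\mathbb{R}q\cap\mathbb{Z}^2=\mathbb{Z}p_0$, hence $H_{E_i}=\mathbb{Z}(d_i p_0)$ for some integer $d_i\ge 0$, and the claim reduces to $d_i\le 1$. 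Suppose $d_i\ge 2$: a loop in $E_i$ realizing the class $d_ip_0$ has nonzero, non-primitive homology class, hence is not an embedding and must self-intersect; splitting it at a self-intersection produces two closed sub-loops of $E_i$ whose classes lie in $\mathbb{Z}p_0$ and sum to $d_ip_0$. Iterating this splitting and using the Wa\.{z}ewski parametrization of Theorem \ref{t.existenceofanicelipschitzparam} to keep the lengths under control (exactly as in the length-modification arguments of Section \ref{section.leafvenation}), one eventually produces a loop in $E_i$ of class $\pm p_0$, so $p_0\in H_{E_i}$ and $d_i=1$. Thus $H_\Gamma=\bigcup_i\mathbb{Z}(d_ip_0)\subset\mathbb{Z}p_0$, and since $H_\Gamma\ne\{0\}$ we conclude $H_\Gamma=\mathbb{Z}p_0$. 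Conversely, if $H_\Gamma=\mathbb{Z}p$ with $0\ne p\parallel q\in\mathbb{R}\mathbb{Z}^2$, then $\mathbb{R}H_\Gamma=\mathbb{R}q$ is one-dimensional, so by Theorem \ref{t.Qtohomotopy} the symmetric positive semi-definite matrix $Q(\theta)$ has rank exactly $1$ with range $\mathbb{R}q$; such a matrix can be written $q'\otimes q'$ with $q'\parallel q$, and then $q'\in\mathbb{R}\mathbb{Z}^2$ and $\Gamma$ is quasi-laminate in direction $q'$, as required.

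The main obstacle is the per-component saturation statement invoked in the third bullet (and implicitly inside Lemma \ref{l.characterizeloopysetn=2}): a connected finite-length subset of $\mathbb{T}^2$ whose homotopy classes span a line $\ell$ must in fact realize the primitive class of $\ell\cap\mathbb{Z}^2$. The two-dimensional facts that simple closed curves on $\mathbb{T}^2$ carry primitive homology classes, and that curves pairing nontrivially must meet, are what make the self-intersection/splitting argument succeed; the nontrivial point is ensuring the splitting terminates, for which the Wa\.{z}ewski parametrization and the associated length bookkeeping from the proof of Theorem \ref{t.Qtohomotopy} are the right tool.
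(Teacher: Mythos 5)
Your treatment of the first and third bullets matches the paper's, which proves the whole theorem in one line by combining Theorem~\ref{t.Qtohomotopy} with Lemma~\ref{l.characterizeloopysetn=2}; the intersection--number argument you record for loopy $\Rightarrow$ reticulate is just the content of that lemma's proof (via Lemma~\ref{l.intersectionofnonparallelpaths} and the component decomposition). The genuine work, and the genuine difference from the paper, is in the second bullet. You correctly notice that Theorem~\ref{t.Qtohomotopy} directly gives only $\R H_\Gamma=\R q$, whereas the quasi-laminate conclusion $H_\Gamma=\Z p$ requires $H_\Gamma$ to actually be a cyclic group --- which is not automatic when $\Gamma$ is disconnected, since $H_\Gamma=\bigcup_i H_{E_i}$ is then only a \emph{union} of subgroups $\Z(d_ip_0)$ of $\Z p_0$, and such a union can fail to be a group. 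The paper's one-line proof leaves this implicit (for connected $\Gamma$ it is immediate from Lemma~\ref{l.homotopygroup}); your per-component primitivity claim $d_i\le 1$ is the right thing to establish.

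However, the argument you give for $d_i\le 1$ has a real gap. Splitting a loop of class $d_ip_0$ at a self-intersection produces sub-loops of classes $ap_0$ and $bp_0$ with $a+b=d_i$, and nothing forces $|a|$ or $|b|$ below $d_i$ (take $a=-d_i$, $b=2d_i$); the length bookkeeping of Section~\ref{section.leafvenation} (Lemma~\ref{l.selectionofgoodpath}) bounds lengths \emph{above} in terms of the class and cannot drive the class down, so the iteration is not shown to terminate. The repair is to minimize rather than iterate: among constant-speed Lipschitz loops in $E_i$ with nontrivial class, the length infimum is positive (length dominates the norm of the class) and is attained, by Arzel\`a--Ascoli, lower semicontinuity of length, and local stability of the homotopy class under uniform convergence (with $E_i$ closed). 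A minimizer $\gamma^*$ cannot have an interior self-intersection, since splitting there would yield a nontrivial sub-loop of strictly smaller length; hence $\gamma^*$ is a simple closed curve in $\T^2$, and a homotopically essential simple closed curve on $\T^2$ carries a primitive class, so $p_0\in H_{E_i}$ and $d_i=1$. With that replacement your proof is complete and in fact supplies a detail the paper leaves to the reader.
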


Theorem \ref{t.characterizationoflowerboundn=2} is a corollary of Theorem \ref{t.Qtohomotopy} combined with the fact that loopiness is equivalent to reticulation in dimension $n=2$ in Lemma \ref{l.characterizeloopysetn=2}.

 The proof of Theorem \ref{t.Qtohomotopy} consists of three steps. First, in Section \ref{subsection.decompose} we use Theorem \ref{t.countabledecompositionof1dmedium} to decompose $\theta$ as a countable sum of its restrictions to its connected components. This step reduces the problem to the case when $\Gamma=\spt{\theta}$ is a 1-rectifiable non-singleton compact connected set with finite $\Ha^1$-measure.
 
 Second, in Section \ref{subsection.oneside} we prove the inclusion $H_{\Gamma}^\perp \subset\ker Q(\theta)$. The key is to construct in a neighborhood of $\Gamma$ the linear function $p\cdot x$ for $p\in H_\Gamma^\perp$. This is accomplished by analyzing the unique lift in the covering space $\R^n/\Z H_\Gamma$ of a surjective Wa\.{z}ewski parametrization $\gamma$ of $\Gamma$.

  In the last step, we first show a change of variable result in Section \ref{subsection.wazewskiparam}. This result helps us to estimate $Q(\theta)$ and then we can finish the proof of Theorem \ref{t.Qtohomotopy} by showing the following lower bound
 \[
 p\cdot Q(\theta) p \ge \frac{|p|^2}{\Lambda c_{n,\Gamma}},
 \]
for all $p\in H_\Gamma$, where $\Lambda>0$ is the coercivity constant in \eqref{eq.ellipticfora} and $c_{n,\Gamma}>0$ depends only on $n$ and the geometry of $\Gamma$. As we have pointed out in the introduction, this is accomplished by a modification procedure that improves the estimation of lengths and multiplicities of Lipschitz closed paths in $\Gamma$ without changing the homotopy classes. See Section \ref{subsection.proofoftheoremqhomotopy} for more details.

\subsection{Decomposition of medium}
\label{subsection.decompose}
Let us show that, to prove Theorem \ref{t.Qtohomotopy}, the assumption \eqref{eq.networklikemedium} on $\theta$ can be reduced to media of the form
\be
\label{eq.simpleformofnetwork}
d\theta(x) = a(x) I_{n\times n}\, d\restr{\Ha^1}{\Gamma}(x),
\ee
where $a$ is the positive function as before in \eqref{eq.ellipticfora}, but $\Gamma$ is a 1-rectifiable closed connected set in $\T^n$ such that $\Ha^1(\Gamma)$ is positive and finite. 

To be more precise, we prove the following lemma.
\begin{lemma}
    \label{l.decompositionof1dmedium}
    Suppose Theorem \ref{t.Qtohomotopy} is correct for $\theta=\theta_w$ of the form \eqref{eq.simpleformofnetwork} with $a$ satisfying \eqref{eq.ellipticfora} and the support $\Gamma:=\spt{\theta}=\spt{w}$ being a closed connected 1-rectifiable set with $0<\Ha^1(\Gamma)<\infty$. Then Theorem \ref{t.Qtohomotopy} is correct.

\end{lemma}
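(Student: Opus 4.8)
The plan is to reduce the general network-like medium to the connected case via the countable decomposition of Theorem \ref{t.countabledecompositionof1dmedium}, and then to check that the two sides of the identity \eqref{eq.Qtohomotopy} — the kernel of $Q$ and the orthogonal complement of the homotopy classes — both decompose compatibly along connected components. First I would invoke Theorem \ref{t.countabledecompositionof1dmedium}: since $\theta$ is network-like with $\Ha^1(\spt{\theta})<\infty$ (this follows from the coercivity condition \eqref{eq.ellipticfora}, as noted in Section \ref{subsubsection.density}), if $\theta$ is nontrivial we obtain countably many $1$-rectifiable mutually disjoint connected components $E_i\subset\Gamma=\spt{\theta}$ with $\theta_i:=\restr{\theta}{E_i}$ satisfying $Q(\theta)=\sum_{i=1}^\infty Q(\theta_i)$. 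Each $\theta_i$ is again of the form \eqref{eq.simpleformofnetwork} with the same weight $a$ restricted to $E_i$ (so the coercivity constant $\Lambda$ is unchanged), and $\spt{\theta_i}=E_i$ is a closed connected $1$-rectifiable set with $0<\Ha^1(E_i)<\infty$. If $\theta$ is trivial then $Q(\theta)=0$, and one checks directly that $\Gamma$ is then trivial (indeed $H_\Gamma=0$): by Theorem \ref{t.dimensiononeofnontrivialmedium} a trivial medium has totally disconnected support, which cannot support any non-nullhomotopic loop, so $H_\Gamma^\perp=\R^n=\ker Q(\theta)$.

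Next, for the nontrivial case, applying the assumed form of Theorem \ref{t.Qtohomotopy} to each $\theta_i$ gives $\ker Q(\theta_i)=H_{E_i}^\perp$, equivalently $\overline{\operatorname{Im} Q(\theta_i)}=\R H_{E_i}$ (using that $Q(\theta_i)$ is a positive semidefinite symmetric matrix, so the range is the orthogonal complement of the kernel). Since $Q(\theta)=\sum_i Q(\theta_i)$ is a convergent sum of positive semidefinite matrices, its range is the span of the ranges of the summands: $\operatorname{Im} Q(\theta)=\operatorname{Span}\bigl(\bigcup_i \operatorname{Im} Q(\theta_i)\bigr)=\operatorname{Span}_\R\bigl(\bigcup_i H_{E_i}\bigr)$. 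Therefore $\ker Q(\theta)=\bigl(\operatorname{Span}_\R\bigcup_i H_{E_i}\bigr)^\perp$. It remains to identify $\operatorname{Span}_\R\bigcup_i H_{E_i}$ with $\R H_\Gamma$. The inclusion $\supseteq$ for the span is clear since each $H_{E_i}\subset H_\Gamma$. For the reverse, I would use that, by the construction of the $E_i$ in the proof of Theorem \ref{t.countabledecompositionof1dmedium}, the ``leftover'' medium $\widetilde\theta=\theta-\sum_i\theta_i$ is trivial, hence $\spt{\widetilde\theta}$ is totally disconnected and carries no loops; moreover every connected component $F$ of $\Gamma$ is either one of the $E_i$ or is contained in $\spt{\widetilde\theta}$ (a connected set contained in $\Gamma\setminus\bigcup_i K_i$, where $K_i$ are the open separating neighborhoods from that proof, is a component of $\Gamma$ disjoint from all $E_i$ and hence totally disconnected, so a singleton). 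Since $H_\Gamma=\bigcup_{x_0\in\Gamma} i_{x_0}(\pi_1(\Gamma,x_0))$ and any loop in $\Gamma$ based at $x_0$ has image in a single connected component of $\Gamma$, we get $H_\Gamma=\bigcup_i H_{E_i}$, which gives $\R H_\Gamma=\operatorname{Span}_\R\bigcup_i H_{E_i}$ and hence $\ker Q(\theta)=H_\Gamma^\perp$. The statement about positive definiteness then follows: $Q(\theta)>0$ iff $\ker Q(\theta)=\{0\}$ iff $\R H_\Gamma=\R^n$ iff $\Gamma$ is loopy.

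The main obstacle I anticipate is the algebraic bookkeeping in the middle step, namely justifying $\operatorname{Im}\bigl(\sum_i Q(\theta_i)\bigr)=\operatorname{Span}\bigcup_i\operatorname{Im} Q(\theta_i)$ for an infinite convergent sum of positive semidefinite matrices. For finitely many terms this is immediate ($p\cdot(\sum Q_i)p=0$ with $Q_i\ge0$ forces $p\cdot Q_i p=0$ for all $i$, hence $Q_i p=0$), and the infinite case follows by the same argument applied termwise since all partial sums are dominated by $Q(\theta)$ and convergence is in the (finite-dimensional) matrix norm; I would spell this out carefully. The only other point requiring care is the topological claim that every non-trivial loop in $\Gamma$ lives in a single connected component and that the components not among the $E_i$ are singletons — both are consequences of connectedness of the image of a path and of Theorem \ref{t.dimensiononeofnontrivialmedium} applied to the trivial leftover $\widetilde\theta$, together with the explicit construction of the $K_i$'s. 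Everything else is a routine assembly of the cited results.
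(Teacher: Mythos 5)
Your overall plan — reduce to connected components via Theorem \ref{t.countabledecompositionof1dmedium} and apply the assumed identity componentwise — matches the paper's strategy, and your treatment of the forward inclusion $H_\Gamma^\perp\subset\ker Q(\theta)$ via $\ker\bigl(\sum_i Q(\theta_i)\bigr)=\bigcap_i\ker Q(\theta_i)$ is correct and parallel to the paper's. However, there is a genuine error in your argument for the reverse inclusion, and in your handling of the trivial case.

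You twice appeal to the inference ``$Q(\theta)=0$, hence by Theorem \ref{t.dimensiononeofnontrivialmedium} the support of $\theta$ is totally disconnected'' — once when $\theta$ itself is trivial, and once for the leftover medium $\widetilde\theta$. This is the \emph{converse} of Theorem \ref{t.dimensiononeofnontrivialmedium}, which only says that nontrivial media have non--totally-disconnected support. The converse is false: Example \ref{ex.reverseofnontrivialmediatheorem} exhibits a trivial medium whose support is all of $\T^1$; and even under the coercivity hypothesis \eqref{eq.ellipticfora}, any weight $a\ge\Lambda^{-1}$ supported on a non-closed arc $\Gamma\subset\T^2$ yields a trivial medium with connected (hence not totally disconnected) support. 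Consequently your claim that every connected component of $\Gamma$ not among the $E_i$ is a singleton cannot be deduced this way, and is in fact not true: the separating neighborhoods $K_i$ from the proof of Theorem \ref{t.countabledecompositionof1dmedium} are only chosen so that $\Ha^1(K_i\cap\Gamma\setminus E_i)$ is small, so $K_i$ may contain small nonsingleton continua of $\Gamma$ distinct from $E_i$, and the complement $\Gamma\setminus\bigcup_i K_i$ can also contain nonsingleton components. Your argument for $\R H_\Gamma\subset\operatorname{Span}_\R\bigcup_i H_{E_i}$ therefore has a gap.

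The paper sidesteps all of this by not using the decomposition for the backward inclusion at all. For any nonsingleton connected component $E\subset\Gamma$ (not just those in the countable decomposition), $\restr{\theta}{E}\le\theta$ implies $Q(\restr{\theta}{E})\le Q(\theta)$ by Lemma \ref{l.basicpropertiesofQ}~\ref{condition.(1)}, so $q\in\ker Q(\theta)$ forces $q\in\ker Q(\restr{\theta}{E})$; by Theorem \ref{t.existenceofanicelipschitzparam}, $E$ is a closed connected $1$-rectifiable set with $0<\Ha^1(E)<\infty$, so the hypothesis applies and gives $q\in H_E^\perp$. Combined with $H_\Gamma=\bigcup_E H_E$ (a consequence of path-connectedness via \cite{ALBERTI201735}*{Proposition 3.4}, requiring no singleton claim), this gives $\ker Q(\theta)\subset\bigcap_E H_E^\perp=H_\Gamma^\perp$. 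The trivial case is handled the same way: for each nonsingleton component $E$, $Q(\restr{\theta}{E})\le Q(\theta)=0$, so the hypothesis yields $H_E^\perp=\R^n$, i.e.\ $H_E=\{0\}$, and hence $H_\Gamma=\{0\}$. Your argument could be repaired along similar lines — for each non-$E_i$ component $E$ one has $Q(\restr{\theta}{E})=0$ (since $\restr{\theta}{E}\le\widetilde\theta$ or $\restr{\theta}{E}\le\restr{\theta}{K_j}-\theta_j$, both trivial), whence $H_E=\{0\}$ by the hypothesis — but as written the appeal to a false converse leaves the proof incorrect.
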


\begin{proof}

    Let us start with a nontrivial medium $\theta$. By Theorem \ref{t.dimensiononeofnontrivialmedium} and Theorem \ref{t.countabledecompositionof1dmedium}, we know that there are $\theta_j \le \theta$ such that
    \be\label{eq.estimatecountabledecomp}
    Q(\theta)=\sum_{j=1}^\infty Q(\theta_j),
    \ee
    where $\theta_j$ are the restrictions of $\theta$ on $\Gamma_j:=\spt{\theta_j}\subset\Gamma$, which are mutually disjoint closed connected sets that satisfy $0<\Ha^1(\Gamma_j)<\infty$. By Theorem \ref{t.existenceofanicelipschitzparam}, each $\Gamma_j$ is also 1-rectifiable.

    Suppose $q\in H_\Gamma^\perp$, then because $\Gamma_j\subset\Gamma$, we know that for all $j$, the vector $q\in H_{\Gamma_j}^\perp$. By the assumption, we know that $q\in \ker Q(\theta_j)$ for all $j$. This implies that by \eqref{eq.estimatecountabledecomp}
    \[
    Q(\theta) q =\sum_{j=1}^\infty Q(\theta_j) q =0.
    \]
    That is, $q\in \ker Q(\theta)$. 
    
    {To prove the reverse inclusion, we first assume $q\in \ker Q(\theta)$,} and then it suffices to show that for any connected component $E\subset\spt{\theta}$, we have {$q\in H_E^\perp$}. {Indeed, because $\Ha^1(\Gamma)<\infty$, by \cite{ALBERTI201735}*{Proposition 3.4}, each connected component $E\subset\Gamma$ is also path-connected and hence for every $x\in E$
    \[
    \pi_1(\Gamma,x) = \pi_1(E,x).
    \]
    Therefore,  by definition \eqref{eq.definehe} we have
    \be\label{eq.computecomponentwise}
    H_\Gamma = \bigcup_{x\in \Gamma} i_x(\pi_1(\Gamma,x))= \bigcup_{E} \bigcup_{x\in E} i_x(\pi_1(\Gamma,x)) =  \bigcup_{E} \bigcup_{x\in E} i_x(\pi_1(E,x)) = \bigcup_{E} H_E,
    \ee
    where the union is taken with respect to all the connected components $E\subset \Gamma$. This implies that 
    $$ 
    q\in \bigcap_{E} H_E^\perp =H_\Gamma^\perp.
    $$}

    To prove the claim we may without loss assume that $E$ is a nonsingleton set, as otherwise $H_E^\perp$ is trivially $\R^n$. {Because $\theta$ takes the form \eqref{eq.networklikemedium}, it is not difficult to derive that
    \[
   d\restr{\theta}{E} = a(x) I_{n\times n} d\restr{\Ha^1}{E}.
    \]
By the discussions in the prior paragraph, $E$ is both compact and path-connected, and hence by Lemma \ref{l.albert211}, we also obtain that for any $x\in E$ the following inequality
\[
\liminf_{r\rta0^+} \frac{\theta(E\cap B_r(x))}{r} \ge\liminf_{r\rta0^+} \frac{\Ha^1(E\cap B_r(x))}{\Lambda r}I_{n\times n}\ge \frac{I_{n\times n}}{\Lambda} >0
\]
for $\Lambda>0$ the coercivity constant. This shows that
\[
\spt{\restr{\theta}{E}} = E.
\] 
Therefore the support of $\restr{\theta}{E}$ is a continuum. Also because $0<\Ha^1(E)<\infty$, by Theorem \ref{t.existenceofanicelipschitzparam}, the medium $\restr{\theta}{E}$ satisfies the assumption in Lemma \ref{l.decompositionof1dmedium} with $\Gamma$ replaced by $E$, and therefore $q\in H_E^\perp$.} 

   Let us finish the proof by discussing the case that $\theta$ is trivial. By Theorem \ref{t.dimensiononeofnontrivialmedium}, we do not worry about the case that the support $\Gamma=\spt{\theta}$ is also totally disconnected. In this case we automatically have the identity \eqref{eq.Qtohomotopy}. Suppose otherwise the support $\Gamma$ has a nonsingleton connected component $E$. By Lemma \ref{l.basicpropertiesofQ} \ref{condition.(1)} we have $Q(\theta)=Q(\restr{\theta}{E})=0$. As $0<\Ha^1(E)\le \Ha^1(\Gamma)<\infty$, we have by Theorem \ref{t.existenceofanicelipschitzparam} the set $E$ is also 1-rectifiable. Therefore by the assumption, we have $H_E^\perp=\ker Q(E)=\R^n$. The proof is then finished by applying the identity \eqref{eq.computecomponentwise}.
\end{proof}
\subsection{One side inclusion}
\label{subsection.oneside}
In this subsection we show one side inclusion $H_\Gamma^\perp\subset \ker Q(\theta)$. This is accomplished by applying the Wa\.{z}ewski parametrization theorem in Theorem \ref{t.existenceofanicelipschitzparam}.

\begin{lemma}
    \label{l.firstinclusion}
    Assume the same as Theorem \ref{t.Qtohomotopy}, then we have
    \[
    H_\Gamma^\perp \subset \ker Q(\theta).
    \]
\end{lemma}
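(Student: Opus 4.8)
The plan is to reduce to the case that $\Gamma$ is a compact connected $1$-rectifiable set with $0<\Ha^1(\Gamma)<\infty$, and then, for each $p\in H_\Gamma^\perp$, to construct a \emph{single} test function $\varphi\in C^\infty(\T^n)$ with $\gd\varphi\equiv-p$ on all of $\Gamma$. Once this is done, plugging $\varphi$ into the definition of $Q(\theta)$ and using that $\norm{\theta}$ is carried by $\Gamma$ yields $p\cdot Q(\theta)p\le\int_\Gamma a\,|\gd\varphi+p|^2\,d\Ha^1=0$, and since $Q(\theta)\ge0$ this forces $p\in\ker Q(\theta)$. For the reduction I would argue as follows: if $\theta$ is trivial the inclusion is vacuous; if $\theta$ is nontrivial, Theorem~\ref{t.countabledecompositionof1dmedium} gives $Q(\theta)=\sum_iQ(\theta_i)$ over the restrictions $\theta_i=\restr{\theta}{E_i}$ to countably many connected components, so $\ker Q(\theta)=\bigcap_i\ker Q(\theta_i)$, while $H_\Gamma=\bigcup_EH_E$ over all components (see \eqref{eq.computecomponentwise}) gives $H_\Gamma^\perp=\bigcap_EH_E^\perp$; hence it suffices to prove $H_E^\perp\subset\ker Q(\restr{\theta}{E})$ for every nonsingleton component $E$, and by Theorem~\ref{t.existenceofanicelipschitzparam} each such $E$ is compact connected $1$-rectifiable with finite positive $\Ha^1$-measure.

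So fix a connected $\Gamma$ as above and set $G:=\Z H_\Gamma$, which for connected $\Gamma$ equals $H_\Gamma$ itself by Lemma~\ref{l.homotopygroup} and is therefore a subgroup of $\Z^n$. I would work in the covering $\pi_G:\T^n_G=\R^n/G\to\T^n$ from Section~\ref{subsubsection.coveringspacesoftorus}, which is a local isometry for the flat metrics. Since $p\in H_\Gamma^\perp=(\R H_\Gamma)^\perp$, we have $p\cdot g=0$ for every $g\in G$, so $\ell_p([y]):=p\cdot y$ is a well-defined smooth function on $\T^n_G$ with constant gradient $\gd\ell_p\equiv p$. The geometric input is a surjective Wa\.{z}ewski parametrization $\gamma:[0,1]\to\Gamma$ (Theorem~\ref{t.existenceofanicelipschitzparam}) together with its unique lift $\hat\gamma:[0,1]\to\T^n_G$ through $\pi_G$ at a chosen base point (Section~\ref{subsubsection.pathliftingproperty}). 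The claim to establish is that $\pi_G$ restricts to a homeomorphism from $\hat\Gamma:=\hat\gamma([0,1])$ onto $\Gamma$: it is a continuous surjection of the compact set $\hat\Gamma$ onto the Hausdorff space $\Gamma$, and it is injective because whenever $\gamma(s)=\gamma(t)$ the arc $\gamma|_{[s,t]}$ is a loop in $\Gamma$, so its class lies in $H_\Gamma\subset G$, so its lift $\hat\gamma|_{[s,t]}$ is a closed loop in $\T^n_G$, forcing $\hat\gamma(s)=\hat\gamma(t)$.

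With the homeomorphic lift $\hat\Gamma$ in hand the construction of $\varphi$ is routine. Injectivity of $\pi_G|_{\hat\Gamma}$ says exactly that $\hat\Gamma$ is disjoint from each of its nontrivial deck translates $\hat\Gamma+[g]$, $[g]\in\Z^n/G\setminus\{0\}$; as $\hat\Gamma$ is compact and only finitely many such translates meet any fixed bounded neighborhood of $\hat\Gamma$, the number $\delta_0:=\inf_{[g]\ne0}\dist_{\T^n_G}(\hat\Gamma,\hat\Gamma+[g])$ is strictly positive. Then $\hat U:=\{z\in\T^n_G:\dist(z,\hat\Gamma)<\delta_0/3\}$ is an open neighborhood of $\hat\Gamma$ on which $\pi_G$ is injective, hence a diffeomorphism onto the open neighborhood $U:=\pi_G(\hat U)\supset\Gamma$ of $\Gamma$ in $\T^n$. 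The function $\psi:=-\ell_p\circ(\pi_G|_{\hat U})^{-1}\in C^\infty(U)$ has $\gd\psi\equiv-p$ on $U$ because $\pi_G$ is a local isometry, and choosing $\chi\in C^\infty(\T^n)$ with $\spt{\chi}\subset U$ and $\chi\equiv1$ on a neighborhood of $\Gamma$, the function $\varphi:=\chi\psi$ (extended by $0$) lies in $C^\infty(\T^n)$ and satisfies $\gd\varphi=\gd\chi\cdot\psi+\chi\cdot\gd\psi=-p$ at every point of $\Gamma$, as desired.

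The one place where something beyond covering-space bookkeeping is needed is the homeomorphism $\hat\Gamma\cong\Gamma$: since $\Gamma$ is only assumed $1$-rectifiable with finite length it need not be locally path-connected, so one cannot lift the inclusion $\Gamma\hookrightarrow\T^n$ directly; the Wa\.{z}ewski parametrization is precisely the device that lets one replace $\Gamma$ by a surjective \emph{path} $\gamma$, which always lifts, and recover $\hat\Gamma$ as $\hat\gamma([0,1])$. A secondary point is the positivity of $\delta_0$, which holds because $\T^n_G$ is compact when $G$ has finite index in $\Z^n$ (so there are only finitely many deck translates) and otherwise splits off a Euclidean factor along which the translates of the compact set $\hat\Gamma$ escape to infinity, leaving only finitely many of them within any bounded distance.
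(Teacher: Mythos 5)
Your proposal is correct and follows essentially the same route as the paper: reduce to a compact connected $1$-rectifiable $\Gamma$ via the countable decomposition, pass to the cover $\R^n/\Z H_\Gamma$, lift the Wa\.{z}ewski parametrization, show the lift of $\Gamma$ is mapped injectively (so a tubular neighborhood is an isometric copy), and cut off the pulled-back linear function $p\cdot y$ to get a competitor $\varphi$ with $\gd\varphi=-p$ on $\Gamma$. The only cosmetic difference is that you argue injectivity of $\pi_G|_{\hat\Gamma}$ directly from the fact that every loop of $\gamma$ has class in $\Z H_\Gamma$ and justify $\delta_0>0$ via deck translates, whereas the paper phrases both steps as proofs by contradiction; both are sound.
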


The proof of this inclusion does not depend on the coercivity constant $\Lambda$ as in \eqref{eq.ellipticfora}.

\begin{proof}
    By Lemma \ref{l.decompositionof1dmedium}, it suffices to prove for medium $\theta$ of the form \eqref{eq.simpleformofnetwork} and $\Gamma=\spt{\theta}$ a closed 1-rectifiable connected set in $\T^n$ such that $0<\Ha^1(\Gamma)<\infty$. As $\Gamma$ satisfies the conditions in Theorem \ref{t.existenceofanicelipschitzparam}, there is a surjective constant speed closed Lipschitz path $\gamma:[0,1]\rta \Gamma$.

    Let $p\in H_\Gamma^\perp$ be a nonzero vector, then by definition \eqref{eq.effectivetensor}
    \[
    p\cdot Q(\theta)p:=\inf_{\varphi\in C^\infty(\T^n)} \int_{\T^n} |\gd\varphi(x) +p|^2 a(x) d\restr{\Ha^1}{\Gamma}(x)
    \]
    To show that $Q(\theta)p=0$ it suffices to show that in an open neighborhood $U$ of $\Gamma$ here is a smooth function $\varphi_p\in C_0^\infty(U)$ such that $\gd \varphi_p = -p$ on $\Gamma$.

    To that end, we first write $H=\Z H_\Gamma$ and recall the following covering maps (See Section \ref{subsubsection.coveringspacesoftorus})
    \[
    \pi=\pi_H\circ \pi^H, \ \pi^H:\R^n \rta \T_H^n=\R^n/H \textup{ and }\pi_H:\T_H^n \rta \T^n.
    \]
    As $p\perp H$, {we know that for any $x,y\in \R^n$ such that $x-y\in H$, $p\cdot x = p\cdot y$. This shows that there is a unique function $h_p$ on $\T_H^n$ such that
    \be\label{eq.definehpppp}
    p\cdot x = h_p(\pi^H(x)) \textup{ for }x\in \R^n.
    \ee
    We claim that the function $h_p\in C^\infty(\T_H^n)$. First notice that because $H\le \Z^n$ is a discrete subgroup of $\R^n$, the projection $\pi^H$ is a locally isometric diffeomorphism onto $\T_H^n$, and for sufficiently small radius $r>0$ the preimage $(\pi^H)^{-1}(B_r)$ of each geodesic ball $B_r\subset \T_H^n$ is a countable union of disjoint balls in $\R^n$ having the same radius $r$. Restricting $\pi^H$ on a fixed component in the preimage $(\pi^H)^{-1}(B_r)$ defines an isometric diffeomorphism from the component to $B_r$, which by \eqref{eq.definehpppp} shows the differentiability of $h_p$ in $B_r$. As the position of $B_r$ can be chosen arbitrarily, we have proved that $h_p\in C^\infty(\T_H^n)$.
    }
    
    {Notice that it is impossible to find a function $h$ on $\T^n$ such that $p\cdot x = h(\pi(x))$ for all $x\in \R^n$. Otherwise one would obtain
    \[
   0\ne p\cdot p = h(\pi(p))=h(\pi(0)) = p\cdot 0 =0,
    \]
    which is impossible. This is one of the main reasons why we invoke the intermediate space $\T_H^n=\R^n/H$.}
    
    Let $\hat{\gamma}:[0,1]\rta \T_H^n$ denotes the unique path lifting of $\gamma$ (starting at some fixed point lift) and write $\hat{\Gamma}$ to be the image of $\hat{\gamma}$. We claim that in a small neighborhood $\hat{U}$ of $\hat{\Gamma}$ the covering map $\pi_H:\T_H^n\rta\T^n$ is an isometric diffeomorphism onto a neighborhood $U$ of $\Gamma$. This claim finishes the proof because we can define 
    \[
    \varphi_p:=-\eta \cdot \lmb h_p\circ \lb\restr{\pi_H}{\hat{U}}\rb^{-1} \rmb\in C^\infty(\T^n),
    \]
    where $\eta\in C^\infty(\T^n)$ satisfies $0\le \eta\le 1$, $\eta\equiv1$ near $\Gamma$ and $\eta\equiv 0$ outside a compact subset of $U$.

   To prove the claim, we observe that $\pi_H$ is a locally diffeomorphic isometry, and therefore it suffices to show that $\restr{\pi_H}{\hat{U}}$ is injective. Let us first show that $\restr{\pi_H}{\hat{\Gamma}}$ is injective. Suppose there are $0\le t_1<t_2\le1$ such that
   \[
   \hat{\gamma}(t_1)\ne \hat{\gamma}(t_2) \textup{ but }\pi_H\lb\hat{\gamma}(t_1)\rb=\pi_H\lb\hat{\gamma}(t_2)\rb.
   \]
   Let $q_1$ and $q_2$ be vectors in $\R^n$ such that we can write $\hat{\gamma}(t_1)$ and $\hat{\gamma}(t_2)$ respectively as
   \[
   \hat{\gamma}(t_1)=q_1+H \textup{ and } \hat{\gamma}(t_2)=q_2+H.
   \]
 {Let $z=q_2-q_1\in \R^n$, then }by the assumption that $\hat{\gamma}(t_1)\ne \hat{\gamma}(t_2)$, {we know that $z\ne 0$ and $z+H\cap H = \emptyset$}. Because $\pi_H\lb\hat{\gamma}(t_1)\rb=\pi_H\lb\hat{\gamma}(t_2)\rb$, {we obtain that the vector $z\in \Z^n$} and $\pi_H\circ \hat{\gamma}:[t_1,t_2]\rta \Gamma$ is a closed path that belongs to the homotopy class 
   \[
   z+h \in \Z^n\setminus H
   \]
   for some $h\in H$. This contradicts the assumption that $H=\Z H_\Gamma\supset H_\Gamma$ includes all the homotopy classes in $\Gamma$.

   To prove that $\restr{\pi_H}{\hat{U}}$ is injective for some small neighborhood $\hat{U}$, we argue by contradiction. We assume that there is a constant $c>0$ and for all small $\delta>0$ there are $x_\delta$, $y_\delta$ in $\T_H^n$ having distance to $\hat{\Gamma}$ bounded by $\delta$ such that
   \[
   \dist(x_\delta,y_\delta) \ge c>0
   \]
   independent of $\delta>0$ but $\pi_H(x_\delta)=\pi_H(y_\delta)$. Notice that $\hat{\Gamma}$ is a compact subset in $\T_H^n$ because $\pi_H$ is a local isometry and hence $\hat{\gamma}'(t)=\gamma'(t)$ is bounded and therefore $\hat{\Gamma}$ is bounded and closed in $\T_H^n$. The proof is done by a compactness argument as we send $\delta\rta0$, which will lead to a contradiction to our previous claim that $\restr{\pi_H}{\hat{\Gamma}}$ is injective.

\end{proof}

\subsection{Change of variables}
\label{subsection.wazewskiparam}
Another technical lemma is the change of variables using Lipschitz paths. Let us first recall a standard result for Lipschitz paths. We also refer to Section \ref{subsubsection.lipschitzpathandlength} for more discussions on the preliminary concepts that are required in this subsection.

\begin{lemma}[\cite{ALBERTI201735}*{Remark 3.6}]\label{l.reparametrization}
    Suppose $\Gamma$ is the image of a Lipschitz path $\gamma:[0,1]\rta \Gamma\subset\T^n$, then for any nonnegative Borel function $f$ (or integrable ones) on $\Gamma$ we have
    \[
    \int_0^1 f(\gamma(t)) |\gamma'| dt = \int_\Gamma f(x) m(\gamma,x) d\Ha^1,
    \]
    where $m(\gamma,x):=\#\gamma^{-1}(x)$ is the multiplicity of $\gamma$ at $x$.
\end{lemma}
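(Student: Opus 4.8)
The plan is to recognize this identity as an instance of the classical \emph{area formula} for Lipschitz maps on a one-dimensional domain. Since $\gamma:[0,1]\rta\T^n$ is Lipschitz, Rademacher's theorem (applied in local isometric charts) gives that the metric derivative $|\gamma'(t)|$ exists for $\mathcal{L}^1$-a.e.\ $t$, is bounded by $\textup{Lip}(\gamma)$, and coincides with the Euclidean norm of $\gamma'(t)$ computed in such a chart; in particular $\ell(\gamma)=\int_0^1|\gamma'(t)|\,dt<\infty$. The image $\Gamma=\gamma([0,1])$ is compact, hence Borel, and the multiplicity $x\mapsto m(\gamma,x)=\#\gamma^{-1}(x)\in\{0,1,\dots\}\cup\{\infty\}$ is $\Ha^1$-measurable; I would take this measurability, which is part of the area formula package, as given.

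First I would reduce to the Euclidean case. Using the local isometry $\pi:\R^n\rta\T^n$ from Section \ref{subsection.torusandtopology}, partition $[0,1]$ into finitely many closed subintervals $I_1,\dots,I_N$ meeting only at endpoints, chosen so small that each $\gamma(I_j)$ is contained in a coordinate ball on which $\pi$ restricts to an isometric diffeomorphism; lifting $\gamma|_{I_j}$ through the corresponding inverse chart yields a Lipschitz curve $\gamma_j:I_j\rta\R^n$ with $|\gamma_j'|=|\gamma'|$ a.e.\ on $I_j$ and with fibres in bijection with those of $\gamma|_{I_j}$. Since $\int_0^1(\cdot)\,dt=\sum_j\int_{I_j}(\cdot)\,dt$, and since $m(\gamma,x)=\sum_j m(\gamma|_{I_j},x)$ for every $x$ outside the finite (hence $\Ha^1$-null) set of values $\gamma(t_j)$ at the partition endpoints, it suffices to prove the formula for a single Lipschitz path $\gamma:[0,1]\rta\R^n$.

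Then I would invoke the weighted area formula for $\gamma:[0,1]\rta\R^n$ (see \cite{functionofboundedvariation} and \cite{ALBERTI201735}*{Remark 3.6}): for every Borel $g:[0,1]\rta[0,\infty]$,
\[
\int_0^1 g(t)\,|\gamma'(t)|\,dt=\int_{\R^n}\Big(\sum_{t\in\gamma^{-1}(y)}g(t)\Big)\,d\Ha^1(y).
\]
Applying this with $g(t)=f(\gamma(t))$ and using that $f$ is constant, equal to $f(y)$, on each fibre $\gamma^{-1}(y)$ with $y\in\Gamma$ (and $\gamma^{-1}(y)=\emptyset$ for $y\notin\Gamma$), the inner sum equals $f(y)\,m(\gamma,y)$, which is exactly the right-hand side of the claimed identity. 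The extension from nonnegative Borel $f$ to $\restr{\Ha^1}{\Gamma}$-integrable $f$ is then the routine positive/negative-part decomposition, permissible because the case $f\equiv1$ already shows $\int_\Gamma m(\gamma,x)\,d\Ha^1(x)=\ell(\gamma)<\infty$, so that $m(\gamma,\cdot)\,d\restr{\Ha^1}{\Gamma}$ is a finite measure.

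There is no genuine obstacle here: the substantive content is carried entirely by the classical area formula, which I would quote. The only points needing care are bookkeeping: (i) identifying the metric derivative $|\gamma'|$ of the statement with the Jacobian appearing in the area formula, handled by Rademacher's theorem and the flatness of $\T^n$; (ii) the possibility $m(\gamma,\cdot)=\infty$, which occurs only on an $\Ha^1$-null set, so the convention $\infty\cdot 0=0$ introduces no ambiguity; and (iii) the additivity of multiplicities across the partition into charts, which costs only an $\Ha^1$-null set of image points. If one insisted on not quoting the area formula, the real work would be its proof --- essentially Rademacher's theorem together with a Vitali-type covering argument showing that the Jacobian governs the local stretching --- but in the present context that is standard and I would simply cite it.
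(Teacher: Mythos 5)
The paper does not prove this lemma at all: it is stated as a direct citation of \cite{ALBERTI201735}*{Remark 3.6}, and no argument is given in the text. Your write-up is a correct reconstruction of the standard proof via the one-dimensional area formula for Lipschitz maps, with the appropriate localization to reduce the torus case to the Euclidean one. The bookkeeping is handled carefully: the partition of $[0,1]$ into chart-sized intervals, the fact that the multiplicity additivity $m(\gamma,x)=\sum_j m(\gamma|_{I_j},x)$ fails only at the finitely many endpoint images (an $\Ha^1$-null set), the finiteness $\int_\Gamma m(\gamma,\cdot)\,d\Ha^1=\ell(\gamma)<\infty$ to license the integrable extension, and the convention for the possible $\infty\cdot 0$. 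Since the paper itself offers only the citation, there is no discrepancy of method to report; you have simply made explicit the argument that the cited remark encapsulates. One cosmetic point: the paper writes $|\gamma'|$ without the argument $t$, which in context (cf. the constant-speed usage in the subsequent Lemma~\ref{l.computationwithinequalityofQ}) is harmless, but your reading of it as $|\gamma'(t)|$ is the one under which the identity holds for general Lipschitz $\gamma$, and is the correct interpretation of the cited result.
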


We have the following technical results.

\begin{lemma}\label{l.computationwithinequalityofQ}
    Suppose $\gamma:[0,1]\rta\Gamma$ is a constant speed Lipschitz path with 
    \[
    m_\gamma:=\sup_{x\in \Gamma}\#\gamma^{-1}(x) <\infty \textup{ and path length }\ell(\gamma)=|\gamma'|,
    \]
    then we have
 \be
   p\cdot Q(\theta) p \ge   \frac{1}{m_\gamma \ell(\gamma)} \inf_{\varphi\in C^\infty(\T^n)}\int_0^1 \lw \frac{d}{dt} \lmb\varphi(\gamma(t)) \rmb+ p\cdot \gamma'(t)\rw^2 a(\gamma(t))d t.
  \ee
\end{lemma}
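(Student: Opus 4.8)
The plan is to start from the variational definition of $p\cdot Q(\theta)p$ in \eqref{eq.effectivetensor}, specialized to the network-like medium $d\theta = a\, I_{n\times n}\, d\restr{\Ha^1}{\Gamma}$, and then push the integral over $\Gamma$ forward along the Lipschitz parametrization $\gamma$ using Lemma \ref{l.reparametrization}. Concretely, for any $\varphi\in C^\infty(\T^n)$ we have
\[
\int_{\T^n} |\gd\varphi(x)+p|^2 a(x)\, d\restr{\Ha^1}{\Gamma}(x) = \int_\Gamma |\gd\varphi(x)+p|^2 a(x)\, d\Ha^1(x).
\]
The first key step is to bound the integrand from below by its tangential component: at $\Ha^1$-almost every $x\in\Gamma$ the rectifiable set $\Gamma$ has a one-dimensional approximate tangent line, and along that line $|\gd\varphi(x)+p|^2 \ge |(\gd\varphi(x)+p)\cdot \tau_x|^2$ where $\tau_x$ is a unit tangent vector; moreover for a constant-speed Lipschitz path $\gamma$, the chain rule gives $\frac{d}{dt}[\varphi(\gamma(t))] = \gd\varphi(\gamma(t))\cdot\gamma'(t)$ for a.e.\ $t$, and $\gamma'(t)$ is parallel to $\tau_{\gamma(t)}$ with $|\gamma'(t)| = \ell(\gamma)$ by the constant-speed normalization. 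Hence $|(\gd\varphi(\gamma(t))+p)\cdot\gamma'(t)|^2 = \ell(\gamma)^2 |(\gd\varphi(\gamma(t))+p)\cdot\tau_{\gamma(t)}|^2 \le \ell(\gamma)^2 |\gd\varphi(\gamma(t))+p|^2$.

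The second key step is the change of variables back from $[0,1]$ to $\Gamma$, controlling the multiplicity. By Lemma \ref{l.reparametrization} applied to the nonnegative Borel function $f(x) = |\gd\varphi(x)+p|^2 a(x)$,
\[
\int_0^1 f(\gamma(t))\,|\gamma'|\, dt = \int_\Gamma f(x)\, m(\gamma,x)\, d\Ha^1(x) \le m_\gamma \int_\Gamma f(x)\, d\Ha^1(x),
\]
using $m(\gamma,x)\le m_\gamma$ for all $x$. Combining this with the tangential lower bound and $|\gamma'| = \ell(\gamma)$, I obtain
\[
\int_0^1 \Big|\tfrac{d}{dt}[\varphi(\gamma(t))] + p\cdot\gamma'(t)\Big|^2 a(\gamma(t))\, dt \le \ell(\gamma)^2\!\!\int_0^1\!\! |\gd\varphi(\gamma(t))+p|^2 a(\gamma(t))\,dt \le m_\gamma\,\ell(\gamma)\!\!\int_\Gamma\!\! |\gd\varphi+p|^2 a\, d\Ha^1.
\]
Dividing by $m_\gamma\,\ell(\gamma)$ and taking the infimum over $\varphi\in C^\infty(\T^n)$ on both sides yields exactly the claimed inequality; note the infimum on the right reproduces $p\cdot Q(\theta)p$, while the infimum on the left is the quantity in the statement.

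\textbf{Main obstacle.} The routine parts are the chain rule and the change-of-variables bookkeeping; the delicate point is the almost-everywhere identification needed to write $\frac{d}{dt}[\varphi\circ\gamma](t) = \gd\varphi(\gamma(t))\cdot\gamma'(t)$ and to know that $\gamma'(t)$ is (a.e.) a multiple of the approximate tangent $\tau_{\gamma(t)}$ of $\Gamma$ with the correct length $\ell(\gamma)$. This requires that the set of $t$ where $\gamma$ fails to be differentiable, or where $\gamma'(t)=0$, or where $\gamma(t)$ lies in the $\Ha^1$-null bad set of $\Gamma$, maps to an $\Ha^1$-null subset of $\Gamma$ and is hit with controlled multiplicity — which is precisely the content of the area formula for Lipschitz paths underlying Lemma \ref{l.reparametrization} (see \cite{ALBERTI201735}*{Section 3}). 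Once these measure-theoretic identifications are in place, the inequality follows by the chain of estimates above; I would state them carefully but not belabor the standard rectifiability arguments.
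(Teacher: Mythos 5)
Your proposal is correct and follows essentially the same route as the paper's proof: both combine the tangential (Cauchy--Schwarz) lower bound, the chain rule for Lipschitz compositions, and the change-of-variables identity of Lemma \ref{l.reparametrization} together with the multiplicity bound $m(\gamma,x)\le m_\gamma$. The only cosmetic difference is the order of the two estimates (the paper carries the tangential projection $P_{\tau_{\gamma(t)}}$ through the area formula, whereas you apply Cauchy--Schwarz to the integrand first and then push forward the full $|\gd\varphi+p|^2$), which yields the same chain of inequalities.
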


\begin{proof}
Let $K:=\gamma([0,1])$ be the image. Note that for every $\varphi\in C^\infty(\T^n)$ we have by Lemma \ref{l.reparametrization}
\[
\begin{split}
    \int_\Gamma |\gd \varphi+p|^2 a\, d\Ha^1& \ge  \int_K |\gd \varphi+p|^2 a\, d\Ha^1\\
    &\ge \frac{\ell(\gamma)}{m_\gamma} \int_0^1 |P_{\tau_{\gamma(t)}}(\gd \varphi(\gamma(t))+p)|^2 a(\gamma(t)) dt\\
    &=\frac{1}{\ell(\gamma)m_\gamma}\int_0^1 \lw \frac{d}{dt} \lmb\varphi(\gamma(t)) \rmb+ p\cdot \gamma'(t)\rw^2 a(\gamma(t)) dt,
\end{split}
\]
where $\tau_{\gamma(t)}$ is the 1-D tangent space of $\gamma$ at $t$. The proof is done by taking the infimum over $\varphi$.
\end{proof}

\subsection{Proof of Theorem \ref{t.Qtohomotopy}}
\label{subsection.proofoftheoremqhomotopy}
We require the following technical lemmas for proving the reverse inclusion $H_\Gamma^\perp\supset \ker Q(\theta)$. We refer to Section \ref{subsection.torusandtopology} for the preliminaries of some concepts in this subsection.

\begin{lemma}
    \label{l.selectionoflipschitzpath}
    Let $E\subset \T^n$ be a subset such that $\Ha^1(E)<\infty$. For any nontrivial closed path $\gamma^*:[0,1]\rta E$, there is another constant speed Lipschitz closed path $\gamma:[0,1]\rta E$ such that $\gamma$ belongs to the same homotopy class as $\gamma^*$, the path length $\ell(\gamma)<\infty$ and $\ell(\gamma)\le \ell(\gamma^*)$, and the lift $\Tilde{\gamma}$ of $\gamma$ on $\R^n$ starting at any $\Tilde{y}\in \pi^{-1}(\gamma(0))$ is injective.
\end{lemma}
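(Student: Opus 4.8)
The plan is to reduce $\gamma^*$ to a Lipschitz competitor living in $E$ while controlling length and preserving the homotopy class, and then to further pass to a constant-speed parametrization and to remove the last cause of non-injectivity of the lift. First I would use the hypothesis $\Ha^1(E) < \infty$ together with Theorem \ref{t.existenceofanicelipschitzparam} applied to a suitable continuum containing the image of $\gamma^*$: since $\gamma^*$ is a continuous closed path in $E$, its image $\gamma^*([0,1])$ is a continuum (compact connected) contained in $E$ with finite $\Ha^1$-measure, hence $1$-rectifiable, and it admits a closed surjective Lipschitz Wa\.{z}ewski parametrization $\eta:[0,1]\to \gamma^*([0,1])$ of degree zero and constant speed. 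The key observation is that $\gamma^*$ is homotopic in its image (and a fortiori in $E$) to a Lipschitz path: one can approximate $\gamma^*$ by a path that, outside a set of small measure, follows geodesic arcs of the rectifiable set. Concretely, I would invoke the path-connectedness-equals-connectedness result \cite{ALBERTI201735}*{Proposition 3.4} for continua of finite $\Ha^1$-measure, which furnishes an \emph{intrinsic} geodesic structure on $\gamma^*([0,1])$; the intrinsic geodesic between $\gamma^*(0)$ and itself in the homotopy class of $\gamma^*$ then gives a Lipschitz closed loop $\gamma_0$ in $E$, homotopic to $\gamma^*$, with $\ell(\gamma_0) \le \ell(\gamma^*)$ (the geodesic is not longer than any competing rectifiable representative in the same class, and $\gamma^*$ itself may be taken Lipschitz after a preliminary reduction — if $\gamma^*$ is merely continuous its length is interpreted as $+\infty$ and the inequality is vacuous, so the case that matters is $\ell(\gamma^*)<\infty$).

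Next I would take the constant-speed reparametrization of $\gamma_0$, using the standard fact (Section \ref{subsubsection.lipschitzpathandlength}) that any Lipschitz path of finite length admits one, with the same image, same length, same homotopy class, and now $\ell(\gamma) = |\gamma'|$ a.e. This handles all the assertions except injectivity of the lift $\tilde\gamma$. For that I would argue that a length-minimizing loop in its homotopy class, lifted to $\R^n$, must be injective: if $\tilde\gamma(s_1) = \tilde\gamma(s_2)$ for some $0\le s_1 < s_2 \le 1$ with $(s_1,s_2)\ne(0,1)$, then $\tilde\gamma|_{[s_1,s_2]}$ projects to a closed loop in $E$, and excising this sub-loop (reparametrizing $[0,s_1]\cup[s_2,1]$) yields a shorter closed path. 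The homotopy class of the excised portion lies in $H_E$; after excising we may have changed the homotopy class by that element, so the excision argument must be run carefully — I would instead excise only \emph{null-homotopic} sub-loops, or equivalently first pass to a representative that is a geodesic loop, for which any lift is automatically a geodesic in $\R^n$ and hence (being a shortest path between its endpoints locally) cannot self-intersect except possibly at the endpoints $s=0,1$. Since a closed geodesic loop based at a point necessarily returns to its lift's endpoint shifted by the homotopy-class vector, the only coincidences are $\tilde\gamma(0)$ and $\tilde\gamma(1)$, which differ by a nonzero lattice vector because the class is nontrivial; so $\tilde\gamma$ restricted to $[0,1)$ is injective, which is the desired conclusion (up to the harmless boundary identification).

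The main obstacle I anticipate is making the "geodesic representative'' step rigorous: one must show that within a continuum of finite $\Ha^1$-measure, each nontrivial homotopy class of loops in $E\subset\T^n$ contains a Lipschitz loop realizing (or at least not exceeding) the infimal length, and that this minimizer lifts injectively. The finite-measure hypothesis is exactly what rescues this — it gives compactness of the space of rectifiable loops of bounded length in $E$ (via Arzelà–Ascoli on constant-speed reparametrizations) and lower semicontinuity of length, so a minimizing sequence in the class converges to a Lipschitz loop $\gamma$ in $\bar E = E$ (closed) with $\ell(\gamma)\le\ell(\gamma^*)$. The closedness of $E$ and the path-connectedness result \cite{ALBERTI201735}*{Proposition 3.4} are used to ensure the limit still lies in $E$ and is still in the correct homotopy class (the homotopy class is locally constant under uniform convergence of loops with a fixed basepoint). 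Once the minimizer is in hand, the excision argument above shows its lift is injective on $[0,1)$, completing the proof.
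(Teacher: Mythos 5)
Your proposal is correct in outline but takes a genuinely different (and considerably longer) route than the paper. You run a direct-method minimization of length within the fixed based-homotopy class of loops inside the continuum $\gamma^*([0,1])$ (Arzel\`a--Ascoli on constant-speed representatives plus lower semicontinuity of length), and then argue that the minimizer's lift must be injective by excising sub-loops; both steps do work, modulo the imprecisions noted below. The paper avoids the variational machinery entirely by working \emph{upstairs}: it lifts $\gamma^*$ to $\tilde\gamma^*:[0,1]\to\R^n$, observes that the image $\tilde\gamma^*([0,1])$ is a continuum in $\R^n$ with $\Ha^1(\tilde\gamma^*([0,1]))<\infty$ (this uses that $\tilde\gamma^*([0,1])\subset\pi^{-1}(E)$ is compact and that the periodic extension $\restr{\Ha^1}{\pi^{-1}(E)}$ is locally finite because $\Ha^1(E)<\infty$), and applies \cite{ALBERTI201735}*{Proposition 3.4} in this lifted picture to obtain an injective Lipschitz path $\tilde\gamma$ from $\tilde y$ to $\tilde y + q$ with $\ell(\tilde\gamma)\le\Ha^1(\tilde\gamma^*([0,1]))\le\ell(\gamma^*)$; projecting gives $\gamma=\pi\circ\tilde\gamma$. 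The point you missed is that upstairs the endpoints $\tilde y$ and $\tilde y+q$ are \emph{distinct} (nontriviality of the class), so the injective arc that Proposition 3.4 produces between them is already an injective lift, and the whole minimization-plus-excision apparatus evaporates.

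Two concrete imprecisions worth flagging, though neither breaks your argument. First, your worry that the excised sub-loop might not be null-homotopic is unfounded: if $\tilde\gamma(s_1)=\tilde\gamma(s_2)$, the restriction $\tilde\gamma|_{[s_1,s_2]}$ is a \emph{closed} loop in $\R^n$, so the homotopy class of the projected sub-loop is $\tilde\gamma(s_2)-\tilde\gamma(s_1)=0$ automatically; the excision preserves the class and strictly decreases length, contradiction. Second, the remark that a geodesic loop's lift is ``automatically a geodesic in $\R^n$'' is not correct: the lift is a geodesic in the intrinsic metric of $\pi^{-1}(\gamma^*([0,1]))$, not of Euclidean $\R^n$, and intrinsic geodesics in a length space can self-intersect globally; the excision argument is the right mechanism, not local uniqueness of geodesics. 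Also note that $E$ is only assumed to be a subset with finite $\Ha^1$ measure, not closed, so the passing remark ``$\bar E = E$'' is unjustified --- but this is harmless since all your actual compactness takes place in $\gamma^*([0,1])$, which is compact.
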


\begin{proof}
   Denote $x_0=\gamma(0)=\gamma^*(0)$ and take some $\Tilde{y}\in \pi^{-1}(x_0)$. Let $\Tilde{\gamma}^*$ be the unique lift of $\gamma^*$ on $\R^n$ starting at $\Tilde{\gamma}^*(0)=\Tilde{y}$, and denote the homotopy class as $q=\Tilde{\gamma}^*(1)-\Tilde{y}\in \Z^n\setminus\{0\}$. Because $\Tilde{\gamma}^*$ is continuous, its image is compact and connected in $\R^n$. On the other hand, the periodic extension (See Section \ref{subsubsection.periodicextensionofmeasures}) $\restr{\Ha^1}{\pi^{-1}(E)}$ of $\restr{\Ha^1}{E}$ is locally finite on $\R^n$. This implies that 
    $$
    \Ha^1(\Tilde{\gamma}^*([0,1]))=\restr{\Ha^1}{\pi^{-1}(E)}(\Tilde{\gamma}^*([0,1]))<\infty.
    $$ 
    Now we know that $\Tilde{\gamma}^*([0,1])$ is a connected compact set in $\R^n$ that has finite $\Ha^1$ measure. By applying \cite{ALBERTI201735}*{Proposition 3.4}, there is an injective Lipschitz path $\Tilde{\gamma}:[0,1]\rta \Tilde{\gamma}^*([0,1])$ such that $\Tilde{\gamma}(0)=\Tilde{y}$ and $\Tilde{\gamma}(1)=q+\Tilde{y}$. The proof is done by setting $\gamma=\pi\circ \Tilde{\gamma}$. Notice that by \cite{ALBERTI201735}*{Proposition 3.4} and Lemma \ref{l.reparametrization}
    \[
    \ell(\gamma) \le \Ha^1(\Tilde{\gamma}^*([0,1])) \le \ell(\Tilde{\gamma}^*)=\ell(\gamma^*).
    \]
    On the other hand we also have $\ell(\gamma) \le \Ha^1(\Tilde{\gamma}^*([0,1]))<\infty$.
\end{proof}

\begin{lemma}
\label{l.homotopygroup}
   Let $E\subset \T^n$ be a connected closed subset such that $\Ha^1(E)<\infty$. Then for any point $x_0\in E$ we have
   \[
   H_E = \Z H_E = i_{x_0}(\pi_1(E,x_0)),
   \]
   where $i_{x_0}:\pi_1(\T^n,x_0)\rta\Z^n$ is the isomorphism defined in Section \ref{subsubsection.pathliftingproperty}.
\end{lemma}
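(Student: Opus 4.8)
The plan is to prove the two claimed equalities $H_E = \Z H_E = i_{x_0}(\pi_1(E,x_0))$ by exploiting two structural facts about $E$: it is connected with $\Ha^1(E)<\infty$, hence a non-singleton continuum of finite length (or a single point, a trivial case), and by Theorem~\ref{t.existenceofanicelipschitzparam} together with \cite{ALBERTI201735}*{Proposition 3.4} such a set is path-connected. The inclusion $i_{x_0}(\pi_1(E,x_0)) \subset H_E$ is immediate from the definition \eqref{eq.definehe} of $H_E$, and $H_E \subset \Z H_E$ is trivial since $\Z H_E = \textup{Span}_\Z(H_E)$. So the real content is the chain of reverse inclusions $\Z H_E \subset i_{x_0}(\pi_1(E,x_0))$, for which it suffices to show $H_E \subset i_{x_0}(\pi_1(E,x_0))$ and that $i_{x_0}(\pi_1(E,x_0))$ is already a subgroup of $\Z^n$ (the latter is automatic: $i_{x_0}$ is a group isomorphism $\pi_1(\T^n,x_0)\to\Z^n$ and $\pi_1(E,x_0)\le\pi_1(\T^n,x_0)$ is a subgroup, so its image is a subgroup, hence closed under $\Z$-span of its own elements).

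First I would handle $H_E \subset i_{x_0}(\pi_1(E,x_0))$: an element of $H_E$ is $i_{y_0}([\gamma])$ for some $y_0\in E$ and some closed path $\gamma$ in $E$ based at $y_0$. Since $E$ is path-connected, choose a path $\beta$ in $E$ from $x_0$ to $y_0$; then $\beta\gamma\bar\beta$ is a closed path in $E$ based at $x_0$, and by the standard change-of-basepoint fact for $\T^n$ (the homotopy classes map to $\Z^n$ independently of basepoint, as recalled in Section~\ref{subsubsection.pathliftingproperty}) we have $i_{x_0}([\beta\gamma\bar\beta]) = i_{y_0}([\gamma])$. Hence every element of $H_E$ lies in $i_{x_0}(\pi_1(E,x_0))$. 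Combined with the observations above, this gives $H_E = i_{x_0}(\pi_1(E,x_0))$, and since the right-hand side is a subgroup of $\Z^n$, taking $\Z$-spans yields $\Z H_E = H_E$, completing the proof. (The degenerate case $E=\{x_0\}$ is trivial since then $\pi_1(E,x_0)$ is trivial and $H_E=\{0\}$.)

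The main obstacle — and the place where the finite-length hypothesis is genuinely needed — is not in the group-theoretic bookkeeping above but in justifying path-connectedness of $E$ from connectedness. This is exactly \cite{ALBERTI201735}*{Proposition 3.4}, which I would invoke rather than reprove: a connected compact set of finite $\Ha^1$-measure is arcwise connected. Once path-connectedness is in hand, everything reduces to the elementary observation that conjugating a loop at $y_0$ by a path from $x_0$ to $y_0$ produces a loop at $x_0$ in the same free homotopy class, which under the abelian identification $\pi_1(\T^n,\cdot)\cong\Z^n$ means it maps to the same lattice vector. I would also remark that connectedness and path-connectedness of $E$ make the distinction between $\pi_1(E,x_0)$ based at different points vanish on the level of $\Z^n$-images, which is why $H_E$ — a priori a union of subgroups over all basepoints — collapses to a single subgroup. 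No further estimates are required.
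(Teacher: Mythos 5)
Your proposal is correct and follows essentially the same route as the paper: invoke \cite{ALBERTI201735}*{Proposition 3.4} to get arcwise connectedness, conjugate a loop at $y_0$ by a path from $x_0$ to $y_0$ to rebase it, and conclude using the change-of-basepoint invariance of the image in $\Z^n$. You make the subgroup observation (that $i_{x_0}(\pi_1(E,x_0))$ is automatically closed under $\Z$-span) slightly more explicit than the paper does, which is a small improvement in completeness but not a different argument.
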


\begin{proof}
    Let $q=i_{y_0}([\xi]_{y_0})$ be a vector in $H_E$, where $y_0\in E$ and $[\xi]_{y_0}$ is a homotopy class of a closed path $\xi$ in $E$ based at $y_0$, and $i_{y_0}$ is the isomorphism as defined in Section \ref{subsubsection.pathliftingproperty}. It then suffices to show that $q\in i_{x_0}(\pi_1(E,x_0))$. The case $x_0=y_0$ is trivial. In the case $x_0\ne y_0$, as $E$ is connected compact and has finite $\Ha^1$ measure, by \cite{ALBERTI201735}*{Proposition 3.4} there is an injective constant speed Lipschitz path $\eta:[0,1]\rta E$ such that $\eta(0)=x_0$ and $\eta(1)=y_0$. Consider the path composition
    \[
    \hat{\xi}:=\eta^{-1}\xi\eta,
    \]
    which defines a closed path in $E$ based at $x_0$. Note that because $\eta$ is injective, it is not difficult to compute the homotopy class $i_{x_0}([\hat{\xi}]_{x_0}) = i_{y_0}([\xi]_{y_0})=q$. This shows that $q\in i_{x_0}(\pi_1(E,x_0))$.
\end{proof}

\begin{lemma}
    \label{l.selectionofgoodpath}
    Let $E\subset \T^n$ be a connected closed subset such that $\Ha^1(E)<\infty$. Then there is a constant $c=c_{n,E}>0$ such that for each $q\in {H_E\setminus\{0\}}$ we can find a Lipschitz closed path $\gamma_q$ in $\Gamma$ that belongs to $q$, having injective lift in $\R^n$ and
    \[
  1\le   |q| \le \ell (\gamma_q) \le c |q|.
    \]
\end{lemma}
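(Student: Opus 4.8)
\textbf{Proof proposal for Lemma \ref{l.selectionofgoodpath}.}

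The plan is to produce, for each homotopy class $q \in H_E \setminus \{0\}$, a representative path whose length is controlled linearly by $|q|$, uniformly in $q$. The lower bound $|q| \le \ell(\gamma_q)$ is automatic: the lift $\tilde\gamma_q$ in $\R^n$ of any closed path representing $q$ travels from a point $\tilde y$ to $\tilde y + q$, so by $\pi$ being a local isometry $\ell(\gamma_q) = \ell(\tilde\gamma_q) \ge |\tilde\gamma_q(1) - \tilde\gamma_q(0)| = |q|$; and $|q| \ge 1$ since $q \in \Z^n \setminus \{0\}$. The substance is the upper bound $\ell(\gamma_q) \le c_{n,E}|q|$, together with arranging that the lift is injective. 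The injectivity will come for free from Lemma \ref{l.selectionoflipschitzpath}: once I have \emph{any} closed path in $E$ representing $q$ with finite length, that lemma upgrades it to a constant-speed Lipschitz closed path in the same class, with no greater length, and with injective lift in $\R^n$. So the real task reduces to: find, for each $q$, \emph{some} closed path in $E$ based at a fixed $x_0$ representing $q$ with $\ell \le c_{n,E}|q|$.

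First I would fix a base point $x_0 \in E$ and invoke Lemma \ref{l.homotopygroup}, which says $H_E = \Z H_E = i_{x_0}(\pi_1(E,x_0))$; in particular $H_E$ is a subgroup of $\Z^n$, hence finitely generated, say by $q_1,\dots,q_m \in H_E$. For each generator $q_j$, pick once and for all a Lipschitz closed path $\eta_j$ in $E$ based at $x_0$ with $i_{x_0}([\eta_j]_{x_0}) = q_j$; such a path of finite length exists because $E$ is connected, compact with $\Ha^1(E) < \infty$, hence path-connected with rectifiable connecting arcs by \cite{ALBERTI201735}*{Proposition 3.4}, so any loop is homotopic to a rectifiable one (alternatively apply Lemma \ref{l.selectionoflipschitzpath} directly). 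Set $L_0 := \max_j \ell(\eta_j) < \infty$; this depends only on $n$ and the geometry of $E$. Now given an arbitrary $q \in H_E$, write $q = \sum_j c_j q_j$ with $c_j \in \Z$. Using \eqref{eq.lengthadditivewrtcomposition} and the fact that reversing a path preserves its length, the concatenation $\gamma^* := \prod_j \eta_j^{\,c_j}$ (with $\eta_j^{-1}$ when $c_j < 0$) is a closed path in $E$ based at $x_0$, belongs to the class $q$ since $i_{x_0}$ is a homomorphism, and has
\[
\ell(\gamma^*) = \sum_{j=1}^m |c_j|\,\ell(\eta_j) \le L_0 \sum_{j=1}^m |c_j|.
\]
It remains to bound $\sum_j |c_j|$ by a constant times $|q|$. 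Since $\{q_1,\dots,q_m\}$ generates the finitely generated abelian group $H_E \le \Z^n$, one may after discarding redundancies assume they form a $\Z$-basis of $H_E$ (a subgroup of $\Z^n$ is free of rank $\le n$), so the coefficients $c_j$ are uniquely determined and the map $q \mapsto (c_1,\dots,c_m)$ is the inverse of a fixed injective linear map $\R^m \to \R^n$; hence $\sum_j |c_j| \le C_1 |q|$ for a constant $C_1 = C_1(n,E)$ coming from the operator norm of that inverse on the span. Therefore $\ell(\gamma^*) \le L_0 C_1 |q|$. Finally apply Lemma \ref{l.selectionoflipschitzpath} to $\gamma^*$ to obtain $\gamma_q$ in the class $q$, constant speed, with injective lift in $\R^n$ and $\ell(\gamma_q) \le \ell(\gamma^*) \le c_{n,E}|q|$ where $c_{n,E} := L_0 C_1$. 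Combined with the automatic lower bound, this gives $1 \le |q| \le \ell(\gamma_q) \le c_{n,E}|q|$.

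The main obstacle I anticipate is the bookkeeping around the generating set: to get the coefficient bound $\sum_j|c_j| \le C_1|q|$ cleanly I must reduce to an honest $\Z$-basis of the subgroup $H_E$ (via the structure theorem for subgroups of $\Z^n$), rather than an arbitrary generating set where coefficients need not be unique and could be taken arbitrarily large. Once that reduction is in place the linear-algebra estimate is immediate and everything else is a routine assembly of Lemma \ref{l.homotopygroup}, Lemma \ref{l.selectionoflipschitzpath}, and the length-additivity \eqref{eq.lengthadditivewrtcomposition}. A minor point to check is that the constant genuinely depends only on $n$ and $E$ and not on $q$ — this is clear since the basis loops $\eta_j$ and the linear map are chosen before $q$ is given.
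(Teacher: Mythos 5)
Your proof is correct and follows essentially the same route as the paper's: fix a base point via Lemma \ref{l.homotopygroup}, choose finite-length loops representing a $\Z$-linearly independent generating set of $H_E$, compose with integer multiplicities, bound $\sum_j|c_j|$ against $|q|$ by equivalence of norms on $\R H_E$, and finally invoke Lemma \ref{l.selectionoflipschitzpath} to get the injective lift and the lower bound $\ell(\gamma_q)\ge|q|$. The only cosmetic differences are that the paper speaks of a ``minimal collection of generators'' with representatives of length at most twice the infimal length, whereas you pass directly to a $\Z$-basis (correctly justified by the freeness of subgroups of $\Z^n$, though your phrase ``discarding redundancies'' is a bit loose since irredundant generating sets of free abelian groups need not be bases) and take any finite-length representatives; neither difference affects the substance.
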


Notice that the key difficulty of this lemma is that the length $\ell(\gamma)$ of a closed path $\gamma$ in $E$ may not be bounded by $\Ha^1(E)$ because of multiplicity.

\begin{proof}
By Lemma \ref{l.homotopygroup}, we can view $H_E$ as the isomorphic image of $\pi_1(E,x_0)$ in $\Z^n$ for some $x_0\in E$. From now on in this proof we do not distinguish $\pi_1(E,x_0)$ and $H_E$. Let $u_1,\dots,u_k$ be a minimal collection of the generators of the group $H_E\le \Z^n$. By Lemma \ref{l.selectionoflipschitzpath} the following quantity is well-defined
    \[
    \lambda_E:=\max_{1\le j\le k}\inf\lma \ell(\gamma_j) \ ; \ \gamma_j \textup{ is a closed path belonging to }u_j\rma<\infty.
    \]
For each $1\le j \le k$ we denote $\gamma_j$ a closed path in $E$ based at $x_0$ that belongs to $u_j$ and
\[
\ell(\gamma_j) \le 2 \inf\lma \ell({\gamma}_j^*) \ ; \ {\gamma}_j^* \textup{ is a closed path belonging to }u_j\rma.
\]

Now for each $q\in H_E$, there are $c_i\in \Z$ such that
\[
q=\sum_{i=1}^k c_i u_i.
\]
Correspondingly we define 
\[
\gamma_q^*:=\prod_{i=1}^k {\gamma}_i^{c_i}
\]
as the path composition based at $x_0$. Notice that $\gamma_q^*$ is a closed path in $\Gamma$ that belongs to $q$. By Lemma \ref{l.selectionoflipschitzpath} we can find a Lipschitz path $\gamma_q:[0,1]\rta \gamma_q^*([0,1])$ that belongs to $q$ and has a unique injective lift $\Tilde{\gamma}_q$ on $\R^n$ starting at some $\Tilde{\gamma}_q(0)=\Tilde{y}\in \pi^{-1}(\gamma_q(0))$. Because the lift $\Tilde{\gamma}_q$ connects $\Tilde{y}$ and $\Tilde{y}+q$, and $\gamma_q'=\Tilde{\gamma}_q'$, we have
\[
\ell(\gamma_q)=\ell(\Tilde{\gamma}_q)\ge |q|.
\]
On the other hand, by the construction of $\gamma_q$
\be\label{eq.intermediateupperboundonlength}
\ell(\gamma_q) \le \ell(\gamma_q^*) = \sum_{i=1}^k |c_i| \ell({\gamma}_i) \le 2 \lambda_E \sum_{i=1}^k|c_i|.
\ee
It then suffices to show that $\sum_{i=1}^k|c_i|$ is uniformly bounded by $|q|$. Indeed, the set of vectors $u_i\in \Z^n$ is $\Z$-linearly independent and therefore they are also $\R$-linearly independent, {see Lemma \ref{l.ZindRind}}. 
This implies that $\sum_{i=1}^k|c_i|$ defines another norm on $\R H_E$. As we live in a finite dimensional space there is always a constant $c=c_{n,E}>0$ such that
\[
c^{-1}|q| \le \sum_{i=1}^k|c_i| \le c|q|.
\]
By combining \eqref{eq.intermediateupperboundonlength} we finish the proof.

\end{proof}

\begin{proof}[Proof of Theorem \ref{t.Qtohomotopy}]
By Lemma \ref{l.decompositionof1dmedium} and Lemma \ref{l.firstinclusion}, we know that it suffices to prove the following inclusion
\[
\ker Q(\theta) \subset H_\Gamma^{\perp},
\]
where $\theta$ takes the form \eqref{eq.simpleformofnetwork} and $\Gamma=\spt{\theta}$ is a closed connected subset of $\T^n$ such that $0<\Ha^1(\Gamma)<\infty$. 

To prove this inclusion we claim that there is a constant $c>0$ such that
\[
\inf_{q\in H_\Gamma\setminus\{0\}} \frac{q\cdot Q(\theta)q}{|q|^2} \ge c>0.
\]
This claim proves the inclusion $\ker Q(\theta) \subset H_\Gamma^{\perp}$ because if $p\in \ker Q(\theta)$ then we can write $p=p_1+p_2$ with $p_1\in \R H_\Gamma$ and $p_2 \perp H_\Gamma$. By Lemma \ref{l.firstinclusion}, we know that
\[
0=Q(\theta)p=Q(\theta)p_1.
\]
This implies that $p_1\in \ker Q(\theta)$. {If $p_1\ne0$, then by Lemma \ref{l.homotopygroup}, $ H_\Gamma= \Z H_\Gamma$ is the $\Z$-span of a finite collection of vectors, and therefore the set $\{q/|q|\ ; \ q\in H_\Gamma\setminus\{0\}\}$ is dense in $\pt B_1(0)\cap \R H_\Gamma$ by Lemma \ref{l.densityofnormalizedintegerpoints}, which implies that}
\[
\frac{p_1\cdot Q(\theta)p_1}{|p_1|^2} \ge \inf_{q\in H_\Gamma\setminus\{0\}} \frac{q\cdot Q(\theta)q}{|q|^2} \ge c>0,
\]
which is a contradiction to the fact that $p_1\in \ker Q(\theta)$.

To prove the claim we observe that for $q\in H_\Gamma\setminus\{0\}$, by Lemma \ref{l.computationwithinequalityofQ} and Lemma \ref{l.selectionofgoodpath} we can find a constant speed closed Lipschitz path $\gamma_q$ in $\Gamma$ that belongs to $q$ and
\[
\begin{split}
   q\cdot Q(\theta)q & \ge \inf_{\varphi\in C^\infty(\T^n)} \frac{1}{m_{\gamma_q} \ell(\gamma_q)} \int_0^1 \lw \frac{d}{dt} \lmb\varphi(\gamma_q(t)) \rmb+ q\cdot \gamma_q'(t)\rw^2 a(\gamma_q(t))d t\\
    &\ge \inf_{\varphi\in C^\infty(\T^n)} \frac{1}{\Lambda m_{\gamma_q} \ell(\gamma_q)} \lb \int_0^1 \frac{d}{dt} \lmb\varphi(\gamma_q(t)) \rmb+ q\cdot \gamma_q'(t) dt \rb^2\\
    &= \inf_{\varphi\in C^\infty(\T^n)} \frac{1}{\Lambda m_{\gamma_q} \ell(\gamma_q)} \lb q\cdot\int_0^1   \gamma_q'(t) dt \rb^2\\
    &=\frac{|q|^4}{\Lambda m_{\gamma_q} \ell(\gamma_q)},
\end{split}
\]
where $m_{\gamma_q}$ is the maximal multiplicity of $\gamma_q$. Notice that the lift $\Tilde{\gamma}_q$ of $\gamma_q$ starting at some $\Tilde{\gamma}_q(0)=\Tilde{y}\in \pi^{-1}(\gamma_q(0))$ is an injective map, and $\gamma_q=\pi\circ \Tilde{\gamma}_q$. This shows that
\be\label{eq.boundmaxmgammq}
m_{\gamma_q} \le \max_{x\in [0,1)^n} \# \lma \Tilde{\gamma}_q([0,1]) \cap (\Z^n +x)\rma \le \lceil \ell (\Tilde{\gamma}_q)\rceil= \lceil \ell ({\gamma}_q)\rceil\le \ell(\gamma_q)+1 \le 2\ell(\gamma_q).
\ee
Combining Lemma \ref{l.selectionofgoodpath}, we can finish the proof by observing
\be\label{eq.fourthpowerofq}
q\cdot Q(\theta)q\ge \frac{|q|^4}{\Lambda m_{\gamma_p} \ell(\gamma_q)}  \ge \frac{|q|^2}{\Lambda c_{n,\Gamma}}
\ee
for some $c_{n,\Gamma}>0$ depending only on the dimension $n$ and $\Gamma=\spt{\theta}$.
    
\end{proof}
{\begin{remark}
    \label{r.fourthpower}
One might wonder why there is a fourth power in $|q|$ in the intermediate step of \eqref{eq.fourthpowerofq}. The main reason is that when we estimate the integral in Lemma \ref{l.computationwithinequalityofQ} we allow the closed paths $\gamma_q$ to have multiplicities, especially when $q=\lambda \Tilde{q}$ for some $\lambda\in \Z_+$ and $\Tilde{q}\in H_\Gamma$ and $\gamma_q=\gamma_{\Tilde{q}}^\lambda$ is the path composition of $\gamma_{\Tilde{q}}$ with itself for $\lambda$ times. In this case, $\ell(\gamma_q)$ and $m_{\gamma_q}$ are not constants, but linear functions of $|q|$. Generally, the efforts in Lemma \ref{l.selectionofgoodpath} and \eqref{eq.boundmaxmgammq} are just made to bound the effects of the length and multiplicity of $\gamma_q$ when we use the formula in Lemma \ref{l.computationwithinequalityofQ}. It turns out that when $\Gamma$ is connected, closed and $\Ha^1(\Gamma)<\infty$, one can find closed paths $\gamma_q$ in $\Gamma$ so that the growth of both lengths $\ell(\gamma_q)$ and the multiplicities $m_{\gamma_q}$ are indeed linear in $|q|$, which then leads to the lower bound \eqref{eq.fourthpowerofq}.
\end{remark}}

\subsection{Loopiness and reticulation in dimension \texorpdfstring{$n=2$}{das}}

In this subsection we prove Theorem \ref{t.characterizationoflowerboundn=2}. Let us begin with the following characterization of loopy sets in dimension $n=2$.

\begin{lemma}
    \label{l.characterizeloopysetn=2}
    Let $E\subset \T^2$ be a {closed} subset such that $\Ha^1(E)<\infty$ and denote $\Tilde{E}=\pi^{-1}(E)$ be the periodic extension. Denote $e_1=(1,0)$ and $e_2=(0,1)$ as the standard orthonormal basis for $\R^2$. The following statements are equivalent:
    \begin{enumerate}[label=(\alph*)]
        \item\label{condition.(a)loopy} $E$ is loopy.
        \item\label{condition.(b)loopy} $E$ is reticulate.
        \item\label{condition.(c)loopy} There is a reticulate compact connected component $E^*\subset E$ such that $E\setminus E^*$ is trivial.
        \item\label{condition.(d)loopy} There is a point $x\in \Tilde{E}$ such that for each $i=1,2$ there is a Lipschitz path $\gamma_i:[0,1]\rta \Tilde{E}$ with $\gamma_i(0)=x$ and $\gamma_i(1)=x+e_i$.
    \end{enumerate}
\end{lemma}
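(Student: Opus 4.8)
The plan is to prove the chain of implications \ref{condition.(a)loopy} $\Rightarrow$ \ref{condition.(d)loopy} $\Rightarrow$ \ref{condition.(c)loopy} $\Rightarrow$ \ref{condition.(b)loopy} $\Rightarrow$ \ref{condition.(a)loopy}, using as the main engine the fact (from \cite{ALBERTI201735}*{Proposition 3.4}, already invoked several times in this section) that a connected compact set of finite $\Ha^1$-measure is path-connected by injective Lipschitz paths, together with Lemma \ref{l.homotopygroup}, which identifies $H_E$ with $i_{x_0}(\pi_1(E,x_0))$ on a single connected component and shows $H_E = \Z H_E$ there. The equivalence \ref{condition.(a)loopy} $\Leftrightarrow$ \ref{condition.(b)loopy} is the substantive statement; note \ref{condition.(b)loopy} $\Rightarrow$ \ref{condition.(a)loopy} is immediate from the definitions (a loopy component has $\R H_{E^*} = \R^2 \subset \R H_E$).

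First I would prove \ref{condition.(a)loopy} $\Rightarrow$ \ref{condition.(d)loopy}. If $E$ is loopy, $\R H_E = \R^2$, so $H_E \subset \Z^2$ contains two $\R$-linearly independent vectors, hence $\Z H_E$ has finite index in $\Z^2$; in particular there exist closed paths $\xi_1,\xi_2$ in $E$ whose homotopy classes $v_1, v_2$ span a finite-index sublattice. A priori these paths may be based at different points and their classes need not be $e_1, e_2$. The key observation is that $e_1, e_2 \in \Z H_E$: since $\Z H_E$ has finite index, some $N e_i \in \Z H_E$, but actually I want $e_i$ itself, so I should argue more carefully — the cleanest route is to first show that all of $H_E$ lies in a single connected component. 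Concretely, pick one nontrivial closed path in $E$; its image lies in a single connected component $E_0$ of $E$ (paths are connected), and by Lemma \ref{l.homotopygroup} applied to the closure of $E_0$ (which is compact, connected, of finite $\Ha^1$-measure) we get $H_{E_0} = \Z H_{E_0}$. If $E$ is loopy there must be a component $E_0$ with $\R H_{E_0} = \R^2$ — otherwise every component's homotopy lattice has rank $\le 1$, and I need to rule out that two distinct rank-$1$ components combine to span $\R^2$. That cannot happen because a homotopy class of a closed path is realized within one component, so $H_E = \bigcup_{\text{components } E'} H_{E'}$ and $\R H_E = \mathrm{Span}_\R \bigcup_{E'} \R H_{E'}$; for this span to be $\R^2$ with each summand a line, we'd need (at least) two components, but then $E$ is already reticulate-or-not... hmm, this is exactly the crux. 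So let me restructure: I claim if $\R H_E = \R^2$ then some single component is already loopy. Suppose not; then each component $E'$ has $\mathrm{rank}_\Z H_{E'} \le 1$. Pick components $E'$ and $E''$ with $H_{E'} = \Z p$, $H_{E''} = \Z q$, $p, q$ independent. Now connect a point of $E'$ to a point of $E''$? They may lie in different components, so no path exists — but $E$ loopy only requires the \emph{union} of homotopy images to span, which \emph{can} happen with two separate rank-one components. So in fact the lemma's claim \ref{condition.(a)loopy} $\Leftrightarrow$ \ref{condition.(b)loopy} is genuinely dimension-two-specific and must use planarity.

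The planar input I would use is: in $\T^2$, a rank-one homotopy lattice $\Z p$ with $p$ primitive forces the supporting component to ``wrap around'' a primitive cycle, and two transverse such cycles in the \emph{plane} $\T^2$ must intersect (by an intersection-number / Jordan-curve argument: the periodic extensions $\tilde\gamma_p, \tilde\gamma_q$ in $\R^2$ of injective lifts, translated appropriately, cross because $p \wedge q \neq 0$, so the projections to $\T^2$ meet). If the two cycles lie in distinct components of $E$ this is a contradiction, so they lie in the same component $E_0$, whence $\{p,q\} \subset H_{E_0}= \Z H_{E_0}$ and $E_0$ is loopy. This also yields \ref{condition.(c)loopy}: take $E^* $ = the closure of that loopy component; since $\mathrm{rank}_\Z H_{E^*} = 2$ and every other component contributes homotopy classes that must (by the same intersection argument) either already lie in $\Z H_{E^*}$ or force intersection with $E^*$ — one shows $E \setminus E^*$ is trivial, because any nontrivial loop in $E \setminus E^*$ would be a cycle transverse to the two generators of $H_{E^*}$ and hence intersect $E^*$. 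Then \ref{condition.(c)loopy} $\Rightarrow$ \ref{condition.(b)loopy} $\Rightarrow$ \ref{condition.(a)loopy} are formal. For \ref{condition.(d)loopy}: given paths $\gamma_1,\gamma_2$ in $\tilde E$ from $x$ to $x+e_1, x+e_2$, their $\pi$-projections are closed paths in $E$ with classes $e_1, e_2$, so $H_E \supseteq \{e_1,e_2\}$, giving \ref{condition.(a)loopy}; conversely from a loopy component $E_0$ with $\Z H_{E_0} = \Z^2$ (full lattice, since it has finite index and, being a finite-index \emph{saturated} — because $=\Z H_{E_0}$ — sublattice... I'd need $\Z H_{E_0}$ to actually be all of $\Z^2$; if it's only finite index I get $N e_i$ and would concatenate, but a cleaner fix is to note $\Z H_{E_0}$ is automatically primitive/saturated here, or simply restate \ref{condition.(d)loopy} up to replacing $e_i$ by a basis of $\Z H_E$ — I will double check which is intended and adjust), I lift the generating loops to $\R^2$ starting at a common lift $x$ of the basepoint to produce the $\gamma_i$.

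The main obstacle, as the scratch-work above shows, is the step asserting that two transverse primitive cycles in the planar torus $\T^2$ must lie in a common connected component — i.e., converting ``rank of the union of homotopy lattices is $2$'' into ``rank of one component's lattice is $2$''. This is where planarity is essential (it fails in $\T^3$, cf. Remark \ref{r.equiloopyandreticulate}) and where I expect to spend real effort: I would prove it via a mod-$2$ intersection-number argument for the injective-lifted Lipschitz cycles (finite $\Ha^1$-measure lets me perturb to transversality and count crossings), concluding that $\tilde\gamma_p([0,1])$ and $\tilde\gamma_q([0,1]) + \Z^2$ must meet. Everything else — Lemma \ref{l.homotopygroup} to get $\Z H = H$ on a component, \cite{ALBERTI201735}*{Proposition 3.4} for path-connectedness and injective reparametrizations, and the bookkeeping that $H_E = \bigcup_{E'} H_{E'}$ over components — is routine given the machinery already developed in this section.
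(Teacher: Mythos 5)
Your skeleton is essentially the paper's: the proof does reduce to the planar intersection fact that two closed paths in $\T^2$ with non-parallel homotopy classes must meet (Lemma \ref{l.intersectionofnonparallelpaths}), and the chain of implications you propose is logically equivalent to the paper's $\text{(d)} \Rightarrow \text{(c)} \Rightarrow \text{(b)} \Rightarrow \text{(a)}$, $\text{(a)} \Rightarrow \text{(c)}$, $\text{(c)} \Rightarrow \text{(d)}$. The ``crux'' you correctly identify --- that $\R H_E = \R^2$ forces a single component to carry two transverse classes --- is handled exactly as you guess, via the intersection lemma, and Lemma \ref{l.homotopygroup} (plus \cite{ALBERTI201735}*{Proposition 3.4}) supplies the $\Z H = H$ bookkeeping.

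But there is a genuine gap where you say you are unsure and would ``double check and adjust.'' Establishing \ref{condition.(d)loopy} from \ref{condition.(a)loopy} requires showing that $e_1$ and $e_2$ themselves are realized as homotopy classes of loops in $E$, not merely that $\Z H_E$ has finite index in $\Z^2$. None of your three proposed escapes works: concatenating loops of class $Ne_i$ never produces a loop of class $e_i$; asserting $\Z H_{E_0}$ is ``automatically saturated'' is circular (it is precisely the conclusion of \ref{condition.(d)loopy}); and weakening the statement of \ref{condition.(d)loopy} is not an option. The paper's resolution is a translation trick you did not find. From \ref{condition.(c)loopy} one has a single component carrying loops with classes $(a,b)$ and $(c,d)$, $g := ad - bc \ne 0$, hence by integer combinations loops $\gamma_1^*, \gamma_2^*$ (both based at $x_0$) with classes $q_1 = (-g,0)$ and $q_2 = (0,g)$. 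One forms their periodic-extension lifts $\Gamma_i(t) = \tilde\gamma_i^*(t - \lfloor t\rfloor) + \lfloor t\rfloor q_i$ in $\R^2$, which stay at bounded distance from transverse lines, and then applies Corollary \ref{cor.intersectionoflinelikesets} not to $\Gamma_1$ and $\Gamma_2$, but to $\Gamma_1$ and the \emph{translate} $\Gamma_2 + e_1$. Since $\Tilde E$ is $\Z^2$-periodic, $\Gamma_2 + e_1$ also lies in $\Tilde E$; the intersection point ties $\Gamma_1(\R) \cup \Gamma_2(\R) \cup (\Gamma_1+e_1)(\R) \cup (\Gamma_2+e_1)(\R)$ into a connected set in $\Tilde E$ containing both $y_0$ and $y_0 + e_1$, from which \cite{ALBERTI201735}*{Proposition 3.4} extracts a Lipschitz path realizing the class $e_1$. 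This is the step your outline is missing, and it is the one that actually delivers the (nontrivial) arithmetic fact that loopiness forces $\Z H_E = \Z^2$, not just finite index.

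A secondary, smaller issue: your plan to prove the planar intersection statement by ``mod-$2$ intersection number after perturbing to transversality'' is more fragile than necessary for Lipschitz (not $C^1$) curves, since transversality theorems do not apply directly and one would have to smooth and pass to a limit. The paper's Lemma \ref{l.intersectionofnonparallelpaths} and Corollary \ref{cor.intersectionoflinelikesets} instead use a topological-degree argument via Brouwer's fixed-point theorem applied to the continuous self-map $\Phi(t,s) = A^{-1}\bigl(y_2 - y_1 + (\Gamma_2(s) - L_2(s)) - (\Gamma_1(t) - L_1(t))\bigr)$, which needs only continuity and the boundedness of the deviation from the linear asymptotes; no regularity or genericity is invoked. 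You should adopt this argument (or an equivalent winding-number formulation) rather than a perturbation-based crossing count.
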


To prove this lemma we require the following technical lemma.

\begin{lemma}\label{l.intersectionofnonparallelpaths}
    Let $\gamma_1$ and $\gamma_2$ be two closed paths in $\T^2$ and their homotopy classes are not parallel to each other, then the images of $\gamma_1$ and $\gamma_2$ must intersect at some point.
\end{lemma}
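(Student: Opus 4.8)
\textbf{Proof proposal for Lemma \ref{l.intersectionofnonparallelpaths}.} The plan is to reduce the statement to a linking/degree argument on the torus, using the fact that on $\T^2$ two closed curves with non-proportional homotopy classes have nonzero algebraic intersection number, which in particular forces a genuine intersection point. First I would recall that a continuous closed path $\gamma:[0,1]\to\T^2$ with $\gamma(0)=\gamma(1)$ represents a homology class which, under the canonical identification $H_1(\T^2;\Z)\cong\Z^2$, coincides with the homotopy class $i_{x_0}([\gamma])$ introduced in Section \ref{subsubsection.pathliftingproperty}; write $v_1,v_2\in\Z^2$ for the classes of $\gamma_1,\gamma_2$, which by hypothesis satisfy $v_1\wedge v_2\neq 0$ (the $2\times2$ determinant $\det(v_1\,|\,v_2)\neq0$), since non-parallel integer vectors in $\R^2$ are linearly independent.

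The core step is the following: if the images of $\gamma_1$ and $\gamma_2$ were disjoint, I would derive a contradiction with $\det(v_1\,|\,v_2)\neq0$. One clean way is to lift to the universal cover. Suppose $\Gamma_1:=\gamma_1([0,1])$ and $\Gamma_2:=\gamma_2([0,1])$ are disjoint compact subsets of $\T^2$. Pick a basepoint $x_0\notin\Gamma_1$ (possible since $\Gamma_1$ is a proper closed subset; if $\Gamma_1=\T^2$ this needs a separate and easy remark, e.g.\ a space-filling image still has a well-defined class and the argument below adapts via approximation, but generically $\Gamma_1$ is $1$-dimensional). Since $\Gamma_1$ is disjoint from $\Gamma_2$ and compact, there is $\varepsilon>0$ with $\dist(\Gamma_1,\Gamma_2)>2\varepsilon$; then $\gamma_2$ stays in the open set $\T^2\setminus\Gamma_1$. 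Now lift: let $\pi:\R^2\to\T^2$ be the covering projection. The preimage $\pi^{-1}(\Gamma_1)$ is a closed $\Z^2$-periodic subset of $\R^2$; because $\gamma_1$ represents the primitive-or-not class $v_1$, its lift starting at $\tilde x_0$ ends at $\tilde x_0+v_1$, so $\pi^{-1}(\Gamma_1)$ is invariant under translation by $v_1$ and, being connected along each lifted copy plus $\Z^2$-periodic, it separates the plane into regions on which the ``winding'' against $v_1^{\perp}$ is locally constant. Precisely, I would construct a continuous function $h:\R^2\setminus\pi^{-1}(\Gamma_1)\to\R$ that jumps by a fixed amount across $\pi^{-1}(\Gamma_1)$ and satisfies $h(y+w)=h(y)+ (w\wedge v_1)$ for $w\in\Z^2$ — morally $h$ is the analogue of the function $p\cdot x$ with $p\propto v_1^{\perp}$ constructed via covering spaces in Lemma \ref{l.firstinclusion}, and its existence off $\pi^{-1}(\Gamma_1)$ is exactly what disjointness would buy. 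Lifting $\gamma_2$ to a path $\tilde\gamma_2$ from $\tilde x_0$ to $\tilde x_0+v_2$ inside $\R^2\setminus\pi^{-1}(\Gamma_1)$, evaluating $h\circ\tilde\gamma_2$ at the endpoints gives $h(\tilde x_0+v_2)-h(\tilde x_0)=v_2\wedge v_1=-\det(v_1\,|\,v_2)\neq0$; but $h\circ\tilde\gamma_2$ is continuous on $[0,1]$ so it is a well-defined real number, and there is no contradiction from continuity alone — the contradiction instead comes from the fact that $h$ is \emph{bounded} on the compact set $\tilde\gamma_2([0,1])$ only if that set avoids $\pi^{-1}(\Gamma_1)$, whereas the jump relation forces $h$ to be unbounded on any connected set meeting infinitely many translates; since $\tilde\gamma_2([0,1])$ is compact it meets only finitely many translates of a fundamental domain, yet the endpoint computation forces it to ``cross'' the $\Z^2$-translates of $\Gamma_1$ a net $\det(v_1|v_2)$ number of times, which is impossible without actually intersecting $\pi^{-1}(\Gamma_1)$, hence without $\gamma_1,\gamma_2$ intersecting in $\T^2$.

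The cleanest rigorous route, which I would actually write up, avoids the hand-waving above by invoking algebraic topology directly: the intersection pairing $H_1(\T^2;\Z)\times H_1(\T^2;\Z)\to H_0(\T^2;\Z)\cong\Z$ is given by $(v_1,v_2)\mapsto \det(v_1\,|\,v_2)$, which is nonzero by hypothesis; on the other hand, if two loops have disjoint images one may homotope them to transverse representatives still with disjoint images (transversality is achieved by an arbitrarily small perturbation, which can be kept disjoint), and the geometric count of signed intersection points of disjoint transverse curves is $0$, contradicting that it must equal the algebraic intersection number $\det(v_1\,|\,v_2)\neq0$. So the skeleton is: (i) identify homotopy class with homology class and recall the intersection form on $\T^2$ is the determinant; (ii) if the images were disjoint, perturb to transverse disjoint representatives; (iii) conclude $0=\#(\text{signed intersections})=\det(v_1\,|\,v_2)\neq0$, a contradiction.

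The main obstacle is step (ii), the transversality/perturbation argument, because the paths $\gamma_1,\gamma_2$ are merely continuous (or Lipschitz), not smooth, so one cannot directly quote smooth transversality; I would handle this by first replacing each $\gamma_i$ by a homotopic piecewise-linear (or smooth) loop in the same class, where the homotopy can be taken within a small tubular neighborhood of the image so disjointness is preserved for the perturbed representatives — this uses that $\dist(\Gamma_1,\Gamma_2)>0$. An alternative that sidesteps transversality entirely is a degree argument: assuming disjointness, $\gamma_2$ factors through $\T^2\setminus\Gamma_1$; I would show $H_1(\T^2\setminus\Gamma_1;\Q)$ — more precisely the image of $H_1$ of any path-component of the complement — cannot contain a class with nonzero determinant against $v_1$, because a small open neighborhood deformation-retracts appropriately and the Mayer–Vietoris / Alexander duality computation pins down which classes survive. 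Either way the heart of the matter is making the intersection-number formalism legitimate for non-smooth curves, and I would spend the bulk of the write-up on that reduction, treating the determinant computation itself as routine.
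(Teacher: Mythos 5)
Your proposal is correct in outline but takes a genuinely different route from the paper. You propose identifying the homotopy classes with homology classes, using that the intersection pairing on $H_1(\T^2;\Z)$ is the determinant $\det(v_1\,|\,v_2)$, and then deriving a contradiction by perturbing disjoint images to transverse disjoint representatives, whose geometric intersection count is zero. This is a standard algebraic-topology argument and it does work; as you correctly flag, the entire burden falls on the reduction step: the paths in this paper are merely continuous (or Lipschitz), so you must first replace each $\gamma_i$ by a smooth representative close enough to the original to preserve disjointness (using $\dist(\Gamma_1,\Gamma_2)>0$), and only then invoke transversality. That is doable but requires importing the whole machinery of the intersection form, smoothing and general-position arguments, or alternatively an Alexander duality/Mayer--Vietoris computation of $H_1$ of the complement.

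The paper avoids all of this. It lifts each $\gamma_i$ to a $\Z$-equivariant path $\Gamma_i:\R\to\R^2$ via $\Gamma_i(t)=\widetilde\gamma_i(t-\lfloor t\rfloor)+\lfloor t\rfloor(n_i,m_i)$, notes that each $\Gamma_i$ stays within bounded distance of the affine line $L_i(t)=y_i+t(n_i,m_i)$, and then applies \emph{Brouwer's fixed-point theorem} to the continuous self-map
\[
\Phi(t,s)=A^{-1}\bigl(y_2-y_1+(\Gamma_2(s)-L_2(s))-(\Gamma_1(t)-L_1(t))\bigr),
\]
where $A$ has columns $(n_1,m_1)$ and $-(n_2,m_2)$; since $\Phi$ is bounded it maps some large closed ball into itself, and a fixed point $(t_0,s_0)$ yields $\Gamma_1(t_0)=\Gamma_2(s_0)$ directly, hence an intersection in $\T^2$. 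The virtue of this route is that it needs no smoothing, no transversality, and no homology: it works for arbitrary continuous closed paths at the cost of a single application of Brouwer. Your first (winding-function $h$) sketch is actually closer in spirit to the covering-space philosophy used elsewhere in the paper (Lemma \ref{l.firstinclusion}), but as you note it is not rigorous as written; the Brouwer argument is what makes that intuition precise without needing the unbounded-jump bookkeeping you were worried about. If you intend to follow the homological route, the transversality reduction must be written out carefully for merely Lipschitz loops; if you want the most economical path to a complete proof, the Brouwer fixed-point formulation is simpler.
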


By the same proof of this lemma we can show the following corollary.

\begin{corollary}
    \label{cor.intersectionoflinelikesets}
    Suppose $L_i(t):=y_i +t q_i\in \R^2$ are two straight lines with $q_1$, $q_2$ linearly independent, then for any continuous functions $\Gamma_i:\R\rta\R^2$ such that
    \[
    \sup_{t\in \R, \, i=1,2} \dist(\Gamma_i(t),L_i(t)) <\infty
    \]
    we have the images of $\Gamma_1$ and $\Gamma_2$ must have nontrivial intersection.
\end{corollary}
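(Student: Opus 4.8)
The plan is to reduce this to the topological core of Lemma \ref{l.intersectionofnonparallelpaths} by passing to the universal cover and using a winding/degree argument on a large loop. First I would change coordinates: since $q_1,q_2$ are linearly independent, apply the linear isomorphism $T\colon\R^2\to\R^2$ sending $q_1\mapsto e_1$ and $q_2\mapsto e_2$. This replaces the lines $L_i$ by the coordinate axes $te_i$ (up to the fixed translations $Ty_i$, which are harmless), replaces the $\Gamma_i$ by $T\circ\Gamma_i$, and preserves the hypothesis $\sup_t\dist(T\Gamma_i(t),Te_i t)=:R<\infty$ (with a possibly larger but still finite $R$, since $T$ is bi-Lipschitz). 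So without loss of generality $L_1(t)=(t,b_1)$ and $L_2(t)=(a_2,t)$ for constants $a_2,b_1$, and $\Gamma_1$ stays within horizontal distance $R$ of the line $\{x_2=b_1\}$ while $\Gamma_2$ stays within distance $R$ of $\{x_1=a_2\}$.

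Next I would argue by contradiction: suppose the images of $\Gamma_1$ and $\Gamma_2$ are disjoint. Pick a box $[-M,M]^2$ large enough that it contains the point $(a_2,b_1)$ together with an $R$-neighborhood of it, and large enough that $\Gamma_2$ exits the box through the top and bottom edges and $\Gamma_1$ exits through the left and right edges (possible because $\Gamma_i(t)$ tracks $L_i(t)$ which is unbounded in the appropriate coordinate, so for $|t|$ large $\Gamma_i(t)$ lies outside $[-M,M]^2$ in the correct direction, by the uniform distance bound). Restricting to a suitable parameter subinterval, $\Gamma_2$ gives an arc in the closed box joining a point on the bottom edge to a point on the top edge, and $\Gamma_1$ gives an arc joining a point on the left edge to a point on the right edge. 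It is a standard fact (a form of the Jordan curve theorem / the planar separation property of arcs, or equivalently the lemma that such a ``horizontal'' and ``vertical'' crossing of a square must meet — see e.g.\ Maehara's proof) that two such crossing arcs of a square necessarily intersect. This contradicts disjointness, which proves the corollary.

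The main obstacle is supplying a rigorous, self-contained justification of that last ``crossing arcs of a square must intersect'' step, since we are only given continuity of $\Gamma_i$ (not injectivity, not piecewise-linearity), so one genuinely needs the Jordan-curve-type topology rather than an elementary count. The cleanest route, and the one I would follow, is to invoke exactly the same mechanism already used in the proof of Lemma \ref{l.intersectionofnonparallelpaths}: close up $\Gamma_2$'s crossing arc by joining its endpoints along the boundary of the box to form a simple-ish loop whose complement separates the left edge from the right edge, then observe that $\Gamma_1$'s crossing arc must pass from one side to the other and hence meets $\Gamma_2$; the winding number / degree bookkeeping is identical to the torus case once one works in $\R^2$ instead of $\T^2$, which is why the statement can be proved ``by the same proof.'' A minor technical point to handle is that $\Gamma_i$ may wander in and out of the box many times, so I would first extract the relevant crossing subarc by a standard connectedness argument (take the last exit before, and first return after, a time when $\Gamma_i$ is near $L_i$ inside the box).
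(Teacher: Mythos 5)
Your proposal is correct, but it takes a noticeably different and more laborious topological route than the paper, and it also mischaracterizes what the paper's proof of Lemma \ref{l.intersectionofnonparallelpaths} actually is. The paper does not use a winding/degree or Jordan-curve separation argument; it directly constructs the continuous map
\[
\Phi(t,s)=A^{-1}\bigl(y_2-y_1+(\Gamma_2(s)-L_2(s))-(\Gamma_1(t)-L_1(t))\bigr),\qquad A=\begin{pmatrix} q_1 & -q_2\end{pmatrix},
\]
observes that the two error terms $\Gamma_i-L_i$ are uniformly bounded by hypothesis so that $\Phi$ maps a large closed ball into itself, and applies Brouwer's fixed-point theorem: a fixed point $(t_0,s_0)$ unwinds exactly to $\Gamma_1(t_0)=\Gamma_2(s_0)$. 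For the corollary nothing changes except that $(n_1,m_1),(n_2,m_2)$ are replaced by the arbitrary linearly independent vectors $q_1,q_2$; the argument transfers word for word, with no need for the linear change of coordinates, no need to exit to a large box, no need to extract crossing subarcs, and no appeal to the crossing/Jordan-curve lemma. Your plan, by contrast, straightens the lines via a linear isomorphism, argues by contradiction, chooses a large box, carves out a left-to-right subarc of $\Gamma_1$ and a bottom-to-top subarc of $\Gamma_2$ (this extraction is doable -- the uniform bound confines each $\Gamma_i$ to a strip, so it can only exit the box through the appropriate pair of edges -- but it is an extra technical step you would need to write out), and then invokes the planar crossing lemma, which is itself equivalent to Brouwer/Poincaré--Miranda. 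So your route is sound and proves the same thing, but it re-derives, with several auxiliary reductions, a fact that the paper's fixed-point map $\Phi$ delivers in one stroke. If you want the cleanest self-contained argument here, copy the $\Phi$-plus-Brouwer computation verbatim; the only hypothesis it uses is exactly the $\sup_t\dist(\Gamma_i(t),L_i(t))<\infty$ bound given in the statement.
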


\begin{proof}[Proof of Lemma \ref{l.intersectionofnonparallelpaths}]
We denote the nonzero vectors $(n_1,m_1)$ and $(n_2,m_2)$ in $\Z^2$ to be the homotopy classes of $\gamma_1$ and $\gamma_2$ respectively. By the nonparallel assumption we know that $|n_1m_2-n_2m_1|>0$. Let $\widetilde{\gamma}_1$ and $\widetilde{\gamma}_2$ be the lift of $\gamma_1$ and $\gamma_2$ respectively on $\R^2$ such that the starting points are $y_i\in \pi^{-1}(\gamma_i(0))$ and
\[
\widetilde{\gamma}_i(1)=y_i+(n_i,m_i),
\]
for all $i=1,2$.

Now we construct $\Gamma_i:\R\to\R^2$ by defining
\[
\Gamma_i(t)=\widetilde{\gamma}_i(t-\lfloor t\rfloor) + \lfloor t\rfloor (n_i,m_i)
\]
for $i=1,2$ and all $t\in \R$. Notice that $\Gamma_i$'s are continuous on $\R$ because of the definition of $\widetilde{\gamma}_i$'s. 

Notice that $\gamma_1,\gamma_2$ have intersection points if and only if the images of $\Gamma_i$'s have intersection points. It then suffices to consider the intersection of $\Gamma_1$ and $\Gamma_2$. We know that because $\Gamma_i$'s are continuous and $\Gamma_i(t+l)-\Gamma_i(t)=l(n_i,m_i)$ for all $t\in\R$, $l\in\Z$ and $i=1,2$, there is a constant $C>0$ such that 
\[
\dist(\Gamma_i(t),\left(y_i+t(n_i,m_i)\right)) \le C
\]
for all $i=1,2$ and $t\in \R$. Let $L_i(t) = y_i + t(n_i,m_i)$ be the linearized paths. Define the matrix $A = \begin{pmatrix}n_1 & -n_2 \\ m_1 & -m_2\end{pmatrix}$ with $\det A = n_2m_1 - n_1m_2 \neq 0$. Consider the continuous map $\Phi: \R^2 \to \R^2$ given by:
\[
\Phi(t,s) = A^{-1}\left(y_2 - y_1 + (\Gamma_2(s) - L_2(s)) - (\Gamma_1(t) - L_1(t))\right).
\]
The boundedness $\dist(\Gamma_i(t),L_i(t)) \le C$ implies $\Phi$ maps some closed ball $B_R \subset \R^2$ to itself. By Brouwer's fixed-point theorem, there exists $(t_0,s_0) \in B_R$ with $\Phi(t_0,s_0) = (t_0,s_0)$, yielding:
\[
\Gamma_1(t_0) = \Gamma_2(s_0).
\]
Projecting via $\pi$ gives $\gamma_1(t_0) = \gamma_2(s_0)$ in $\T^2$, completing the proof.

\end{proof}

\begin{proof}[Proof of Lemma \ref{l.characterizeloopysetn=2}]
By Lemma \ref{l.intersectionofnonparallelpaths} we know that \ref{condition.(d)loopy} $\implies$ \ref{condition.(c)loopy} $\implies$ \ref{condition.(b)loopy} $\implies$ \ref{condition.(a)loopy}. By the same reason and also Lemma \ref{l.homotopygroup} we deduce that \ref{condition.(a)loopy} $\implies$ \ref{condition.(c)loopy}, and therefore \ref{condition.(a)loopy}, \ref{condition.(b)loopy} and \ref{condition.(c)loopy} are equivalent. It then suffice to show that \ref{condition.(d)loopy} can be implied by other three conditions. We begin with \ref{condition.(c)loopy} and assume there are two closed paths $\gamma_1$ and $\gamma_2$ based at the same point $x_0\in E^*$. It suffices to show that $E^*$ contains homotopy classes of the form $(1,0)$ and $(0,1)$.

Denote $(a,b)$ and $(c,d)$ as the nonzero homotopy classes of $\gamma_1$ and $\gamma_2$ respectively, then by the assumption \ref{condition.(c)} we can assume that $g:=ad-bc\ne0$. Notice that by taking integer combinations
 \[
    (-d) \cdot (a, b) + b \cdot (c, d) = (bc - ad, 0), \quad (-c) \cdot (a, b) + a \cdot (c, d) = (0, ad - bc),
\]
we obtain classes of the form $q_1=(-g,0)$ and $q_2=(0,g)$. To show the existence of classes of the form $(1,0)$ (the class $(0,1)$ can be deduced symmetrically) we denote $\gamma_i^*$ as some closed paths based at the common $x_0$ that belong to $q_i$ for $i=1,2$. Denote $\Tilde{\gamma}_i^*$ as the lift of $\gamma_i^*$ based at some common $y_0\in \pi^{-1}(x_0)$ and define
\[
\Gamma_i(t)=\Tilde{\gamma}_i^*(t-\lfloor t\rfloor) + \lfloor t\rfloor q_i.
\]
The proof is done by observing that $\Gamma_1$ must intersect $\Gamma_2+e_1$ according to Corollary \ref{cor.intersectionoflinelikesets}.
    
\end{proof}

\begin{proof}[Proof of Theorem \ref{t.characterizationoflowerboundn=2}]
    The proof is done by combining Theorem \ref{t.Qtohomotopy} and Lemma \ref{l.characterizeloopysetn=2}.
\end{proof}

\appendix

\section{A formal derivation of resilience from the expected total dissipation}\label{appendix.formalderivation}

In this section we derive the notion of resilience introduced in Theorem \ref{t.forma1}, that is, the positivity of the effective tensor $Q(\theta)$, from the minimization of expected total dissipation in the hydraulic system of a leaf under random fluctuations \cites{Corson2010,Katifori2010}.

Let us model a small piece of leaf by a smooth bounded planar domain $\Da\subset\R^2$ along with a positive definite matrix field $\sigma=L+Q :\Da \rta \R^{2\times 2}$ that represents the local conductance of the leaf. Here $L$ represents the conductance of the lower order veins, which are supported near a tree-like network. The field $Q\ge Q_0$ is a constant positive definite matrix, regarded as the effective tensor of $\theta +  Q_0\,d\Lme^2$ where $\theta$ is a network-like medium as in Theorem \ref{t.forma1} and $Q_0$ is a small positive definite matrix that represents the background medium. That is, we are at a scale where the higher order veins are considered well-mixed with the background materials. We are concerned about the positive definiteness of $Q-Q_0\approx Q(\theta)$ as the leaf grows.

By Darcy's law, the velocity field $j$ satisfies the following relation with the pressure function $\phi$:
\be
\label{eq.darcy}
j=-\sigma\gd \phi.
\ee
Suppose the source distribution is $m_i$ and the sink distribution is $m_o$, where $m_i$ and $m_o$ are nonnegative finite measures on $\pt\Da$ and $\Da$ respectively. The mass conservation indicates that $m_o(\Da)=m_i(\pt\Da)$. Combining the Darcy's law \eqref{eq.darcy} with the Gauss law, one obtains the Neumann problem
\be
\bca
  \gd\cdot \lb {\sigma} \gd \phi\rb(x)=m_o(x) &x\in \Da\\
 \sigma\gd \phi\cdot\Vec{n}(x)=m_i(x) &x\in\partial \Da,
\eca\label{eq.gaussdarcy}
\ee
where $\Vec{n}(x)$ is the unit outer normal of $\pt\Da$ at $x$. Under random fluctuations in $m_i$ and $m_o$, the total dissipation in this scenario takes the form
\be
P(\sigma) = \Ept\lmb\int_{\Da} \gd\phi\cdot\sigma \gd \phi \rmb,
\ee
where the pressure function $\phi$ satisfies \eqref{eq.gaussdarcy} and the expectation is taken with respect to $m_i$ and $m_o$. As the Gauss law explicitly controls the velocity field $j$, the total dissipation $P(\sigma)$ represents the effective resistance of the construction $\sigma$ under a random choice of the source-sink distributions $m_i$ and $m_o$. Therefore, for a leaf to maintain its function, it is natural to minimize the effective resistance, or equivalently the total dissipation $P(\sigma)$.

We are especially interested in the effects of the higher order veins, which leads us to compute the gradient of $P(\sigma)=P(L+Q)$ as a function of $Q$. Indeed, let $B$ be any symmetric matrix, we have
\be
\begin{split}
    \restr{\frac{d}{d t}}{t=0} P\lb \sigma+tB\rb &=-  \Ept\lmb\int_\Da \gd \phi\cdot B\gd\phi \rmb\\
    &=-\trace{B K},
\end{split}
\ee
where $K:=\Ept\lmb\int_{\Da} \gd\phi\otimes \gd\phi\rmb$ and $\phi$ satisfies the equation \eqref{eq.gaussdarcy}. This implies that the gradient flow of $Q$ takes the form
\[
\frac{d}{dt}Q=K=\Ept\lmb\int_{\Da} \gd\phi\otimes \gd\phi\rmb.
\]
Note that if $m_i$ and $m_o$ are random, then the tensor $K$ is generically positive definite, which means that in this scenario the effective tensor $Q_t-Q_0$ of the higher order veins regardless of the background has to be positive definite.

\section{Periodic homogenization of media}
\label{appendix.periodichomog}
For a positive semi-definite matrix-valued Radon measure $d\xi=\beta\,d\norm{\xi}$ on a bounded open domain $U\Subset\R^n$, one can consider the following mesoscopic effective tensor
\[
p\cdot Q(\xi ; U)p := \inf_{\phi\in C_0^\infty(U)} \dashint_U (\gd \phi+p)\cdot \beta (\gd \phi+p) d\norm{\xi}=\inf_{u\in p\cdot x + C_0^\infty(U)} \dashint_U \gd u\cdot \beta \gd u d\norm{\xi},
\]
where $\dashint_U:=\frac{1}{|U|}\int_U$ with $|U|$ the Lebesgue measure of $U$.

In this section we show the following periodic homogenization result. 
\begin{lemma}
    \label{l.periodichomog}
Let $\theta^*$ be the periodic extension (See Section \ref{subsubsection.periodicextensionofmeasures}) of a medium $\theta$ on $\T^n$, then 
\[
\lim_{R\rta\infty} Q(\theta^*; (-R,R)^n) = Q(\theta).
\]
\end{lemma}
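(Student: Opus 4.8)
The plan is to prove the two inequalities $\limsup_{R\to\infty} Q(\theta^*;(-R,R)^n) \le Q(\theta)$ and $\liminf_{R\to\infty} Q(\theta^*;(-R,R)^n) \ge Q(\theta)$ separately, fixing an arbitrary $p\in\R^n$ throughout and working with the scalar quantities $p\cdot Q(\cdot)p$.

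For the upper bound, I would take an almost-optimal smooth corrector $\varphi\in C^\infty(\T^n)$ for $p\cdot Q(\theta)p$ in \eqref{eq.effectivetensor}, lift it to a $\Z^n$-periodic function $\tilde\varphi$ on $\R^n$, and use a standard cutoff argument: multiply $\tilde\varphi$ by a plateau function $\chi_R$ equal to $1$ on $(-R+\sqrt R, R-\sqrt R)^n$ and vanishing outside $(-R,R)^n$ with $|\gd\chi_R|\lesssim R^{-1/2}$, and test $Q(\theta^*;(-R,R)^n)$ against $u_R := p\cdot x + \chi_R\tilde\varphi$. Because $\theta^*$ is periodic, on the bulk region the averaged integral $\dashint$ over the inner cube reproduces exactly $p\cdot Q(\theta)p$ up to a factor $(1-O(R^{-1/2}))$ coming from the volume ratio, while the boundary layer of width $\sqrt R$ contributes a term of order $R^{-1/2}\cdot(\text{mass of }\theta^* \text{ in the layer})/R^n = O(R^{-1/2})$ since $\theta^*$ has periodic (hence linearly growing in $R^n$) total mass. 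Sending $R\to\infty$ and then letting the corrector approach the infimum gives $\limsup_R p\cdot Q(\theta^*;(-R,R)^n)p \le p\cdot Q(\theta)p$.

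For the lower bound I would argue the other direction: given an almost-optimal test function $u_R\in p\cdot x + C_0^\infty((-R,R)^n)$ for $Q(\theta^*;(-R,R)^n)$, I want to produce a competitor in the periodic cell problem. The clean way is a periodization/averaging trick. Subdivide $(-R,R)^n$ into unit cells indexed by $z\in\Z^n\cap(-R,R)^n$; on each translated cell $z+[0,1)^n$ the function $u_R(\cdot+z) - p\cdot z$ differs from $p\cdot x$ by a function that is $C_0^\infty$ on a neighborhood, and since $\theta^*$ is $\Z^n$-periodic the Dirichlet-type energy of $u_R$ over the whole cube equals the sum over cells of the energy of these translates measured against $\theta$ on $\T^n$ (with the caveat that these translates need not be periodic on $\T^n$, only compactly supported perturbations of the linear field restricted to a fundamental domain, which is exactly what the infimum in \eqref{eq.effectivetensor} allows, using that $C^\infty(\T^n)$ perturbations and compactly supported ones give the same value — this needs a density remark). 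Averaging over the $\sim (2R)^n$ cells and using convexity of the quadratic form gives a single periodic-type competitor whose energy is at most the averaged energy of $u_R$, hence $p\cdot Q(\theta)p \le \dashint_{(-R,R)^n} \gd u_R\cdot\sigma^*\gd u_R\, d\|\theta^*\| + o(1) \to \liminf_R p\cdot Q(\theta^*;(-R,R)^n)p$.

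The main obstacle I anticipate is the lower bound, specifically reconciling the boundary conditions: the cube test functions satisfy zero \emph{Dirichlet} boundary data (they agree with $p\cdot x$ on $\partial(-R,R)^n$), whereas the cell problem wants \emph{periodic} correctors, and a Dirichlet-admissible function on the big cube does not restrict to periodic data on individual cells. The averaging-over-translates device handles this only up to the boundary cells, so I expect to need a careful accounting that the cells meeting $\partial(-R,R)^n$ form an $O(R^{n-1})$ fraction of all $O(R^n)$ cells and contribute a vanishing proportion after averaging; combined with the $C^\infty(\T^n)$-versus-$C^\infty_0$ density point for the effective tensor (which should follow by a standard mollification/cutoff argument since $\theta$ is a finite measure on the compact torus), this closes the gap. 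Everything else — the cutoff estimate, the periodicity of $\theta^*$, weak-* lower semicontinuity if one prefers that route — is routine.
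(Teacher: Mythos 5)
Your two-sided strategy is the same as the paper's: a cutoff competitor for the upper bound (the paper uses a fixed-width boundary layer with $|\nabla\eta_R|\le 2$ rather than a $\sqrt R$-width layer, but this is immaterial since either way the boundary layer's mass is $o(R^n)$), and an averaging-over-lattice-translates device for the lower bound. However, your narrative for the lower bound contains two claims that are false or misguided, and you should not lean on them.

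First, for an interior cell $z+[0,1)^n$, the function $u_R(\cdot+z)-p\cdot z - p\cdot x = \phi(\cdot+z)$ (with $\phi\in C_0^\infty((-R,R)^n)$) does \emph{not} vanish near the boundary of that cell, so it is not $C_0^\infty$ on a neighborhood of the cell; $\phi$ only vanishes near $\partial(-R,R)^n$. Second, the ``density remark'' that $C^\infty(\T^n)$ perturbations and compactly-supported-in-a-fundamental-domain perturbations give the same infimum is not generally true for singular $\theta$ (the measure can charge the cell boundary) and, more to the point, it is not needed. The correct mechanism is that the \emph{average} of the lattice translates, $v_\phi(x):=\frac{1}{(2R)^n}\sum_{y\in\pi^{-1}(x)}\phi(y)$, is genuinely $\Z^n$-periodic, hence a valid competitor in \eqref{eq.effectivetensor} directly — no density argument enters. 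That is exactly what the paper does in Lemma \ref{l.auxillary1forperiodichomog}: it applies Jensen/Cauchy--Schwarz to the periodized average and then carefully tracks the factor $c_R(x)=N_R(x)/(2R)^n$ (the number of translates of $x$ landing in $(-R,R)^n$ divided by the cube volume), which is not identically $1$ and produces the $o_R(1)$ error together with the boundary-cell undercounting you alluded to. So delete the per-cell $C_0^\infty$ claim and the density remark, keep the averaging-plus-convexity argument, and add the explicit $c_R(x)$ bookkeeping; after that you have the paper's proof.
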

Such homogenization results are known for the case when $\theta$ is absolutely continuous with respect to the Lebesgue measure, see \cite{Jikov1994}*{Chapter 1} and an ergodic version in \cite{Armstrong2019}*{Chapter 1}. A singular version of homogenization can be found in \cite{bouchitte2001}. Here we present a proof for media including general anisotropy. The proof does not use either the BBS tangent space theory as in \cite{bouchitte2001} or any Sobolev estimates on the corrector equation.

Let us start with an auxiliary lemma. First we denote 
$$
N_R(x):=\# (-R,R)^n \cap \pi^{-1}(x),
$$ 
where $\pi:\R^n\rta\T^n$ is the standard projection. Note that
\[
N_{R,-}(x):=\inf_{x\in \T^n} N_R(x) \textup{ and }N_{R,+}(x):=\sup_{x\in \T^n}N_R(x)
\]
satisfy
\[
\lim_{R\rta\infty}\frac{N_{R,\pm}}{(2R)^n} = 1.
\]

\begin{lemma}
    \label{l.auxillary1forperiodichomog}
    For every $R>0$ one has
    \[
    Q(\theta) \le \liminf_{R\rta\infty} Q(\theta^* ; (-R,R)^n).
    \]
\end{lemma}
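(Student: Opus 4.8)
\textbf{Proof plan for Lemma~\ref{l.auxillary1forperiodichomog}.}
The plan is to take a near-optimal corrector for the mesoscopic cell $(-R,R)^n$ and periodize it to produce an admissible test function for $Q(\theta)$, keeping careful track of the multiplicities $N_R(x)$ with which points of $\T^n$ are covered by the cell. First I would fix $p \in \R^n$ and, for each $R>0$, choose $\phi_R \in C_0^\infty((-R,R)^n)$ with
\[
\dashint_{(-R,R)^n} (\gd \phi_R + p)\cdot \beta^* (\gd \phi_R + p)\, d\norm{\theta^*} \le p\cdot Q(\theta^*;(-R,R)^n) p + \tfrac{1}{R},
\]
where $d\theta^* = \beta^* d\norm{\theta^*}$ is the periodic extension. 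Since $\phi_R$ is compactly supported in $(-R,R)^n$, one can extend it by zero to all of $\R^n$, and then it is \emph{not} periodic, so I cannot directly push it down to $\T^n$. The fix is the standard one: I would not periodize $\phi_R$ itself but instead use it to build a competitor on $\T^n$ by an averaging/gluing argument over translates of the fundamental domain, or — cleaner here — argue in the reverse direction by lifting an optimal corrector on $\T^n$; but since we want the inequality $Q(\theta) \le \liminf Q(\theta^*; \cdot)$, the natural route is to transfer information \emph{from} the large cell \emph{to} the torus.

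Concretely, the key step: given $\phi_R$ on $(-R,R)^n$, define $\psi_R \in C^\infty(\T^n)$ by summing the $\Z^n$-translates of $\phi_R$ that land in a slightly smaller cube, i.e. $\psi_R(x) := \sum_{z \in \Z^n \cap (-R,R)^n} \phi_R(y+z)$ interpreted on $\T^n$ via a lift $y \in \pi^{-1}(x)$; because $\phi_R$ has compact support this sum has finitely many nonzero terms and descends to a well-defined smooth function on $\T^n$. Then, using the change of variables relating $\int_{\T^n} f\, d\norm{\theta}$ to $\int_{(-R,R)^n} f(\pi(\cdot))\, d\norm{\theta^*}$ weighted by $N_R(x)$ (equivalently, $\norm{\theta^*}$ restricted to $(-R,R)^n$ projects to $N_R(x)\, d\norm{\theta}(x)$), I would compute
\[
\int_{\T^n} (\gd\psi_R + p)\cdot \sigma (\gd\psi_R + p)\, d\norm{\theta}
\]
and bound it, up to boundary-layer errors coming from the translates whose support crosses $\partial(-R,R)^n$, by $\frac{(2R)^n}{N_{R,-}} \cdot \big[p\cdot Q(\theta^*;(-R,R)^n)p + \frac1R\big]$ plus a term that is $o(1)$ as $R\to\infty$ (controlled because $\norm{\theta}(\T^n) < \infty$ and the fraction of translates meeting the boundary is $O(1/R)$). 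Taking the infimum over test functions on the left gives $p\cdot Q(\theta)p \le \frac{(2R)^n}{N_{R,-}}\big[p\cdot Q(\theta^*;(-R,R)^n)p + \frac1R\big] + o(1)$, and since $\frac{(2R)^n}{N_{R,-}} \to 1$ and $\frac{N_{R,\pm}}{(2R)^n}\to 1$, passing to the liminf in $R$ yields $p\cdot Q(\theta)p \le \liminf_{R\to\infty} p\cdot Q(\theta^*;(-R,R)^n)p$. As $p$ is arbitrary this is the claim.

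\textbf{Main obstacle.} The delicate point is the boundary-layer estimate: the periodized function $\psi_R$ is built from translates of $\phi_R$, and the translates whose support is not fully contained in $(-R,R)^n$ contribute "cut" pieces that are not themselves correctors and could a priori carry non-negligible energy. I would handle this by first shrinking: replace $\phi_R$ by $\chi_R \phi_R$ for a cutoff $\chi_R$ equal to $1$ on $(-(R-R^{1/2}),R-R^{1/2})^n$ and supported in $(-R,R)^n$, absorbing the extra gradient $\gd\chi_R \cdot \phi_R$ into an error controlled by $\|\phi_R\|_\infty \cdot R^{-1/2} \cdot \norm{\theta^*}((-R,R)^n \setminus (-(R-R^{1/2}),R-R^{1/2})^n)$; the measure of this annular region relative to the whole cube is $O(R^{-1/2})$, which goes to zero. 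A technical subtlety is that $\|\phi_R\|_\infty$ need not be bounded a priori, so I would first truncate $\phi_R$ at a large level (which only decreases the energy, since the integrand is convex and the truncation is a $1$-Lipschitz modification of $\phi_R \mapsto \min(\max(\phi_R,-M),M)$ — one checks this does not increase the Dirichlet-type energy when $p$ is fixed, or alternatively one argues the optimal corrector can be taken bounded by a standard De Giorgi–type truncation on the torus cell) so that the boundary-layer term is genuinely $o(1)$. Everything else — the finite-sum well-definedness of $\psi_R$, the change-of-variables identity with weight $N_R$, and the final arithmetic with $N_{R,\pm}/(2R)^n \to 1$ — is routine.
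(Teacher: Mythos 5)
Your basic strategy matches the paper's: take a near-optimal corrector $\phi_R$ for the cell problem, periodize it to manufacture a competitor for the torus problem, and compare energies via Jensen/Cauchy--Schwarz while tracking the multiplicity $N_R(x)$. The idea and the closing arithmetic with $N_{R,\pm}/(2R)^n\to 1$ are the right ones.

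There are, however, two concrete problems. First, your definition $\psi_R(x) := \sum_{z\in\Z^n\cap(-R,R)^n}\phi_R(y+z)$ is missing the normalization $\tfrac{1}{(2R)^n}$. Without it, $\gd\psi_R(x)$ is roughly $N_R(x)\approx (2R)^n$ times the size of a typical $\gd\phi_R(y)$, so $\int_{\T^n}|\gd\psi_R+p|_\sigma^2\,d\norm{\theta}$ blows up and the claimed bound by $\tfrac{(2R)^n}{N_{R,-}}\bigl[p\cdot Q(\theta^*;(-R,R)^n)p+\tfrac1R\bigr]+o(1)$ cannot hold. With $v_\phi := \tfrac{1}{(2R)^n}\sum_{y'\in\pi^{-1}(x)}\phi_R(y')$ (as the paper takes it), Cauchy--Schwarz gives $|\gd v_\phi(x)|_\sigma^2\le\tfrac{N_R(x)}{(2R)^{2n}}\sum_{y'}|\gd\phi_R(y')|_\sigma^2$, and after writing $\gd v_\phi + p = \tfrac{1}{(2R)^n}\sum_{y'}(\gd\phi_R(y')+p)+(1-c_R(x))p$ with $c_R=N_R/(2R)^n$ and applying a Young inequality, the error from $(1-c_R)p$ is $o_R(1)$ because $\norm{\theta}(\T^n)<\infty$. (Also note that your index set $z\in\Z^n\cap(-R,R)^n$ depends on the choice of lift $y$; the fiber sum $\sum_{y'\in\pi^{-1}(x)}\phi_R(y')$ is the intrinsic, well-defined object.)

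Second, the entire ``main obstacle'' paragraph --- cutoffs $\chi_R$, control on $\|\phi_R\|_\infty$, De Giorgi-type truncation --- is not needed in this direction and addresses a nonexistent difficulty. Since $\phi_R$ is compactly supported in $(-R,R)^n$, the fiber sum $v_\phi$ is locally a finite sum of smooth functions and hence smooth on $\T^n$; there are no ``cut pieces,'' no translates with partial support, and no boundary-layer energy beyond what is already counted in the cell energy of $\phi_R$. The only approximation error is the replacement of $N_R(x)$ by $(2R)^n$, a purely arithmetic $o_R(1)$ effect that never requires an $L^\infty$ bound on $\phi_R$. The cutoff/boundary-layer issue you describe is real, but it belongs to the \emph{reverse} inequality (extending a torus corrector $\psi\in C^\infty(\T^n)$ to the large cell, as in the paper's proof of Lemma~\ref{l.periodichomog}); it does not arise here.
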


\begin{proof}
    For every $\phi\in C_0^\infty((-R,R)^n))$ we define 
    \[
    v_\phi(x):=\frac{1}{(2R)^n}\sum_{y\in \pi^{-1}(x)} \phi(y).
    \]
    Notice that 
    \[
    \gd v_\phi(x) = \frac{1}{(2R)^n}\sum_{y\in \pi^{-1}(x)} \gd \phi(y).
    \]
    In particular by Cauchy-Schwartz inequality we have
    \be\label{eq.tech.b2}
    \gd v_\phi(x)\cdot \sigma(x)\gd v_\phi(x)\le \frac{N_R(x)}{(2R)^{2n}}\sum_{y\in \pi^{-1}(x)} \gd \phi(y)\cdot \sigma(x) \gd \phi (y).
    \ee
    By the definition of periodic extension in Section \ref{subsubsection.periodicextensionofmeasures}, the periodic extension of the medium $d\theta(x)=\sigma(x)\,d\norm{\theta}(x)$ takes the form
    $$
    d\theta^*(y)=\sigma(\pi(y)) \, d w(y),
    $$
where the Radon measure $w$ is the periodic extension of $\norm{\theta}$. This implies that
    \[
    \begin{split}
        \dashint_{(-R,R)^n} |\gd \phi + p|_{\sigma(\pi(y))}^2 dw(y)&= \int_{\T^n} \frac{1}{(2R)^n}\sum_{y\in \pi^{-1}(x)} |\gd \phi(y)|_{\sigma(x)}^2 d\norm{\theta}(x) + 2 \int_{\T^n} \gd v_\phi \cdot \sigma p\,  d\norm{\theta} \\
        &\quad+ \dashint_{(-R,R)^n}  p\cdot \sigma(\pi(y))p\,  dw(y)\\
        &\ge \int_{\T^n} \frac{1}{c_R(x)} |\gd v_\phi|_\sigma^2 d\norm{\theta}(x) + 2 \int_{\T^n} \gd v_\phi(x) \cdot \sigma(x) p\,  d\norm{\theta}(x) \\
        &\quad+ \int_{\T^n} c_R(x) |p|_{\sigma}^2\,  d\norm{\theta}(x)+ o_R(1)\int_{\T^n} |p|_{\sigma}^2\,  d\norm{\theta}(x)\\
        &=\int_{\T^n} \frac{1}{c_R(x)} \lw  \gd v_\phi + c_R(x)p\rw_{\sigma}^2 d\norm{\theta}+ o_R(1),
    \end{split}
    \]
    where $|q|_\sigma^2:=q\cdot \sigma q$, $c_R(x):= \frac{N_R(x)}{(2R)^n}$ and we have used \eqref{eq.tech.b2}. By using triangle inequality
    \[
    \dashint_{(-R,R)^n} |\gd \phi + p|_{\sigma(\pi(y))}^2 dw(y) \ge \int_{\T^n} \frac{1}{c_R(x)} \lb\lw  \gd v_\phi + p\rw_{\sigma} -|1-c_R(x)||p|_\sigma \rb^2 d\norm{\theta}+o_R(1).
    \]
    This shows that for any $\ep>0$
    \[
     \dashint_{(-R,R)^n} |\gd \phi + p|_{\sigma(\pi(y))}^2 dw(y) \ge \int_{\T^n} \frac{1-\ep}{c_R(x)} \lw  \gd v_\phi + p\rw_{\sigma}^2 +\lb1-\frac{1}{\ep}\rb\frac{|1-c_R(x)|^2}{c_R(x)}|p|_\sigma^2 d\norm{\theta} +o_R(1).
    \]
    Therefore, for any $\ep>0$ and $R>0$
    \[
   p\cdot Q(\theta^*;(-R,R)^n)p \ge \frac{(2R)^n(1-\ep)}{N_{R,+}}p\cdot Q(\theta)p +\lb1-\frac{1}{\ep}\rb \int_{\T^n} \frac{|1-c_R(x)|^2}{c_R(x)}|p|_\sigma^2 d\norm{\theta} +o_R(1)
    \]
    Sending $R\rta\infty$ we have $\displaystyle\lim_{R\rta\infty} c_R(x)=1$ uniformly in $x\in \T^n$ and hence for all $\ep>0$
    \[
    \liminf_{R\rta\infty}\, p\cdot Q(\theta^*;(-R,R)^n)p  \ge (1-\ep) p\cdot Q(\theta)p.
    \]
    This finishes the proof by sending $\ep\rta0$.
\end{proof}

\begin{proof}[Proof of Lemma \ref{l.periodichomog}]
    By Lemma \ref{l.auxillary1forperiodichomog}, it suffices to show that 
    \[
    \limsup_{R\rta\infty} Q(\theta^*;(-R,R)^n) \le Q(\theta).
    \]
    To this end, we denote $\eta_R\in C_0^\infty((-R,R)^n)$ a cut-off function satisfying
    \begin{itemize}
        \item $0\le\eta_R\le 1$ and $|\gd \eta_R|\le 2$
        \item $\eta_R \equiv 1$ on $[-\lfloor R\rfloor+1,\lfloor R\rfloor-1]^n$ and $\eta_R \equiv0$ near the boundary $\pt (-R,R)^n$.
    \end{itemize}
    By using this function we observe that any $\psi\in C^\infty(\T^n)$ can be extended to $C_0^\infty((-R,R)^n)$ through
    \[
    f_\psi^R(y):=\eta_R(y) \psi(\pi(y)).
    \]
    Notice that 
    \[
    \begin{split}
         \dashint_{(-R,R)^n} |\gd f_\psi^R+p|_{\sigma(\pi(y))}^2 dw(y) &= \frac{(2\lfloor R\rfloor-2)^n}{(2R)^n} \int_{\T^n} |\gd \psi + p|_\sigma^2 d\norm{\theta} \\
         &\quad+ \frac{1}{(2R)^n} \int_{(-R,R)^n\setminus [-\lfloor R\rfloor+1,\lfloor R\rfloor-1]^n}  |\gd f_\psi^R+p|_{\sigma(\pi(y))}^2 dw(y).
    \end{split}
    \]
Observe that for any fixed $\psi$, the second term is $o_R(1)$, and therefore
\[
\frac{(2\lfloor R\rfloor-2)^n}{(2R)^n} \int_{\T^n} |\gd \psi + p|_\sigma^2 d\norm{\theta} +o_R(1) \ge p\cdot Q(\theta^*; (-R,R)^n) p 
\]
Sending $R\rta\infty$ we obtain
\[
\int_{\T^n} |\gd \psi + p|_\sigma^2 d\norm{\theta} \ge \limsup_{R\rta\infty}\, p\cdot Q(\theta^*; (-R,R)^n) p, 
\]
which finishes the proof by taking the infimum over all $\psi\in C^\infty(\T^n)$.

\end{proof}

\section{Some technical facts}

In this section we present some technical facts that we use in the proofs but are not in the scope of the main topics of this article.

\begin{lemma}
    \label{l.densityofnormalizedintegerpoints} 
    Suppose $H$ is a discrete additive subgroup of $\R^n$. Then the set of points $$\left\{\frac{q}{|q|} \,;\, q\in H \setminus \{0\}\right\}$$ is dense in the unit sphere of $\textup{Span}_\R H$.
\end{lemma}

\begin{proof}

Let $G = \textup{Span}_\R H$. 
Denote by $S_G = \{ x \in G : |x| = 1 \}$ the unit sphere in $G$. Since $H$ is discrete, it is a lattice in $G$. 
Thus there exists $r = \textup{dim}_\R \ G$ and linearly independent vectors $v_1, \dots, v_r \in G$ such that 
$H = \Z v_1 + \cdots + \Z v_r$. 
The set $\{v_1, \dots, v_r\}$ is an $\R$-basis for $G$.
Define a linear isomorphism $T : G \to \R^r$ by $T(v_i) = e_i$ (the standard basis vectors). 
Then $T(H) = \Z^r$. 
The Euclidean inner product on $\R^n$ induces an inner product on $\R^r$ via
\[
\langle x, y \rangle_T = \langle T^{-1}x, T^{-1}y \rangle \textup{ and } \norm{x}_T = \sqrt{\langle x, x \rangle_T}.
\]
Then $T$ is an isometry: $|x| = \norm{T(x)}_T$ for all $x \in G$.

For $q \in H \setminus \{0\}$, set $z = T(q) \in \Z^r \setminus \{0\}$. Then
\[
T\left( \frac{q}{|q|} \right) = \frac{z}{\norm{z}_T}.
\]
Hence it suffices to prove that $\{ z / \norm{z}_T : z \in \Z^r \setminus \{0\} \}$ is dense in the 
$T$-unit sphere $$S_T = T(S_G) =\{ x \in \R^r : \norm{x}_T = 1 \}.$$

Let $u \in S_T$ and $\varepsilon > 0$. By Dirichlet's simultaneous approximation theorem (see for example \cite{schmidt1991diophantine}*{Theorem 1B}), for any integer $N > 0$ there exist $q_N \in \Z$ with $1 \le q_N \le N$ and $p_N \in \Z^r$ such that
\[
\norm{q_N u - p_N}_2 \le \frac{\sqrt{r}}{N^{1/r}}.
\]
Then because all norms on $\R^r$ are equivalent
\[
\norm{q_N u - p_N}_T \le \frac{C\sqrt{r}}{N^{1/r}},
\]
for some constant $C>0$. Choose $N$ large enough so that $C\sqrt{r}/N^{1/r} < \varepsilon$. 
For such $N$, we have $p_N \neq 0$ (otherwise $\norm{q_N u}_T = q_N \ge 1$ contradicts the inequality for large $N$). 

Now because $p_N/q_N \to u$ in $\norm{\cdot}_T$ and $\norm{p_N/q_N}_T \to \norm{u}_T = 1$, we obtain 
\[
\norm{ \frac{p_N}{\norm{p_N}_T} - u }_T 
= \norm{ \frac{p_N/q_N}{\norm{p_N/q_N}_T} - \frac{u}{\norm{u}_T} }_T \to 0
\]
as $N \to \infty$. Thus $u$ is approximated by elements of $\{ z/\norm{z}_T : z \in \Z^r \setminus \{0\} \}$. By the isometry $T$, the original set $\{ q/|q| : q \in H \setminus \{0\} \}$ is dense in $S_G$.

\end{proof}

\begin{lemma}
    \label{l.ZindRind}
    Suppose $V=\{v_1,\dots, v_k\}$ is a sequence of $\Z$-linearly independent vectors in $\Z^n$, then $V$ is also $\R$-linearly independent.
\end{lemma}

\begin{proof}
We argue by contradiction and assume that there are real numbers $c_1,\dots, c_k$, not all zeros, such that
\[
c_1v_1 +\cdots + c_k v_k =0.
\]
By writing $c=(c_j)$ and $M=(M_{ij})$, where $M_{ij}=(v_j)_i\in \Z$ the $i$-th component of the vector $v_j$ for $j=1,\dots,k$ and $i=1,\dots, n$, we obtain the linear equation $M c =0$. 

As the coefficients of $M$ are integers and $0\ne c\in \ker M$, by Gaussian elimination one can always find a solution $\Tilde{c}\in \mathbb{Q}^k\setminus\{0\}$ so that $M\Tilde{c}=0$. We can define
\[
c^*=\Tilde{c}\cdot d\in \Z^k
\]
by choosing $d\in\Z$ that divides all the denominators of the components of $\Tilde{c}$. This implies that $Mc^*=0$ for some $c^*\in \Z^k\setminus\{0\}$, and hence $v_1,\dots,v_k$ are $\Z$-linearly dependent, contradicting the assumption.

\end{proof}

\bibliographystyle{plainnat}
\bibliography{ref}

\end{document}